\newtheorem{theorem}{Theorem}[section]
\newtheorem{lemma}[theorem]{Lemma}
\newtheorem{proposition}[theorem]{Proposition}
\newtheorem{corollary}[theorem]{Corollary}
\newtheorem{claim}{Claim}[section]
\theoremstyle{remark}
\newtheorem{remark}{Remark}[section]
\numberwithin{equation}{section}
\newcommand{\R}{\mathbb{R}}
\newcommand{\C}{\mathbb{C}}
\newcommand{\Z}{\mathbb{Z}}
\newcommand{\T}{\mathbb{T}}
\newcommand{\la}{\langle}
\newcommand{\ra}{\rangle}
\newcommand{\pd}{\partial}
\renewcommand{\a}{\alpha}
\newcommand{\eps}{\varepsilon}
\newcommand{\wP}{\widetilde{P}}
\newcommand{\wR}{\widetilde{R}}
\newcommand{\wS}{\widetilde{S}}
\newcommand{\cC}{\mathcal{C}}
\newcommand{\wC}{\widetilde{\mathcal{C}}}
\newcommand{\mF}{\mathcal{F}}
\newcommand{\mL}{\mathcal{L}}
\newcommand{\bS}{\bar{S}}
\newcommand{\bM}{\mathbb{M}}
\newcommand{\cN}{\mathcal{N}}
\newcommand{\fN}{\mathfrak{N}}
\newcommand{\ta}{\tilde{a}}
\newcommand{\tb}{\tilde{b}}
\newcommand{\tc}{\tilde{c}}
\newcommand{\td}{\tilde{d}}
\newcommand{\tk}{\tilde{k}}
\newcommand{\tv}{\tilde{v}}
\newcommand{\tx}{\tilde{x}}
\newcommand{\bb}{\mathbf{b}}
\newcommand{\bd}{\mathbf{d}}
\newcommand{\tbd}{\mathbf{\tilde{d}}}
\newcommand{\tpsi}{\tilde{\psi}}
\newcommand{\oc}[1]{\overset{\circ}{#1}}
\newcommand{\obu}[1]{\overset{\bullet}{#1}}
\DeclareMathOperator{\sech}{sech}
\DeclareMathOperator{\supp}{supp}
\DeclareMathOperator{\diag}{diag}
\begin{document}
\title
{The phase shift of line solitons for the KP-II equation}
\author{Tetsu Mizumachi}
\thanks{Department of Mathematics, Hiroshima University,
1-7-1 Kagamiyama 739-8521, Japan\\
Email: tetsum@hiroshima-u.ac.jp}
\keywords{KP-II, line soliton, phase shift}
\subjclass[2010]{Primary 35B35, 37K40; Secondary 35Q35}
\begin{abstract}
The KP-II equation was derived by Kadmotsev and Petviashvili \cite{KP}
to explain stability of line solitary waves of shallow water.
Stability of line solitons has been proved by \cite{Miz15,Miz17}
and it turns out the local phase shift of modulating line solitons
are not uniform in the transverse direction.
In this paper, we obtain the $L^\infty$-bound for the local phase
shift of modulating line solitons for polynomially localized perturbations.
\end{abstract}

\maketitle
\tableofcontents

\section{Introduction}
\label{sec:intro}
The KP-II equation
\begin{equation}\label{eq:KPII}
\partial_x(\pd_tu+\pd_x^3u+3\pd_x(u^2))+3\sigma\partial_y^2u=0
\quad\text{for $t>0$ and $(x,y)\in \R^2$,}
\end{equation}
where $\sigma=1$,
is a generalization to two spatial dimensions of the KdV equation
\begin{equation}
  \label{eq:KdV}
\pd_tu+\pd_x^3u+3\pd_x(u^2)=0\,,
\end{equation}
and has been derived as a model to explain the transverse stability
of solitary wave solutions to the KdV equation with respect to two dimensional perturbation when the surface tension
is weak or absent.  See \cite{KP} for the derivation of \eqref{eq:KPII}.
\par
The global well-posedness of \eqref{eq:KPII} in $H^s(\R^2)$ ($s\ge0$)
on the background of line solitons has been studied by Molinet, Saut
and Tzvetkov \cite{MST} whose proof is based on the work of Bourgain
\cite{Bourgain}.  For the other contributions on the Cauchy problem of
the KP-II equation, see e.g.
\cite{GPS,Hadac,HHK,IM,Tak,TT,Tz,Ukai} and the references therein.
\par
Let
$$\varphi_c(x)\equiv c\sech^2\Big(\sqrt{\frac{c}{2}}\,x\Big),\quad c>0.
$$
Then $\varphi_c(x-2ct)$ is a solitary wave solution of the KdV equation
\eqref{eq:KdV} and a line soliton solution of \eqref{eq:KPII} as well.
Transverse linear stability of line solitons for the KP-II equation 
was studied by Burtsev (\cite{Burtsev}).
See also \cite{APS} for the spectral stability of KP line solitons.
Recently, transverse spectral and linear stability of periodic waves
for the KP-II equation has been studied in \cite{Haragus,HLP,JZ}.

If $\sigma=-1$, then \eqref{eq:KPII} is called KP-I which is a model for
long waves in a media with positive dispersion, e.g. water waves with large
surface tension. The KP-I equation has a stable ground state
(\cite{dBS}) and line solitons are unstable for the KP-I equation
except for thin domains in $\R^2$ where the two dimensional nature
of the equation is negligible (see \cite{RT1, RT2, RT3, Z}).
\par
Nonlinear stability of line solitons for the KP-II equation has been proved 
for localized perturbations as well as for perturbations which have
$0$-mean along all the lines parallel to the $x$-axis (\cite{Miz15, Miz17}).

\begin{theorem}\emph{(\cite[Theorem~1.1]{Miz17})}
  \label{thm:stability0}
Let $c_0>0$ and $u(t,x,y)$ be a solution of \eqref{eq:KPII} satisfying
$u(0,x,y)=\varphi_{c_0}(x)+v_0(x,y)$.
There exist positive constants $\eps_0$ and $C$ satisfying the following:
if $v_0\in \pd_xL^2(\R^2)$ and
$\|v_0\|_{L^2(\R^2)}+\||D_x|^{1/2}v_0\|_{L^2(\R^2)}+\||D_x|^{-1/2}|D_y|^{1/2}v_0\|_{L^2(\R^2)} <\eps_0$
then there exist $C^1$-functions $c(t,y)$ and $x(t,y)$
such that for every $t\ge0$ and $k\ge0$,
\begin{align}
\label{OS}
 & \|u(t,x,y)-\varphi_{c(t,y)}(x-x(t,y))\|_{L^2(\R^2)}\le C\|v_0\|_{L^2}\,,
 \\  & 
\left\|c(t,\cdot)-c_0\right\|_{H^k(\R)}+\left\|\pd_yx(t,\cdot)\right\|_{H^k(\R)}
+\|x_t(t,\cdot)-2c(t,\cdot)\|_{H^k(\R)}\le C\|v_0\|_{L^2}\,,
\\  &  \label{phase2}
\lim_{t\to\infty}\left(\left\|\pd_yc(t,\cdot)\right\|_{H^k(\R)}+
\left\|\pd_y^2x(t,\cdot)\right\|_{H^k(\R)}\right)=0\,,
\end{align}
and for any $R>0$,
\begin{equation}
\label{AS}
\lim_{t\to\infty}
\left\|u(t,x+x(t,y),y)-\varphi_{c(t,y)}(x)\right\|_{L^2((x>-R)\times\R_y)}=0\,.
\end{equation}
\end{theorem}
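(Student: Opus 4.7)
The plan is to combine a modulation decomposition with energy estimates on the reduced perturbation equation, following the strategy that has been developed for KdV solitons and adapted to the transverse setting of KP-II. First I would write
\[
u(t,x,y)=\varphi_{c(t,y)}(x-x(t,y))+v(t,x,y),
\]
and fix the modulation parameters $c(t,y)$ and $x(t,y)$ by imposing, for each $y$, orthogonality of $v(t,\cdot,y)$ to the two resonant modes associated with translation in $x$ and rescaling in $c$, namely $\pd_x\varphi_{c_0}$ and $\pd_c\varphi_c\big|_{c=c_0}$. These modes span the generalized kernel of the formal linearization at the line soliton. Allowing the modulation parameters to depend on $y$ is essential, because transverse perturbations genuinely propagate along the crest and cannot be absorbed by a single $y$-independent phase.

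Second, differentiating the orthogonality conditions in $t$ yields a coupled evolution system for $c(t,y)$ and $x(t,y)$. The structure of the linearized KP-II flow at the soliton makes this system of Burgers/diffusive type in the transverse variable $y$, so that $c-c_0$ and $\pd_y x$ are governed by equations with parabolic smoothing in $y$ and $x_t-2c$ is slaved at leading order to $\pd_y^2 x$. This is the mechanism that produces the decay asserted in \eqref{phase2}: the transverse curvatures $\pd_y c$ and $\pd_y^2 x$ are dissipated by the effective parabolic flow on the modulation manifold.

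Third, for the remainder $v$ I would work in an exponentially weighted $L^2$-space $e^{ax}L^2$ with a small $a>0$, in which the linearization
\[
\mL_{c_0}=-\pd_x^3-6\pd_x(\varphi_{c_0}\cdot)-3\pd_x^{-1}\pd_y^2
\]
acquires a spectral gap of the kind identified by \cite{Burtsev,APS}. A semigroup estimate for $e^{t\mL_{c_0}}$ on the orthogonal complement of the resonant modes, combined with the modulation equations and a continuity/bootstrap argument on the nonlinear remainder, delivers the uniform bounds \eqref{OS} and the $H^k$ estimate on $(c-c_0,\pd_y x,x_t-2c)$. The Miura transform conjugating KP-II into a modified KP-II with a simpler soliton spectrum is a natural tool for implementing this semigroup estimate cleanly, since it converts the resonance structure at the soliton into a more tractable one.

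The main obstacle will be the asymptotic statements \eqref{phase2} and \eqref{AS}. The modulation system is only weakly dissipative at low transverse frequencies, where the effective parabolic gain is logarithmic; turning pointwise-in-time $H^k$ control into the vanishing limit \eqref{phase2} requires extra input. I would argue via a Kato-type monotonicity/virial identity for $v$ in a moving frame $x=x(t,y)+\nu t$ with $0<\nu<2c_0$, which forces the perturbation to radiate out of the soliton region and thereby removes the low-frequency coupling between $v$ and the modulation. Once \eqref{phase2} is available, the local convergence \eqref{AS} follows by combining the Kato smoothing estimate with the modulation control established in the earlier steps.
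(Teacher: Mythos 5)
Your outline reproduces the general philosophy (modulation plus weighted linear estimates plus a virial/monotonicity argument), but two of its load-bearing steps would fail as stated. First, the orthogonality conditions. Imposing, for each fixed $y$, orthogonality of $v(t,\cdot,y)$ to $\pd_x\varphi_{c_0}$ and $\pd_c\varphi_c$ does not remove the secular terms. The linearized operator has a curve of resonant continuous eigenvalues $\lambda(\eta)=4i\eta\beta(\eta)$ accumulating at $0$, whose eigenmodes $g(x,\eta)e^{iy\eta}$ and adjoint modes $g^*(x,\eta)e^{iy\eta}$ are genuinely $\eta$-dependent and grow exponentially as $x\to-\infty$; only at $\eta=0$ do they reduce to the KdV kernel elements you propose. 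The secular-term condition has to be imposed against the $\eta$-dependent adjoint modes $g_k^*(\cdot,\eta,c)$, and only for transverse frequencies $|\eta|\le\eta_0$ (for $|\eta|>\eta_0$ there is a genuine spectral gap and no condition is needed; restricting to low frequencies is also what keeps the modulation system semilinear). With the $\eta$-independent projection the component of $v$ along $g(\cdot,\eta)e^{iy\eta}$ for small $\eta\ne 0$ is not projected out, the weighted semigroup on the complement does not decay, and the bootstrap collapses. Relatedly, the resulting system for $(c,x)$ is not parabolic: its symbol has eigenvalues $-2\eta^2\pm i\eta\omega(\eta)$ with $\omega(0)=\sqrt{8c_0}$, i.e.\ it is a damped wave equation whose finite propagation speed along the crest is essential to the phenomenology (and to the failure of orbital stability of the soliton family).

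Second, the remainder cannot be run through the exponentially weighted semigroup as a single object. Under the hypotheses of the theorem $v_0$ has no decay as $x\to+\infty$, so $e^{ax}v_0\notin L^2(\R^2)$ in general and the weighted estimate is unavailable at $t=0$; conversely, the unweighted bound \eqref{OS} cannot come out of the weighted space, whose norm is blind to mass escaping to $x\to-\infty$. The argument requires splitting $v=v_1+v_2$, where $v_1$ solves the free KP-II equation with the (mass-corrected) initial perturbation and is controlled by the unweighted well-posedness/scattering theory of Molinet--Saut--Tzvetkov and Hadac--Herr--Koch together with virial estimates, while $v_2$, whose datum is a localized difference of solitons, is the only piece treated in $e^{ax}L^2$. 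The bound \eqref{OS} then comes from a perturbed $L^2$ conservation law, not from semigroup decay. One also needs the auxiliary wave $\psi_{c,L}(z+3t)$ in the ansatz to absorb the discrepancy $\int_\R(\varphi_{c(t,y)}-\varphi_{c_0})\,dx$, without which $v$ cannot be small in the weighted class. Finally, the Miura transform plays no role here; the weighted spectral analysis of $\mL(\eta)$ is carried out directly.
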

\begin{theorem}\emph{(\cite[Theorem~1.2]{Miz17})}
  \label{thm:stability}
Let $c_0>0$ and $s>1$. Suppose that $u$ is a solutions of 
\eqref{eq:KPII} satisfying  $u(0,x,y)=\varphi_{c_0}(x)+v_0(x,y)$.
Then there exist positive constants $\eps_0$ and $C$ such that 
if $\|\la x\ra^sv_0\|_{H^1(\R^2)} <\eps_0$,
there exist $c(t,y)$ and $x(t,y)$ satisfying \eqref{phase2}, \eqref{AS} and
\begin{align}
\label{OS'} &
\|u(t,x,y)-\varphi_{c(t,y)}(x-x(t,y))\|_{L^2(\R^2)}
\le C\|\la x\ra^sv_0\|_{H^1(\R^2)}\,,\\
\label{phase-sup'} & 
\left\|c(t,\cdot)-c_0\right\|_{H^k(\R)}+\left\|\pd_yx(t,\cdot)\right\|_{H^k(\R)}
+\|x_t(t,\cdot)-2c(t,\cdot)\|_
{H^k(\R)}\le C\|\la x\ra^sv_0\|_{H^1(\R^2)}
\end{align}
for every $t\ge0$ and $k\ge0$.
\end{theorem}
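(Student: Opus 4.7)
The plan is to adapt the modulation-plus-linearized-operator framework behind Theorem~\ref{thm:stability0}, replacing the $\pd_xL^2$ low-frequency hypothesis with polynomial $x$-decay as the mechanism for controlling long-wave interactions with the line soliton. First I would write
$$u(t,x,y)=\varphi_{c(t,y)}(x-x(t,y))+v(t,x,y),$$
and fix $(c(t,y),x(t,y))$ by imposing, for each transverse frequency $\eta$, symplectic orthogonality of $v$ to the two-dimensional kernel spanned by $\pd_c\varphi_c$ and $\pd_x\varphi_c$ (suitably translated). This orthogonality yields a quasilinear system for $(c,x)$ whose principal part is a Burgers-type transport in $(t,y)$, with forcing quadratic in $v$ and in $\pd_y$ of the modulation parameters, together with a perturbed KP-II evolution for $v$ on the moving background.

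Second, I would set up energy estimates for $v$ in a weighted space dictated by the hypothesis $\|\la x\ra^sv_0\|_{H^1}<\eps_0$, $s>1$, for instance $L^2_y(\la x\ra^sL^2_x)\cap H^1_{x,y}$. The exponent $s>1$ is used precisely to ensure that the $x$-profile of $v$ is integrable, so that the ``zero $x$-mass'' functional $\int_{\R}v(t,x,y)\,dx$ is well-defined and can be absorbed into the modulation $c(t,y)$ via the nondegeneracy $\int \pd_c\varphi_c\,dx\ne 0$. This replaces the structural ``$v_0\in\pd_xL^2$'' hypothesis of Theorem~\ref{thm:stability0}, which encoded vanishing $x$-mean for free, and motivates the appearance of $\|\la x\ra^sv_0\|_{H^1}$ on the right-hand side of \eqref{OS'} and \eqref{phase-sup'}.

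Third, I would close a continuity/bootstrap argument in $t$. A Lyapunov functional built from mass and momentum of $v$, minus the corresponding soliton contribution, gives coercivity under the orthogonality; the modulation equations are then used to trade potentially secular terms against derivatives of $(c-c_0,\pd_yx)$. A virial estimate on the half-plane $\{x>x(t,y)-R\}$ combined with a Kato-type smoothing bound yields the localized convergence \eqref{AS}, while the $H^k$-smoothing for the modulation system yields the flattening \eqref{phase2}.

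The principal obstacle, as I see it, is reconciling the polynomial weight $\la x\ra^s$ with the third-order dispersion $\pd_x^3$ and with the nonlinearity $3\pd_x(u^2)$: neither commutes with the weight cleanly, and the commutators produce lower-order but non-absorbable terms once one passes to the linearized operator in transverse Fourier variable $\eta$. Overcoming this will require a weighted resolvent/semigroup bound for the linearization of KP-II around $\varphi_{c_0}$ on $\la x\ra^sL^2_x$, uniform in $\eta$ in a neighborhood of $0$. This is the replacement for the $|D_x|^{1/2}$ and $|D_x|^{-1/2}|D_y|^{1/2}$ controls used in Theorem~\ref{thm:stability0}, and it is the step I would expect to consume most of the technical work.
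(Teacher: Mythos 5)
First, note that the paper does not prove this statement: it is quoted verbatim from \cite[Theorem~1.2]{Miz17}, and the present paper only recalls (in Sections 2--8) the machinery of \cite{Miz15,Miz17} in order to refine it under the stronger hypothesis $\|\la x\ra(\la x\ra+\la y\ra)v_0\|_{H^1}<\eps_0$. Judged against that machinery, your outline captures some correct ingredients --- in particular the observation that $s>1$ makes $\int_\R v_0(x,y)\,dx$ well defined so that the nonzero $x$-mean can be absorbed into an amplitude modulation (this is exactly \eqref{eq:defc1}--\eqref{eq:defv*}) --- but two of your central steps would fail as stated.

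First, the secular-term condition is not symplectic orthogonality of the whole remainder $v$ to $\spann\{\pd_c\varphi_c,\pd_x\varphi_c\}$ for every transverse frequency. The correct conditions \eqref{eq:orth} are imposed only on the exponentially localized component $v_2$ of a splitting $v=v_1+v_2$ (with $v_1$ the solution of the free KP-II flow issued from the zero-$x$-mean part $v_*$ of the data), only against the adjoint \emph{resonant continuous eigenfunctions} $g_k^*(z,\eta,c)$ of $\mL(\eta)^*$, and only for $|\eta|\le\eta_0$; this low-frequency truncation is what renders the modulation system semilinear and is explicitly flagged in the introduction. Second, and more seriously, your ``principal obstacle'' --- a semigroup/resolvent bound for the linearization on $\la x\ra^{-s}L^2_x$, uniform near $\eta=0$ --- is not merely technically hard but unavailable: the operators $\mL(\eta)$ carry resonant continuous eigenvalues $\lambda(\eta)=4i\eta\beta(\eta)$ accumulating at $0$, whose eigenmodes grow exponentially as $x\to-\infty$, and a spectral gap for the non-resonant part exists only in the one-sided exponentially weighted space $X=L^2(e^{2\a x}dxdy)$ (Proposition~\ref{prop:semigroup-est}); in polynomially weighted spaces the essential spectrum touches the imaginary axis and no decay can be extracted. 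The actual proof never propagates the polynomial weight under the flow: the weight on $v_0$ is spent once at $t=0$ (to define $c_1$ and $v_*$ and to seed the virial estimates of Lemma~\ref{lem:virial-0}, which give algebraic decay of exponentially localized norms of $v_1$), while $v_1$ globally is controlled by the Hadac--Herr--Koch scattering theory and $v_2$ by the exponentially weighted semigroup plus a virial identity. Without the $v_1/v_2$ splitting and the exponential weight, the bootstrap you describe cannot be closed.
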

\begin{remark}
The parameters $c(t_0,y_0)$ and $x(t_0,y_0)$ represent the local amplitude
and the local phase shift of the modulating
line soliton $\varphi_{c(t,y)}(x-x(t,y))$ at time $t_0$ along the line $y=y_0$
and that $x_y(t,y)$  represents the local orientation of the crest of the
line soliton.    
\end{remark}
\begin{remark}
In view of Theorem~\ref{thm:stability0},
$$\lim_{t\to\infty}\sup_{y\in\R}(|c(t,y)-c_0|+|x_y(t,y)|)=0\,,$$
and as $t\to\infty$, the modulating line soliton $\varphi_{c(t,y)}(x-x(t,y))$
converges to a $y$-independent modulating line soliton
$\varphi_{c_0}(x-x(t,0))$ in $L^2(\R_x\times (|y|\le R))$ for any $R>0$.  
\end{remark}

For the KdV equation as well as for the KP-II equation posed on 
$L^2(\R_x\times \T_y)$, the dynamics of a modulating soliton
$\varphi_{c(t)}(x-x(t))$ is described by a system of ODEs
  \begin{equation*}
\dot{c}\simeq 0\,,\quad \dot{x}\simeq 2c\,.
\end{equation*}
See \cite{PW} for the KdV equation and \cite{MT} for the KP-II equation
with the $y$-periodic boundary condition.
However, to analyze transverse stability of line solitons
for localized perturbation in $\R^2$,
we need to study a system of PDEs for $c(t,y)$ and $x(t,y)$
in \cite{Miz15,Miz17} as is the case with the planar traveling waves
for the heat equations (e.g. \cite{Kapitula,Le-Xin,Xin}) 
and planar kinks for the $\phi^4$-model (\cite{Cu}).
\par
By analyzing modulation PDEs, it turns out the set of exact $1$-line solitons
$$\mathcal{K}=\{\varphi_c(x+ky-(2c+3k^2)t+\gamma)
\mid c>0\,,\, k\,,\gamma\in\R\}$$
is not stable in $L^2(\R^2)$.
\begin{theorem}\emph{(\cite[Theorem~1.4]{Miz15})}
  \label{thm:instability}
Let $c_0>0$. Then for any $\eps>0$, 
there exists a solution of \eqref{eq:KPII} such that
$\|u(0,x,y)-\varphi_{c_0}(x)\|_{L^2(\R^2)}<\eps$ and
$\liminf_{t\to\infty}t^{-1/4}\inf_{v\in\mathcal{A}}
\|u(t,\cdot)-v\|_{L^2(\R^2)}>0$.  
\end{theorem}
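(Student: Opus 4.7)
The plan is to combine the modulation decomposition of Theorem~\ref{thm:stability0} with a long-time analysis of the reduced equations for $(c(t,y),x(t,y))$ obtained in \cite{Miz15}. Writing $u(t,\cdot)=\varphi_{c(t,y)}(x-x(t,y))+w(t,\cdot)$ with $\|w(t,\cdot)\|_{L^2(\R^2)}\le C\eps$ uniformly in $t$, the triangle inequality reduces the claim to exhibiting one $v_0$ of size $\eps$ for which
\[
\inf_{v\in\mathcal{K}}\|\varphi_{c(t,y)}(x-x(t,y))-v\|_{L^2(\R^2)}\gtrsim t^{1/4}.
\]

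For the slice-wise distance I would use translation invariance in $x$ together with the odd--even orthogonality $\la\pd_c\varphi_c,\pd_x\varphi_c\ra=0$ to obtain, with $\delta(t,y):=x(t,y)+ky-(2c+3k^2)t+\gamma$,
\[
\|\varphi_{c(t,y)}(x-x(t,y))-\varphi_c(x+ky-(2c+3k^2)t+\gamma)\|_{L^2(\R^2)}^2 \gtrsim \int_\R\!\Bigl((c(t,y)-c)^2+\min(\delta(t,y)^2,1)\Bigr)\,dy,
\]
the saturation encoding that on each $y$-slice the two $\sech^2$-profiles become essentially disjointly supported once $|\delta|$ exceeds a fixed threshold. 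Since $\pd_yx(t,\cdot)\in H^k(\R)$ forces $x(t,y)=O(|y|^{1/2})$, any $k\ne 0$ makes $|\delta(t,y)|$ grow linearly in $|y|$ on a cofinite set and the integral diverges; the infimum over $\mathcal{K}$ is therefore attained (in the limit) at $k=0$ and is bounded below by $\min_{c,\gamma}\int_\R\min((x(t,y)-2ct+\gamma)^2,1)\,dy$.

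The heart of the proof is to construct $v_0$ of size $\eps$, polynomially localized in $(x,y)$, so that the initial data of the reduced modulation system makes $\pd_yx(0,\cdot)$ a nontrivial profile of zero mean (for instance, take $v_0$ odd in $y$). The leading-order system in $(c-c_0,\pd_yx)$ derived in \cite{Miz15} is parabolic in $y$ with quadratic nonlinearities, and standard self-similar asymptotics for small localized data should yield
\[
c(t,y)-c_0=t^{-1/2}G(y/\sqrt{t})+o(t^{-1/2}),\qquad \pd_yx(t,y)=t^{-1/2}K(y/\sqrt{t})+o(t^{-1/2})
\]
in a weighted $L^2$-norm, with $K\not\equiv 0$ and $\int_\R K\,d\xi=0$. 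Integrating yields $x(t,y)=x_*(t)+\Phi(y/\sqrt t)+o(1)$ with $\Phi(\xi):=\int_{-\infty}^\xi K$ bounded, non-constant, and $\Phi(\pm\infty)=0$; hence $\Phi\in L^2(\R)$ and $\|\Phi\|_{L^2}^2=:c_*>0$. The change of variables $\xi=y/\sqrt t$ then gives
\[
\min_\gamma\int_\R\bigl(x(t,y)-2ct+\gamma\bigr)^2\,dy=c_*\sqrt{t}+o(\sqrt t),
\]
and combining with the slice-wise lower bound gives $\inf_{v\in\mathcal{K}}\|u(t,\cdot)-v\|_{L^2(\R^2)}\ge\sqrt{c_*}\,t^{1/4}-C\eps$ for $t$ large, which is the desired liminf.

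The hardest step will be justifying the self-similar asymptotics for the nonlinear modulation system in a norm strong enough to support the above scaling argument: the quadratic interactions between $c-c_0$ and $\pd_yx$ must be integrable against the diffusive $t^{-1/4}$ decay in $L^2_y$, which is borderline, while Theorem~\ref{thm:stability0} only supplies uniform $L^2$ control. Overcoming this requires weighted estimates (in the spirit of Theorem~\ref{thm:stability}) and careful exploitation of the specific algebraic structure of the modulation equations from KP-II, in particular cancellations that place the quadratic terms in divergence form.
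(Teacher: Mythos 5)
Your reduction steps are sound and match the actual argument: the triangle inequality via \eqref{OS}, the slice-wise coercivity bound forcing $k=0$ and reducing everything to $\min_\gamma\int_\R\min\bigl((x(t,y)-2c_0t-\gamma)^2,1\bigr)\,dy$, and the recognition that the whole content lies in the large-time behaviour of the modulation parameters. The gap is in that last, essential step: your description of the modulation dynamics is qualitatively wrong. The leading-order system for $(c-c_0,x_y)$ is \emph{not} parabolic; it is a damped wave equation (see Section~\ref{subsec:dampedW}: the symbol $\mathcal{A}_*(\eta)$ has eigenvalues $-2\eta^2\pm i\eta\omega(\eta)$ with $\omega(0)=4$), so nothing concentrates as a stationary self-similar profile $t^{-1/2}K(y/\sqrt t)$ at the origin. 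The correct asymptotics (Theorem~\ref{thm:burgers}, and \cite[Theorem~1.5]{Miz15}) are two Burgers diffusion waves riding on the characteristics $y=\pm\sqrt{8c_0}\,t$, i.e.\ $x_y(t,y)\approx u_B^+(t,y+4t)-u_B^-(t,y-4t)$. Moreover your choice of data is the wrong one: taking $x_y(0,\cdot)$ of zero mean (e.g.\ $v_0$ odd in $y$) kills the leading diffusion waves ($m_\pm=O(\eps^2)$) and produces \emph{faster} decay, not a persistent $L^2$ profile. The data one must take has $\int_\R(c(0,y)-c_0)\,dy\neq0$ (a one-signed bump in $c_1-c_0$, as in the proof of Corollary~\ref{cor:instability2} here), so that $m_\pm\neq0$.

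With the correct picture your scaling conclusion can be salvaged, but by a different mechanism than the one you describe: $x(t,y)-2c_0t$ is asymptotically a \emph{step-like} function, approximately $0$ for $y\ll-4t$, approximately $M=\int u_B^+\sim\eps$ on the plateau $-4t\ll y\ll 4t$, with transitions of width $O(\sqrt t)$ across each front. Your $\Phi$ is therefore not an $L^2$ bump with $\Phi(\pm\infty)=0$; it is a front connecting two distinct constants. The $t^{1/4}$ rate then comes from the $\min(\cdot,1)$ saturation on the transition layer: for every $\gamma$, the phase deviates from $\gamma$ by $\gtrsim|M|$ on a subset of the layer of measure $\gtrsim\sqrt t$, giving squared distance $\gtrsim\sqrt t$. (Exploiting the full plateau of length $\sim t$ instead yields the $t^{1/2}$ rate of Corollary~\ref{cor:instability2}, but that requires the $L^1+\pd_y^2Y_1$ decomposition of the initial data developed in this paper.) Your closing concern about the borderline integrability of the quadratic terms is well placed, and the resolution is indeed the divergence structure of the nonlinearity together with the decomposition of $b_*$ in Lemma~\ref{lem:modeq-init-decomp}; but as written, the construction of the bad solution and the identification of the asymptotic profile do not go through.
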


\begin{remark}
Theorem~\ref{thm:instability} is a consequence of
finite speed propagation of local phase shifts and
the fact that the line solitons have infinite length in the $\R^2$ case.
Indeed, the phase $x(t,y)$ has \textit{jumps} around
the points $y=\pm\sqrt{8c_0}t$.
\par
Such phenomena are observed for Boussinesq equations in the physics
literature. See e.g. \cite{Ped} and the reference therein.
\end{remark}

The following result is an improvement of \cite[Theorem~1.5]{Miz15}.
\begin{theorem}
  \label{thm:burgers}
 Let $c_0=2$ and $u(t)$ be as in Theorem~\ref{thm:stability}. 
 There exist positive constants $\eps_0$ and $C$ such that if 
$\eps:=\|\la x\ra(\la x\ra+\la y\ra) v_0\|_{H^1(\R^2)} <\eps_0$,
then there exist $C^1$-functions $c(t,y)$ and $x(t,y)$ satisfying
\eqref{OS}--\eqref{AS} and
\begin{equation}
  \label{eq:profile}
\left\|\begin{pmatrix} c(t,\cdot)-2\\ x_y(t,\cdot) \end{pmatrix}
-\begin{pmatrix}  2 & 2 \\ 1 & -1\end{pmatrix}
\begin{pmatrix}u_B^+(t,y+4t) \\ u_B^-(t,y-4t)\end{pmatrix}
\right\|_{L^2(\R)}=o(\eps t^{-1/4})
\end{equation}
as $t\to\infty$, where $u_B^\pm$ are self similar solutions of
the Burgers equation 
$$\pd_tu=2\pd_y^2u\pm 4\pd_y(u^2)$$
 such that 
$$u_B^\pm(t,y)=\frac{\pm m_\pm H_{2t}(y)}
{2\left(1+m_\pm \int_0^{y}H_{2t}(y_1)\,dy_1\right)}\,,
\quad H_t(y)=(4\pi t)^{-1/2}e^{-y^2/4t}\,,$$
and that $m_\pm$ are constants satisfying
$$\int_\R u_B^\pm(t,y)\,dy=\frac14\int_\R \left(c(0,y)-2\right)\,dy+O(\eps^2)\,.$$
\end{theorem}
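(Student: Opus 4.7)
\section*{Proof proposal for Theorem~\ref{thm:burgers}}

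The plan is to start from the modulation PDE system for $(c(t,y),x(t,y))$ already derived in the proof of Theorem~\ref{thm:stability} (cf.~\cite{Miz15,Miz17}) and push the asymptotic analysis one order further than \cite[Theorem~1.5]{Miz15} by capturing the full quadratic self-interaction along each characteristic. The modulation equations have the schematic form
\begin{equation*}
\pd_t\begin{pmatrix}c-2\\ x_y\end{pmatrix}
+A\,\pd_y\begin{pmatrix}c-2\\ x_y\end{pmatrix}
=D\,\pd_y^2\begin{pmatrix}c-2\\ x_y\end{pmatrix}
+\pd_y\,\mathcal{N}(c-2,x_y)+\mathcal{R}(t,y),
\end{equation*}
where $A$ has eigenvalues $\pm\sqrt{8c_0}=\pm 4$ (since $c_0=2$), $D$ is a symmetric positive operator, $\mathcal{N}$ is a smooth quadratic nonlinearity, and $\mathcal{R}$ is an error generated by coupling with the remainder $v=u-\varphi_{c(t,y)}(\cdot-x(t,y))$. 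The polynomially localized assumption $\|\la x\ra(\la x\ra+\la y\ra)v_0\|_{H^1}<\eps_0$ upgrades the decay estimates for $v$ available from Theorem~\ref{thm:stability} so that $\mathcal{R}$ is integrable in both $L^1_y$ and time-weighted norms.

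My next step is to diagonalize the hyperbolic part. With the change of variables
\begin{equation*}
\begin{pmatrix}U^+\\ U^-\end{pmatrix}
=\begin{pmatrix}2 & 2\\ 1 & -1\end{pmatrix}^{-1}
\begin{pmatrix}c-2\\ x_y\end{pmatrix}
=\begin{pmatrix}\tfrac14(c-2)+\tfrac12 x_y\\[1pt] \tfrac14(c-2)-\tfrac12 x_y\end{pmatrix},
\end{equation*}
the left- and right-moving Riemann invariants $U^\pm(t,y)$ satisfy, after identifying the diagonal quadratic coefficients via a careful expansion of the KP-II soliton ansatz, a pair of viscous Burgers equations coupled by terms that are either quadratic mixed products $U^+U^-$ or third-order in $(U^\pm,\pd_yU^\pm)$:
\begin{equation*}
\pd_tU^\pm\pm 4\pd_yU^\pm=2\pd_y^2U^\pm\pm 4\pd_y\bigl((U^\pm)^2\bigr)
+\pd_y\,Q^\pm(U^+,U^-)+\mathcal{R}^\pm(t,y).
\end{equation*}
Passing to the moving frames $\eta=y\mp 4t$ removes the transport terms and leaves two Burgers equations of exactly the form $\pd_tu=2\pd_\eta^2u\pm 4\pd_\eta(u^2)$ driven by mixed and remainder terms whose supports in the moving frames drift apart at speed $8$.

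The following step is to compare $U^\pm(t,y\pm 4t)$ with the self-similar solutions $u_B^\pm$. The Hopf--Cole transformation linearizes the Burgers equations, and standard heat-kernel estimates give $\|u_B^\pm(t,\cdot)\|_{L^2}=O(t^{-1/4})$, while the nonlinear evolution with small, polynomially localized data converges to the self-similar profile with the mass $m_\pm$ determined by $\int_\R U^\pm(0,y)\,dy=\tfrac14\int_\R(c(0,y)-2)\,dy+O(\eps^2)$; see the well-known analysis of asymptotic self-similarity for viscous conservation laws. The mixed-mode nonlinearity $Q^\pm(U^+,U^-)$ contributes at higher order in $\eps$ and, crucially, its $L^2_y$ norm decays at rate $o(t^{-1/4})$ because the supports of $U^+(\cdot,y+4t)$ and $U^-(\cdot,y-4t)$ separate: the overlap region shrinks like $H_{2t}(y+4t)H_{2t}(y-4t)$, which decays exponentially in $t$.

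The main obstacle will be the quantitative decoupling estimate that yields the $o(\eps t^{-1/4})$ rate. One must (i)~verify that $\mathcal{R}^\pm(t,y)$ inherits sufficient space-time decay from the weighted $L^2$ estimates on $v$ in Theorem~\ref{thm:stability}, in particular that $\|\mathcal{R}^\pm(t,\cdot)\|_{L^1_y}$ is integrable in $t$ so that its contribution to $U^\pm$ is $O(\eps^2)$ uniformly, and (ii)~propagate the $\la y\ra$-weight assumed on $v_0$ in order to justify $\int_\R U^\pm(0,y)\,dy$ being well defined and close to $\tfrac14\int_\R(c(0,y)-2)\,dy$. Once these bounds are in place, I would invoke the stability of the self-similar Burgers profile under $L^1\cap L^2$ perturbations, expressed in similarity variables $\xi=\eta/\sqrt{t}$, $v(\tau,\xi)=\sqrt{t}\,u(t,\eta)$ with $\tau=\log t$, where the linearized operator $\mathcal{L}_\pm=2\pd_\xi^2+\tfrac12\pd_\xi(\xi\,\cdot)\pm 8\pd_\xi(u_B^\pm(1,\xi)\,\cdot)$ has a one-dimensional kernel spanned by $\pd_{m_\pm}u_B^\pm(1,\cdot)$, the remaining spectrum being strictly stable; the choice of $m_\pm$ aligned with the total mass absorbs the kernel direction and leaves decay in $L^2_\xi$, which translates back to the $o(\eps t^{-1/4})$ bound claimed in \eqref{eq:profile}.
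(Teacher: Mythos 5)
Your overall strategy --- diagonalize the modulation system into Riemann invariants $U^\pm$, pass to moving frames, and prove convergence to the self-similar Burgers profiles --- matches the paper's starting point (the paper uses exactly the matrix $\Pi_*(0)=\begin{pmatrix}2&2\\1&-1\end{pmatrix}$ and the conjugation $e^{4t\sigma_3\pd_y}$ to reach \eqref{eq:modeq4}). From there the routes diverge: you propose the similarity-variables/spectral method (rescaled time $\tau=\log t$, linearization about $u_B^\pm$, one-dimensional kernel absorbed by the choice of $m_\pm$), whereas the paper follows Karch's compactness scheme --- uniform bounds on the rescalings $\bd_\lambda(t,y)=\lambda\bd(\lambda^2t,\lambda y)$, Aubin--Lions, identification of the limit's delta-function initial data, a tightness lemma (Lemma~\ref{lem:outer-region}), and uniqueness of the limiting Burgers solution. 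Both are viable for small data, and your route would avoid the extraction of subsequences at the price of working in weighted spaces adapted to the self-similar scaling.

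The genuine gap is that you relegate to a ``verification'' precisely the step that is false in its naive form and constitutes the paper's main new input. Neither the initial datum $b_*$ of the modulation equation nor the source terms are in $L^1(\R_y)$: they are only controlled in $Y\subset L^2$ and $Y_1=\mF^{-1}L^\infty$. So ``$\|\mathcal{R}^\pm(t,\cdot)\|_{L^1_y}$ integrable in $t$'' and ``$\int_\R U^\pm(0,y)\,dy$ well defined'' cannot be established as stated; what is true is the decomposition $b_*=\wP_1\obu{b}+\pd_y^2\oc{b}$ with $\obu{b}\in L^1$, $\oc{b}\in Y_1$ (Lemma~\ref{lem:modeq-init-decomp}), and the corresponding splitting of the nonlinearity \eqref{eq:nonl-classify} into an $L^1$-in-space-time piece $\obu{\cN}_a$ plus exact $y$-derivatives $\pd_y(\cdots)+\pd_y^2(\cdots)$ of merely $Y_1$- or $Y$-bounded quantities; the derivative pieces carry no mass but must still be shown not to spoil the $L^2$ asymptotics, and a further change of variables ($\tbd=\bd-\bar{\bd}$, and the subtraction of $\tk$ built from $v_1$) is needed to put the system in conservative form before the mass $m_\pm$ can even be defined. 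Without this structure your invocation of ``stability of the self-similar profile under $L^1\cap L^2$ perturbations'' has no data to act on. A secondary, smaller issue: your argument that the mixed term $Q^\pm(U^+,U^-)$ is negligible ``because the supports separate'' is circular a priori --- $U^\pm$ are not supported near $y=\mp4t$ until after convergence is known; the correct (and sufficient) bound is the product estimate $\|U^+U^-\|_{L^1}\lesssim\|U^+\|_{L^2}\|U^-\|_{L^2}\lesssim\eps^2t^{-1/2}$, which already gives $o(\eps t^{-1/4})$ through Duhamel, with the exponential separation used only for the limiting profiles $u_B^\pm$.
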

\begin{remark}
 Since \eqref{eq:KPII} is invariant under the scaling
$u\mapsto \lambda^2 u(\lambda^3t,\lambda x,\lambda^2y)$,
we may assume that $c_0=2$ without loss of generality.
\end{remark}
\begin{remark}
  The linearized operator around the line soliton solution has
  resonant continuous eigenvalues near $\lambda=0$ whose corresponding
  eigenmodes grow exponentially as $x\to-\infty$. See
  \eqref{eq:resonance1}--\eqref{eq:resonance3}.  The diffraction of
  the line soliton around $y=\pm4t$ can be thought as a mechanism to
  emit energy from those resonant continuous eigenmodes.
\end{remark}

If we disregard diffractions of waves propagating along
the crest of line solitons, then time evolution of the phase shift
is approximately described by the $1$-dimensional wave equation
$$x_{tt}=8c_0x_{yy}\,.$$
It is natural to expect that $\sup_{t,y\in\R}|x(t,y)-2c_0t|$ remains
small for localized perturbations although the $L^2(\R_y)$ norm of
$x(t,y)-2c_0t$ grows as $t\to\infty$. 

Our main result in the present paper is the following.
\begin{theorem}
  \label{thm:phase}
Let $u(t,x,y)$ and $x(t,y)$ be as in Theorem~\ref{thm:stability}.
There exist positive constants $\eps_0$ and $C$ such that if
$\eps:=\|\la x\ra(\la x\ra+\la y\ra)v_0\|_{H^1(\R^2)}<\eps_0$, then
$\sup_{t\ge0\,,\,y\in\R}|x(t,y)-2c_0t|\le C\eps$.
\par
Moreover, there exists an $h\in \R$ such that for any $\delta>0$,
\begin{equation}
  \label{eq:phase-lim}
  \begin{cases}
\lim_{t\to\infty}\left\|x(t,\cdot)-2c_0t-h
\right\|_{L^\infty(|y|\le (\sqrt{8c_0}-\delta)t}=0\,,
\\    
\lim_{t\to\infty}\left\|x(t,\cdot)-2c_0t
\right\|_{L^\infty(|y|\ge (\sqrt{8c_0}+\delta)t)}=0\,.
  \end{cases}
\end{equation}
\end{theorem}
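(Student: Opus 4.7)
The plan is to split
$$x(t,y)-2c_0 t=\bigl(x(t,0)-2c_0 t\bigr)+\int_0^y x_y(t,y')\,dy'$$
and bound each piece using Theorem~\ref{thm:burgers} together with the explicit logarithmic-derivative representation of the Burgers profiles. The key structural remark is that $F_\pm(t,y):=1+m_\pm\int_0^y H_{2t}(y_1)\,dy_1$ solves the heat equation $\pd_t F_\pm=2\pd_y^2 F_\pm$ and that $u_B^\pm(t,y)=\pm\tfrac12\pd_y\log F_\pm(t,y)$. Consequently every $y$-primitive of $u_B^\pm$ equals an explicit logarithm
$$\int_a^b u_B^\pm(t,y')\,dy'=\pm\tfrac12\log\bigl(F_\pm(t,b)/F_\pm(t,a)\bigr),$$
which is uniformly bounded in $(t,a,b)$ by $\tfrac12\log\tfrac{1+|m_\pm|/2}{1-|m_\pm|/2}=O(|m_\pm|)=O(\eps)$. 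Along its own characteristic, $u_B^\pm(s,\pm 4s)$ equals $H_{2s}$ two standard deviations beyond its center and so is of order $\eps\,s^{-1/2}e^{-2s}$, hence $s$-integrable.

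\textbf{The two pieces.}
Theorem~\ref{thm:burgers} gives
$$x_y(t,y)=u_B^+(t,y+4t)-u_B^-(t,y-4t)+E(t,y),\qquad\|E(t,\cdot)\|_{L^2_y}=o(\eps t^{-1/4}),$$
and the log-derivative bound above immediately integrates the main part to an $O(\eps)$ function of $(t,y)$. For the single-point piece I would use the modulation identity of \cite{Miz17} of the schematic form $\pd_tx(t,0)=2c(t,0)+Q(t,0)$, with $Q$ a quadratic correction in the perturbation and modulation parameters; inserting $c(s,0)-c_0=2u_B^+(s,4s)+2u_B^-(s,-4s)+o(\eps s^{-1/4})$ and the bound on $Q$ extracted from the proof of Theorem~\ref{thm:burgers} yields an absolutely convergent time integral, hence $|x(t,0)-2c_0 t-x(0,0)|\le C\eps$ uniformly in $t$.

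\textbf{Main analytic obstacle.}
The crux is upgrading $\|E(t,\cdot)\|_{L^2_y}=o(\eps t^{-1/4})$ into a uniform bound on the primitive $\int_0^y E(t,y')\,dy'$. Raw Cauchy--Schwarz yields only $|y|^{1/2}\|E\|_{L^2}=o(\eps t^{1/4})$ on the wave-cone scale $|y|\lesssim t$, which is useless. My plan is to return to the coupled modulation-and-perturbation system of \cite{Miz15,Miz17}, derive the evolution equation directly for the antiderivative $\tilde E(t,y):=\int_0^y E(t,y')\,dy'$, and run a weighted energy estimate using a slowly varying weight concentrated near the two characteristics $y=\pm 4t$ (for instance a convex combination of Gaussians in $(y\mp 4t)/\sqrt t$). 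The dissipation coming from the Burgers part of the system, combined with the $\la y\ra$-polynomial localization of the perturbation furnished by the hypothesis $\|\la x\ra(\la x\ra+\la y\ra)v_0\|_{H^1}<\eps_0$ (the additional ingredient compared to Theorem~\ref{thm:stability}), should then yield the required uniform $L^\infty_y$ bound on $\tilde E$.

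\textbf{Asymptotic limits.}
With the uniform bound established, the limits in \eqref{eq:phase-lim} follow from the explicit Cole--Hopf formula by dominated convergence. The main contribution to the $y$-primitive is
$$\tfrac12\log\frac{F_+(t,y+4t)}{F_+(t,4t)}+\tfrac12\log\frac{F_-(t,y-4t)}{F_-(t,-4t)}.$$
On $|y|\le(4-\delta)t$ all four arguments lie far outside the $O(\sqrt t)$-wide concentration zone of $H_{2t}$ and on the same side of $0$ within each $F_\pm$ factor, so both ratios tend to $1$, the primitive tends to $0$, and $x(t,y)-2c_0 t$ converges to $h:=\lim_{t\to\infty}(x(t,0)-2c_0 t)$ uniformly. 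On $|y|\ge(4+\delta)t$ one ratio tends to a nontrivial constant; the relation $m_+=-m_-$ forced by the common value $\int_\R u_B^+=\int_\R u_B^-$ built into the definition of $m_\pm$ in Theorem~\ref{thm:burgers} makes this contribution cancel $h$ exactly, yielding the second limit.
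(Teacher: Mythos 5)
Your reduction of the theorem to Theorem~\ref{thm:burgers} plus a $y$-integration of the Burgers profiles handles only the explicitly known part of $x_y$, and you have correctly located, but not closed, the actual difficulty: passing from $\|E(t,\cdot)\|_{L^2_y}=o(\eps t^{-1/4})$ to a bound on $\sup_y|\int_0^y E(t,y')\,dy'|$. This is not a technical refinement to be supplied by ``a weighted energy estimate for the antiderivative''; it is the entire content of the theorem, and the sketch you give for it (a slowly varying Gaussian weight near $y=\pm4t$ and ``dissipation from the Burgers part'') is not an argument --- in particular it does not explain how the hypothesis $\la y\ra v_0\in L^2$ is converted into $L^1_y$-control of the data and forcing of the modulation system, which is the only mechanism that can produce a uniform-in-$y$ primitive. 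The same gap appears in your treatment of $x(t,0)-2c_0t$: the identity $x_t=2c+3(x_y)^2+\dots$ has corrections controlled only in $L^2_y$ at rates like $\la t\ra^{-1/2}$ or $\la t\ra^{-3/4}$, which are not absolutely integrable in time pointwise in $y$, so ``inserting the Burgers asymptotics at $y=0$ and integrating in $s$'' is not justified. Finally, your identification of the limit $h$ purely from the Cole--Hopf logarithms (and the claimed exact cancellation in the outer region via $m_+=-m_-$) is incorrect: $h$ contains a genuinely nonlinear contribution $h_a=\frac12\int_0^\infty\int_\R\gamma\,\mathbf{e_1}\cdot\obu{\cN}_a\,dy\,ds$ coming from the $L^1$ part of the forcing, not visible in the Burgers approximation.

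What the paper does instead is to work directly on the linearized modulation system: it proves $L^1\to L^\infty$ bounds for $e^{t\mathcal{A}_*}$ (Lemma~\ref{lem:fund-sol}, especially \eqref{eq:k3-est} and \eqref{eq:fund-asymp1}, the substitutes for d'Alembert's formula), decomposes the initial datum as $b_*=\wP_1\obu{b}+\pd_y^2\oc{b}$ with $\obu{b}\in L^1$ (Lemma~\ref{lem:modeq-init-decomp}), performs a change of variables to cancel the non-decaying $k_t$ contribution of $v_1$, and then decomposes every forcing term as in \eqref{eq:nonl-classify} into a piece that is integrable in $(t,y)$ plus $\pd_y$ and $\pd_y^2$ of pieces with decaying $Y_1$- or $Y$-norms, so that Duhamel's formula \eqref{eq:formula-bb} closes in $L^\infty_y$; the limits \eqref{eq:phase-lim} then follow from Lemma~\ref{cl:phase-lim}. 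Theorem~\ref{thm:burgers} enters only to control the quadratic term $\cN^1$ in $L^1_y$ (a product of two $L^2$ functions), not to reconstruct $x$ by integrating $x_y$. As it stands your proposal omits all of this machinery, so the proof is incomplete at its central step.
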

In the case where $h\ne0$ in \eqref{eq:phase-lim}, 
the $L^2(\R^2)$-distance between the solution $u$ and the set of exact
$1$-line solitons grows like $t^{1/2}$ or faster.
\begin{corollary}
  \label{cor:instability2}
Let $c_0>0$. Then for any $\eps>0$, 
there exists a solution of \eqref{eq:KPII} such that
$\left\|\la x\ra(\la x\ra+\la y\ra)\left\{u(0,x,y)-\varphi_{c_0}(x)\right\}
\right\|_{H^1(\R^2)}<\eps$ and
$\liminf_{t\to\infty}t^{-1/2}\inf_{v\in\mathcal{A}}
\|u(t,\cdot)-v\|_{L^2(\R^2)}>0$.  
\end{corollary}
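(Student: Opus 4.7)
The plan is to produce, for arbitrary $\eps>0$, initial data within the required norm bound for which the phase shift $h$ of Theorem~\ref{thm:phase} is nonzero, and then to convert the corresponding jump in the asymptotic profile of $x(t,y)$ into an $L^2(\R^2)$-lower bound of order $\sqrt t$ on the distance from $u(t)$ to any $v\in\mathcal{A}$. By the scaling invariance noted after Theorem~\ref{thm:burgers}, we may assume $c_0=2$.

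The first task is to extract an explicit formula for $h$. Integrating the asymptotic identity $x_y(t,\cdot)\approx u_B^+(t,\cdot+4t)-u_B^-(t,\cdot-4t)$ from $-\infty$ to any $y$ in the inner cone $|y|\le(4-\delta)t$, and using that the Gaussian factor in $H_{2t}$ makes $u_B^\pm$ concentrate in a window of width $O(\sqrt t)$ around $y=\mp4t$, yields
$$x(t,y)-4t=\int_{\R}u_B^+(t,y')\,dy'+o(1)=\tfrac12\log\frac{2+m_+}{2-m_+}+o(1)\quad\text{as }t\to\infty,$$
via the telescoping identity $u_B^+=\tfrac12(\log(1+m_+G))'$ with $G(y)=\int_0^yH_{2t}(y_1)\,dy_1$. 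Hence $h=\tfrac12\log\frac{2+m_+}{2-m_+}$, which is nonzero iff $m_+\ne0$. By Theorem~\ref{thm:burgers}, $m_+$ is a nonzero leading-order multiple of $\int_\R(c(0,y)-2)\,dy$, which in turn depends linearly (modulo $O(\eps^2)$) on the projection of $v_0$ onto $\pd_c\varphi_c|_{c=2}$. Choosing $v_0(x,y)=\eta\,\chi(y)\pd_c\varphi_c|_{c=2}(x)$ with $\chi\in C_c^\infty(\R)$ of unit mass and $\eta\in(0,\eps)$ sufficiently small gives data satisfying the weighted norm bound of Corollary~\ref{cor:instability2} and producing $h\ne0$.

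For the lower bound, fix any $v=\varphi_c(x+ky-(2c+3k^2)t+\gamma)\in\mathcal{A}$. Unless $k=0$, $c=c_0$, and $\gamma=0$, the phase or amplitude of $v$ fails to match that of $u(t)$ at $|y|\to\infty$ (since $c(t,y)\to c_0$ and $x(t,y)-2c_0t\to0$ uniformly on the outer cone by Theorems~\ref{thm:stability0} and~\ref{thm:phase}), so the integrand $\|\varphi_{c(t,y)}(\cdot-x(t,y))-\varphi_c(\cdot-X(t,y))\|_{L^2(\R_x)}^2$ stays bounded below by a positive constant on the unbounded outer region, forcing $\|u(t)-v\|_{L^2(\R^2)}=\infty$. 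So it suffices to consider $v=\varphi_{c_0}(x-2c_0t)$. Theorem~\ref{thm:stability} gives $\|u(t)-\varphi_{c(t,y)}(x-x(t,y))\|_{L^2}=O(\eps)$; combined with the uniform convergence $x(t,y)\to2c_0t+h$, $c(t,y)\to c_0$ on the inner cone, one obtains
$$\|u(t)-v\|_{L^2(\R^2)}^2\ge\int_{|y|\le(\sqrt{8c_0}-\delta)t}\bigl\|\varphi_{c_0}(\cdot-h)-\varphi_{c_0}\bigr\|_{L^2(\R_x)}^2\,dy-o(t).$$
For $h\ne0$ the integrand is a positive constant $\kappa(h)$, and the region has length $2(\sqrt{8c_0}-\delta)t$, so $\|u(t)-v\|_{L^2(\R^2)}\ge\sqrt{2(\sqrt{8c_0}-\delta)\kappa(h)}\,\sqrt t(1+o(1))$; taking $\liminf_{t\to\infty}t^{-1/2}\inf_v$ yields the corollary.

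The principal technical obstacle is the first step: upgrading the $L^2(\R_y)$ error bound $o(\eps t^{-1/4})$ of Theorem~\ref{thm:burgers} to a pointwise-in-$y$ asymptotic for $x(t,y)-2c_0t$ uniform on the inner cone, whose length grows like $t$. One must argue that $\int_{-\infty}^yr(t,y')\,dy'$ is uniformly small for $y$ in that cone, where $r=x_y-(u_B^+(\cdot+4t)-u_B^-(\cdot-4t))$; this likely requires combining the higher-Sobolev bounds in \eqref{phase-sup'} with the Gaussian decay of $u_B^\pm$ outside their windows around $y=\mp4t$ to obtain weighted $L^1$- or $L^\infty$-in-$y$ control on $r$.
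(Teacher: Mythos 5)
Your second half---converting $h\ne 0$ into the $\sqrt t$ lower bound by splitting $y$ into the inner cone, where the crest of $u$ sits near $2c_0t+h$, and the outer region, where it sits near $2c_0t$, and observing that no single affine crest can match both---is essentially the argument the paper leaves implicit after \eqref{eq:phase-lb}, and is sound. The problem is the first half. You derive $h=\tfrac12\log\frac{2+m_+}{2-m_+}$ by integrating $x_y\approx u_B^+(t,\cdot+4t)-u_B^-(t,\cdot-4t)$ over the half-line $(-\infty,y]$, but the only control available on the remainder $r=x_y-(u_B^+(t,\cdot+4t)-u_B^-(t,\cdot-4t))$ is the $L^2(\R_y)$ bound $o(\eps t^{-1/4})$ of \eqref{eq:profile}; integrating that over a region of length $O(t)$ costs a factor $t^{1/2}$ and yields only $o(t^{1/4})$, which is useless. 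You acknowledge this obstacle, but the fix you suggest---the bounds \eqref{phase-sup'}---cannot work: those are merely uniform in $t$ (they do not decay), and no amount of $y$-regularity converts an $L^2_y$ bound into the $L^1_y$ control of $r$ that your integration requires. Since the nonvanishing of $h$ is the entire content of the corollary beyond Theorem~\ref{thm:phase}, this is a genuine gap, not a technicality.

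The paper avoids the identification of $h$ with the Burgers mass altogether. It takes initial data that is an exact modulated line soliton, $u(0)=\varphi_{2+\tc_*(y)}(x)-\psi_{2+\tc_*(y),L}(x)$ with $\tc_*=\eps\,\mF^{-1}\zeta$, so that $v_*=v_{2,*}=0$ and $\tx(0)\equiv0$, and then reads a lower bound for $h$ directly off the ingredients of the proof of Theorem~\ref{thm:phase}: the homogeneous part of \eqref{eq:formula-bb} contributes $\tfrac12H_{2t}*W_{4t}*b_1(0)$, which tends to $\tfrac14\int_\R b_1(0,y)\,dy\gtrsim\eps$ on the inner cone by \eqref{eq:phase-hom}, while the accumulated nonlinear contribution $h_a$ satisfies $|h_a|\lesssim\eps e^{-\a L}+\eps^2$ by \eqref{eq:phase-Na4}; hence $h\gtrsim\eps>0$. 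To repair your argument you would either have to reproduce this $L^\infty$-level analysis (Lemma~\ref{lem:fund-sol}, Lemma~\ref{cl:phase-lim}, and the classification of the nonlinear terms in Section~\ref{sec:phase}) or find another route to $h\ne0$; the $L^2$ asymptotics of Theorem~\ref{thm:burgers} are in too weak a topology to determine $h$.
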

To investigate the large time behavior of $x(t,y)$, we derive estimates of
fundamental solutions to the linearized equation of modulation equations
for parameters $c(t,y)$ and $x(t,y)$ which is a $1$-dimensional
damped wave equation (see Section~\ref{subsec:dampedW}).
As is the same with the $1$-dimensional wave equation,
we need integrability of the initial data of the modulation equation
to prove the boundedness of the phase shift.

In our construction of modulation parameters, we impose a secular term
condition on $c(t,y)$ and $x(t,y)$ only for $y$-frequencies in a small
interval $[-\eta_0,\eta_0]$. This facilitates the estimates of
modulation parameters because the truncation of Fourier modes turns
the modulation equations into semilinear equations.  On the other
hand, it was not clear in \cite{Miz15} whether the initial data of
modulation equations are integrable even if perturbations to line
solitons are exponentially localized.
We find that $c(0,y)$ can be decomposed into a sum of an integrable function
and a derivative of a function that belongs to $\mF^{-1}L^\infty(\R)$
for polynomially localized perturbations in $\R^2$.

The decomposition of initial data also enables us to prove
Theorem~\ref{thm:burgers} which shows the large time asymptotic
of the local amplitude and the local orientation of line solitons
in $L^2(\R)$ whereas the result in \cite{Miz15} shows
large time asymptotics in a region $y=\pm\sqrt{8c_0}t+O(\sqrt{t})$.
\par
In \cite{MizShim}, we study the $2$-dimensional linearized Benney-Luke
equation around line solitary waves in the weak surface tension case
and find that the time evolution of resonant continuous eigenmondes is
similar to \eqref{eq:phase-lim}. We except our argument presented in
this paper is useful to investigate phase shifts of modulating line
solitary waves for the $2$-dimensional Benney-Luke equation and the
other long wave models for $3$D water.
\par

Finally, let us introduce several notations. 
Let $\mathbf{1}_A$ be the characteristic function of the set $A$.
For Banach spaces $V$ and $W$, let $B(V,W)$ be the space of all the
linear continuous operators from $V$ to $W$ and
$\|T\|_{B(V,W)}=\sup_{\|x\|_V=1}\|Tu\|_W$ for $T\in B(V,W)$.
We abbreviate $B(V,V)$ as $B(V)$.
For $f\in \mathcal{S}(\R^n)$ and $m\in \mathcal{S}'(\R^n)$, let 
\begin{gather*}
(\mathcal{F}f)(\xi)=\hat{f}(\xi)
=(2\pi)^{-n/2}\int_{\R^n}f(x)e^{-ix\xi}\,dx\,,\\
(\mathcal{F}^{-1}f)(x)=\check{f}(x)=\hat{f}(-x)\,,
\end{gather*}
and $(m(D)f)(x)=(2\pi)^{-n/2}(\check{m}*f)(x)$.
\par
The symbol $\la x\ra$ denotes $\sqrt{1+x^2}$ for $x\in\R$.
We use $a\lesssim b$ and $a=O(b)$ to mean that there exists a
positive constant such that $a\le Cb$. 
Various constants will be simply denoted
by $C$ and $C_i$ ($i\in\mathbb{N}$) in the course of the
calculations. 
\bigskip

\section{Preliminaries}
\label{sec:preliminaries}

\subsection{Semigroup estimates for the linearized KP-II equation}
\label{subsec:LKP}
First, we recall decay estimates of the semigroup generated by the linearized operator
around a $1$-line soliton in exponentially weighted spaces.
\par
Let $$\varphi=\varphi_2\,,\quad
\mL=-\pd_x^3+4\pd_x-3\pd_x^{-1}\pd_y^2-6\pd_x(\varphi \cdot)\,.$$
We remark that $\mL$ generates a $C^0$-semigroup on $X:=L^2(\R^2;e^{2\a x}dxdy)$
for any $\a>0$.
\par
Let $\mL(\eta)=-\pd_x^3+4\pd_x+3\eta^2\pd_x^{-1}-6\pd_x(\varphi \cdot)$
be an operator on $L^2(\R;e^{2\a x}dx)$ with its domain $D(\mL(\eta))=e^{-\a x}H^3(\R)$.
Obviously, we have $\mL(u(x)e^{iy\eta})=e^{iy\eta}\mL(\eta)u(x)$
for any $\eta\in\R$.
If $\eta\simeq0$, then $\mL(\eta)$ has two isolated eigenvalues near $0$ 
and the rest of the spectrum is bounded away from 
the imaginary axis and lies in the stable half plane
(see \cite[Chapter~2]{Miz15}).
We remark that that $\mL(0)$ is the linearized KdV operator around $\varphi$
which has an isolated $0$ eigenvalue of multiplicity $2$
in $L^2(\R;e^{2\a x}dx)$ with $\a\in(0,2)$ (see \cite{PW}).
\par
Let 
\begin{gather}
\label{eq:resonance1}
\beta(\eta)=\sqrt{1+i\eta}\,,\quad \lambda(\eta)=4i\eta\beta(\eta)\,,
\\ \label{eq:resonance2}
g(x,\eta)=\frac{-i}{2\eta\beta(\eta)}
\pd_x^2(e^{-\beta(\eta)x}\sech x),\quad
g^*(x,\eta)=\pd_x(e^{\beta(-\eta)x}\sech x)\,.
\end{gather}
Then
\begin{equation}
  \label{eq:resonance3}
\mL(\eta)g(x,\pm\eta)=\lambda(\pm\eta)g(x,\pm\eta)\,,\quad
\mL(\eta)^*g^*(x,\pm\eta)=\lambda(\mp\eta)g^*(x,\pm\eta)\,.
\end{equation}
The continuous eigenvalues $\lambda(\eta)$ belongs to the stable half plane
$\{\lambda\in \C\mid \Re\lambda<0\}$ for $\eta\in\R\setminus\{0\}$ and
$\lambda(\eta)\to\lambda(0)=0$ as $\eta\to0$.
\par
Let $\nu(\eta):=\Re\beta(\eta)-1$ and $\eta_0$ be a small positive number. 
Since $g(x,\eta)=O(e^{\nu(\eta)|x|})$ as $x\to-\infty$
and $\nu(\eta)=O(\eta^2)$ for small $\eta$, we choose $\a$ and so that
$\a\ge \nu(\eta)$ and $g(x,\eta)\in L^2(\R;e^{2\a x}dx)$ for $\eta\in[-\eta_0,\eta_0]$. The continuous eigenmodes $g(x,\eta)e^{iy\eta}$ grow
exponentially as $x\to-\infty$. Nevertheless, they have to do with modulation of line solitons.
See \cite{Burtsev} and the references therein.
\par
The spectral projection to the continuous eigenmodes
$\{g_\pm(x,\eta)\}_{-\eta_0\le \eta\le \eta_0}$ is given by
\begin{gather*}
P_0(\eta_0)f(x,y)=\frac{1}{\sqrt{2\pi}}\sum_{k=1,\,2}
\int_{-\eta_0}^{\eta_0}a_k(\eta)g_k(x,\eta)e^{i y\eta}\,d\eta\,,
\\
 a_k(\eta)=\int_\R (\mF_yf)(x,\eta)g_k^*(x,\eta)\,d x\,,
\end{gather*}
where
\begin{gather*}
g_1(x,\eta)=2\Re g(x,\eta)\,,\quad g_2(x,\eta)=-2\eta\Im g(x,\eta)\,,\\
g_1^*(x,\eta)=\Re g^*(x,\eta)\,,\quad g_2^*(x,\eta)=-\eta^{-1}\Im g^*(x,\eta)\,.
\end{gather*}
We remark that for an $\a\in(0,2),$
\begin{align*}
& g_1(x,\eta)=\frac14\varphi'+\frac{x}{4}\varphi'+\frac{1}{2}\varphi
+O(\eta^2)\,,\quad
g_2(x,\eta)=-\frac{1}{2}\varphi'+O(\eta^2) 
\quad\text{in $L^2(\R;e^{2\a x}dx)$,}\\
& g_1^*(x,\eta)=\frac12\varphi+O(\eta^2)\,,\quad
g_2^*(x,\eta)=\int_{-\infty}^x\pd_c\varphi dx+O(\eta^2)
\quad\text{in $L^2(\R;e^{-2\a x}dx)$,}
\end{align*}
where $\pd_c\varphi=\pd_c\varphi_c|_{c=2}$.
See \cite[Chapter~3]{Miz15}.
\par
For $\eta_0$ and $M$ satisfying $0<\eta_0\le M\le \infty$, let
\begin{gather*}
P_1(\eta_0, M)u(x,y):=\frac{1}{2\pi}\int_{\eta_0\le |\eta|\le M}
\int_\R  u(x,y_1)e^{i\eta(y-y_1)}\,d y_1d\eta\,,
\\ P_2(\eta_0,M):= P_1(0,M)-P_0(\eta_0)\,.
\end{gather*}
The semigroup $e^{t\mL}$ is exponentially stable on $(I-P_0(\eta_0))X$.
\begin{proposition}\emph{(\cite[proposition~3.2 and Corollary~3.3]{Miz15})}
\label{prop:semigroup-est}
Let $\a\in (0,2)$ and $\eta_1$ be a positive number satisfying
$\nu(\eta_1)<\a$.  Then there exist positive constants
$K$ and  $b$ such that for any $\eta_0\in(0,\eta_1]$, $M\ge \eta_0$,
$f\in X$ and $t\ge 0$,
$$ \|e^{t\mL}P_2(\eta_0,M)f\|_X\le Ke^{-bt}\|f\|_X\,.$$
Moreover, there exist positive constants $K'$ and $b'$ such that for $t>0$,
\begin{gather*}
\|e^{t\mL}P_2(\eta_0,M)\pd_xf\|_X \le
K'e^{-b' t}t^{-1/2}\|e^{\a x}f\|_X\,,\\
\|e^{t\mL}P_2(\eta_0,M)\pd_xf\|_X \le
K'e^{-b' t}t^{-3/4}\|e^{\a x}f\|_{L^1_xL^2_y}\,.  
\end{gather*}
\end{proposition}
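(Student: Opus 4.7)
The plan is to Fourier-transform in $y$ and reduce the problem to the family of one-dimensional operators $\mL(\eta)$ on the weighted line $L^2(\R;e^{2\a x}dx)$, for which $e^{t\mL}$ acts fibrewise as $e^{t\mL(\eta)}$. By Plancherel it is enough to prove the analogous fibre estimates with constants uniform in $\eta\in[-M,M]$, paying particular attention to $|\eta|\le\eta_0$ where the resonant modes $g(\cdot,\pm\eta)$ must first be subtracted by $P_0(\eta_0)$.

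The first step is a uniform spectral gap. Conjugating by the weight, set $\widetilde{\mL}(\eta)=e^{\a x}\mL(\eta)e^{-\a x}$ on $L^2(\R)$; its principal Fourier symbol is
\[
p(\xi,\eta)=-(i\xi-\a)^3+4(i\xi-\a)+\frac{3\eta^2}{i\xi-\a},
\]
whose real part equals $-3\a\xi^2+\a^3-4\a-3\a\eta^2/(\xi^2+\a^2)$. For $\a\in(0,2)$ this is bounded above by $-(4\a-\a^3)<0$ uniformly in $(\xi,\eta)$, so the essential spectrum of $\widetilde{\mL}(\eta)$ sits strictly in the stable half-plane. The relatively compact perturbation $-6(\pd_x-\a)(\varphi\,\cdot)$ contributes only discrete eigenvalues, and near $\eta=0$ these are precisely $\lambda(\pm\eta)$ by an Evans-function/perturbation analysis as in \cite[Ch.~2]{Miz15}. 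Since $P_2(\eta_0,M)$ retains only frequencies $|\eta|\le M$ and excises exactly the resonant eigenspace on $|\eta|\le\eta_0$, a Fredholm argument yields the uniform resolvent bound $\|(\mL(\eta)-\lambda)^{-1}(I-P_0(\eta_0))\|_{B(L^2(e^{2\a x}dx))}\le C$ on $\{\Re\lambda\ge -b\}$, and a Dunford contour integral shifted to $\Re\lambda=-b$ delivers the first estimate $\|e^{t\mL}P_2(\eta_0,M)f\|_X\le Ke^{-bt}\|f\|_X$.

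For the two smoothing estimates, the decisive observation is that after the weight-conjugation the leading part of $\widetilde{\mL}(\eta)$ is $-\pd_x^3+3\a\pd_x^2+\cdots$, so it acquires genuine parabolic dissipation. Because $|i\xi\,e^{tp(\xi,\eta)}|\le|\xi|e^{-3\a t\xi^2}e^{-(4\a-\a^3)t}$, taking the supremum in $\xi$ yields an $L^2_x\to L^2_x$ multiplier bound of size $t^{-1/2}e^{-b't}$, while Plancherel applied to the kernel gives an $L^1_x\to L^2_x$ bound of size $t^{-3/4}e^{-b't}$; these match the two rates in the proposition exactly. The soliton-potential term $-6(\pd_x-\a)(\varphi\,\cdot)$ has exponentially localised coefficient, so a short-time Duhamel iteration transfers the free bounds to $\widetilde{\mL}(\eta)$, and the long-time regime is absorbed into the exponential decay proved in the previous paragraph.

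The principal obstacle is the uniformity of the resolvent and smoothing bounds as $\eta\to 0$, where $\lambda(\pm\eta)$ collide with the origin and the projection onto the resonant eigenspace has to be extracted sharply. This is where the explicit formulas \eqref{eq:resonance1}--\eqref{eq:resonance3} are essential: both $g(\cdot,\eta)$ and the dual $g^*(\cdot,\eta)$ are analytic in $\eta$ and lie in $L^2(\R;e^{2\a x}dx)$ and $L^2(\R;e^{-2\a x}dx)$ respectively, thanks to the choice $\a\ge\nu(\eta)$, and their biorthogonality makes $P_0(\eta_0)$ a bounded analytic projection on $X$. Consequently the complementary semigroup $e^{t\mL(\eta)}(I-P_0(\eta_0))$ inherits a spectral gap of uniform size $b$, and this $\eta$-uniformity is precisely what Plancherel needs to close the global $X$-estimate.
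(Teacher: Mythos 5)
The paper never proves this proposition --- it is quoted verbatim from \cite[Proposition~3.2 and Corollary~3.3]{Miz15} --- so the comparison is with the argument in that reference. Your strategy (Fourier fibering in $y$, conjugation by $e^{\a x}$, a uniform spectral gap for $\mL(\eta)$ off the resonant modes, a Dunford integral for the decay, and free-semigroup smoothing transferred by Duhamel) is exactly the route taken there, and your symbol computation $\Re p=-3\a\xi^2+\a^3-4\a-3\a\eta^2/(\xi^2+\a^2)\le-\a(4-\a^2)$ correctly locates the essential spectrum and correctly produces the $t^{-1/2}$ ($L^2_x\to L^2_x$) and $t^{-3/4}$ ($L^1_x\to L^2_x$, via Young and then Minkowski to reach $L^1_xL^2_y$) rates for the constant-coefficient part.

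Two steps are materially underdeveloped. First, the symbol bound controls only the essential spectrum; to get a resolvent bound on $\{\Re\lambda\ge-b\}$ that is uniform over \emph{all} $\eta$ with $\eta_0\le|\eta|\le M$ (and $M$ may be $\infty$), you must rule out unstable or neutral point spectrum for every such $\eta$, not merely identify $\lambda(\pm\eta)$ near $\eta=0$. That is the transverse spectral stability statement (the Evans-function analysis of \cite[Chapter~2]{Miz15}, cf. \cite{APS,Burtsev}) and it is the real content of the proposition; your "Fredholm argument" presupposes it. You also need a separate large-$|\eta|$ argument when $M=\infty$, since compactness in $\eta$ only gives uniformity on bounded sets. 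Second, the Duhamel transfer of the smoothing bounds is not routine: the perturbation $-6(\pd_x-\a)(\varphi\,\cdot)$ itself loses a derivative, so the naive iteration $\int_0^t e^{(t-s)\widetilde{\mL}_0}V e^{s\widetilde{\mL}_0}\pd_x\,ds$ produces a non-integrable $(t-s)^{-1/2}s^{-1}$ singularity. One has to redistribute the derivatives onto the free kernels on both sides of $\varphi$ (using the exponential localization of $\varphi$ and the $t^{-1/2}$ gain of $\pd_xe^{t\widetilde{\mL}_0}$ at each end), or argue by duality; as written, "a short-time Duhamel iteration transfers the free bounds" does not close.
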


\subsection{Decay estimates for linearized modulation equations}
\label{subsec:dampedW}
Time evolution of parameters $c(t,y)$ and $x(t,y)$
of a modulating line soliton $\varphi_{c(t,y)}(x-x(t,y))$ is described
by a system of Burgers type equations.
In this subsection, we introduce linear estimates which will be used to prove
boundedness of the phase shift $x(t,y)-2c_0t$. The estimates are a substitute
of d'Alembert's formula for the $1$-dimensional wave equation.
\par

Let $\omega(\eta)=\sqrt{16+(8\mu_3-1)\eta^2}$,
$\mu_3=-\frac{\mu_1}{2}+\frac{3}{4}=\frac{1}{2}+\frac{\pi^2}{24}>1/8$,
$\lambda_*^\pm(\eta)=-2\eta^2\pm i\eta\omega(\eta)$ and
$$\mathcal{A}_*(\eta)=
\begin{pmatrix} -3\eta^2 & -8\eta^2 \\ 2+\mu_3\eta^2 & -\eta^2\end{pmatrix}\,,
\quad \mathcal{P}_*(\eta)=\frac{1}{4\eta}
\begin{pmatrix}
  8\eta & 8\eta \\ -\eta-i\omega(\eta) & -\eta+i\omega(\eta)
\end{pmatrix}\,.$$
Then $\mathcal{P}_*(\eta)^{-1}\mathcal{A}_*(\eta)\mathcal{P}_*(\eta)
=\diag(\lambda_*^+(\eta),\lambda_*^-(\eta))$ and
\begin{equation}
  \label{eq:etA}
\begin{split}
e^{t\mathcal{A}_*(\eta)}
=& e^{-2t\eta^2}
\begin{pmatrix}
  \cos t\eta\omega(\eta)-\frac{\eta}{\omega(\eta)}\sin t\eta\omega(\eta)
& -\frac{8\eta}{\omega(\eta)}\sin t\eta\omega(\eta) 
\\ \frac{\eta^2+\omega(\eta)^2}{8\eta\omega(\eta)}\sin t\eta\omega(\eta)
&   \cos t\eta\omega(\eta)+\frac{\eta}{\omega(\eta)}\sin t\eta\omega(\eta)
\end{pmatrix}\,.
\end{split}
\end{equation}
Let $\eta_0$ be a positive number and let
$\chi_1(\eta)$ be a nonnegative smooth function such that
$0\le \chi_1(\eta)\le 1$ for $\eta\in\R$,
$\chi_1(\eta)=1$ if $|\eta|\le \frac12\eta_0$ and $\chi_1(\eta)=0$ if
$|\eta|\ge \frac{3}{4}\eta_0$.
Let $\chi_2(\eta)=1-\chi_1(\eta)$.
Then 
\begin{equation}
  \label{eq:k-est,high}
\|\chi_2(D_y)e^{t\mathcal{A}_*(D_y)}\|_{B(L^2(\R))}\lesssim e^{-\eta_0^2t/2}
\quad\text{for $t\ge0$.}
\end{equation}
\par
Next, we will estimate the low frequency part of $e^{t\mathcal{A}_*(\eta)}$.
Let  
\begin{align*}
& K_1(t,y)=\frac{1}{\sqrt{2\pi}}\mathcal{F}^{-1}
\left(\chi_1(\eta)e^{-2t\eta^2}\cos t\eta\omega(\eta)\right)\,,
\\ &
K_2(t,y)=\frac{1}{\sqrt{2\pi}}\mathcal{F}^{-1} \left(
e^{-2t\eta^2}\frac{\eta\chi_1(\eta)}{\omega(\eta)}
\sin t\eta\omega(\eta)\right)\,,
\\ &
K_3(t,y)=\frac{1}{\sqrt{2\pi}}\mathcal{F}^{-1}\left(
e^{-2t\eta^2}\frac{\chi_1(\eta)\omega(\eta)}{\eta}
\sin t\eta\omega(\eta)\right)\,.  
\end{align*}
Then
\begin{equation}
  \label{eq:fundamental-low}
\chi_1(D_y)e^{t\mathcal{A}_*(D_y)}\delta=\begin{pmatrix}
 K_1(t,y)-K_2(t,y) & -8K_2(t,y)
\\ \frac{1}{8}(K_2(t,y)+K_3(t,y))
&  K_1(t,y)+K_2(t,y)
\end{pmatrix}\,.
\end{equation}
We have the following estimates for $K_1$, $K_2$ and $K_3$.
\begin{lemma} Let $j\in\Z_{\ge0}$. Then
  \label{lem:fundamental-sol}
  \begin{gather}
    \label{eq:k1-est}
\sup_{t>0}\|K_1(t,\cdot)\|_{L^1(\R)}<\infty\,,\quad \|K_1(t,\cdot)\|_{L^2(\R)}\lesssim \la t \ra^{-1/4}\,,
\\  \label{eq:k2-est1}
\|\pd_y^{j+1}K_1(t,\cdot)\|_{L^1(\R)} +\|\pd_y^jK_2(t,\cdot)\|_{L^1(\R)}+
\|\pd_y^{j+2}K_3(t,\cdot)\|_{L^1(\R)} 
\lesssim \la t \ra^{-(j+1)/2}\,,
\\
\label{eq:k2-est2}
\quad \|\pd_y^{j+1}K_1(t,\cdot)\|_{L^2(\R)}+\|\pd_y^jK_2(t,\cdot)\|_{L^2(\R)}
+\|\pd_y^{j+2}K_3(t,\cdot)\|_{L^2(\R)}\lesssim \la t \ra^{-(2j+3)/4}\,,
\\
\label{eq:k3-est'}
\sup_{t>0}\|\pd_yK_3(t,\cdot)\|_{L^1(\R)}<\infty\,,
\quad \|\pd_yK_3(t,\cdot)\|_{L^2(\R)}\lesssim \la t \ra^{-1/4}\,,
\\  \label{eq:k3-est}
\sup_{t>0}\|K_3(t,\cdot)*f\|_{L^\infty(\R)}\lesssim \|f\|_{L^1(\R)}\,.
\end{gather}
\end{lemma}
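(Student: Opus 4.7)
The plan is to view each of $K_1$, $K_2$, $K_3$ as the inverse Fourier transform in $\eta$ of a symbol comprising the cutoff $\chi_1(\eta)$, the heat damping $e^{-2t\eta^2}$, and a wave-type oscillatory factor with phase $\pm t\eta\omega(\eta)$. Writing $\omega(\eta)=4+r(\eta)$ with $r(\eta):=(8\mu_3-1)\eta^2/(\omega(\eta)+4)=O(\eta^2)$, each kernel is morally a heat-smoothed translate (by $\pm 4t$) of a classical $1$D wave fundamental solution of speed $4$; this structural picture dictates all of the stated decay rates.

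\emph{$L^2$ bounds.} Plancherel reduces \eqref{eq:k1-est}, \eqref{eq:k2-est2}, and the $L^2$ half of \eqref{eq:k3-est'} to $L^2(\R_\eta)$-bounds on the symbols. Using $|\cos|\le 1$, $\omega(\eta)\ge 4$, and $|\sin(t\eta\omega(\eta))/(t\eta\omega(\eta))|\le 1$---the last absorbing the apparent singularity $\omega(\eta)/\eta$ in the $K_3$ symbol against one power of $\eta$---every symbol in sight is pointwise bounded by $C|\eta|^{j+1}e^{-2t\eta^2}$; its $L^2$-norm is $\lesssim \la t\ra^{-(2j+3)/4}$ for $t\ge 1$, with the $t\le 1$ estimate coming from $\chi_1\in L^2(\R)$.

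\emph{$L^1$ bounds.} For the $L^1$ parts of \eqref{eq:k1-est}, \eqref{eq:k2-est1}, and \eqref{eq:k3-est'} I would extract the leading translation using $e^{\pm it\eta\omega(\eta)}=e^{\pm 4it\eta}e^{\pm it\eta r(\eta)}$: the first factor is a $y$-shift by $\mp 4t$ which preserves $L^1$. For the residual symbol $\chi_1(\eta)e^{-2t\eta^2}e^{\pm it\eta r(\eta)}p(\eta)$---where $p(\eta)$ gathers the appropriate powers of $i\eta$ together with one of $1$, $\eta/\omega(\eta)$, or $\omega(\eta)/\eta$---the substitution $\eta=\xi/\sqrt{2t}$ exhibits it as the dilate of a family of symbols in $\xi$ whose residual phase $t\eta r(\eta)=O(\xi^3/\sqrt{t})$ is uniformly small where $e^{-\xi^2}$ is appreciable, and whose Schwartz semi-norms in $\xi$ are uniformly controlled in $t\ge 1$. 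Hence the rescaled symbols lie in a bounded subset of the Wiener algebra $\mathcal{F}L^1$, and undoing the dilation $y=\sqrt{2t}z$ gives the claimed $L^1$ rates, with the extra factor $t^{-k/2}$ coming from the $k$ factors of $\eta$ that appear in $p(\eta)$.

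\emph{$L^\infty$ bound on $K_3$.} I expect \eqref{eq:k3-est} to be the main obstacle, since the na\"ive estimate $\|K_3\|_{L^\infty}\le(2\pi)^{-1}\|\hat K_3\|_{L^1}$ only yields an $O(\log t)$ bound. To get uniform boundedness I would decompose
$$\frac{\sin(t\eta\omega(\eta))}{\eta}=\frac{\sin(4t\eta)}{\eta}\cos(t\eta r(\eta))+\cos(4t\eta)\frac{\sin(t\eta r(\eta))}{\eta},$$
writing $K_3=K_3^{(1)}+K_3^{(2)}$ accordingly. For $K_3^{(1)}$, the classical identity $\mathcal{F}^{-1}(\sin(4t\eta)/\eta)(y)=\sqrt{\pi/2}\,\mathbf{1}_{|y|<4t}$ gives a profile that is $L^\infty$-bounded uniformly in $t$, convolved with $\mathcal{F}^{-1}(\chi_1(\eta)\omega(\eta)e^{-2t\eta^2}\cos(t\eta r(\eta)))$, which by the rescaling argument above is uniformly bounded in $L^1(\R_y)$; so $\|K_3^{(1)}\|_{L^\infty}\lesssim 1$. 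For $K_3^{(2)}$, the symbol is pointwise bounded by $C\chi_1(\eta)e^{-2t\eta^2}\min(t\eta^2,1/|\eta|)$, and splitting the $\eta$-integration at $|\eta|\sim t^{-1/3}$ shows its $L^1(\R_\eta)$-norm is uniformly bounded in $t$, giving $\|K_3^{(2)}\|_{L^\infty}\lesssim 1$. Adding the two contributions and applying Young's inequality yields \eqref{eq:k3-est}.
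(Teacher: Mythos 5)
Your proposal is correct and follows essentially the same route as the paper: the same splitting $\omega(\eta)=4+\tilde{\omega}(\eta)$ with $\tilde{\omega}(\eta)=O(\eta^2)$ to factor out the translations by $\pm4t$ before estimating the residual heat-type kernels, and the same trigonometric decomposition of $\eta^{-1}\sin(t\eta\omega(\eta))$ exposing the convolution with $\mathbf{1}_{[-4t,4t]}$ for \eqref{eq:k3-est}. The only differences are cosmetic: the paper gets the uniform $L^1_y$ bounds on the residual kernels by a Cauchy--Schwarz split at $|y|=\sqrt{t}$ using $\|K\|_{L^2}$ and $\|yK\|_{L^2}$ (i.e. $H^1$ control of the symbol in $\eta$) rather than your dilation/Wiener-algebra packaging, and it closes the second piece of \eqref{eq:k3-est} through $\|K_{3,2}\|_{L^1_y}=O(1)$ together with a frequency localization of $f$ instead of your direct $L^1_\eta$ bound on that symbol.
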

\begin{proof}
Let $\tilde{\omega}(\eta)=\omega(\eta)-4$ and
\begin{equation}
  \label{eq:def-Kpm}
K_{1,\pm}(t,y)=\frac{1}{2\sqrt{2\pi}}
\mF^{-1}\left(\chi_1(\eta)e^{-(2\eta^2\pm i\eta\tilde{\omega}(\eta))t}\right)\,.  
\end{equation}
Since $K_1(t,y)=\sum_\pm K_{1,\pm}(t,y\mp4t)$,
it suffices to show that
$\sup_{t>0}\|K_{1,\pm}(t,\cdot)\|_{L^1(\R)}<\infty$
and $\|K_{1,\pm}(t,\cdot)\|_{L^2(\R)}\lesssim \la t \ra^{-1/4}$ to prove
\eqref{eq:k1-est}. Using the Plancherel identity, we have
\begin{align*}
  \|K_{1,\pm}(t,\cdot)\|_{L^2(\R)}
\lesssim & \left\|\chi_1(\eta)e^{-2t\eta^2}\right\|_{L^2(\R)}\lesssim \la t \ra^{-1/4}\,,
\end{align*}
\begin{align*}
  \|yK_{1,\pm}(t,y)\|_{L^2(\R)} \lesssim &
\left\|\pd_\eta\left(\chi_1(\eta)e^{-(2\eta^2\pm i\tilde{\omega}(\eta))t}\right)\right\|_{L^2(\R)}
\\ \lesssim & 
\|\chi_1'(\eta)e^{-2t\eta^2}\|_{L^2(\R)}
+t(\|\eta\chi_1(\eta) e^{-2t\eta^2}\|_{L^2(\R)}
+\|\tilde{\omega}'(\eta)\chi_1(\eta) e^{-2t\eta^2}\|_{L^2(\R)})
\\ \lesssim & \la t \ra^{1/4}\,.
\end{align*}
Note that
\begin{equation}
    \label{eq:omega-approx}
 |\tilde{\omega}(\eta)|\lesssim \min\{1,\eta^2\}\,,
\quad |\tilde{\omega}'(\eta)|\lesssim \min\{1,|\eta|\}\,.
\end{equation}
Combining the above, we have
\begin{align*}
  \|K_{1,\pm}(t,\cdot)\|_{L^1(\R)} \lesssim & t^{1/4}\|K_{1,\pm}(t,\cdot)\|_{L^2(|y|\le \sqrt{t})}
+\|yK_{1,\pm}(t,y)\|_{L^2(|y|\ge\sqrt{t})}\|y^{-1}\|_{L^2(|y|\ge\sqrt{t})}
\\ =& O(1)\,.
\end{align*}
Thus we have \eqref{eq:k1-est}. We can prove
\eqref{eq:k2-est1}--\eqref{eq:k3-est'} in the same way.
\par

Now we will prove \eqref{eq:k3-est}.
Let
\begin{gather*}
K_{3,1}(t,y)=\frac{1}{2\sqrt{2\pi}}\mathcal{F}^{-1}\left(\omega(\eta)\chi_1(\eta)
e^{-2t\eta^2}\cos t\eta\tilde{\omega}(\eta)\right)\,,
\\
K_{3,2}(t,y)=\frac{1}{2\sqrt{2\pi}}\mathcal{F}^{-1}
\left(\omega(\eta)\chi_1(\eta)e^{-2t\eta^2}\frac{\sin t\eta\tilde{\omega}(\eta)}{\eta}\right)\,.
\end{gather*}
Then
\begin{equation}
  \label{eq:k3-formula}
K_3(t,y)=K_{3,1}(t,\cdot)\ast \mathbf{1}_{[-4t,4t]}+K_{3,2}(t,y+4t)+K_{3,2}(t,y-4t)\,.
\end{equation}
We can prove that
\begin{equation}
\label{eq:k31}
\sup_{t>0}\|K_{3,1}(t,\cdot)\|_{L^1(\R)}<\infty\,,
\end{equation}
and that $\|\pd_y^jK_{3,1}(t,\cdot)\|_{L^2(\R)}\lesssim \la t \ra^{-(2j+1)/4}$
for $j\ge0$ in the same way as \eqref{eq:k1-est}.
\par
Using the Schwarz inequality and the Plancherel theorem, we have
\begin{align*}
  \|K_{3,2}(t,\cdot)\|_{L^1(\R)} \lesssim & t^{1/4}
\|K_{3,2}(t,y)\|_{L^2(|y|\le \sqrt{t})}
+t^{-1/4}\|yK_{3,2}(t,y)\|_{L^2(|y|\ge \sqrt{t})}
\\ \lesssim & 
t^{1/4}\left\|\omega(\eta)\chi_1(\eta)e^{-2t\eta^2}
\frac{\sin t\eta\tilde{\omega}(\eta)}{\eta}\right\|_{L^2(\R)}
\\ & +t^{-1/4}\left\|\pd_\eta\left\{\omega(\eta)\chi_1(\eta)e^{-2t\eta^2}
\frac{\sin t\eta\tilde{\omega}(\eta)}{\eta}\right\}\right\|_{L^2(\R)}\,.
\end{align*}
Since $\left|\pd_j\left(\eta^{-1}\sin t\eta\tilde{\omega}(\eta)\right)\right|
\lesssim t\eta^{2-j}$ for $j=0$ and $1$,
it follows that
  \begin{align*}
  \|K_{3,2}(t,\cdot)\|_{L^1(\R)} \lesssim &
t^{1/4} \|t\eta^2 e^{-2t\eta^2}\|_{L^2(\R)}+t^{-1/4}\|t\eta e^{-2t\eta^2}\|_{L^2(\R)}
=O(1)\,.
\end{align*}
\par
Let $\chi_3(\eta)\in C_0^\infty(\R)$ such that
$\chi_1(\eta)=\chi_1(\eta)\chi_3(\eta)$. Then
$$K_{3,2}(t,\cdot)*f=K_{3,2}(t,\cdot)*\chi_3(D_y)f\,,
\quad \|\chi_3(D_y)f\|_{L^\infty(\R)} \lesssim  
\|\widecheck{\chi_3}\|_{L^\infty(\R)}\|f\|_{L^1(\R)}\,,$$
and
\begin{equation}
\label{eq:k32}
 \|K_{3,2}(t,\cdot\pm 4t)*f\|_{L^\infty(\R)}
\lesssim  \|f\|_{L^1(\R)}\,.
\end{equation}
Combining \eqref{eq:k3-formula}--\eqref{eq:k32}, we have \eqref{eq:k3-est}.
This completes the proof of Lemma~\ref{lem:fundamental-sol}.
\end{proof}

Let $Y$ and $Z$ be closed subspaces of $L^2(\R)$ defined by
$$Y=\mF^{-1}_\eta Z\quad\text{and}\quad
Z=\{f\in L^2(\R)\mid\supp f \subset[-\eta_0,\eta_0]\}\,,$$
and let $X_1=L^1(\R_y;L^2(R;e^{\a x}dx))$, $Y_1=\mF^{-1}_\eta Z_1$ and
$Z_1=\{f\in Z \mid\|f\|_{Z_1}:=\|f\|_{L^\infty}<\infty\}$.
\par

Let $E_1=\diag(1,0)$ and $E_2=\diag(0,1)$ and let $\chi(\eta)$ be a smooth
such that $\chi(\eta)=1$ if $\eta\in[-\frac{\eta_0}{4},\frac{\eta_0}{4}]$
and $\chi(\eta)=0$ if $\eta\not\in[-\frac{\eta_0}{2},\frac{\eta_0}{2}]$.
We will use the following estimates to investigate large time
behavior of modulation parameters.
\begin{lemma}
\label{lem:fund-sol}
For $t\ge0$ and $k\ge1$,
\begin{gather}
  \label{eq:decayE1}
  \|\chi_1(D_y)e^{t\mathcal{A}_*}E_1\|_{B(L^1;L^\infty)}=O(1)\,,\quad
\|(I-\chi(D_y))e^{t\mathcal{A}_*}E_1\|_{B(Y;L^\infty)}=O(e^{-c_1 t})\,,
\\  
\label{eq:etA-2}
\|e^{t\mathcal{A}_*}E_2\|_{B(Y;L^\infty)}\lesssim \la t\ra^{-1/4}\,,
\quad
\|e^{t\mathcal{A}_*}E_2\|_{B(Y_1;L^\infty)}\lesssim \la t\ra^{-1/2}\,,
\\ 
\label{eq:etA-3}
\|\pd_y^ke^{t\mathcal{A}_*}\|_{B(Y,L^\infty)}\lesssim \la t\ra^{-(2k-1)/4}\,,
\quad  \|\pd_y^ke^{t\mathcal{A}_*}\|_{B(Y_1;L^\infty)}\lesssim \la t\ra^{-k/2}\,,
\end{gather}
where $c_1$ is a positive constant. Moreover,
\begin{equation}
\label{eq:fund-asymp1}
\left\|e^{t\mathcal{A}_*}\begin{pmatrix}f_1 \\ f_2 \end{pmatrix}
-\frac12H_{2t}*W_{4t}*f_1\mathbf{e_2}
\right\|_{L^\infty}
\lesssim \la t \ra^{-1/2}(\|f_1\|_{Y_1}+\|f_2\|_{Y_1})\,,
\end{equation}
\begin{equation}
\label{eq:fund-asymp2}
\begin{split}
& \left\|\diag(1,\pd_y)e^{t\mathcal{A}_*}\begin{pmatrix}f_1 \\ f_2 \end{pmatrix}
-\frac14\begin{pmatrix}
  2 & 2 \\  1 & -1 \end{pmatrix}
\begin{pmatrix}
 H_{2t}(\cdot+4t)\\ H_{2t}(\cdot-4t)
\end{pmatrix}
*f_1\right\|_{L^\infty}
\\ \lesssim &
\la t \ra^{-1}(\|f_1\|_{Y_1}+\|f_2\|_{Y_1})\,,  
\end{split}
\end{equation}
\begin{equation}
\label{eq:fund-asymp3}
\begin{split}
& \left\|e^{t\mathcal{A}_*}\diag(1,\pd_y)\begin{pmatrix}f_1 \\ f_2 \end{pmatrix}
-\frac14\sum_{\pm}
 H_{2t}(\cdot\pm4t)(2f_2\pm f_1)\mathbf{e_2}\right\|_{L^\infty}
\\ \lesssim &
\la t \ra^{-1}(\|f_1\|_{Y_1}+\|f_2\|_{Y_1})\,,  
\end{split}
\end{equation}
where
$H_t(y)=(4\pi t)^{-1/2}\exp(-y^2/4t)$ and $W_t(y)=\frac12 \mathbf{1}_{[-t,t]}(y)$.
\end{lemma}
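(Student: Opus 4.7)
The plan is to derive all four estimates directly from the explicit low-frequency kernel \eqref{eq:fundamental-low}, combined with the kernel bounds of Lemma~\ref{lem:fundamental-sol} and the high-frequency bound \eqref{eq:k-est,high}. Inputs in $Y$ and $Y_1$ already have Fourier support in $[-\eta_0,\eta_0]$, so the split
$e^{t\mathcal{A}_*(D_y)}=\chi_1(D_y)e^{t\mathcal{A}_*(D_y)}+\chi_2(D_y)e^{t\mathcal{A}_*(D_y)}$
reduces the analysis to the low-frequency piece up to an $O(e^{-c_1 t})$ remainder: on the support of $\chi_2$ inside $[-\eta_0,\eta_0]$ one has $e^{-2t\eta^2}\le e^{-\eta_0^2t/8}$, and compact Fourier support converts $L^2$ into $L^\infty$ via Bernstein. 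The same reasoning, with $I-\chi(D_y)$ localising to $|\eta|\ge\eta_0/4$, proves the second half of \eqref{eq:decayE1}.

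For the operator-norm bounds I read off the four entries of \eqref{eq:fundamental-low}. The $L^1\to L^\infty$ estimate in \eqref{eq:decayE1} follows from Young's inequality together with the uniform bound $\|K_j(t,\cdot)\|_{L^\infty}\le(2\pi)^{-1/2}\|\hat K_j\|_{L^1}\lesssim\|\chi_1\|_{L^1}$ for $j=1,2$ (all other factors in the symbols $\hat K_j$ being bounded on $\supp\chi_1$), plus \eqref{eq:k3-est} for the $(2,1)$ entry carrying $K_3$ (whose symbol contains the unbounded factor $\omega(\eta)/\eta$). For \eqref{eq:etA-2} on $Y$, applying $\|K\ast f\|_{L^\infty}\le\|K\|_{L^2}\|f\|_{L^2}$ with \eqref{eq:k1-est}, \eqref{eq:k2-est2} delivers $\la t\ra^{-1/4}$; on $Y_1$ I instead use $\|K\ast f\|_{L^\infty}\le\|\hat K\|_{L^1}\|\hat f\|_{L^\infty}$ together with $\|\chi_1(\eta)e^{-2t\eta^2}\|_{L^1}\lesssim\la t\ra^{-1/2}$. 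The derivative bound \eqref{eq:etA-3} is obtained in the same way, with $\pd_y^k$ inserting a factor $(i\eta)^k$ inside each symbol; the slowest entry is the $(2,1)$ one, where \eqref{eq:k2-est1}, \eqref{eq:k2-est2}, and \eqref{eq:k3-est'} furnish precisely the claimed rate.

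For the asymptotic formulas \eqref{eq:fund-asymp1}--\eqref{eq:fund-asymp3} I decompose $\omega(\eta)=4+\tilde\omega(\eta)$ with $|\tilde\omega(\eta)|\lesssim\eta^2$ by \eqref{eq:omega-approx}, and apply the sum formulas to expand $\cos(t\eta\omega)$, $\sin(t\eta\omega)$. The leading pieces carry only the phase $e^{\pm 4it\eta}$ and, upon Fourier inversion, produce the translates $H_{2t}(\cdot\pm 4t)$ (for the $(1,1)$, $(2,2)$, $(1,2)$ entries) and the box-convolution $H_{2t}\ast W_{4t}$ for the $(2,1)$ entry through \eqref{eq:k3-formula}. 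Each remainder symbol carries an extra factor $\tilde\omega(\eta)$ or $t\eta\tilde\omega(\eta)$, hence two or more extra powers of $\eta$ times $\chi_1(\eta)e^{-2t\eta^2}$; pairing with $\hat f\in L^\infty$ and using $\|\eta^m e^{-2t\eta^2}\|_{L^1}\lesssim\la t\ra^{-(m+1)/2}$ yields the claimed remainder rates $\la t\ra^{-1/2}$ in \eqref{eq:fund-asymp1} and $\la t\ra^{-1}$ in \eqref{eq:fund-asymp2}, \eqref{eq:fund-asymp3}.

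The main obstacle is the bookkeeping: identifying entry by entry which kernel term dominates the leading profile versus the error, verifying that the off-diagonal contributions from $f_2$ in \eqref{eq:fund-asymp2} and \eqref{eq:fund-asymp3} really are subleading, and correctly interpreting the distributional derivative $\pd_y\mathbf{1}_{[-4t,4t]}=\delta_{-4t}-\delta_{4t}$ that arises when differentiating the $(2,1)$ entry via \eqref{eq:k3-formula}; this is precisely what converts the box kernel into the pair of translated Gaussians appearing in \eqref{eq:fund-asymp3}. Matching the explicit coefficient matrix $\tfrac14\bigl(\begin{smallmatrix}2&2\\1&-1\end{smallmatrix}\bigr)$ of \eqref{eq:fund-asymp2} is then a matter of carefully summing the leading contributions of the four kernel entries.
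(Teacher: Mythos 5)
Your proposal is correct and follows essentially the same route as the paper: the operator bounds \eqref{eq:decayE1}--\eqref{eq:etA-3} are read off entry by entry from \eqref{eq:fundamental-low} using the kernel estimates of Lemma~\ref{lem:fundamental-sol} together with \eqref{eq:k-est,high}, and the asymptotics \eqref{eq:fund-asymp1}--\eqref{eq:fund-asymp3} come from writing $\omega(\eta)=4+\tilde\omega(\eta)$, keeping the pure phases $e^{\pm4it\eta}$ (which give the translated Gaussians and, via \eqref{eq:k3-formula}, the box convolution), and controlling the remainder symbols $\chi_1(\eta)e^{-2t\eta^2}(e^{\pm it\eta\tilde\omega(\eta)}-1)$ in $L^1_\eta$ against $\|\hat f\|_{L^\infty}$, exactly as in \eqref{eq:pf-fund-1}--\eqref{eq:pf-fund-3}. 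Your identification of $\pd_y\mathbf{1}_{[-4t,4t]}=\delta_{-4t}-\delta_{4t}$ as the mechanism producing the paired Gaussians is also the intended reading of \eqref{eq:fund-asymp3} (where the $\pd_y$ must land on the first input component for the stated profile to emerge).
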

\begin{proof}
Equations~\eqref{eq:decayE1}--\eqref{eq:etA-3} follows immediately
from Lemma~\ref{lem:fundamental-sol}, \eqref{eq:k-est,high} and
\eqref{eq:fundamental-low}.
\par
In view of \eqref{eq:def-Kpm} and \eqref{eq:omega-approx},
\begin{align*}
\left\|2K_{1,\pm}(t,\cdot)*f-\chi_1(D)e^{2t\pd_y^2}f\right\|_{L^\infty}
\lesssim & \left\|\chi_1(\eta)e^{-2t\eta^2}(e^{\pm it\eta\tilde{\omega}(\eta)}-1)
\right\|_{L^1}\|\hat{f}\|_{L^\infty}
\\ \lesssim & \min\left \{t\|\eta^3e^{-2t\eta^2}\|_{L^1},\|\chi_1\|_{L^1}\right\}
\|f\|_{Y_1}
\\ \lesssim & \la t \ra^{-1}\|f\|_{Y_1}\,.
\end{align*}
Since $K_1(t,\cdot)=\frac12\sum_\pm K_{1,\pm}(t,\cdot\mp4t)$,
\begin{equation}
\label{eq:pf-fund-1}
\left\|K_1(t,\cdot)*f-
\frac12\sum_{\pm}H_{2t}(\cdot\pm 4t)*f\right\|_{L^\infty}
\lesssim \la t \ra^{-1}\|f\|_{Y_1}\,.
\end{equation}
We can prove 
\begin{gather}
\label{eq:pf-fund-2}
\left\|\pd_yK_3(t,\cdot)*f-2H_{2t}(\cdot+4t)*f+2H_{2t}(\cdot-4t)*f
\right\|_{L^\infty(\R)}
\lesssim \la t \ra^{-1}\|f\|_{Y_1}\,,
\\ \label{eq:pf-fund-3}
\|K_3(t,\cdot)*f-4H_{2t}*W_{4t}*f\|_{L^\infty(\R)}
\lesssim \la t \ra^{-1/2}\|f\|_{Y_1}\,.
  \end{gather}
in the same way.
 Combining \eqref{eq:pf-fund-1}--\eqref{eq:pf-fund-3}
with Lemma~\ref{lem:fundamental-sol} and   \eqref{eq:k-est,high},
we obtain \eqref{eq:fund-asymp1}--\eqref{eq:fund-asymp3}.
Thus we complete the proof.
\end{proof}

To investigate the large time behavior of $x(t,y)$, we need the following.
\begin{lemma}
  \label{cl:phase-lim}
Suppose that $f\in L^1(\R_+\times \R)$. Then for any $\delta>0$,
\begin{gather}
\label{eq:lim-in}
\lim_{t\to\infty}\sup_{|y|\le (4-\delta)t}
\left|\int_0^t H_{2(t-s)}*W_{4(t-s)}*f(s,\cdot)(y)\,ds
-\frac12\int_0^\infty\int_\R f(s,y)\,dyds\right|=0\,,
\\ \label{eq:lim-out}
\lim_{t\to\infty}\sup_{|y|\ge (4+\delta)t}
\left|\int_0^t H_{2(t-s)}*W_{4(t-s)}*f(s,\cdot)(y)\,ds\right|=0\,.  
\end{gather}
\end{lemma}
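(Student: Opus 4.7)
The plan is to reduce the statement to a pointwise analysis of the convolution kernel $G(\tau,y):=H_{2\tau}*W_{4\tau}(y)$ and then transfer the asymptotics to the space-time integral via an $L^1$-truncation argument. A direct calculation using the substitution $v=y-w$ gives
$$G(\tau,y)=\tfrac{1}{2}\int_{y-4\tau}^{y+4\tau}H_{2\tau}(v)\,dv
=\tfrac{1}{2}\left[\Phi\!\left(\tfrac{y+4\tau}{2\sqrt{\tau}}\right)
-\Phi\!\left(\tfrac{y-4\tau}{2\sqrt{\tau}}\right)\right],$$
where $\Phi$ denotes the standard normal distribution function. In particular $0\le G(\tau,y)\le\tfrac12$, so $|G(t-s,y-z)f(s,z)|\le \tfrac12|f(s,z)|$ serves as a uniform dominating integrand throughout.

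Next I would extract the pointwise asymptotics of $G$ as $t-s\to\infty$ with $(s,z)$ fixed. If $|y-z|\le(4-\delta/2)(t-s)$, one argument of $\Phi$ in the formula diverges to $+\infty$ and the other to $-\infty$, each at rate at least $\tfrac{\delta}{4}\sqrt{t-s}$, so $G(t-s,y-z)\to\tfrac12$. If instead $|y-z|\ge(4+\delta/2)(t-s)$, both arguments of $\Phi$ diverge in the same direction and the Gaussian tail bound $1-\Phi(x)\lesssim e^{-x^2/2}/x$ forces $G(t-s,y-z)\to 0$. For fixed $(s,z)$, the condition $|y|\le(4-\delta)t$ puts the point in the first regime for all sufficiently large $t$, while $|y|\ge(4+\delta)t$ puts it in the second.

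The main task is to upgrade these pointwise limits to uniformity in $y$ over the moving regions. Given $\eps>0$, I would use $f\in L^1(\R_+\times\R)$ to pick $T,M>0$ so that $\iint_{([0,T]\times[-M,M])^c}|f(s,z)|\,dzds<\eps$; the $\tfrac12$-bound on $G$ then shows that replacing the full integration domain by $[0,T]\times[-M,M]$ perturbs both $\int_0^t H_{2(t-s)}*W_{4(t-s)}*f(s,\cdot)(y)\,ds$ and $\tfrac12\iint f\,dzds$ by at most $\eps/2$. On the compact region $[0,T]\times[-M,M]$, the condition $|y|\le(4-\delta)t$ forces $|y-z|\le(4-\delta/2)(t-s)$ uniformly in $(s,z)$ once $t\ge 2(M+(4-\tfrac{\delta}{2})T)/\delta$, and since $\sqrt{t-s}\ge\sqrt{t-T}\to\infty$ controls the rate, the pointwise convergence $G(t-s,y-z)\to\tfrac12$ becomes uniform in $(s,z)$ and in admissible $y$. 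Hence $\limsup_{t\to\infty}\sup_{|y|\le(4-\delta)t}|\cdots|\le\eps$, and $\eps\to0$ yields \eqref{eq:lim-in}. The analogous argument with $G\to 0$ in place of $G\to\tfrac12$ proves \eqref{eq:lim-out}.

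The main obstacle is precisely the need to cut off the integration domain before passing to the limit. Without such a truncation, the contribution from $s$ close to $t$, where $G(t-s,y-z)$ is bounded away from $\tfrac12$, and from $|z|$ comparable to $t$, where the inequality $|y-z|\le(4-\delta/2)(t-s)$ may fail, would obstruct uniformity. The hypothesis $f\in L^1(\R_+\times\R)$ is exactly what renders these contributions negligible and allows the pointwise asymptotics of $G$ to be applied on a fixed compact set.
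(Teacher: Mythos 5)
Your proof is correct. It rests on the same two pillars as the paper's argument -- the Gaussian concentration of $H_{2\tau}*W_{4\tau}$ on the ballistic region $|y|\lesssim 4\tau$ and an $L^1$-truncation of $f$ -- but the decomposition is organized differently. You compute the kernel $G(\tau,y)=H_{2\tau}*W_{4\tau}(y)$ in closed form as $\tfrac12\bigl[\Phi\bigl(\tfrac{y+4\tau}{2\sqrt{\tau}}\bigr)-\Phi\bigl(\tfrac{y-4\tau}{2\sqrt{\tau}}\bigr)\bigr]$, derive uniform pointwise asymptotics ($G\to\tfrac12$ inside the cone, $G\to0$ outside) from error-function tail bounds, and then cut $f$ off to a compact set $[0,T]\times[-M,M]$ on which these asymptotics apply uniformly; the split is thus performed in the $(s,z)$-variables of $f$. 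The paper instead standardizes the Gaussian variable, rewriting the expression as $\tfrac{1}{2\sqrt{2\pi}}\int e^{-y_1^2/2}\int_{K_t(y,y_1)}f\,ds\,dy_2\,dy_1$ with $K_t(y,y_1)=\{(s,y_2): |y_2-y+2y_1(t-s)^{1/2}|\le 4(t-s)\}$, splits in the Gaussian variable $y_1$ at $|y_1|=\delta\sqrt{t}/4$, and concludes from the convergence of the regions $K_t(y,y_1)$ to $\R_+\times\R$ (resp.\ $\emptyset$). Your version is somewhat more self-contained and elementary in that the kernel asymptotics are made fully explicit and the uniformity over the moving sets $|y|\le(4-\delta)t$, $|y|\ge(4+\delta)t$ is spelled out quantitatively (via the threshold $t\ge 2(M+(4-\delta/2)T)/\delta$), whereas the paper's set-convergence formulation is terser and leaves the uniformity over admissible $(y,y_1)$ implicit; the two arguments are otherwise of equal strength.
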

\begin{proof}
Let $K_t(y,y_1)=\left\{(s,y_2)\mid 0\le s< t\,,\;
\left|y_2-y+2y_1(t-s)^{1/2}\right|\le4(t-s)\right\}$.
Then
\begin{align*}
& \int_0^t H_{2(t-s)}*W_{4(t-s)}*f(s,\cdot)(y)\,ds
\\=&
\frac{1}{4\sqrt{2\pi}}\int_0^t ds\, (t-s)^{-1/2}
\int_\R dy_1\,e^{-(y-y_1)^2/8(t-s)} \int_{y_1-4(t-s)}^{y_1+4(t-s)}dy_2\,f(s,y_2)
\\ =& 
\frac{1}{2\sqrt{2\pi}}\int_\R dy_1\,e^{-y_1^2/2}
\int_{K_t(y,y_1)}dsdy_2\,f(s,y_2)\,.
\end{align*}
Since $e^{-y_1^2/2}f(s,y_2)$ is integrable on $\R_+\times \R^2$,
\begin{align*}
& \left|\int_{|y_1|\ge \delta\sqrt{t}/4} dy_1\,e^{-y_1^2/2}
\int_{K_t(y,y_1)}dsdy_2\,f(s,y_2)\right|
\\ \le  &
\|f\|_{L^1(\R_+\times \R)}\int_{|y_1|\ge\delta\sqrt{t}/4} dy_1\,e^{-y_1^2/2}
\to0\quad\text{as $t\to\infty$.}  
\end{align*}
Moreover,
\begin{align*}
& \lim_{t\to\infty}\sup_{|y|\le (4-\delta)t}
\left|\frac{1}{\sqrt{2\pi}}\int_{|y_1|\le\delta\sqrt{t}/4}
e^{-y_1^2/2}\left(\int_{K_t(y,y_1)}f(s,y_2)\,dy_2ds\right)\,dy_1
-\int_{\R_+\times\R} f(s,y)\,dyds\right|=0\,,
\\ &
\lim_{t\to\infty}\sup_{|y|\ge (4+\delta)t}
\left|\int_{|y_1|\le\delta\sqrt{t}/4}e^{-y_1^2/2}
\left(\int_{K_t(y,y_1)}f(s,y_2)\,dy_2ds\right)\,dy_1\right|=0
\end{align*}
because
\begin{align*}
& \lim_{t\to\infty}\bigcap_{|y|\le (4-\delta)t\,,\,|y_1|\le \delta\sqrt{t}/4}
K_t(y,y_1)=\R_+\times \R\,,
\quad
\lim_{t\to\infty}\bigcup_{|y|\ge (4+\delta)t\,,\,|y_1|\le \delta\sqrt{t}/4}
K_t(y,y_1)=\emptyset\,.
\end{align*}
\end{proof}
\bigskip

\section{Decomposition of solutions around $1$-line solitons}
Following \cite{Miz15,Miz17}, we decompose a solution around
a line soliton $\varphi(x-4t)$ into a sum of a modulating
line soliton and a dispersive part plus a small wave
which is caused by amplitude changes of the line soliton:
\begin{equation}
  \label{eq:decomp}
u(t,x,y)=\varphi_{c(t,y)}(z)-\psi_{c(t,y),L}(z+3t)+v(t,z,y)\,,\quad
z=x-x(t,y)\,,
\end{equation}
where
$\psi_{c,L}(x)=2(\sqrt{2c}-2)\psi(x+L)$,
$\psi(x)$ is a nonnegative function such that
$\psi(x)=0$ if  $|x|\ge1$ and that $\int_\R \psi(x)\,dx=1$
and $L>0$ is a large constant to be fixed later.
The modulation parameters $c(t_0,y_0)$ and $x(t_0,y_0)$ denote
the maximum height and the phase shift of the modulating line soliton
$\varphi_{c(t,y)}(x-x(t,y))$ along the line $y=y_0$ at the time $t=t_0$,
and $\psi_{c,L}$ is an auxiliary smooth function such that
\begin{equation}
  \label{eq:0mean}
\int_\R \psi_{c,L}(x)\,dx=\int_\R(\varphi_c(x)-\varphi(x))\,dx\,.
\end{equation}
\par
Now we further decompose $v$ into a small solution of
\eqref{eq:KPII} and an exponentially localized part
as in \cite{Miz1,MT2,Miz17}.
If $v_0(x,y)$ is polynomially localized, then as in \cite{Miz17},
we can decompose the initial data as a sum of an amplified line soliton and
a remainder part $v_*(x,y)$ that satisfies $\int_\R v_*(x,y)\,dx=0$ for every
$y\in\R$.
Let
\begin{gather}
  \label{eq:defc1}
  c_1(y)=\left\{\sqrt{c_0}+\frac{1}{2\sqrt{2}}\int_\R v_0(x,y)\,dx\right\}^2\,,
  \\ \label{eq:defv*}
  v_*(x,y)=v_0(x,y)+\varphi_{c_0}(x)-\varphi_{c_1(y)}(x)\,.
\end{gather}
Then we have the following. 
\begin{lemma}
  \label{lem:nonzeromean1}
Let $c_0>0$ and $s>1$.
There exists a positive constant $\eps_0$ such that if
$\eps:=\|\la x\ra^{s/2}(\la x\ra+\la y\ra)^{s/2}v_0\|_{H^1(\R^2)}<\eps_0$, then
\begin{align}
\label{eq:nonzero1b} 
&  \left\|\la y\ra^{s/2}(c_1-c_0)\right\|_{L^2(\R)}+\left\|\la y\ra^{s/2}\pd_yc_1\right\|_{L^2(\R)}
 \lesssim
\left\|\la x\ra^{s/2}\la y\ra^{s/2}v_0\right\|_{H^1(\R^2)}\,,
\\ \label{eq:nonzero1c}
& \left\|\la x\ra^s v_*\right\|_{L^2(\R^2)} \lesssim 
\left\|\la x\ra^sv_0\right\|_{L^2(\R^2)}\,,
\quad
\left\|\la x\ra^{s/2}\la y\ra^{s/2}v_*\right\|_{L^2(\R^2)} \lesssim 
\left\|\la x\ra^{s/2}\la y\ra^{s/2}v_0\right\|_{L^2(\R^2)}\,,
\\  \label{eq:nonzero1d}
&  \|\pd_x^{-1}v_*\|_{L^2}+ \|\pd_x^{-1}\pd_yv_*\|_{L^2}+\|v_*\|_{H^1(\R^2)} \lesssim
\|\la x\ra^sv_0\|_{H^1(\R^2)}\,.
\end{align}
Moreover, the mapping
$$ \la x\ra^{-s/2}(\la x\ra+\la y\ra)^{-s/2}H^1(\R^2)\ni v_0\mapsto
(v_*,c_1-c_0)\in H^1(\R^2)\times H^1(\R)\cap \la y\ra^{-s/2}L^2(\R)$$
is continuous.
\end{lemma}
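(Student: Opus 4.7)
The plan is to exploit the fact that the definition \eqref{eq:defc1} of $c_1$ is engineered precisely so that $v_*$ has zero mean in $x$ along every horizontal line, and then to use this zero-mean property to obtain the antiderivative estimates in \eqref{eq:nonzero1d} via an elementary Cauchy--Schwarz argument. To establish \eqref{eq:nonzero1b}, I would expand $c_1(y)-c_0 = 2\sqrt{c_0}\,a(y)+a(y)^2$ with $a(y):=(2\sqrt{2})^{-1}\int_\R v_0(x,y)\,dx$; since $s>1$ the weight $\la x\ra^{-s}$ is integrable on $\R$, so Cauchy--Schwarz yields $|a(y)|\lesssim\|\la x\ra^{s/2}v_0(\cdot,y)\|_{L^2_x}$, and multiplying by $\la y\ra^{s/2}$ and integrating in $y$ produces the linear part of \eqref{eq:nonzero1b}. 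The quadratic term $a(y)^2$ absorbs a factor of $\eps$ and is harmless for $\eps_0$ small, and the same argument applied to $\int \pd_y v_0\,dx$ after differentiating $c_1$ yields the bound on $\pd_y c_1$.

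Next, using the classical identity $\int_\R\varphi_c\,dx=2\sqrt{2c}$, the definition \eqref{eq:defc1} forces $\int_\R(\varphi_{c_1(y)}-\varphi_{c_0})\,dx=\int_\R v_0(x,y)\,dx$, hence $\int_\R v_*(x,y)\,dx=0$ for every $y\in\R$; differentiating in $y$ then gives $\int_\R\pd_y v_*(x,y)\,dx=0$ as well. The weighted $L^2$ estimates \eqref{eq:nonzero1c} and the $H^1$ part of \eqref{eq:nonzero1d} follow by writing $v_*=v_0-(\varphi_{c_1(y)}-\varphi_{c_0})$ and using that $\varphi_c$ and $\pd_c\varphi_c$ decay exponentially in $x$, so $\|\la x\ra^s(\varphi_{c_1(y)}-\varphi_{c_0})\|_{L^2_x}\lesssim|c_1(y)-c_0|$ uniformly for $c_1$ near $c_0$; the $\pd_y$-derivative uses $\pd_y(\varphi_{c_1(y)}-\varphi_{c_0})=(\pd_y c_1)\pd_c\varphi_{c_1(y)}$ together with the bound on $\pd_y c_1$.

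For the antiderivative bounds, zero mean lets me write $\pd_x^{-1}v_*(x,y)=\int_{-\infty}^x v_*(x',y)\,dx'=-\int_x^\infty v_*(x',y)\,dx'$; integrating over whichever half-line contains $\{|x'|\ge|x|\}$ and applying Cauchy--Schwarz with weight $\la x'\ra^{-s}$ yields
\[
|\pd_x^{-1}v_*(x,y)|^2\lesssim\la x\ra^{-(2s-1)}\|\la x\ra^s v_*(\cdot,y)\|_{L^2_x}^2,
\]
which is integrable in $x$ precisely because $s>1$. The same argument applied to $\pd_y v_*$ handles $\pd_x^{-1}\pd_y v_*$. Continuity of the map $v_0\mapsto(v_*,c_1-c_0)$ follows by applying the same estimates to differences, since the only nonlinear dependence on $v_0$ (through $c_1(y)$ and $\varphi_{c_1(y)}$) is smooth in $c_1$ near $c_0$.

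The main technical subtlety is precisely this borderline threshold $s>1$: the decay factor $\la x\ra^{-(2s-1)}$ in the antiderivative estimate is only just integrable in $x$, so no weight can be spared, which is why the zero-mean property of $v_*$ --- rather than merely of $v_0$ --- is essential, and hence why $c_1(y)$ must be defined nonlinearly as in \eqref{eq:defc1} rather than taken to be a constant.
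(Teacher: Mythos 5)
Your proposal is correct and follows essentially the same route as the paper: the paper likewise controls $c_1-c_0$ via $\sqrt{c_1}-\sqrt{c_0}=(2\sqrt 2)^{-1}\int v_0\,dx$ and Cauchy--Schwarz with the integrable weight $\la x\ra^{-s}$ (possible precisely because $s>1$), bounds $\varphi_{c_1(y)}-\varphi_{c_0}$ by exponential localization, and obtains \eqref{eq:nonzero1d} from the zero-mean property of $v_*$ exactly as in your half-line Cauchy--Schwarz argument (the paper outsources this step to \cite[Lemma~10.1]{Miz17}, whose proof is the same computation). Your explicit identification of $\int_\R\varphi_c\,dx=2\sqrt{2c}$ as the reason $c_1$ is defined nonlinearly, and of $s>1$ as the integrability threshold for $\la x\ra^{-(2s-1)}$, matches the intent of the construction.
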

\begin{proof} 
By \cite[(10.4)]{Miz17},
$$\sup_y\left|\sqrt{c_1(y)}-\sqrt{c_0}\right|
\lesssim  \|\la x\ra^{s/2}v_0\|_{L^2}+ \|\la x\ra^{s/2}\pd_yv_0\|_{L^2}\,.$$
Hence it follows from \eqref{eq:defc1} and \eqref{eq:defv*} that
for sufficiently small $\eps$,
\begin{align*}
  \left\|\la y\ra^{s/2}\pd_y^i(c_1-c_0)\right\|_{L^2(\R)}
\lesssim &   \left\|\la y\ra^{s/2}
\pd_y^i\left(\sqrt{c_1}-\sqrt{c_0}\right)\right\|_{L^2(\R)}
\\ \lesssim &
\left\| \la y\ra^{s/2}\int_\R \pd_y^iv_0(x,y)\,dx\right\|_{L^2(\R_y)}
\lesssim \left\| \la x\ra^{s/2}\la y\ra^{s/2} \pd_y^iv_0\right\|_{L^2(\R^2)}\,,
\end{align*}
\begin{align*}
\left\|\la x\ra^{s/2}\la y\ra^{s/2}v_*\right\|_{L^2(\R^2)}
\lesssim &
\left\|\la x\ra^{s/2}\la y\ra^{s/2}v_0\right\|_{L^2(\R^2)}
+\left\|\la x\ra^{s/2}\la y\ra^{s/2}(\varphi_{c_1(y)}-\varphi_{c_0}\right\|_{L^2(\R^2)}
\\ \lesssim & \left\|\la x\ra^{s/2}\la y\ra^{s/2}v_0\right\|_{L^2(\R^2)}\,.
\end{align*}
Using \cite[(10.2)]{Miz17}, we can prove 
$\left\|\la x\ra^sv_*\right\|_{L^2(\R^2)} 
\lesssim \left\|\la x\ra^sv_0\right\|_{L^2(\R^2)}$ in the same way.
We have \eqref{eq:nonzero1d} from \cite[Lemma~10.1]{Miz17} and its proof.
Since the continuity of the mapping $v_0\mapsto (v_*,c_1-c_0)$ can
be proved in the similar way, we omit the proof.
Thus we complete the proof.
\end{proof}

Let $\tv_1$ be a solution of
\begin{equation}
\label{eq:tv1}
\left\{\begin{aligned}
& \pd_t\tv_1+\pd_x^3\tv_1+3\pd_x(\tv_1^2)+3\pd_x^{-1}\pd_y^2\tv_1=0\,,\\
& \tv_1(0,x,y)=v_*(x,y)\,.    
  \end{aligned}\right.
\end{equation}
Since $v_*\in H^1(\R^2)$ and $\pd_x^{-1}\pd_yv_*\in L^2(\R^2)$,
we have $\tv_1(t)\in C(\R;H^1(\R^2))$ from \cite{MST}.
Applying the Strichartz estimate in \cite[Proposition~2.3]{Saut93} to
$$ \pd_x^{-1}\pd_y\tv_1(t)=e^{tS}\pd_x^{-1}\pd_yv_*-6\int_0^t
e^{(t-s)S}(\tv_1\pd_y\tv_1)(s)\,ds\,,
\quad S=-\pd_x^3-3\pd_x^{-1}\pd_y^2\,,
$$
we have $\pd_x^{-1}\pd_y\tv\in C(\R;L^2(\R))$.
Suppose that $v_0$ satisfies the assumption of Lemma~\ref{lem:nonzeromean1}
and that $u(t)$ is a solution to \eqref{eq:KPII} with
$u(0,x,y)=\varphi(x)+v_0(x,y)$, where $\varphi=\varphi_2$.
Then as \cite[Lemmas~3.1 and 3.3]{Miz17}, we can prove that
$w(t,x,y):=u(t,x+4t,y)-\varphi(x)-\tv_1(t,x+4t,y)$ belongs to an exponentially
weighted space $X=L^2(\R^2;e^{2ax}dxdy)$ for an $\a\in(0,1)$, that
$$w\in C([0,\infty);X)\,,\quad 
\pd_xw\,,\, \pd_x^{-1}\pd_yw\in L^2(0,T;X)\,,$$
and that the mapping
$$ \la x\ra^{-1}(\la x\ra+\la y\ra)^{-1}H^1(\R^2)\ni v_0
\mapsto w\in C([0,T];X)$$
is continuous for any $T>0$ by using by Lemma~\ref{lem:nonzeromean1}.
\par

Now let
\begin{equation}
  \label{eq:decomp2}
v_1(t,z,y)=\tv_1(t,z+x(t,y),y)\,,\quad v_2(t,z,y)=v(t,z,y)-v_1(t,z,y)\,.
\end{equation}
To fix the decomposition \eqref{eq:decomp},
we impose the constraint that for $k=1$, $2$,
\begin{equation}
  \label{eq:orth}
 \int_{\R^2}
v_2(t,z,y)g_k^*(z,\eta,c(t,y))e^{-iy\eta}\,dzdy=0
\quad\text{in $L^2(-\eta_0,\eta_0)$,}
\end{equation}
where 
$g_1^*(z,\eta,c)=cg_1^*(\sqrt{c/2}z,\eta)$ and
$g_2^*(z,\eta,c)=\frac{c}{2}g_2^*(\sqrt{c/2}z,\eta)$.
\par
Let
\begin{align*}
F_k[u,\tc,\gamma,L](\eta):=\mathbf{1}_{[-\eta_0,\eta_0]}&(\eta)
\int_{\R^2} \bigl\{u(x,y)+\varphi(x)- \varphi_{c(y)}(x-\gamma(y))
\\ & +\psi_{c(y),L}(x-\gamma(y))\bigr\}g_k^*(x-\gamma(y),\eta,c(y))
e^{-iy\eta}\,dxdy
\end{align*}
for $k=1$, $2$, where $c(y)=2+\tc(y)$.
Since $w(0)=\varphi_{c_1}-\varphi$ and
\begin{align*}
\|\varphi_{c_1}-\varphi\|_{X_1}\lesssim &
\left\|\la y\ra(c_1-2)\right\|_{L^2(\R_y)}
 \lesssim  \|\la x\ra\la y\ra v_0\|_{L^2(\R^2)}
\end{align*}
by Lemma~\ref{lem:nonzeromean1}, it follows from
\cite[Lemmas~5.2 and 5.4]{Miz15} that there exists
$(\tc_*,x_*)\in Y_1\times Y_1$ satisfying
$$F_1[w(0),\tc_*,x_*,L]=F_2[w(0),\tc_*,x_*,L]=0\,,$$
\begin{equation}
\label{eq:cx*bound}
\begin{split}
& \|\tc_*\|_Y+\|x_*\|_Y\lesssim \|w(0)\|_X\lesssim \|\la x\ra v_0\|_{L^2(\R^2)}\,,  
\\ & 
\|\tc_*\|_{Y_1}+\|x_*\|_{Y_1}\lesssim \|w(0)\|_{X_1}\lesssim
\|\la x\ra\la y\ra v_0\|_{L^2(\R^2)}\,,  
\end{split}
\end{equation}
provided $\|\la x\ra\la y\ra v_0\|_{L^2(\R^2)}$ is sufficiently small.
By the definitions,
\begin{equation}
  \label{eq:v2-init}
\left\{
  \begin{aligned}
& v_2(0,x,y)=v_{2,*}(x,y):=\varphi_{c_1(y)}(x)-\varphi_{c_*(y)}(x-x_*(y))
+\tpsi_{c_*(y),L}(x-x_*(y))\,,
\\ & \tc(0,y)=\tc_*(y)\,,\quad x(0,y)=x_*(y)\,,
  \end{aligned}
\right.
\end{equation}
where $c_*=2+\tc_*$
and it follows from Lemma~\ref{lem:nonzeromean1} and \eqref{eq:cx*bound}
that 
\begin{equation}
 \label{eq:v2*-bound}
 \|v_{2,*}\|_X\lesssim \|c_1-2\|_{L^2(\R)}+ \|\tc_*\|_Y
\lesssim \|\la x\ra v_0\|_{L^2(\R^2)}\,.
\end{equation}
\par

Lemma~5.2 in \cite{Miz15} implies that there exist
a $T>0$, $\tc(t,\cdot):=c(t,\cdot)-2\in C([0,T;Y)$ and $\tx(t,\cdot):=x(t,\cdot)-4t\in C([0,T];Y)$
satisfying
$$ F_1[w(t),\tc(t),\tx(t),L]=F_2[w(t),\tc(t),\tx(t),L]=0
\quad\text{for $t\in[0,T]$}$$
because $w\in C([0,\infty);X)$.
If $(v_2(t),\tc(t))$ remains small in $X\times Y$ for $t\in[0,T]$,
then the decomposition \eqref{eq:decomp}
and \eqref{eq:decomp2} satisfying \eqref{eq:orth} exists beyond $t=T$
thanks to the continuation argument.

\begin{proposition}\emph{(\cite[Proposition~3.9]{Miz17})}
\label{prop:continuation}
There exists a $\delta_1>0$ such that if \eqref{eq:decomp},
\eqref{eq:decomp2} and \eqref{eq:orth} hold for $t\in[0,T)$ and
\begin{gather*}
(\tc,\tx)\in C([0,T);Y\times Y)\cap C^1((0,T);Y\times Y)\,,
\\
\sup_{t\in[0,T]}(\|v_2(t)\|_X+\|\tc(t)\|_Y)<\delta_1\,,
\quad \sup_{t\in[0,T)}\|\tx(t)\|_Y<\infty\,,
\end{gather*}
then either $T=\infty$ or $T$ is not the maximal time of
the decomposition \eqref{eq:decomp} and \eqref{eq:decomp2} satisfying \eqref{eq:orth}.
\end{proposition}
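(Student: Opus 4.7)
The plan is to run a standard continuation argument: assume $T<\infty$ is the purported maximal time, extract limits of $(v_2(t),\tc(t),\tx(t))$ as $t\nearrow T$, use the already existing local decomposition result (the $t=0$ construction from \cite[Lemma~5.2]{Miz15}) restarted at $t=T$, and glue by uniqueness to contradict maximality. The two ingredients that make this possible are (i) $w\in C([0,\infty);X)$, which is part of the background setup, so the background field to be decomposed is globally defined in $X$, and (ii) the smallness $\|v_2(t)\|_X+\|\tc(t)\|_Y<\delta_1$ together with boundedness of $\|\tx(t)\|_Y$, which keeps us in the regime where the implicit-function/fixed-point machinery behind the construction of $(\tc_*,x_*)$ at $t=0$ still applies.

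First I would pass to the limit $t\to T^-$. The $C^1$ regularity in time of $(\tc,\tx)$ on $(0,T)$ together with the uniform $Y$-bound allows extraction of weak-$\ast$ limits $(\tc(T),\tx(T))\in Y\times Y$, and the modulation PDEs (obtained by differentiating $F_k[w(t),\tc(t),\tx(t),L]=0$ in $t$) together with the semigroup estimates of Proposition~\ref{prop:semigroup-est} and Lemma~\ref{lem:fund-sol} give the strong continuity $(\tc,\tx)\in C([0,T];Y\times Y)$. For $v_2$, write $v_2=v-v_1$ with $v(t,z,y)=u(t,z+x(t,y),y)-\varphi_{c(t,y)}(z)+\psi_{c(t,y),L}(z+3t)$ and $v_1$ the spatially translated global-in-time KP-II correction $\tv_1$ from \eqref{eq:tv1}–\eqref{eq:decomp2}; both sides inherit $C([0,T];X)$ regularity from continuity of $w$ and of $(\tc,\tx)$ at $T$. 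By continuity of $F_k$ in all its arguments, the orthogonality \eqref{eq:orth} persists at $t=T$, so $(v_2(T),\tc(T),\tx(T))$ is an admissible initial state.

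Next I would re-apply the construction used at $t=0$ in \eqref{eq:v2-init}–\eqref{eq:v2*-bound}, but with $w(T)$ in place of $w(0)$: since $\|v_2(T)\|_X+\|\tc(T)\|_Y<\delta_1$, the implicit-function argument behind \cite[Lemma~5.2]{Miz15} produces, for some $T'>T$, unique modulation parameters $(\tc',\tx')\in C([T,T'];Y\times Y)\cap C^1((T,T');Y\times Y)$ with $F_1=F_2=0$ and corresponding $v_2'\in C([T,T'];X)$. The uniqueness part of that lemma, applied at each time in a small right neighbourhood of $T$, forces $(\tc',\tx',v_2')$ to coincide with $(\tc,\tx,v_2)$ wherever both are defined, so the two pieces glue into an admissible decomposition on $[0,T')$, contradicting the assumption that $T$ was maximal.

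The main obstacle is the first step, namely showing that the limits at $t=T$ really sit in $X\times Y\times Y$ with the orthogonality preserved; this is where the choice to truncate Fourier modes to $[-\eta_0,\eta_0]$ pays off, because $\tc(t),\tx(t)$ live in the closed subspace $Y$ of band-limited functions, so weak limits automatically remain band-limited and one can upgrade to strong convergence using the smoothing in $y$ provided by $e^{t\mathcal{A}_*}$ in Lemma~\ref{lem:fund-sol}. Once that is in hand, everything else is a local-in-time repetition of the $t=0$ construction, and the smallness threshold $\delta_1$ is chosen precisely to make the fixed-point argument in the restart uniform in the base point.
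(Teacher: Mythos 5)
The paper itself gives no proof of this proposition: it is imported verbatim from \cite[Proposition~3.9]{Miz17}, and the argument there is exactly the continuation scheme you outline — use $w\in C([0,\infty);X)$, restart the implicit-function construction of \cite[Lemma~5.2]{Miz15} at $t=T$, and glue by local uniqueness. So your overall strategy is the right one and matches the source.

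There is, however, one step whose mechanism as you describe it is not available under the stated hypotheses. To show that $(\tc,\tx)$ extends continuously to $t=T$ you invoke ``the modulation PDEs \ldots together with the semigroup estimates of Proposition~\ref{prop:semigroup-est} and Lemma~\ref{lem:fund-sol}'' and later ``the smoothing in $y$ provided by $e^{t\mathcal{A}_*}$''. But the modulation system \eqref{eq:modeq} is only derived under the hypotheses of Proposition~\ref{prop:modulation}, which require smallness of $\bM_{c,x}(T)$ and $\bM_2(T)$ (i.e.\ quantitative decay of the parameters), none of which is assumed in the continuation proposition; using it here is circular. Moreover $e^{t\mathcal{A}_*}$ acts on the band-limited space $Y$ and provides decay, not a compactness or smoothing gain that would upgrade a weak-$\ast$ limit to a strong one. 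The correct (and simpler) mechanism is the one that the smallness hypothesis $\|v_2(t)\|_X+\|\tc(t)\|_Y<\delta_1$ is designed for: it keeps the linearization of $(F_1,F_2)$ in $(\tc,\gamma)$ uniformly invertible on $[0,T)$, so by the quantitative implicit function theorem $(\tc(t),\tx(t))=\mathcal{G}(w(t))$ for a solution map $\mathcal{G}$ that is continuous on a fixed neighbourhood, and convergence as $t\to T^-$ follows directly from $w(t)\to w(T)$ in $X$. With that replacement (and the observation that \eqref{eq:orth} at $t=T$ then holds by continuity of $F_k$), the rest of your argument — restarting at $T$ and gluing by uniqueness — goes through as written.
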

\bigskip

\section{Modulation equations}
By \cite[Lemma~3.6 and Remark~3.7]{Miz17},
$$v_2(t)\in C([0,T);X)\,,\quad 
(\tc,\tx)\in C([0,T);Y\times Y)\cap C^1((0,T);Y\times Y)\,,
$$
where $T$ is the maximal time of the decomposition
\eqref{eq:decomp} and \eqref{eq:decomp2} satisfying \eqref{eq:orth}.
Substituting \eqref{eq:decomp2} with $z=x-x(t,y)$ into \eqref{eq:tv1}
and  \eqref{eq:decomp} and \eqref{eq:decomp2} into \eqref{eq:KPII},
we have
\begin{equation}
  \label{eq:v1}
\pd_tv_1-2c\pd_zv_1+\pd_z^3v_1+3\pd_{z}^{-1}\pd_y^2v_1
=\pd_z(N_{1,1}+N_{1,2})+N_{1,3}\,,
\end{equation}
where $N_{1,1}=-3v_1^2$, $N_{1,2}=\{x_t-2c-3(x_y)^2\}v_1$
and $N_{1,3}=6\pd_y(x_yv_1)-3x_{yy}v_1$, and
\begin{equation}
  \label{eq:v2}
\left\{
  \begin{aligned}
& \pd_tv_2=\mL_{c}v_2+\ell+\pd_z(N_{2,1}+N_{2,2}+N_{2,4})+N_{2,3}\,,
\\ &
v_2(0)=v_{2,*}\,,
  \end{aligned}
\right.
\end{equation}
where  
$\mL_cv=-\pd_z(\pd_z^2-2c+6\varphi_c)v-3\pd_z^{-1}\pd_y^2$,
$\ell=\sum_{k=1}^2\ell_k$, $\ell_k=\sum_{j=1}^3\ell_{kj}$ $(k=1$, $2)$,
$\tpsi_c(z)=\psi_{c,L}(z+3t)$ and
\begin{align*}
\ell_{11}=&(x_t-2c-3(x_y)^2)\varphi'_c-(c_t-6c_yx_y)\pd_c\varphi_c\,,\quad
\ell_{12}=3x_{yy}\varphi_c\,,\\
\ell_{13}=& 3c_{yy}\int_z^\infty\pd_c\varphi_c(z_1)\,dz_1
+3(c_y)^2\int_z^\infty \pd_c^2\varphi_{c}(z_1)\,dz_1\,,\\
\ell_{21}=& (c_t-6c_yx_y)\pd_c\tpsi_c-(x_t-4-3(x_y)^2)\tpsi_c'\,,\\
\ell_{22}=&(\pd_z^3-\pd_z)\tpsi_c-3\pd_z(\tpsi_c^2)
+6\pd_z(\varphi_c\tpsi_c)-3x_{yy}\tpsi_c\,,\\
\ell_{23}=&-3c_{yy}\int_z^\infty\pd_c\tpsi_c(z_1)\,dz_1
-3(c_y)^2\int_z^\infty \pd_c^2\tpsi_c(z_1)\,dz_1\,,
\end{align*}
\begin{gather*}
N_{2,1}=-3(2v_1v_2+v_2^2)\,,\quad  N_{2,2}=\{x_t-2c-3(x_y)^2\}v_2+6\tpsi_cv_2\,,
\\ N_{2,3}=6\pd_y(x_yv_2)-3x_{yy}v_2\,,\quad N_{2,4}=6(\tpsi_c-\varphi_c)v_1\,.
\end{gather*}
\par
Differentiating \eqref{eq:orth} with respect to $t$ and substituting
\eqref{eq:v2} into the resulting equation, we have
 in $L^2(-\eta_0,\eta_0)$
\begin{equation}
\label{eq:orth_t}
\begin{split}
& \frac{d}{dt}\int_{\R^2}v_2(t,z,y)g_k^*(z,\eta,c(t,y))
e^{-i y\eta}\,dzdy
\\=& \int_{\R^2} \ell g_k^*(z,\eta,c(t,y))e^{-i y\eta}\,dzdy
+\sum_{j=1}^6II^j_k(t,\eta)=0\,,
\end{split}  
\end{equation}
where
\begin{align*}
II^1_k=& \int_{\R^2} v_2(t,z,y)\mL_{c(t,y)}^*(g_k^*(z,\eta,c(t,y))
e^{-iy\eta})\, dzdy\,,\\
II^2_k=& -\int_{\R^2} N_{2,1}\pd_zg_k^*(z,\eta,c(t,y))
e^{-i y\eta}\, dzdy\,,\\
II^3_k=& \int_{\R^2} N_{2,3}g_k^*(z,\eta,c(t,y))e^{-iy\eta}\,dzdy
\\ & +6\int_{\R^2} v_2(t,z,y)c_y(t,y)x_y(t,y)
\pd_cg_k^*(z,\eta,c(t,y))e^{-i y\eta}\, dzdy\,,\\
II^4_k=& \int_{\R^2} v_2(t,z,y)\left(c_t-6c_yx_y\right)(t,y)
\pd_cg_k^*(z,\eta,c(t,y))e^{-i y\eta}\, dz y\,,\\
II^5_k=& -\int_{\R^2} N_{2,2}\pd_zg_k^*(z,\eta,c(t,y))e^{-i y\eta}\,dzdy\,,
\quad
II^6_k= -\int_{\R^2} N_{2,4}\pd_zg_k^*(z,\eta,c(t,y))e^{-i y\eta}\,dzdy\,.
\end{align*}
Using the fact that $g_1^*(z,\eta,c)\simeq\varphi_c(z)$ and
$g_2^*(z,\eta,c)\simeq(c/2)^{3/2}\int_{-\infty}^z\pd_c\varphi_c$ for $\eta\simeq0$,
we derive the modulation equations for $c(t,y)$ and $x(t,y)$
(see \cite[Section~4]{Miz17}). 
\par
To write down the modulation equation, let us introduce several notations.
Let $R^j$, $\wR^j$, $\wS_j$, $\bS_j$, $\widetilde{\mathcal{A}}_1(t)$, 
$B_j$ and $\wC_j$  be the same as those in \cite[pp.~165--168]{Miz17}
except for the definitions of $R^4$ and $R^5$.
We move a part of  $R^4$ into $R^5$.
See \eqref{eq:defR4} and \eqref{eq:defR5} in Appendix~\ref{ap:r}.

Note that 
\begin{equation}
\label{eq:bdef}
b(t,\cdot):=\frac{1}{3}\wP_1\left\{\sqrt{2}c(t,\cdot)^{3/2}-4\right\}
=\tc(t,\cdot)+O(\tc^2)\,,
\end{equation}
where $\wP_1 f=\mF_\eta^{-1}\mathbf{1}_{[-\eta_0,\eta_0]}\mF_yf$ and
$\mathbf{1}_{[-\eta_0,\eta_0]}$ is a characteristic function of $[-\eta_0,\eta_0]$.
We make use of \eqref{eq:bdef} to translate the nonlinear term $c_yx_y$
into a divergence form.
\par

Now let us introduce localized norms of $v(t)$.
Let $p_\a(z)=1+\tanh\a z$ and $\|v\|_{W(t)}=\|p_\a(z+3t+L)^{1/2}v\|_{L^2(\R^2)}$.
Assuming the smallness of the following quantities,
we can derive modulation equations of $b(t,y)$ and $x(t,y)$ for $t\in[0,T]$.
Let $0\le T\le\infty$ and 
\begin{align*}
& \bM_{c,x}(T)=\sum_{k=0,\,1}\sup_{t\in[0,T]}\bigl\{\la t\ra^{(2k+1)1/4}
(\|\pd_y^k\tc(t)\|_Y+\|\pd_y^{k+1}x(t)\|_Y)
       +\la t\ra(\|\pd_y^2\tc(t)\|_Y+\|\pd_y^3x(t)\|_Y)\bigr\}\,,
\\ & \bM_v(T)=\sup_{t\in[0,T]}\|v(t)\|_{L^2}\,,       
\\ & \bM_1(T)=\sup_{t\in[0,T]}\{\la t\ra^2\|v_1(t)\|_{W(t)}
 +\la t\ra\|(1+z_+)v_1(t)\|_{W(t)}\}
+\|\mathcal{E}(v_1)^{1/2}\|_{L^2(0,T;W(t))}\,,
\\ & 
\bM_1'(\infty)=\sup_{t\ge0}\|\mathcal{E}(\tv_1(t))^{1/2}\|_{L^2(\R^2)}\,, \quad
\bM_2(T)=\sup_{0\le t\le T}\la t \ra^{3/4}\|v_2(t)\|_X
+\|\mathcal{E}(v_2)^{1/2}\|_{L^2(0,T:X)}\,,
\end{align*}
where $\mathcal{E}(v)=(\pd_xv)^2+(\pd_x^{-1}\pd_yv)^2+v^2$.
We remark that by an anisotropic Sobolev inequality (see e.g. \cite{Besov}),
\begin{equation}
  \label{eq:Sobolev}
\|v\|_{L^2(\R^2)}+\|v\|_{L^6(\R^2)}\lesssim \|\mathcal{E}(v)^{1/2}\|_{L^2(\R^2)}\,.  
\end{equation}
We can prove the following result exactly in the same way as
\cite[Proposition~3.9]{Miz17}).
\begin{proposition}
\label{prop:modulation}
There exists a $\delta_2>0$ such that if 
$\bM_{c,x}(T)+\bM_2(T)+\eta_0+e^{-\a L}<\delta_2$ for a $T\ge0$, then  
\begin{gather}
  \label{eq:modeq}
\begin{pmatrix}b_t \\ \tx_t\end{pmatrix}=\mathcal{A}(t)
\begin{pmatrix}b \\ \tx\end{pmatrix}\,+\sum_{i=1}^6 \cN^i\,,\\
  \label{eq:modeq-init}
b(0,\cdot)=b_*\,,\quad x(0,\cdot)=x_*\,,
\end{gather}
where $b_*=4/3\wP_1\{(c_*/2)^{3/2}-1\}$,
$\mathcal{A}(t)=
\mathcal{A}_*+B_4^{-1}\widetilde{\mathcal{A}}_1(t)
+\pd_y^4\mathcal{A}_1(t)+\pd_y^2\mathcal{A}_2(t)$,
\begin{align*}
& \mathcal{A}_1(t)=-B_4^{-1}(\wS_1B_1^{-1}B_2+\wS_0)\,,\quad
\mathcal{A}_2(t)=B_4^{-1}\wS^3B_1^{-1}B_2\,,
\\  
\cN^1=& \wP_1\begin{pmatrix}
6(b\tx_y)_y\\ 2(\tilde{c}-b)+3(\tx_y)^2
\end{pmatrix}\,,\quad \cN^2=\cN^{2a}+\cN^{2b}\,,\\
 \cN^{2a}=& B_3^{-1}\left(
\wP_1R^7_1\mathbf{e_1}+\wR^1+\wR^3\right)\,,
\quad  \cN^{2b}=B_3^{-1}\wP_1 R^7_2\mathbf{e_2}\,,
\quad \mathbf{e}_1=\begin{pmatrix}1 \\ 0\end{pmatrix}\,,
\quad \mathbf{e}_2=\begin{pmatrix}0 \\ 1 \end{pmatrix}\,,
\\ \cN^3=& B_3^{-1}\pd_y(\wR^2+\wR^4)\,,\quad
\cN^4=(B_3^{-1}-B_4^{-1})(B_2-\pd_y^2\wS_0)\pd_y
\begin{pmatrix} b_y \\ x_y\end{pmatrix}\,,
\\ \cN^5=& (B_3^{-1}-B_4^{-1})\widetilde{\mathcal{A}}_1(t)
\begin{pmatrix} b \\ \tx \end{pmatrix}\,,
\quad \cN^6= B_3^{-1}R^{v_1}\,.
\end{align*}
\end{proposition}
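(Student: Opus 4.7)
The plan is to follow the strategy of \cite[Section~4]{Miz17}: differentiate the orthogonality constraint \eqref{eq:orth} in time to obtain the identity \eqref{eq:orth_t}, then rearrange it as an evolution PDE for $(b,\tx)$ by isolating the contributions linear in $(c_t,x_t)$ from everything else. First I would substitute the explicit form of $\ell=\ell_1+\ell_2$ into \eqref{eq:orth_t}. The pieces $\ell_{11}$ and $\ell_{21}$ are linear in $c_t-6c_yx_y$ and $x_t-2c-3(x_y)^2$, so pairing with $g_k^*(z,\eta,c(t,y))e^{-iy\eta}$ and integrating produces, in Fourier variables, a $2\times 2$ matrix operator $B_4(t,\eta)$ acting on these combinations. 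This matrix is invertible on the band $[-\eta_0,\eta_0]$ under the smallness hypothesis, since at $(\eta,c)=(0,2)$ it reduces to a constant nondegenerate matrix built from $\la\varphi,\pd_c\varphi\ra$-type integrals; a Neumann series then gives $B_4^{-1}$. Inversion yields the linear piece $\mathcal{A}_*+B_4^{-1}\widetilde{\mathcal{A}}_1(t)$, while the pieces $\ell_{12},\ell_{13},\ell_{22},\ell_{23}$, which are linear in $c_{yy},x_{yy}$, contribute the $\pd_y^2\mathcal{A}_2(t)$ and $\pd_y^4\mathcal{A}_1(t)$ terms after inversion.

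A key algebraic manoeuvre is the substitution \eqref{eq:bdef}: because $\tc=b+O(b^2)$, the genuinely quadratic cross term $6c_yx_y$, which is not in divergence form in $(c,x)$-variables, becomes $6\pd_y(b\tx_y)$ modulo cubic remainders, and this is precisely the first coordinate of $\cN^1$. The second coordinate $2(\tc-b)+3(\tx_y)^2$ collects the residues of $x_t-2c-3(x_y)^2$ and of the nonlinear change of variables $\tc\mapsto b$. The remaining $II^j_k$ are then assigned to $\cN^2,\dots,\cN^6$ by origin: the $v_2$-quadratic source $N_{2,1}$ feeds into $\cN^2=\cN^{2a}+\cN^{2b}$ through the $R^7$ and $\wR^{1,3}$ components; the $y$-divergence piece of $N_{2,3}=6\pd_y(x_yv_2)-3x_{yy}v_2$, integrated against $g_k^*$, takes the form $B_3^{-1}\pd_y(\wR^2+\wR^4)$, which is $\cN^3$; the discrepancy between $B_3^{-1}$ and $B_4^{-1}$ produced when part of the linearisation is absorbed into $\widetilde{\mathcal{A}}_1$ provides $\cN^4$ and $\cN^5$; finally, the $v_1$-interaction $R^{v_1}$ yields $\cN^6=B_3^{-1}R^{v_1}$.

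The main obstacle is the bookkeeping required to verify that every contribution fits into exactly one of $\cN^1,\dots,\cN^6$ with the claimed algebraic structure and acceptable smallness. This relies on the hypothesis $\bM_{c,x}(T)+\bM_2(T)+\eta_0+e^{-\a L}<\delta_2$ in two distinct ways: to run the Neumann series giving $B_4^{-1}$ and $B_3^{-1}$ uniformly on $Y$, and to Taylor-expand $g_k^*(z,\eta,c(t,y))$ in $(\eta,c)$ around $(0,2)$ so that the $O(\eta^2)$ and $O(\tc)$ remainders can be absorbed either into $\widetilde{\mathcal{A}}_1$ or into the $\cN^i$. The low-frequency cut-off $\wP_1$ appearing in $\cN^1$ and in \eqref{eq:bdef} is forced by the orthogonality condition \eqref{eq:orth} being imposed only on frequencies in $[-\eta_0,\eta_0]$; the complementary band is handled separately through the exponential decay \eqref{eq:k-est,high}. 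With the redistribution of $R^4$ and $R^5$ described in Appendix~\ref{ap:r}, every remaining step is structurally identical to the derivation in \cite[Section~4]{Miz17}, so the modulation system \eqref{eq:modeq}--\eqref{eq:modeq-init} follows; the initial data $b_*$, $x_*$ come directly from \eqref{eq:v2-init} and the definition \eqref{eq:bdef}.
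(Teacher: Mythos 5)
Your proposal follows exactly the route the paper takes: the paper's proof of this proposition is simply the derivation of \cite[Section~4]{Miz17} (differentiating \eqref{eq:orth}, inverting the $B_3$, $B_4$ operators by Neumann series, and using the change of variable \eqref{eq:bdef} to put $c_yx_y$ into divergence form), modified only by the redistribution of $R^4$ and $R^5$ recorded in Appendix~\ref{ap:r} and the relabelling of $\cN^5$, $\cN^6$. Your sketch captures all of these points, so it is correct and essentially identical to the paper's argument.
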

We remark that $\cN^6$ equals to $\cN^5$ in \cite{Miz17}
and that $\mathcal{A}(t)+\cN^5$ equals to $\mathcal{A}(t)$ in \cite{Miz17}.
The other terms are exactly the same.
\par

To apply \eqref{eq:decayE1} and \eqref{eq:fund-asymp1} to \eqref{eq:modeq}
in Section~\ref{sec:phase},  we need to decompose $b_*$ into a sum
of an integrable function and a function that belongs to $\pd_y^2Y_1$.
Note that $\wP_1L^1(\R)\subset Y_1\subset Y$ and $Y\subset \cap_{k\ge0}H^k(\R)$.
\begin{lemma}
  \label{lem:modeq-init-decomp}
There exist $\obu{b}\in L^1(\R)$ and $\oc{b}\in Y_1$ such that
$b_*=\wP_1\obu{b}+\pd_y^2\oc{b}$ and
$$\bigl\|\obu{b}\bigr\|_{L^1(\R)}+\bigl\|\oc{b}\bigr\|_{Y_1}
\lesssim \left\|\la x\ra\la y\ra v_0\right\|_{L^2(\R^2)}\,.$$
\end{lemma}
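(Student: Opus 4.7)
The plan is to express $b_*$ via the identity $c_1-c_0=m+\tfrac18 m^2$, with $c_0=2$ and $m(y):=\int_\R v_0(x,y)\,dx$, and to split the result into a mass-carrying $L^1$-piece and a piece vanishing like $\eta^2$ at low frequency (which will become $\pd_y^2\oc{b}$). The two Cauchy--Schwarz estimates
\begin{equation*}
|m(y)|\le \|\la x\ra^{-1}\|_{L^2_x}\|\la x\ra v_0(\cdot,y)\|_{L^2_x}\,,
\qquad
\|m\|_{L^1_y}\le\|\la y\ra^{-1}\|_{L^2_y}\|\la y\ra m\|_{L^2_y}\,,
\end{equation*}
combined with Lemma~\ref{lem:nonzeromean1}, give $m\in L^1(\R)\cap L^2(\R)$ with norm $\lesssim \|\la x\ra\la y\ra v_0\|_{L^2(\R^2)}$, and hence $c_1-2\in L^1(\R)\cap L^2(\R)$ with the same bound.

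Next I would invert the orthogonality $F_1[w(0),\tc_*,x_*,L]=0$ with $w(0)=\varphi_{c_1}-\varphi$. After changing variable to $z=x-x_*(y)$, the integrand becomes $\varphi_{c_1(y)}(z+x_*(y))-\varphi_{c_*(y)}(z)+\psi_{c_*,L}(z)$; Taylor-expanding in the small parameters $c_1-2,\tc_*,x_*$ and using $g_1^*(z,0,2)=\tfrac12\varphi$ together with the nonvanishing scalar $\int\pd_c\varphi\cdot\varphi\,dz$ (a standard KdV solvability computation), I would solve for $\tc_*$ in the form
$\tc_*=\wP_1(c_1-2)+\mathcal{R}_L+\mathcal{R}_Q+\mathcal{R}_\psi$,
where $\mathcal{R}_L$ collects the $O(\eta^2)$ corrections generated by the $\eta$-expansion of $g_1^*(z,\eta,c_*)$ acting on the linear data $(c_1-2,x_*)$, $\mathcal{R}_Q$ gathers quadratic-or-higher products of $(c_1-2,\tc_*,x_*)$, and $\mathcal{R}_\psi=O(e^{-\a L})$ comes from $\psi_{c_*,L}$. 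Substituting into $b_*=\wP_1\tc_*+\wP_1 O(\tc_*^2)$ then gives
$b_*=\wP_1(m+\tfrac18 m^2)+\wP_1(\mathcal{R}_Q+O(\tc_*^2)+\mathcal{R}_\psi)+\wP_1\mathcal{R}_L$.

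Set $\obu{b}:=m+\tfrac18 m^2$ plus the quadratic pieces coming from $\mathcal{R}_Q+O(\tc_*^2)+\mathcal{R}_\psi$: these lie in $L^1(\R)$ because $\tc_*,x_*\in L^2(\R)$ by \eqref{eq:cx*bound}, $c_1-2\in L^2(\R)$ by \eqref{eq:nonzero1b}, and $L^2\times L^2\hookrightarrow L^1$ handles all products, while $\mathcal{R}_\psi$ is controlled by $e^{-\a L}$ times the same norms. The remaining $\wP_1\mathcal{R}_L$ has Fourier transform of the form $\mathbf{1}_{[-\eta_0,\eta_0]}(\eta)\eta^2 h(\eta)$, hence equals $-\pd_y^2\oc{b}$ with $\oc{b}=\mathcal{F}^{-1}\{\mathbf{1}_{[-\eta_0,\eta_0]}h\}$. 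The main obstacle is producing a bounded $h(\eta)$: one must show that the $\eta$-expansion of $g_1^*(z,\eta,c_*)$, applied to the $L^2$-data $(c_1-2,x_*)$, yields a remainder whose Fourier transform in $y$ is uniformly bounded on $[-\eta_0,\eta_0]$. This uses the smoothness of $g_1^*(z,\eta,c)$ in $\eta$ (so that the second-order Taylor remainder in $\eta$ factors out $\eta^2$ cleanly) and the bounded Fourier support of $\tc_*,x_*$, which ensures that products of $Y$-functions land in $Y_1$ rather than merely in $L^2$.
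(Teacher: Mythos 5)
Your proposal follows essentially the same route as the paper's proof: split the orthogonality functional into its $\eta=0$ part plus $\eta^{2}$ times a remainder bounded in $Z_1$ (the paper's $F_{10}+\eta^{2}F_{11}$, built from $g_{11}^*(z,\eta,c)=\eta^{-2}\{g_1^*(z,\eta,c)-g_1^*(z,0,c)\}$), put the data term in $L^1(\R_y)$ via the $\la y\ra$-weight on $\int_\R v_0\,dx$, and control the quadratic terms by $L^2\times L^2\subset L^1$. The one imprecision is your treatment of $\mathcal{R}_\psi$: the $\psi_{c_*,L}$ contribution is \emph{linear} in $\tc_*$ (namely $\tfrac12\tc_*\int\varphi\,\psi(\cdot+L)$ to leading order), so placing it among the $L^1$ remainders is circular as written — the paper instead keeps it inside the invertible linear coefficient $J_0=\bigl(-1+\tfrac12\int\varphi\,\psi(\cdot+L)\bigr)\tc_*$ (equivalently, absorb this $O(e^{-\a L})\tc_*$ term into the left-hand side before estimating), after which your argument closes.
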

\begin{proof}
Since $b_*-\tc_*=\frac{4}{3}\wP_1\{(c_*/2)^{3/2}-1-\tc_*\}$ and
$\|(c_*/2)^{3/2}-1-c_*\|_{L^1(\R)} \lesssim
\|\tc_*\|_Y^2  \lesssim  \|\la x\ra v_0\|_{L^2(\R^2)}^2$,
it suffices to show that there exist $\obu{c}\in L^1(\R)$ and
$\oc{c}\in Y_1$ such that $\tc_*=\obu{c}+\pd_y^2\oc{c}$ and
$$\bigl\|\obu{c}\bigr\|_{L^1(\R)}+\bigl\|\oc{c}\bigr\|_{Y_1}
\lesssim \left\|\la x\ra\la y\ra v_0 \right\|_{L^2(\R^2)}\,.$$
\par
Let
\begin{gather*}
F_{10}[u,\tc,\gamma,L](\eta):=\frac{1}{2}\mathbf{1}_{[-\eta_0,\eta_0]}(\eta)
\int_{\R^2} \left\{u(x,y)+\Phi[\tc,\gamma](x,y)\right\}
\varphi_{c(y)}(x-\gamma(y))e^{-iy\eta}\,dxdy\,,
\\
F_{11}[u,\tc,\gamma,L](\eta):=\mathbf{1}_{[-\eta_0,\eta_0]}(\eta)
\int_{\R^2} \left\{u(x,y)+\Phi[\tc,\gamma](x,y)\right\}
g_{k1}^*(x-\gamma(y),\eta,c(y))e^{-iy\eta}\,dxdy\,
\end{gather*}
where $c(y)=2+\tc(y)$ and
$\Phi[\tc,\gamma](x,y)=\varphi(x)-\varphi_{c(y)}(x-\gamma(y))+\psi_{c(y),L}(x-\gamma(y))$. Then
$$
F_1[u,\tc,\gamma,L](\eta)=F_{10}[u,\tc,\gamma,L](\eta)
+\eta^2F_{11}[u,\tc,\gamma,L](\eta)\,,$$
and we can prove 
$$\left\|F_{11}[u,\tc,\gamma,L]\right\|_{Z_1}
\lesssim \|u\|_{X_1}+\|\tc\|_{Y_1}+\|\gamma\|_{Y_1}
+\|u\|_X(\|\tc\|_{Y_1}+\|\gamma\|_{Y_1})$$
in exactly the same way as the proof of \cite[Lemma~5.1]{Miz15}.
\par
Let $w_0(x,y)=\varphi_{c_1(y)}(x)-\varphi(x)$.
Since $F_1[w_0,\tc_*,x_*,L]=0$ and $(w_0,\tc_*,x_*)\in X_1\times Y_1\times Y_1$,
\begin{align}
    & F_{10}[w_0,\tc_*,x_*,L](\eta)=-\eta^2F_{11}[w_0,\tc_*,x_*,L](\eta)\,,
\\  & F_{11}[w_0,\tc_*,x_*,L]\in Z_1\,.
  \end{align}
\par
Let
\begin{align*}
& \Phi_0(x,y)=-\tc_*(y)\{\pd_c\varphi(x)-\psi(x+L)\}+x_*(y)\varphi'(x)\,,
\\ &
\Psi_1(x,y)=\frac12\left\{\varphi_{c_*(y)}(x-x_*(y))-\varphi(x)\right\}\,.
\end{align*}
Then $\mF_\eta^{-1}F_{10}[w_0,\tc_*,x_*,L](y)
=(2\pi)^{1/2}\wP_1(J_0+J_1+J_2+J_3$),
where
\begin{align*}
J_0= & \frac12\int_\R \Phi_0(x,y)\varphi(x)\,dx=
\left(-1+\frac12\int_\R \varphi(x)\psi(x+L)\,dx\right)\tc_*\,,
\\
J_1=& \frac12\int_\R w_0(x,y)\varphi_{c_*(y)}(x-x_*(y))\,dx\,,
\quad
J_2= \int_\R \Phi[\tc_*,x_*](x,y)\Psi_1(x,y)\,dx\,,
\\
J_3=& \frac12\int_\R \left\{\Phi[\tc_*,x_*](x,y)-\Phi_0(x,y)\right\}\varphi(x)\,dx\,,
\end{align*}
and
\begin{gather*}
\|J_1\|_{L^1(\R)} \lesssim  \|w_0\|_{L^1(\R^2)}
 \lesssim  \left\|\la y\ra(c_1-2)\right\|_{L^2(\R)}\,,
\\
\|J_2\|_{L^1(\R)}\lesssim  \|\Phi\|_{L^2(\R^2)}\|\Psi_1\|_{L^2(\R^2)}
\lesssim  (\|\tc_*\|_Y+\|x_*\|_Y)^2\,,
\\
\|J_3\|_{L^1(\R)}\lesssim \|\Phi-\Phi_0\|_{L^1(\R^2)}
\lesssim (\|\tc_*\|_Y+\|x_*\|_Y)^2\,.
\end{gather*}
Combining the above with Lemma~\ref{lem:nonzeromean1} and
\eqref{eq:cx*bound}, we obtain Lemma~\ref{lem:modeq-init-decomp}.
\end{proof}

\bigskip

\section{\`A priori estimates for modulation parameters.}
In this section, we will estimate  $\bM_{c,x}(T)$
assuming smallness of $\bM_1(T)$, $\bM_2(T)$, $\bM_v(T)$, $\eta_0$
and $e^{-\a L}$.
\begin{lemma}
  \label{lem:Mcx-bound}
There exist positive constants $\delta_3$ and $C$ such that if
$\bM_{c,x}(T)+\bM_1(T)+\bM_2(T)+\eta_0+e^{-\a L}\le \delta_3$, then
\begin{equation}
  \label{eq:Mcx-bound}
\bM_{c,x}(T)\le C\| \la x\ra(\la x\ra+\la y\ra) v_0\|_{L^2}
+C(\bM_1(T)+\bM_2(T)^2)\,.
\end{equation}
\end{lemma}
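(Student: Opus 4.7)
The plan is to treat the modulation system \eqref{eq:modeq}--\eqref{eq:modeq-init} via Duhamel's formula and bound each resulting piece in the $Y$-norms making up $\bM_{c,x}(T)$. Split $\mathcal{A}(t)=\mathcal{A}_*+\cR(t)$, where $\cR(t):=B_4^{-1}\widetilde{\mathcal{A}}_1(t)+\pd_y^4\mathcal{A}_1(t)+\pd_y^2\mathcal{A}_2(t)$, and move $\cR(t)(b,\tx)^{T}$ to the right-hand side, so that
\begin{equation*}
\begin{pmatrix}b(t)\\ \tx(t)\end{pmatrix}=e^{t\mathcal{A}_*}\begin{pmatrix}b_*\\ x_*\end{pmatrix}+\int_0^t e^{(t-s)\mathcal{A}_*}\left[\cR(s)\begin{pmatrix}b\\ \tx\end{pmatrix}(s)+\sum_{i=1}^6\cN^i(s)\right]\,ds\,.
\end{equation*}
The target bound is $\|\pd_y^k\tc(t)\|_Y+\|\pd_y^{k+1}\tx(t)\|_Y\lesssim \la t\ra^{-(2k+1)/4}$ for $k=0,1$ and $\lesssim\la t\ra^{-1}$ for $k=2$, with constants proportional to the right-hand side of \eqref{eq:Mcx-bound}.

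For the linear part, I first invoke Lemma~\ref{lem:modeq-init-decomp} to decompose $b_*=\wP_1\obu{b}+\pd_y^2\oc{b}$ with $\|\obu{b}\|_{L^1}+\|\oc{b}\|_{Y_1}\lesssim \|\la x\ra\la y\ra v_0\|_{L^2(\R^2)}$, and record $\|x_*\|_{Y_1}\lesssim \|\la x\ra\la y\ra v_0\|_{L^2(\R^2)}$ from \eqref{eq:cx*bound}. For the $\wP_1\obu{b}$ piece, Young's inequality together with $\|K_1(t,\cdot)\|_{L^2}\lesssim\la t\ra^{-1/4}$ from \eqref{eq:k1-est} and the higher-derivative bounds \eqref{eq:k2-est1}--\eqref{eq:k2-est2} handle the action of $\chi_1(D_y)e^{t\mathcal{A}_*}E_1$. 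For the $\pd_y^2\oc{b}$ residue, I transfer the two $\pd_y$'s onto the kernel to gain an extra factor $\la t\ra^{-1}$ via the $L^1$ bounds of \eqref{eq:k2-est1}. The $x_*$ entry is handled via \eqref{eq:etA-2}--\eqref{eq:etA-3}, and the $\chi_2(D_y)$-components decay exponentially thanks to \eqref{eq:k-est,high}.

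The nonlinear contribution is treated term by term. The term $\cN^1$ is pointwise bilinear in $(b,\tc,\tx_y)$, hence controlled by $\bM_{c,x}(T)^2$ with an integrable-in-time weight; for each Duhamel integral I split $\int_0^t=\int_0^{t/2}+\int_{t/2}^t$ and pair the semigroup blocks against $L^1$ or $Y$ inputs accordingly. The terms $\cN^2$ and $\cN^3$ collect the residual forcings $\wR^1,\wR^2,\wR^3,\wR^4$ and $R^7$ from \cite{Miz17}; their size is controlled by $\bM_2(T)(\bM_2(T)+\bM_{c,x}(T))$ through the $X$-norm of $v_2$ combined with the exponential $p_\a$-localization, and the divergence form of $\cN^3$ donates a $\pd_y$ to the kernel that supplies the missing half-order of decay. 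The commutator-like pieces $\cN^4$ and $\cN^5$ carry the factor $B_3^{-1}-B_4^{-1}=O(\tc)$ and are therefore quadratic in small quantities. Finally $\cN^6=B_3^{-1}R^{v_1}$ is dominated by $\bM_1(T)$ via the $W(t)$-weighted norms in that quantity, while the perturbative remainder $\cR(t)(b,\tx)^{T}$ has two or four $\pd_y$'s acting on $(b,\tx)$, which match exactly the higher-order pieces of $\bM_{c,x}(T)$ and close with a small prefactor $\eta_0+e^{-\a L}+\bM_{c,x}(T)$.

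Assembling the above gives
\begin{equation*}
\bM_{c,x}(T)\le C\|\la x\ra(\la x\ra+\la y\ra)v_0\|_{L^2}+C\bigl(\bM_1(T)+\bM_2(T)^2\bigr)+C\bigl(\eta_0+e^{-\a L}+\bM_{c,x}(T)\bigr)\bM_{c,x}(T)\,,
\end{equation*}
and choosing $\delta_3$ so that $C(\eta_0+e^{-\a L}+\bM_{c,x}(T))\le 1/2$ absorbs the quadratic self-term and produces \eqref{eq:Mcx-bound}. The main obstacle will be the careful bookkeeping of $L^1$ vs.\ $Y$ pairings against the semigroup entries in \eqref{eq:fundamental-low}: because the $E_1$ block of $e^{t\mathcal{A}_*}$ is wave-like and does not decay in $L^\infty$ from $L^1$ data (see the $O(1)$ bound in \eqref{eq:decayE1} and the bound $\|K_3\ast f\|_{L^\infty}\lesssim\|f\|_{L^1}$), one cannot simply put $b_*$ into $L^1$; the splitting supplied by Lemma~\ref{lem:modeq-init-decomp} is precisely what converts the bulk of $b_*$ into an $L^1$ piece acceptable for $L^2$-decay estimates and relegates the rest to a $\pd_y^2 Y_1$ residue that enjoys the full $\la t\ra^{-1}$ gain, and analogous care is required to make the divergence structure of $\cN^3$ and the $p_\a$-localization of $v_2$ yield matching decay rates in the Duhamel integrals.
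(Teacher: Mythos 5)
Your overall architecture (Duhamel formula for the modulation system, splitting the propagator contributions according to $L^1$/$Y_1$ versus $Y$ inputs, and absorbing the quadratic self-term by smallness) matches the paper's, which works with the propagator $U(t,s)$ of $A(t)=\diag(1,\pd_y)\mathcal{A}(t)\diag(1,\pd_y^{-1})$ and the decay bounds \eqref{eq:BB1}--\eqref{eq:BB2} rather than treating $\mathcal{A}(t)-\mathcal{A}_*$ perturbatively against $e^{t\mathcal{A}_*}$; that difference is cosmetic. (Incidentally, for this $Y$-norm lemma the decomposition of $b_*$ from Lemma~\ref{lem:modeq-init-decomp} is not needed --- $\|b_*\|_{Y_1}\lesssim\|\la x\ra\la y\ra v_0\|_{L^2}$ already suffices; that decomposition is required later for the $L^\infty$ phase estimate.)

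The genuine gap is in your one-line treatment of $\cN^6=B_3^{-1}R^{v_1}$. The $W(t)$-weighted bound on $v_1$ only yields $\|\cN^6(t)\|_Y\lesssim\bM_1(T)\la t\ra^{-2}$, i.e.\ a $Y$-bound, and the $E_1$ (non-divergence) component of $\diag(1,\pd_y)\cN^6$ then enters the Duhamel integral paired with $\|U(t,s)\|_{B(Y)}=O(1)$, giving only $O(\bM_1(T))$ with \emph{no} time decay --- not the $\la t\ra^{-1/4}$ required for $\sup_t\la t\ra^{1/4}\|\tc(t)\|_Y$ in $\bM_{c,x}(T)$. To upgrade to $\la t\ra^{-(2k+1)/4}$ one must feed $U(t,s)$ a $Y_1$-input via \eqref{eq:BB2}, but $\|\cN^6\|_{Y_1}$ does \emph{not} decay, because $\|\int_\R\varphi(z)v_1(t,z,y)\,dz\|_{L^1(\R_y)}$ need not decay in time. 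The paper's fix is structural and cannot be skipped: one writes $E_1\cN^6=E_1\obu{B}_4R^{v_1}+\cdots$ using the decomposition $B_3^{-1}=\obu{B}_4+\obu{B}_{34}-\pd_y^2(\oc{B}_{14}+\oc{B}_{34})$ (Claim~\ref{cl:b4part1}), identifies the worst part of $R^{v_1}_1$ as an exact time derivative $-k_t$ via \eqref{eq:Rv1'}, and removes it by the change of unknown $\tb=b+\tk$ in \eqref{eq:modeq2'}; the remaining pieces $\obu{R}_{v_1}$, $\pd_y\oc{R}_{v_1}$ and the $S^7_1$ terms are then genuinely $Y_1$- or divergence-structured and close the estimate. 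A similar (smaller) omission occurs in $\cN^{2a}$, whose constituent $R^4$ contains $II^2_k$ with the mixed product $v_1v_2$; there the bound \eqref{eq:N2a-v1}, exploiting the $p_\a$-localization of $v_1$ against $v_2\in X$, is what restores the decay rate. Without the $\tk$-substitution your scheme does not produce \eqref{eq:Mcx-bound}.
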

To prove Lemma~\ref{lem:Mcx-bound}, we need the following.
\begin{claim}
\label{cl:cx_t-bound}
Let $\delta_2$ be as in proposition~\ref{prop:modulation}. Suppose
$\bM_{c,x}(T)+\bM_1(T)+\bM_2(T)+\eta_0+e^{-\a L}<\delta_2$ for a $T\ge0$.
Then for $t\in[0,T]$,
$$\|c_t\|_Y+\|x_t-2c-3(x_y)^2\|_Y\lesssim 
(\bM_{c,x}(T)+\bM_1(T)+\bM_2(T)^2)\la t\ra^{-3/4}\,.$$
\end{claim}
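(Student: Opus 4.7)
The plan is to read off $b_t$ and $\tx_t$ directly from the modulation equation \eqref{eq:modeq} of Proposition~\ref{prop:modulation}, and then convert the resulting $Y$-bounds into bounds on $c_t$ and on $x_t - 2c - 3(x_y)^2$. By \eqref{eq:bdef} and the smallness of $\|\tc\|_Y$, the map $c\mapsto b$ is a smooth near-identity, so $\|c_t\|_Y \lesssim \|b_t\|_Y$. For the phase, the second row of $\mathcal{A}_*(D_y)$ acts as $2-\mu_3\pd_y^2$ on $b$ and as $\pd_y^2$ on $\tx$, which yields
\begin{equation*}
x_t - 2c - 3(x_y)^2 = 2(b-\tc) - \mu_3 \pd_y^2 b + \pd_y^2 \tx - 3(\tx_y)^2 + (\text{correction terms}),
\end{equation*}
where the leading discrepancy $2(b-\tc)=O(\tc^2)$ is already quadratic, so no cancellation with the $2c$ subtraction is needed beyond \eqref{eq:bdef}.

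Next I estimate each contribution in $Y$ using the definition of $\bM_{c,x}(T)$. The first row of $\mathcal{A}_*(D_y)(b,\tx)^\top$ equals $3\pd_y^2 b + 8\pd_y^2 \tx$; the critical estimate here is $\|\pd_y^2 \tx\|_Y \lesssim \bM_{c,x}(T)\la t\ra^{-3/4}$, which dictates the overall rate, whereas $\|\pd_y^2 \tc\|_Y \lesssim \bM_{c,x}(T)\la t\ra^{-1}$ is strictly better. The correction $B_4^{-1}\widetilde{\mathcal{A}}_1(t) + \pd_y^4 \mathcal{A}_1(t) + \pd_y^2 \mathcal{A}_2(t)$ has coefficients built from $v_2$-inner products and therefore absorbs an extra factor $\|v_2\|_X \lesssim \bM_2(T)\la t\ra^{-3/4}$. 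Quadratic products such as $(\tx_y)^2$, $b\tx_y$, and $\tc^2$ are handled via the interpolation $\|f\|_{L^\infty} \lesssim \|f\|_Y^{1/2}\|\pd_y f\|_Y^{1/2}$ combined with the decay rates encoded in $\bM_{c,x}(T)$, giving contributions of size $O(\bM_{c,x}(T)^2 \la t\ra^{-3/4})$.

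For the nonlinear terms, $\cN^1$ is a $\wP_1$-projection of a quadratic in $(b,\tx_y,\tc-b)$, so $\|\cN^1\|_Y = O(\bM_{c,x}(T)^2 \la t\ra^{-3/4})$. The terms $\cN^2,\cN^3,\cN^4,\cN^5$ come from the orthogonality identity \eqref{eq:orth_t} and each is an integral pairing $v_2$ against one of $g_k^*, \pd_c g_k^*$ or $\tpsi_c$, multiplied by a second small factor ($\tc, \tx_y, v_1$, or another $v_2$); the exponential localization of the adjoint eigenfunctions allows $X$-$X^*$-type estimates, yielding $\bM_2(T)^2 \la t\ra^{-3/4}$ or mixed $\bM_2(T)\bM_{c,x}(T)\la t\ra^{-3/4}$ bounds. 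Finally $\cN^6 = B_3^{-1} R^{v_1}$ collects all $v_1$-contributions; integrated against the exponentially decaying $g_k^*$, they inherit the weighted decay $\|v_1(t)\|_{W(t)} \lesssim \bM_1(T)\la t\ra^{-2}$ built into $\bM_1(T)$, giving $\|\cN^6\|_Y \lesssim \bM_1(T)\la t\ra^{-3/4}$ with room to spare.

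The main obstacle will be the bookkeeping: verifying term by term in $\widetilde{\mathcal{A}}_1(t)$, $\wR^j$, and $R^7_k$ that every contribution containing $v_2$ pairs it with another small quantity (a second $v_2$, or $v_1, \tc, \tx_y$) rather than appearing with an $O(1)$ coefficient, so that the resulting estimate is genuinely quadratic in $(\bM_1(T),\bM_2(T),\bM_{c,x}(T))$ rather than linear in $\bM_2(T)$ alone. Once this structural check is in place, summation over all contributions closes at the claimed $\la t\ra^{-3/4}$ rate.
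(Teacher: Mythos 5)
Your proposal is correct and follows essentially the same route as the paper: read $b_t$ and $\tx_t$ off the modulation system \eqref{eq:modeq}, control the linear part ($\widetilde{\mathcal{A}}_1$, $\pd_y^2\tc$, $\pd_y^2\tx$, $b-\tc$) through the definition of $\bM_{c,x}(T)$ with the $\la t\ra^{-3/4}$ rate dictated by $\pd_y^2\tx$, treat $\cN^1$--$\cN^5$ as genuinely quadratic products of small quantities (with the $v_1v_2$ pairing in $N_{2,1}$ handled by the exponential localization of $g_k^*$, i.e. \eqref{eq:N2a-v1}), and absorb $\cN^6$ via the weighted decay $\|v_1\|_{W(t)}\lesssim\bM_1(T)\la t\ra^{-2}$ as in \eqref{eq:N6-est}. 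The term-by-term bookkeeping you flag at the end is exactly what the paper also defers to \cite[Chapter~7]{Miz15} and \cite[Claim~D.6]{Miz15}.
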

\begin{proof}
In view of \eqref{eq:modeq},
  \begin{align*}
& \|c_t(t)\|_Y+\|x_t-2c-3(x_y)^2\|_Y \lesssim I+\sum_{2\le i\le 6}\|\cN^i\|_Y\,,
\\ & I=\|b_t-c_t\|_Y+\|\widetilde{\mathcal{A}}_1(t)(b,\tx)\|_Y
+\|b_{yy}\|_Y+\|x_{yy}\|_Y+\|(b\tx_y)_y\|_Y+\|(I-\wP_1)x_y^2\|_Y\,,
  \end{align*}
and it follows from Claim~\ref{cl:akbound}, \cite[Claim~D.6]{Miz15}
and the definition of $\bM_{c,x}(T)$ that
\begin{equation*}
I\lesssim \bM_{c,x}(T)\la t\ra^{-3/4}\quad\text{for $t\in[0,T]$.}  
\end{equation*}
See the proof of Lemma~5.2 in \cite{Miz17}. 
Following the line of \cite[Chapter~7]{Miz15} and using \eqref{eq:N2a-v1},
we can prove that for $t\in[0,T]$,
\begin{equation*}
\sum_{i=2}^5 \|\cN_i(t)\|_Y\lesssim
(\bM_{c,x}(T)+\bM_1(T)+\bM_2(t))^2\la t\ra^{-1} \,.
\end{equation*}
Combining the above with \eqref{eq:N6-est}, we have Claim~\ref{cl:cx_t-bound}.
\end{proof}
\par

To deal with $E_1\cN^6$,
we  decompose $\chi(D_y)B_3^{-1}$ and $\chi(D_y)B_4^{-1}$
into a sum of operators that belong to $B(L^1(\R))$ and operators
that belong to $\pd_y^2B(Y_1)$.
Since
\begin{align*}
& B_3=B_1+\wC_1+\pd_y^2(\bS_1+\bS_2)-\bS_3-\bS_4-\bS_5\,,
\\ & 
B_4=B_1+\pd_y^2\wS_1-\wS_3=B_3|_{\tc=0\,,\,v_2=0}\,,
\end{align*}
we have
\begin{gather}
\label{eq:B4B3expr}
B_4^{-1}=\obu{B}_4-\pd_y^2\oc{B}_{14}\,,\quad
B_3^{-1}-B_4^{-1}=\obu{B}_{34}-\pd_y^2\oc{B}_{34}\,,
\\
\obu{B}_4=(B_1-\wS_{31})^{-1}\,,
\quad \oc{B}_{14}=B_4^{-1}(\wS_1+\wS_{32})\obu{B}_4\,,
\\ \label{eq:B34expr}
\obu{B}_{34}=-\obu{B}_4(\wC_1+\wS_{31}-\bS_{31}-\bS_{41}-\bS_{51})B_3^{-1}\,,
\\ 
\oc{B}_{34}=\oc{B}_{14}(B_4-B_3)B_3^{-1}+\obu{B}_4\left(
\bS_1-\wS_1+\bS_2+\bS_{32}-\wS_{32}+\bS_{42}+\bS_{52}\right)\,
\end{gather}
where $\wS_j$ and $\bS_j$ are the same as those in \cite[p.~167]{Miz17} and
$\wS_{j1}$, $\wS_{j2}$, $\bS_{j1}$ and $\bS_{j2}$ are defined by
\eqref{eq:defS3k1}--\eqref{eq:defS6k2} and
\eqref{eq:defwS3l}--\eqref{eq:defwS5l} in Appendix~\ref{ap:s}.
We remark that $\wS_j=\wS_{j1}-\pd_y^2\wS_{j2}$, that
$\bS_j=\bS_{j1}-\pd_y^2\bS_{j2}$  and that $\wS_{j1}$ is a
time-dependent constant multiple of $\wP_1$.
\begin{claim}
  \label{cl:b4part1}
Let $0\le T\le \infty$.
There exist positive constants $\eta_0$, $L_0$ and $C$ such that if
$|\eta|\le\eta_0$ and $L\ge L_0$, then for every $t\in[0,T]$,
\begin{gather}
  \label{eq:b4part1}
 \bigl\|\obu{B}_4\bigr\|_{B(Y)\cap B(Y_1)}
+\bigl\|\oc{B}_{14}\bigr\|_{B(Y)\cap B(Y_1)}\le C\,,
\\ \label{eq:b4part2}
\|\chi(D_y)\obu{B}_4\|_{B(L^1)}\le C\,,
\\ \label{eq:b4part2'}
\bigl\|\obu{B}_4-B_1^{-1}\bigr\|_{B(Y_1)}
+\bigl\|\chi(D_y)(\obu{B}_4-B_1^{-1})\bigr\|_{B(L^1)}
\le Ce^{-\a(3t+L)}\,.
\end{gather}
Moreover, if $\bM_{c,x}(T)+\bM_2(T)\le \delta$ is sufficiently small,
then there exists a positive constant $C_1$ such that for $t\ge0$,
  \begin{align}
\label{eq:b4part3}
& \bigl\|\obu{B}_{34}\bigr\|_{B(Y,Y_1)}+\bigl\|\oc{B}_{34}\bigr\|_{B(Y,Y_1)}
  \le C_1(\bM_{c,x}(T)+\bM_2(T))\la t\ra^{-1/4}\,,
\\ &
\label{eq:b4part4}
 \bigl\|\chi(D_y)\obu{B}_{34}\bigr\|_{B(Y,L^1)} \le
 C_1(\bM_{c,x}(T)+\bM_2(T))\la t\ra^{-1/4}\,,
\\ &
\label{eq:b4part5}
 \bigl\|[\pd_y,\obu{B}_{34}]\bigr\|_{B(Y,Y_1)}
+ \bigl\|\chi(D_y)[\pd_y,\obu{B}_{34}]\bigr\|_{B(Y,L^1)} \le
 C_1(\bM_{c,x}(T)+\bM_2(T))\la t\ra^{-3/4}\,.
  \end{align}
\end{claim}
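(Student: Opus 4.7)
The plan is to treat $\obu{B}_4$ as a Neumann-series perturbation of $B_1^{-1}$, and then to read off the remaining bounds from the multiplicative expressions \eqref{eq:B4B3expr}--\eqref{eq:B34expr} together with the smallness of their ingredients. First I would recall from \cite{Miz17} that $B_1$ is essentially a smooth Fourier multiplier on $Y$ whose symbol is bounded away from zero on $[-\eta_0,\eta_0]$, so that $B_1^{-1}\in B(Y)\cap B(Y_1)$ and $\chi(D_y)B_1^{-1}$ is convolution with a Schwartz kernel and hence bounded on $L^1(\R)$. Then I would verify that $\wS_{31}$ is a time-dependent scalar multiple of $\wP_1$ with coefficient of size $O(e^{-\a(3t+L)})$: this follows by inspecting the defining integral of $\wS_{31}$, which pairs its input with $\tpsi_c=\psi_{c,L}(\cdot+3t)$, a function supported near $z=-3t-L$, so the weight $e^{\a z}$ built into the $X$-norm produces the exponential decay. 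Consequently, for $L\ge L_0$, the Neumann series
\[
\obu{B}_4=(B_1-\wS_{31})^{-1}=B_1^{-1}\sum_{n\ge 0}(\wS_{31}B_1^{-1})^n
\]
converges in $B(Y)\cap B(Y_1)$ and its $\chi(D_y)$-restriction converges in $B(L^1)$, yielding \eqref{eq:b4part1}, \eqref{eq:b4part2} and \eqref{eq:b4part2'} simultaneously. The bound on $\oc{B}_{14}=B_4^{-1}(\wS_1+\wS_{32})\obu{B}_4$ then follows since $B_4^{-1}$ is controlled by the same argument (using $B_4=B_3|_{\tc=0,\,v_2=0}$) and $\wS_1$, $\wS_{32}$ already have the required operator bounds in \cite{Miz17}.

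Under the smallness hypothesis $\bM_{c,x}(T)+\bM_2(T)\le\delta$, the second step is to show that each of $\wC_1$, $\wS_{31}$, $\bS_{31}$, $\bS_{41}$, $\bS_{51}$, as well as the $\bS_{j2}$, $\wS_{j2}$ entries appearing in $\oc{B}_{34}$, defines a bounded map $Y\to Y_1$ with norm $\lesssim (\bM_{c,x}(T)+\bM_2(T))\la t\ra^{-1/4}$. Each of these operators has the form: pair the argument $f\in Y$ with a prescribed $z$-profile, multiply by one of $\tc$, $x_y$, $v_2$ or $\tpsi_c$, and project with $\wP_1$. Cauchy--Schwarz in $z$ together with the $Y_1=\mathcal{F}_y^{-1}L^\infty$ characterization of the output space reduces the bound to $\|\tc\|_Y+\|x_y\|_Y+\|v_2\|_X+e^{-\a(3t+L)}$, each of which is $\lesssim (\bM_{c,x}(T)+\bM_2(T))\la t\ra^{-1/4}$ by the definitions of $\bM_{c,x}$ and $\bM_2$. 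Composing with $\obu{B}_4$ (bounded $Y_1\to Y_1$) and $B_3^{-1}$ (itself another Neumann-series perturbation of $B_1^{-1}$ under the same smallness) yields \eqref{eq:b4part3}, and the $\chi(D_y)$ version \eqref{eq:b4part4} follows from the same computation combined with $\chi(D_y)\obu{B}_4\in B(L^1)$.

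For the commutator bound \eqref{eq:b4part5}, I would distribute $\pd_y$ across the product defining $\obu{B}_{34}$. The contribution from commuting $\pd_y$ with the Fourier-multiplier parts $B_1^{-1}$ and $B_3^{-1}$ vanishes at leading order, so each surviving term carries a $y$-derivative of one of the coefficients $\tc$, $x_y$, $v_2$, or $\tpsi_c$. The definition of $\bM_{c,x}$ (which controls $\|\pd_y\tc\|_Y+\|\pd_y^2 x\|_Y$ by $\bM_{c,x}\la t\ra^{-3/4}$) and the energy control of $\pd_y v_2$ through $\mathcal{E}(v_2)^{1/2}$ then supply the additional $\la t\ra^{-1/2}$ factor that improves the decay from $\la t\ra^{-1/4}$ to $\la t\ra^{-3/4}$.

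The main obstacle I anticipate is the bookkeeping required to verify the precise structural properties of the auxiliary operators $\wS_j$, $\bS_j$, $\wC_1$ and their splittings into $\wS_{j1}$, $\wS_{j2}$, $\bS_{j1}$, $\bS_{j2}$. In particular, the statement that $\wS_{j1}$ is a scalar multiple of $\wP_1$, which is what allows $(B_1-\wS_{31})^{-1}$ to be handled as a scalar perturbation of a smooth Fourier multiplier, and the localization of the $z$-dependent kernels (so that pairings against $v_2\in X$ actually output $Y_1$-valued functions), must be read off carefully from the definitions collected in the appendix. Once these structural facts are in hand, the analytic part of the argument reduces to routine applications of Cauchy--Schwarz, Young's inequality, and Neumann series.
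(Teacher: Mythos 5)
Your overall architecture coincides with the paper's: a Neumann series for $\obu{B}_4=(B_1-\wS_{31})^{-1}$ exploiting that $\wS_{31}$ is a time-dependent scalar multiple of $\wP_1$ of size $O(e^{-\a(3t+L)})$ (this is exactly \eqref{eq:obu-exp} combined with Claim~\ref{cl:S3}), and then operator-norm estimates on the building blocks of $\obu{B}_{34}$ and $\oc{B}_{34}$ taken from the appendix and from \cite{Miz15}. The first part of your argument (\eqref{eq:b4part1}--\eqref{eq:b4part2'}) is fine.

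There is, however, a genuine gap in your treatment of \eqref{eq:b4part3}--\eqref{eq:b4part5}. You assert that \emph{each} of $\wC_1$, $\wS_{31}$, $\bS_{31}$, $\bS_{41}$, $\bS_{51}$ is bounded from $Y$ to $Y_1$ with norm $\lesssim(\bM_{c,x}(T)+\bM_2(T))\la t\ra^{-1/4}$. This is true for $\wC_1$ (its kernel carries a factor $c^2-4=O(\tc)$), for $\bS_{41}$ (its entries carry factors $\sqrt{2/c}-1$ or $\sqrt{2c}-2$, both $O(\tc)$), and for $\bS_{51}$ (its kernel carries $v_2$), but it is \emph{false} for $\wS_{31}$ and $\bS_{31}$ individually: these pair the input only against $\psi(\cdot+3t+L)$ and $g_k^*$, with no factor of $\tc$, $x_y$ or $v_2$, so their norms are $O(e^{-\a(3t+L)})$ — small when $L$ is large, but not controlled by $\bM_{c,x}(T)+\bM_2(T)$, which is what \eqref{eq:b4part3} demands (and what the downstream applications such as \eqref{eq:B34Rv1} need, since there the factor $\bM_{c,x}+\bM_2$ multiplies another small quantity to produce a genuinely quadratic term). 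The required smallness is recovered only from the \emph{difference}: since $\bS_{31}=\wS_{31}(I+\wC_2)^{-1}$, one has $\wS_{31}-\bS_{31}=\wS_{31}\wC_2(I+\wC_2)^{-1}$, and $\wC_2=\cC_2E_1$ with $\cC_2=\wP_1\{(c/2)^{1/2}-1\}\wP_1=O(\tc)$ supplies the factor $\bM_{c,x}(T)\la t\ra^{-1/4}$; this cancellation is the content of \eqref{eq:clwS3-1'} in Claim~\ref{cl:wS3} and is the reason $\wS_{31}$ and $\bS_{31}$ appear together, with opposite signs, inside the parenthesis defining $\obu{B}_{34}$ in \eqref{eq:B34expr}. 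The same pairing must be used in the commutator estimate \eqref{eq:b4part5}. With that correction (and the observation, which you do make, that the analogous cancellation is not needed for $\bS_{41}$, $\bS_{51}$ because their kernels already contain small factors), your argument goes through and matches the paper's.
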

\begin{proof}
By \cite[Claim~6.3]{Miz15} and \cite[Claim~B.1]{Miz15},
$$
\sup_{t\ge0}\|B_4^{-1}\|_{B(Y)\cap B(Y_1)}=O(1)\,,\quad
\|\wS_1\|_{B(Y)\cap B(Y_1)}=O(1)\,.$$
Combining the above with \eqref{eq:clwS3-1} and \eqref{eq:clwS3-2},
we have \eqref{eq:b4part1}.
\par
Next, we will prove \eqref{eq:b4part2}.
Since $\chi(\eta)\chi_1(\eta)=\chi(\eta)$ and
$[\chi_1(D_y),\wS_{31}]=0$,
\begin{equation}
\label{eq:obu-exp}
\chi(D_y)\obu{B}_4
=\sum_{n\ge0}\chi(D_y)(B_1^{-1}\chi_1(D_y)\wS_{31})^nB^{-1}\,.  
\end{equation}
Hence it follows from \eqref{eq:clwS3-1} that
$$  \|\chi(D_y)\obu{B}_4\|_{B(L^1)} \lesssim 
\|\check{\chi}\|_{L^1}\sum_{n\ge0}\|B_1^{-1}\chi_1(D_y)\wS_{31}\|_{B(L^1(\R))}^n
=O(1)\,.$$
We can prove \eqref{eq:b4part2'} in the same way.
\par
By \cite[Claims~6.1 and 6.2]{Miz15}, we have
\begin{equation*}
\sup_{t\in[0,T]}\|B_3^{-1}\|_{B(Y)}<\infty\quad\text{and}\quad
\|\wC_1\|_{B(Y,Y_1)}\lesssim \bM_{c,x}(T)\la t\ra^{-1/4}
\quad\text{for $t\in[0,T]$.}
\end{equation*}
It follows from Claims~6.1, B.1--B.3 in \cite{Miz15} that
\begin{equation*}
\|B_3-B_4\|_{B(Y,Y_1)}+\|\bS_1-\wS_1\|_{B(Y,Y_1)}
\lesssim (\bM_{c,x}(T)+\bM_2(T))\la t\ra^{-1/4}
\quad\text{ for $t\in[0,T]$.}
\end{equation*}
Combining the above with \eqref{eq:b4part1} and Claim~\ref{cl:wS3},
we have \eqref{eq:b4part3}.
\par
Since
$$\chi(D_y)\obu{B}_{34}=-\chi(D_y)\obu{B}_4
\chi_1(D_y)\left(\wC_1+\wS_{31}-\sum_{3\le j\le 5}
\bS_{j1}\right)B_3^{-1}\,,
$$
we have \eqref{eq:b4part4} from Claim~\ref{cl:wS3}.
Using the fact that
$\|[\pd_y,\wC_1]\|_{B(Y,Y_1)}+\|\chi_1(D_y)[\pd_y,\wC_1]\|_{B(Y,L^2)}\lesssim \bM_{c,x}(T)\la t\ra^{-3/4}$
(see \cite[Claim~B.7]{Miz15}), we can prove
\eqref{eq:b4part5} in the same way as \eqref{eq:b4part3} and
\eqref{eq:b4part4}.
This completes the proof of Claim~\ref{cl:b4part1}.
\end{proof}

Now we are in position to prove Lemma~\ref{lem:Mcx-bound}.
\begin{proof}[Proof of Lemma~\ref{lem:Mcx-bound}]
Let $A(t)=\diag(1,\pd_y)\mathcal{A}(t)\diag(1,\pd_y^{-1})$ and $U(t,s)$ be
the semigroup generated by $A(t)$. Lemma~4.2 in \cite{Miz15} implies that 
there exists a $C=C(\eta_0,k)>0$ such that
\begin{gather}
\label{eq:BB1}
\|\pd_y^kU(t,s)f\|_Y\le C\la t-s\ra^{-k/2}\|f\|_Y
\quad\text{for $t\ge s\ge0$,}
\\ \label{eq:BB2}
\|\pd_y^kU(t,s)f\|_Y\le C\la t-s\ra^{-(2k+1)/4}\|f\|_{Y_1}
\quad\text{for $t\ge s\ge0$,}
\end{gather}
provided $\eta_0$ is sufficiently small.
\par
Multiplying \eqref{eq:modeq} by $\diag(1,\pd_y)$ from the left,
we have
\begin{equation}
  \label{eq:modeq2}
\left\{
  \begin{aligned}
& \pd_t\begin{pmatrix}b \\ x_y\end{pmatrix}=A(t)
\begin{pmatrix}b \\ x_y\end{pmatrix}\,+\sum_{i=1}^6\diag(1,\pd_y)\cN^i\,,
\\ & b(0,\cdot)=b_*\,,\quad \pd_yx(0,\cdot)=\pd_yx_*\,.
  \end{aligned}\right.
\end{equation}
Since $\|\cN^6\|_{Y_1}$ does not necessarily decay as $t\to\infty$,
we make use of the change of variable
\begin{gather}
\label{eq:def-k}
k(t,y)=\frac{1}{2}\wP_1
\left(\int_{\R} v_1(t,z,y)\varphi_{c(t,y)}(z)\,dz\right)\,,
\quad
S^3_{11}[\psi](t)=\frac12\int_{\R^2}\psi(z+3t+L)\varphi(z)\,dz\,,
\\
\tb(t,y)=b(t,y)+\tk(t,y)\,,\quad \tk(t,y)=(2-S^3_{11}(t))^{-1}k(t,\cdot)
\,. \notag
\end{gather}
Then
\begin{equation}
\label{eq:modeq2'}
\left\{
  \begin{aligned}
& \pd_t\begin{pmatrix}\tb \\ x_y\end{pmatrix}=A(t)
\begin{pmatrix}\tb \\ x_y\end{pmatrix}\,+\sum_{i=1}^6\diag(1,\pd_y)\cN^i
+\pd_t\tk(t)\mathbf{e_1}+A(t)\tk(t)\mathbf{e_1}\,,
\\ & \tb(0,\cdot)=b_*+\tk(0,\cdot)\,,
\quad \pd_yx(0,\cdot)=\pd_yx_*\,.
  \end{aligned}\right.  
\end{equation}
\par
By \eqref{eq:cx*bound} and \eqref{eq:BB2},
$$\left\|\pd_y^kU(t,0)
\begin{pmatrix}\tb(0,\cdot)\\ x_y(0,\cdot)\end{pmatrix}\right\|_Y
\lesssim
\la t \ra^{-(2k+1)/4}(\|\tb(0,\cdot)\|_{Y_1}+\|\pd_yx_*\|_{Y_1})\,,$$
and 
\begin{align*}
 \|\tb(0)\|_{Y_1}+\|\pd_yx_*\|_{Y_1}
\lesssim & \|b_*\|_{Y_1}+\|\la y\ra v_*\|_{L^2}+\eta_0\|x_*\|_{Y_1}
+\|\varphi_{c_*(y)}(x-x_*(y))v_*\|_{L^1(\R^2)}
\\ \lesssim &
\|\la x\ra(\la x\ra+\la y\ra) v_0\|_{L^2}\,.
\end{align*}
\par
Except for $\cN^6$, the term which includes $v_1$ in \eqref{eq:modeq2}
and needs to be treated differently
from \cite[(6.16)]{Miz15} is $\cN^{2a}$ because $\wR^1$ includes $R^4$
and $R^4$ includes the inverse Fourier transform of
$\mathbf{1}_{[-\eta_0,\eta_0]}(\eta)II^2_k(t,\eta)$. 
But thanks to the smallness of $\bM_1(T)$, 
\begin{equation}
  \label{eq:N2a-v1}
\begin{split}
\|II^2_k\|_{L^\infty[-\eta_0,\eta_0]} =&
\sup_{\eta\in[-\eta_0,\eta_0]}\left|\int_{\R^2} N_{2,1}
\pd_zg_k^*(z,\eta,c(t,y))e^{-iy\eta}\,dzdy\right|
\\ \lesssim & (\|p_\a(z)v_1\|_{L^2}\|v_2\|_X+\|v_2\|_X^2)
\sup _{c\in[2-\delta,2+\delta]\,,\eta\in[-\eta_0,\eta_0]}
\|e^{-2\a z}g_k^*(z,\eta,c)\|_{L^\infty_z}
\\ \lesssim & (\bM_1(T)+\bM_2(T))^2\la t \ra^{-3/2}\,,
\end{split}  
\end{equation}
and $\|R^4\|_{Y_1}$ decays at the rate as in \cite[Claim~D.5]{Miz15}.
Using \eqref{eq:BB1}, \eqref{eq:BB2} and \eqref{eq:N2a-v1},
we can prove that for $t\in[0,T]$ and $k=0$, $1$, $2$,
\begin{align*}
& \sum_{i=1}^5\int_0^t \|\pd_y^kU(t,s)\diag(1,\pd_y)\cN^j(s)\|_Y\,ds
 \lesssim  (\bM_{c,x}(T)+\bM_1(T)+\bM_2(T))^2\la t\ra^{-\min\{1,(2k+1)/4\}}\,.
\end{align*}
in the same way as \cite[Chapter~7]{Miz15}.
\par

Since $\diag(1,\pd_y)\cN^6=E_1\cN^6+\pd_yE_2\cN^6$,
it follows from \eqref{eq:BB1}
\begin{equation}
\label{eq:fNk}
\fN^k:=
\left\|\int_0^t \|\pd_y^kU(t,s)\pd_yE_2\cN^6(s)\|_Y\,ds\right\|_Y
\lesssim \int_0^t \la t-s\ra^{-(k+1)/2}\|\cN^6(s)\|_Y\,ds\,.
\end{equation}
Using the fact that
\begin{align*}
& \sup_{\eta\in[-\eta_0,\eta_0]}
\left(|\varphi_{c(t,y)}(z)\pd_zg_k^*(z,\eta,c(t,y))
-\varphi(z)\pd_zg_k^*(z,\eta)|
+|\tpsi_{c(t,y)}(z)\pd_zg_k^*(z,\eta,c(t,y))|\right)
  \\ & \lesssim   e^{-2\a|z|}|\tc(t,y)|\,,
\end{align*}
we see that
\begin{equation}
  \label{eq:Rv1-est}
\begin{split}
& \|R^{v_1}(t)\|_Y
\\ \lesssim  & 
\sum_{k=1,2}\left\|\int_{\R^2}
\varphi(z)v_1(t,z,y)\pd_zg_k^*(z,\eta)e^{-iy\eta}
\,dzdy\right\|_{L^2(-\eta_0,\eta_0)}
+\|\tc\|_Y\|e^{-\a|z|}v_1(t)\|_{L^2(\R^2)}
\\ \lesssim &  \|e^{-\a|z|}v_1(t)\|_{L^2(\R^2)}
\lesssim \la t \ra^{-2}\bM_1(T)
\quad\text{$t\in[0,T]$,}
\end{split}  
\end{equation}
and that
\begin{gather}
  \label{eq:N6-est}
\|\cN^6(t)\|_Y\lesssim  \la t \ra^{-2}\bM_1(T)
\quad\text{for $t\in[0,T]$,}
\end{gather}
follows from the boundedness of $B_3^{-1}$ (see \cite[Claim~4.5]{Miz17}).
Combining \eqref{eq:fNk} and \eqref{eq:N6-est}, we have
$\fN^k\lesssim \bM_1(T)\la t\ra^{-(k+1)/2}$ for $t\in[0,T]$ and $k=0$, $1$, $2$.
\par
Now we investigate
\begin{equation}
  \label{eq:cN6}
E_1\cN^6=E_1B_3^{-1}R^{v_1}=E_1\{\obu{B}_4+\obu{B}_{34}
-\pd_y^2(\oc{B}_{14}+\oc{B}_{34})\}R^{v_1}
\end{equation}
more precisely. We remark that
$\|\cN^6\|_{Y_1}$ cannot be expected to decay like $\|\cN^6\|_Y$ as $t\to\infty$
because
$$\left\|\int_\R\varphi(z)v_1(t,z,y)\,dz\right\|_{L^1(\R_y)}$$
does not necessarily decay as $t\to\infty$.
By \eqref{eq:Rv1-est} and Claim~\ref{cl:b4part1},
\begin{gather}
\label{eq:B4Rv1}
\bigl\|\obu{B}_4R^{v_1}\bigr\|_Y+\bigl\|(\oc{B}_{14}+\oc{B}_{34})R^{v_1}\bigr\|_Y
 \lesssim \bM_1(T)\la t\ra^{-2}\,,
\\
  \label{eq:B34Rv1}
\bigl\|\chi(D_y)\obu{B}_{34}R^{v_1}\bigr\|_{L^1}
+\bigl\|\obu{B}_{34}R^{v_1}\bigr\|_{Y_1}\lesssim 
\bM_1(T)(\bM_{c,x}(T)+\bM_2(T))\la t\ra^{-9/4}\,.
\end{gather}
\par
In view of \eqref{eq:BB1}, \eqref{eq:BB2}
and \eqref{eq:cN6}--\eqref{eq:B34Rv1}, we see that for $k=0$, $1$, $2$ and $t\in[0,T]$,
\begin{align*}
& \left\|\int_0^t \pd_y^kU(t,s)
E_1\{\obu{B}_{34}-\pd_y^2(\oc{B}_{14}+\oc{B}_{34})\}R^{v_1}\,ds\right\|_Y
\\ \lesssim & 
\int_0^t \la t-s\ra^{-(2k+1)/4}\bigl\|\obu{B}_{34}R^{v_1}\bigr\|_{Y_1}\,ds
+ \int_0^t \la t-s\ra^{-(k+2)/2}\bigl\|(\oc{B}_{14}+\oc{B}_{34})\}R^{v_1}\bigr\|_Y\,ds
\\ \lesssim & \bM_1(T)\la t\ra^{-(2k+1)/4}\,,
\end{align*}
and that $E_1\obu{B}_4R^{v_1}$ is the hazardous part of $E_1\cN^6$.
\par
The worst part of $E_1\obu{B}_4R^{v_1}$ can be expressed as
a time derivative of a decaying function as in \cite{Miz17}.
The operator $B_1-\wS_{31}$ and its inverse $\obu{B}_4$ are lower triangular
on $Y\times Y$ and
\begin{equation}
\label{eq:bad-part1}
E_1\obu{B}_4R^{v_1}=(2-S^3_{11}[\psi])^{-1}R^{v_1}_1\mathbf{e_1}\,.
\end{equation}
In view of \cite[pp.175--176]{Miz17},
\begin{equation}
  \label{eq:Rv1'}
R^{v_1}_1=S^7_1[\pd_c\varphi_c](c_t)-S^7_1[\varphi_c'](x_t-2c-3(x_y)^2)
-k_t+\obu{R}_{v_1}+\pd_y\oc{R}_{v_1}\,,
\end{equation}
where 
\begin{gather*}
S^7_1[q_c](f)(t,y)
=\frac{1}{2}\wP_1\left(\int_{\R} v_1(t,z,y)f(y)q_{c(t,y)}(z)\,dz\right)\,,
\end{gather*}
and $\obu{R}_{v_1}$ and $\oc{R}_{v_1}$ are chosen such that
$\obu{R}_{v_1}+\pd_y\oc{R}_{v_1}=R^{v_1}_{11}+\pd_yR^{v_1}_{12}$
and that
$$\chi(D_y)\obu{R}_{v_1}\in L^1(0,\infty;L^1(\R))\,.$$
We give the precise definitions of $\obu{R}_{v_1}$ and $\oc{R}_{v_1}$ later.
\par
The term $\pd_t\tk\mathbf{e_1}$ in \eqref{eq:modeq2'}
cancels out with a bad part of $E_1\obu{B}_4R^{v_1}$ which comes
from $-k_t$ in \eqref{eq:Rv1'}.
In fact,
$$\pd_t\tk(t,y)-(2-S^3_{11}[\psi](t))^{-1}\pd_tk(t,y)
=(2-S^3_{11}[\psi](t))^{-2}\pd_tS^3_{11}[\psi](t)k(t,y)\,,$$
and by the definition,
\begin{equation}
  \label{eq:S311-d}
|S^3_{11}[\psi](t)|+\left|\pd_tS^3_{11}[\psi](t)\right|
\lesssim e^{-2(3t+L)}\quad\text{for $t\ge0$.}
\end{equation}
Combining Claim~\ref{cl:k-growth} and \eqref{eq:S311-d} with \eqref{eq:BB2},
we have for $t\in[0,T]$ and $k\ge0$,
\begin{align*}
& \left\|\int_0^t \pd_y^kU(t,s)\left\{
  \pd_t\tk(s,\cdot)-(2-S^3_{11}[\psi](s))^{-1}\pd_tk(s,\cdot)\right\}
  \mathbf{e_1}\,ds\right\|_Y
\\ \lesssim & \bM_1(T)\int_0^t \la t-s\ra^{-(2k+1)/4}\la s\ra e^{-2(3s+L)}\,ds
\lesssim \bM_1(T)\la t\ra^{-(2k+1)/4}\,.
\end{align*}

Next, we will investigate $\obu{R}_{v_1}$ and $\oc{R}_{v_1}$.
We write $II^6_{13}$ in \cite[p.175]{Miz17} as
$II^6_{13}=II^6_{131}+\eta^2II^6_{132}$,
\begin{align*}
& II^6_{131}=3\int_{\R^2} v_1(t,z,y)\tpsi_{c(t,y)}(z)\varphi_{c(t,y)}(z)e^{-i y\eta}\,dzdy\,,
\\ &
II^6_{132}=6\int_{\R^2} v_1(t,z,y)\tpsi_{c(t,y)}(z)
\pd_zg_{11}^*(z,\eta,c(t,y))e^{-i y\eta}\,dzdy\,,
\\ &
g_{k1}^*(z,\eta,c)=\frac{g_k^*(z,\eta,c)-g_k^*(z,0,c)}{\eta^2}\,,
\end{align*}
because $\pd_zg_1^*(z,0,c(t,y))=\frac12\varphi_{c(t,y)}(z)$ and let
\begin{align*}
\obu{R}_{v_1}=& \frac{1}{2\pi}\int_{-\eta_0}^{\eta_0}
\left\{II^6_{111}(t,\eta)-II^6_{131}(t,\eta)\right\}e^{iy\eta}\,d\eta
=R^{v_1}_{11}
-\frac{\pd_y^2}{2\pi}\int_{-\eta_0}^{\eta_0} II^6_{132}(t,\eta)e^{iy\eta}\,d\eta\,,
\\ 
\oc{R}_{v_1} =
& \frac{1}{2\pi}\int_{-\eta_0}^{\eta_0}\left\{II^6_{112}(t,\eta)
-i\eta II^6_{12}(t,\eta)+i\eta II^6_{132}(t,\eta)\right\}e^{i y\eta}\,d\eta\,.
\end{align*}
Then
\begin{multline*}
\obu{R}_{v_1}=
 \frac32\wP_1\int_\R v_1(t,z,y)^2\varphi_{c(t,y)}'(z)\,dz
 -3\wP_1\int_\R v_1(t,z,y)\tpsi_{c(t,y)}(z)\varphi_{c(t,y)}'(z)\,dz
 \\
 +\frac32\wP_1\left[\int_\R v_1(t,z,y)\left\{
     c_{yy}(t,y)\int_{-\infty}^z \pd_c\varphi_{c(t,y)}(z_1)\,dz_1+
   c_y(t,y)^2 \int_{-\infty}^z \pd_c^2\varphi_{c(t,y)}(z_1)\,dz_1   \right\}\,dz\right]
\\
 -\frac32\wP_1\int_\R v_1(t,z,y)\left\{x_{yy}(t,y)\varphi_{c(t,y)}(z)
+2(c_yx_y)(t,y)\pd_c\varphi_{c(t,y)}(z)\right\}\,dz\,,
\end{multline*}
$\oc{R}_{v_1}=\oc{R}_{v_1,1}+\pd_y\oc{R}_{v_1,2}$ and
\begin{multline*}
\oc{R}_{v_1,1}=
-\frac{3}{2}\wP_1\int_\R v_1(t,z,y) c_y(t,y)
\left(\int_{-\infty}^z \pd_c\varphi_{c(t,y)}(z_1)\,dz_1\right)\,dz
\\ +3\wP_1\int_\R v_1(t,z,y)x_y(t,y)\varphi_{c(t,y)}(z)\,dz\,,
\end{multline*}
\begin{multline*}
\oc{R}_{v_1,2}=
\frac32 \wP_1\int_\R v_1(t,z,y)
\left(\int_{-\infty}^z \varphi_{c(t,y)}(z_1)\,dz_1\right)\,dz
\\ 
-\frac{1}{2\pi}
\int_{-\eta_0}^{\eta_0}\left\{II^6_{12}(t,\eta)-II^6_{132}(t,\eta)\right\}
e^{iy\eta}\,d\eta\,,
\end{multline*}
$$II^6_{12}(t,\eta)=6\int_{\R^2} v_1(t,z,y)\varphi_{c(t,y)}(z)
\pd_zg_{11}^*(z,\eta,c(t,y))e^{-i y\eta}\,dzdy\,,$$
and we have
\begin{gather}
\label{eq:obuRv1}
\bigl\|\obu{R}_{v_1}\bigr\|_{Y_1}+\bigl\|\chi(D_y)\obu{R}_{v_1}\bigr\|_{L^1(\R)}
\lesssim \bM_1(T)(\bM_{c,x}(T)+\bM_1(T))\la t\ra^{-3/2}\,,
\\
\label{eq:ocRv1}
\bigl\|\oc{R}_{v_1,1}\bigr\|_{Y_1} \lesssim \bM_{c,x}(T)\bM_1(T)
\la t\ra^{-7/4}\,,
\quad
\bigl\|\oc{R}_{v_1,2}\bigr\|_Y \lesssim \bM_1(T)\la t\ra^{-1}\,. 
\end{gather}
Combining the above with \eqref{eq:BB1} and \eqref{eq:BB2}, we have
\begin{align*}
& \left\|\int_0^t \pd_y^kU(t,s) \bigl(2-S^3_{11}[\psi](s)\bigr)^{-1}
\bigl(\obu{R}_{v_1}+\pd_y\oc{R}_{v_1}\bigr)\,ds\right\|_Y
 \lesssim 
\int_0^t \la t-s\ra^{-(2k+1)/4}\bigl\|\obu{R}_{v_1}\bigr\|_{Y_1}\,ds
\\ & +
\int_0^t \la t-s\ra^{-(2k+3)/2}\bigl\|\oc{R}_{v_1,,1}\bigr\|_{Y_1}\,ds
+\int_0^t \la t-s\ra^{-(k+4)/2}\bigl\|\oc{R}_{v_1,2}\bigr\|_Y\,ds
\\ \lesssim & \bM_1(T)\la t\ra^{-\min\{1,(2k+1)/4\}}\quad
\text{for $k=0$, $1$, $2$ and $t\in[0,T]$.}
\end{align*}
\par
Next, we will estimate $S^7_1[\pd_c\varphi_c](c_t)$ and
$S^7_1[\varphi_c'](x_t-2c-3(x_y)^2)$. 
By Claim~\ref{cl:cx_t-bound},
\begin{equation}
  \label{eq:S7-bound}
  \begin{split}
& \|S^7_1[\pd_c\varphi_c](c_t)\|_{Y_1}
+\|\chi(D_y)S^7_1[\pd_c\varphi_c](c_t)\|_{L^1}
\\ & +\|S^7_1[\varphi_c'](x_t-2c-3(x_y)^2)\|_{Y_1}
+\|\chi(D_y)S^7_1[\varphi_c'](x_t-2c-3(x_y)^2)\|_{L^1}
\\ \lesssim & 
\bM_1(T)
(\bM_{c,x}(T)+\bM_1(T)+\bM_2(T)^2)\la t\ra^{-11/4}\,.
\end{split}
\end{equation}

\par

Finally, we will estimate $A(t)\tk\mathbf{e_1}$.
Since $\mathcal{A}_1(t)E_2=O$ and $[\mathcal{A}_i(t),\pd_y]=O$ for $i=1$, $2$,
\begin{align*}
&  A(t)=A_*+\diag(\pd_y^3,\pd_y^4)\mathcal{A}_1(t)\diag(\pd_y,1)
+\diag(\pd_y,\pd_y^2)\mathcal{A}_2(t)\diag(\pd_y,1)+A_3(t)\,,
\\ & 
A_*=\begin{pmatrix} 3\pd_y^2 & 8\pd_y \\ (2-\mu_3\pd_y^2)\pd_y & \pd_y^2
\end{pmatrix}\,,
\quad
A_3(t)=\diag(1,\pd_y)B_4^{-1}\widetilde{\mathcal{A}}_1(t)E_1\,,
\\ &
\sup_{t\ge0}\|\pd_y^{-1}(A(t)-A_3(t))\|_{B(Y)}<\infty\,,
\quad \|A_3(t)\|_{B(Y_1)}\lesssim e^{-\a(3t+L)}\,.
\end{align*}
Combining the above with \eqref{eq:BB1}, \eqref{eq:BB2} and 
Claims~\ref{cl:k-decay} and \ref{cl:k-growth},
we have for $k=0$, $1$, $2$,
\begin{align*}
& \left\|\int_0^t \pd_y^k  U(t,s)A(s)\tk(s)\,ds\right\|_Y
 \le \int_0^t \|\pd_y^{k+1}U(t,s)\|_{B(Y)}
\left\|\pd_y^{-1}(A(s)-A_3(s))\right\|_{B(Y)}\|\tk(s)\|_Y\,ds
\\  & \phantom{\left\|\int_0^t \pd_y^k  U(t,s)A(s)\tk(s)\,ds\right\|_Y
 \le \int_0^t \|\pd_y^{k+1}U}
+\int_0^t \|\pd_y^kU(t,s)\|_{B(Y_1,Y)}\|A_3(s)\|_{B(Y_1)}\|\tk(s)\|_{L^1}\,ds
 \\ \lesssim &
\bM_1(T)\left\{\int_0^t \la t-s\ra^{-(k+1)/2}\la s\ra^{-2}\,ds
+\int_0^t \la t-s\ra^{-(2k+1)/4}\la s\ra e^{-\a(3s+L)}\,ds\right\}
\\ \lesssim &
\bM_1(T)\la t\ra^{-(2k+1)/4}\,.
\end{align*}
This completes the proof of Lemma~\ref{lem:Mcx-bound}.
\end{proof}
\bigskip

\section{The $L^2(\R^2)$ estimate}
\label{sec:L2norm}
In this section, we will estimate $\bM_v(T)$
assuming smallness of $\bM_{c,x}(T)$, $\bM_1(T)$ and $\bM_2(T)$.

\begin{lemma}
  \label{lem:M4-bound}
Let $\delta_3$ be the same as in Lemma~\ref{lem:Mcx-bound}.
Suppose that $\bM_{c,x}(T)+\bM_1(T)+\bM_2(T)+\eta_0+e^{-\a L}\le \delta_3$.
Then there exists a positive constant $C$ such that 
$$\bM_v(T)\le  C(\|v_0\|_{L^2(\R^2)}+\bM_{c,x}(T)+\bM_1(T)+\bM_2(T))\,.$$
\end{lemma}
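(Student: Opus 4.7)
We split $v=v_1+v_2$ via \eqref{eq:decomp2} and estimate each piece.

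For $v_1$: the change of variable $x=z+x(t,y)$ (unit Jacobian in $z$ for each $y$) gives $\|v_1(t)\|_{L^2(\R^2)}=\|\tv_1(t)\|_{L^2(\R^2)}$. Since $\tv_1$ solves the standard KP-II equation \eqref{eq:tv1}, its $L^2$-mass is conserved, so $\|v_1(t)\|_{L^2}=\|v_*\|_{L^2}$. From $v_*=v_0+\varphi_{c_0}-\varphi_{c_1(y)}$ and the Taylor-type bound $\|\varphi_{c_1}-\varphi_{c_0}\|_{L^2(\R^2)}\lesssim \|c_1-c_0\|_{L^2(\R_y)}$ we obtain $\|v_1(t)\|_{L^2}\le \|v_0\|_{L^2}+C\|c_1-c_0\|_{L^2(\R)}$. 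The last term is absorbed into $\bM_{c,x}(T)$: the orthogonality \eqref{eq:orth} at $t=0$ forces $c_1-c_*$ to be quadratic in the smallness parameters (via the Taylor expansion used in Lemma~\ref{lem:modeq-init-decomp}), so $\|c_1-c_0\|_{L^2}\le \|c_*-c_0\|_{L^2}+O(\bM_{c,x}^2)\lesssim \bM_{c,x}(T)$.

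For $v_2$: test the equation \eqref{eq:v2} against $v_2$ in $L^2(\R^2)$ and integrate in time. The skew-symmetric $-3\pd_z^{-1}\pd_y^2$ part of $\mL_c$ drops under this pairing (integrate by parts once in $y$ and once in $x$), while the local part $-\pd_z(\pd_z^2-2c+6\varphi_c)$ contributes, after integration by parts, only the bulk term $3\int \varphi_c'\,v_2^2\,dzdy \le C\|v_2\|_X^2$ once we use that $\varphi_c'(z)e^{-2\a z}\in L^\infty_z$ for $\a$ small. The source $\ell$ is a sum of exponentially $z$-localized functions multiplied by modulation quantities; by Claim~\ref{cl:cx_t-bound} together with $L^1$-in-$z$ smallness of $\ell$,
\begin{equation*}
|\la v_2,\ell\ra_{L^2}|\lesssim \bM_{c,x}(T)\,\la t\ra^{-3/4}\|v_2\|_X.
\end{equation*}
The nonlinear terms $\pd_z(N_{2,1}+N_{2,2}+N_{2,4})+N_{2,3}$ are at least quadratic in the small quantities $v_1,v_2,\tc,x_y,\tpsi_c$; after integration by parts in $z$ they are dominated by $(\bM_{c,x}+\bM_1+\bM_2)(T)\,\la t\ra^{-3/4}\|v_2\|_X$. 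Using $\|v_2(s)\|_X\le \bM_2(T)\la s\ra^{-3/4}$ from the definition of $\bM_2$, we find
\begin{equation*}
\int_0^t \la s\ra^{-3/4}\|v_2(s)\|_X\,ds+\int_0^t \|v_2(s)\|_X^2\,ds \le C\bM_2(T)^2,
\end{equation*}
which is time-integrable thanks to $\la s\ra^{-3/2}\in L^1(\R_+)$. Therefore
\begin{equation*}
\|v_2(t)\|_{L^2}^2 \le \|v_{2,*}\|_{L^2}^2 + C(\bM_{c,x}+\bM_1+\bM_2)^2,
\end{equation*}
and the initial value is controlled by $\|v_{2,*}\|_{L^2}\lesssim \bM_{c,x}(T)+e^{-\a L}$ via \eqref{eq:v2-init} and \eqref{eq:cx*bound} (writing $v_{2,*}$ as the difference of two soliton profiles plus the small bump $\tpsi_{c_*,L}$ and using $\|\varphi_{c_0}(\cdot-x_*)-\varphi_{c_*}\|_{L^2}^2\lesssim \|x_*\|_{L^2}^2+\|\tc_*\|_{L^2}^2$). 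Summing the $v_1$ and $v_2$ bounds and absorbing $e^{-\a L}$ into $\delta_3$ yields \eqref{eq:Mcx-bound}-type estimate for $\bM_v(T)$.

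\textbf{Main obstacle.} The delicate point is closing the time integration for the bulk term $\int \varphi_c'\,v_2^2\,dzdy$: the operator $\mL_c$ is not dissipative on $L^2(\R^2)$ (only on the exponentially weighted space $X$), so the $L^2$-energy of $v_2$ is driven by its weighted $X$-norm. The $\la t\ra^{-3/4}$ pointwise decay of $\|v_2\|_X$ built into $\bM_2$ (squaring to the integrable $\la t\ra^{-3/2}$) is exactly what makes the Gronwall-type integration close. A second subtlety is that $\|v_2(0)\|_{L^2}$ must be absorbed into $\bM_{c,x}$ rather than into the heavier weighted norm $\|\la x\ra v_0\|_{L^2}$; this relies on the orthogonality-based relation between the initial modulation parameters $(c_*,x_*)$ and the auxiliary quantities $(c_1,v_*)$ given by \eqref{eq:cx*bound} and Lemma~\ref{lem:nonzeromean1}.
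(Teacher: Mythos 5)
Your treatment of $v_1$ is fine: $\|v_1(t)\|_{L^2}=\|\tv_1(t)\|_{L^2}=\|v_*\|_{L^2}$ by the $L^2$ conservation law for \eqref{eq:tv1}. But your energy estimate for $v_2$ alone is where you genuinely depart from the paper — which instead applies the almost-conservation identity for the \emph{full} perturbation $v=v_1+v_2$ (the quantity $Q(t,v)$ of Lemma~\ref{lem:L2conserve}) — and it is where the argument breaks down.

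Three concrete gaps. First, the pairing $\la v_2,\ell\ra_{L^2(\R^2)}$ is not controlled by $\|v_2\|_X$ times a decaying factor, because $\ell$ is \emph{not} localized in $z$: the pieces $\ell_{13}$ and $\ell_{23}$ contain $\int_z^\infty\pd_c\varphi_c\,dz_1$ and $\int_z^\infty\pd_c\tpsi_c\,dz_1$, which tend to nonzero constants as $z\to-\infty$, exactly where the weight $e^{2\a z}$ of $X$ degenerates; so $\|e^{-\a z}\ell_{13}\|_{L^2}=\infty$ and your claimed bound $|\la v_2,\ell\ra|\lesssim\bM_{c,x}(T)\la t\ra^{-3/4}\|v_2\|_X$ fails for these terms. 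Handling them requires writing $\ell_{13}=3\pd_y\bigl(c_y\int_z^\infty\pd_c\varphi_c\,dz_1\bigr)$ and moving $\pd_y=\pd_z\pd_z^{-1}\pd_y$ onto $v$, which produces the term $\int(\pd_z^{-1}\pd_yv)\,c_y\,\pd_c\varphi_c$ appearing as the last line of \eqref{eq:L2-conserve} — this is precisely the term the paper singles out and estimates through $\|\mathcal{E}(\cdot)^{1/2}\|_{L^2(0,T;\cdot)}$. Second, $\la v_2,\pd_zN_{2,1}\ra$ produces the cross term $\int(\pd_zv_1)v_2^2$ after integration by parts; this is bounded by none of the quantities you control ($\pd_zv_1$ is neither in $L^\infty$ nor exponentially localized, and only the weighted $W(t)$-norms of $\mathcal{E}(v_1)$ are small). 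The paper never meets this term because in the identity for the full $v$ the cubic contribution $\int\pd_z(v^3)$ vanishes exactly. Third, your bound $\|v_{2,*}\|_{L^2}\lesssim\bM_{c,x}(T)+e^{-\a L}$ rests on $\|x_*\|_{L^2}\lesssim\bM_{c,x}(T)$, but $\bM_{c,x}$ contains only $y$-derivatives of the phase ($\|\pd_y^{k+1}x\|_Y$, $k=0,1$, and $\|\pd_y^3x\|_Y$), not $\|x\|_Y$ itself; the available control is $\|x_*\|_Y\lesssim\|\la x\ra v_0\|_{L^2}$ from \eqref{eq:cx*bound}. For these reasons the split-and-test strategy does not close as written; the route through $Q(t,v)$ is not a cosmetic choice but the mechanism by which the non-localized sources and the $v_1$--$v_2$ cross terms are either cancelled or converted into integrable form.
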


To prove Lemma~\ref{lem:M4-bound}, we use a variant of the
$L^2$ conservation law on $v$ as in \cite{Miz15,Miz17}. 
\begin{lemma}
  \label{lem:L2conserve}\emph{(\cite[Lemma~6.2]{Miz17})}
Let $0\le T\le\infty$. Let $\tv_1$ be a solution of \eqref{eq:tv1} and 
$v_2$ be a solution of \eqref{eq:v2}.
Suppose that $(v_2(t),c(t),\gamma(t))$ satisfy \eqref{eq:decomp},
\eqref{eq:decomp2} and \eqref{eq:orth}.
Then
$$Q(t,v):=\int_{\R^2}\left\{v(t,z,y)^2
-2\psi_{c(t,y),L}(z+3t)v(t,z,y)\right\}\,dzdy$$
satisfies for $t\in[0,T]$,
\begin{equation}
  \label{eq:L2-conserve}
\begin{split}
Q(t,v)=& Q(0,v)+
2\int_0^t\int_{\R^2}\left(\ell_{11}+\ell_{12}+6\varphi_{c(s,y)}'(z)\tpsi_{c(s,y)}(z)
\right)v(s,z,y)\,dzdyds
\\ & -2\int_0^t\int_{\R^2} \ell\psi_{c(t,y),L}(z+3s)\,dzdyds
-6\int_0^t\int_{\R^2}\varphi_{c(s,y)}'(z)v(s,z,y)^2\,dzdyds
\\ &
-6\int_0^t\int_{\R^2}(\pd_z^{-1}\pd_yv)(s,z,y)c_y(s,y)
\pd_c\varphi_{c(t,y)}(z)\,dzdy\,.
\end{split}
\end{equation}
\end{lemma}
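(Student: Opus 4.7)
The plan is to derive a near-conservation law for $\int v^2\,dzdy$ by adapting the $L^2$-conservation of KP-II to the moving frame and using the ansatz \eqref{eq:decomp}--\eqref{eq:decomp2}. Summing \eqref{eq:v1} and \eqref{eq:v2} gives an equation of the form
\begin{equation*}
\pd_tv=\mL_cv+\ell+\pd_z(N_1+N_2+N_4)+N_3
\end{equation*}
satisfied by $v=v_1+v_2$ in the moving frame $z=x-x(t,y)$, where $N_1=-3v^2$ assembles $-3v_1^2$ and $-3(2v_1v_2+v_2^2)$, and $N_2,N_3,N_4$ combine the remaining pieces of \eqref{eq:v1}--\eqref{eq:v2}. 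The auxiliary correction $-2\int\psi_{c,L}(z+3t)v\,dzdy$ in the definition of $Q$ is tailored so that dangerous linear-in-$v$ terms arising from $\mL_c v$ acting across the background are cancelled, which uses the defining identity \eqref{eq:0mean} of $\psi_{c,L}$.

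Next I would compute $\frac{d}{dt}Q(t,v)$ directly, splitting into the quadratic and correction parts. For $\frac{d}{dt}\int v^2=2\int v\pd_tv$: the skew-symmetric pieces $-\int v\pd_z^3v$, $2c(t,y)\int v\pd_zv$ and $-3\int v\pd_z^{-1}\pd_y^2v$ vanish by integration by parts (the last using the substitution $V=\pd_z^{-1}v$, Fubini, and integration by parts in $y$, which turns the integrand into a total $z$-derivative of $(\pd_yV)^2$). The remaining linear piece gives $-6\int v\pd_z(\varphi_cv)=-3\int\varphi_c'v^2$, so that $2\int v\mL_cv=-6\int\varphi_c'v^2$, matching the cubic line of \eqref{eq:L2-conserve}. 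The cubic nonlinearity $\pd_zN_1=-3\pd_z(v^2)$ integrates to zero against $v$. For the correction, $\frac{d}{dt}\int\psi_{c,L}(z+3t)v=\int(3\pd_z\psi_{c,L}+c_t\pd_c\psi_{c,L})v+\int\psi_{c,L}\pd_tv$; substituting the equation into the last term, integration by parts transfers derivatives onto $\psi_{c,L}$, and the cross-interaction $-6\int\psi_{c,L}\pd_z(\varphi_cv)=6\int\pd_z\psi_{c,L}\cdot\varphi_cv$ recombines with $-\int\pd_z^3\psi_{c,L}\cdot v$ and $-2c\int\pd_z\psi_{c,L}\cdot v$. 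The rigid-shift piece $3\pd_z\psi_{c,L}$ cancels the $3\pd_z\psi_{c,L}$ produced by $\pd_t$, and what survives reassembles into the $6\varphi_c'\tpsi_{c,L}v$ contribution inside the first integrand of \eqref{eq:L2-conserve}, with the remaining terms pairing cleanly with $\ell$ to produce $2\int(\ell_{11}+\ell_{12})v-2\int\ell\psi_{c,L}$.

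The term $-6\int(\pd_z^{-1}\pd_yv)c_y\pd_c\varphi_c$ on the last line of \eqref{eq:L2-conserve} arises from tracking how $\pd_y$ interacts with the $y$-dependence of $\varphi_{c(t,y)}$, via $\pd_y\varphi_{c(t,y)}(z)=c_y\pd_c\varphi_c-x_y\varphi_c'$, followed by one integration by parts against $\pd_z^{-1}\pd_y$; the $x_y\varphi_c'$ piece is absorbed by the $-6\int\varphi_c'v^2$ term through a symmetrization. The main obstacle is the delicate bookkeeping of all terms arising from the time-dependent moving frame and from the double $t$-dependence of $\psi_{c(t,y),L}(z+3t)$, and verifying the three cancellations (the rigid-shift $3\pd_z\psi$, the modulation $c_t\pd_c\psi$, and the $\pd_z^3\psi$ pieces) that reduce the expression to the clean form \eqref{eq:L2-conserve}. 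The key algebraic input throughout is \eqref{eq:0mean}: it makes $\pd_z^{-1}(\varphi_c-\varphi-\psi_{c,L})$ well-defined and thereby justifies the $\pd_z^{-1}\pd_y^2$ manipulations that produce the cross term in the last line of \eqref{eq:L2-conserve}.
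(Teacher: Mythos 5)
The paper itself gives no proof of this lemma --- it is imported verbatim from \cite[Lemma~6.2]{Miz17} --- but the proof there is exactly the direct computation you outline: add \eqref{eq:v1} and \eqref{eq:v2} to get a single equation for $v=v_1+v_2$ (your observation that $N_{1,1}+N_{2,1}=-3v^2$ and that $N_{2,4}$ recombines with $-6\pd_z(\varphi_c v_2)$ to give $-6\pd_z(\varphi_c v)+6\pd_z(\tpsi_c v)$ is the right bookkeeping), then differentiate $Q$ and integrate by parts. Your explicit checks are correct: $2\int v\,\mL_c v=-6\int\varphi_c'v^2$, the cubic term and the skew terms $\int v\pd_z^3v$, $\int v\pd_z^{-1}\pd_y^2v$ vanish, and \eqref{eq:0mean} is what legitimizes the $\pd_z^{-1}$ manipulations.

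The one place where your account goes astray is the last line of \eqref{eq:L2-conserve}. It is not produced by separately ``tracking $\pd_y\varphi_{c(t,y)}$'' on top of the equation --- that dependence has already been packaged into $\ell_{13}$ when \eqref{eq:v2} was derived, so re-deriving it would double count. The correct step is simply that $\ell_{13}=3\,\pd_y\bigl[c_y\int_z^\infty\pd_c\varphi_{c}(z_1)\,dz_1\bigr]$, whence
\begin{equation*}
2\int_{\R^2} v\,\ell_{13}\,dzdy
=-6\int_{\R^2}\pd_yv\;c_y\int_z^\infty\pd_c\varphi_{c}(z_1)\,dz_1\,dzdy
=-6\int_{\R^2}(\pd_z^{-1}\pd_yv)\,c_y\,\pd_c\varphi_{c}\,dzdy
\end{equation*}
after one integration by parts in $y$ and one in $z$ (using $\pd_z\int_z^\infty\pd_c\varphi_c=-\pd_c\varphi_c$). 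In particular there is no ``$x_y\varphi_c'$ piece absorbed into $-6\int\varphi_c'v^2$ by symmetrization''; all $x_y$-dependent source terms sit in $\ell_{11}$ and $\ell_{12}$ and are kept explicitly in the first integral of \eqref{eq:L2-conserve}. With that correction, and with the $\ell_2$/$\psi_{c,L}$ cancellations carried out in the detail you only sketch, your argument reproduces the cited proof.
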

\begin{proof}[Proof of Lemma~\ref{lem:M4-bound}]
We can estimate the right hand side of \eqref{eq:L2-conserve}
in exactly the same way as in the proof of \cite[Lemma~8.1]{Miz15}
except for the last term.
By the definition, we have for $t\in[0,T]$,
\begin{align*}
  \left|\int_{\R^2}(\pd_z^{-1}\pd_yv)(s,z,y)
c_y(s,y)\pd_c\varphi_{c(t,y)}(z)\,dzdy\right|
\lesssim & \|e^{-\a|z|}\pd_z^{-1}\pd_yv\|_{L^2(0,T;L^2(\R^2))}\|c_y\|_{L^2(0,T;Y)}
\\ \lesssim & \bM_{c,x}(T)(\bM_1(T)+\bM_2(T))\,.
\end{align*}
and
\begin{align*}
& Q(t,v)+8\|\psi\|_{L^2}^2\|\sqrt{c(t)}-\sqrt{c_0}\|_{L^2(\R)}^2
\\ \lesssim & 
\|v_0\|_{L^2(\R^2)}^2
+(\bM_1(T)+\bM_2(T)+\bM_{c,x}(T))^2\int_0^t \la s\ra^{-5/4}\,dt
\\ \lesssim & \|v_0\|_{L^2(\R^2)}^2+(\bM_1(T)+\bM_2(T)+\bM_{c,x}(T))^2\,.
\end{align*}
Combining the above with the fact that
$Q(t,v)=\|v(t)\|_{L^2}^2+O(\|\tc(t)\|_Y\|v(t)\|_{L^2})$,
we have Lemma~\ref{lem:M4-bound}.
Thus we complete the proof.
\end{proof}
\bigskip

\section{Estimates for small solutions for the KP-II equation}
\label{sec:v1}
In this section, we will give upper bounds of $\bM_1(T)$
and $\bM_1'(\infty)$. First, we will prove decay estimates for $v_1$
assuming that $v_0(x,y)$ is polynomially localized as $x\to\infty$.
\begin{lemma}
  \label{lem:v1-a} Let$\tv_1$ be a solution of \eqref{eq:tv1}.
There exist positive constants $C$ and $\delta_4$ such that if
$\|\la x\ra^2v_0\|_{L^2}+\bM_{c,x}(T)+\bM_1(T)+\bM_2(T)<\delta_4$, then
$\bM_1(T) \le C\|\la x\ra^2 v_0\|_{L^2}$ for $t\in[0,T]$.
\end{lemma}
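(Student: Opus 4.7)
The plan is to combine global well-posedness and conservation laws for \eqref{eq:tv1} with weighted virial/monotonicity estimates carried out in the moving frame $z=x-x(t,y)$. First I would invoke \cite{MST} to propagate the regularity of $v_*\in H^1(\R^2)$ globally: by Lemma~\ref{lem:nonzeromean1} we have $v_*\in H^1(\R^2)$ and $\pd_x^{-1}\pd_yv_*\in L^2(\R^2)$, so $\tv_1\in C(\R;H^1)$, and the conservation of the $L^2$-norm together with the KP-II Hamiltonian gives uniform smallness of $\|\tv_1(t)\|_{L^2}$ and $\|\mathcal{E}(\tv_1(t))^{1/2}\|_{L^2}$.

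The core step is a weighted monotonicity identity for $v_1$. Multiplying \eqref{eq:v1} by $2p_\a(z+3t+L)v_1$ and integrating, using $\pd_tp_\a(z+3t+L)=3p_\a'$ together with the standard KdV-type integration by parts, yields
\begin{equation*}
\frac{d}{dt}\|v_1\|_{W(t)}^2+\int_{\R^2}p_\a'\bigl\{(2c-3)v_1^2+3(\pd_zv_1)^2+3(\pd_z^{-1}\pd_yv_1)^2\bigr\}\,dz\,dy=\mathcal{R}(t),
\end{equation*}
where $\mathcal{R}(t)$ collects the contributions of $\pd_z(N_{1,1}+N_{1,2})$, $N_{1,3}$, and the lower order correction $\int p_\a''' v_1^2$. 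Since $c\simeq 2$ and $p_\a'>0$, the bracket is a strictly positive quadratic form, so integration in $t$ produces a Kato-type local smoothing bound that already controls the third term in the definition of $\bM_1(T)$. Repeating the same computation with the polynomial weight $(1+z_+)^2p_\a(z+3t+L)$ yields the $(1+z_+)$-weighted estimate; the initial datum is finite in this norm thanks to $\la x\ra^2v_*\in L^2(\R^2)$ from Lemma~\ref{lem:nonzeromean1} combined with $|x(0,y)|\lesssim\eps$ from \eqref{eq:cx*bound}, and the commutators generated by $\pd_z(1+z_+)^2=2(1+z_+)\mathbf{1}_{z>0}$ are absorbed into the coercive square term.

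To upgrade to the temporal rates $\la t\ra^{-2}\|v_1(t)\|_{W(t)}$ and $\la t\ra^{-1}\|(1+z_+)v_1(t)\|_{W(t)}$ I would run the virial identity in two successive stages with the time-dependent weights $\la t\ra^{2j}p_\a(z+3t+L)$ for $j=1,2$. The derivative $\frac{d}{dt}\la t\ra^{2j}\sim\la t\ra^{2j-1}$ produces an extra $\la t\ra^{2j-1}\|v_1\|_{W(t)}^2$ term, which is absorbed using the smoothing norm and the $(1+z_+)$-weighted bound produced at the previous stage (the polynomial weight trades one factor of $p_\a$ for $p_\a'\cdot(1+z_+)$, at the cost of a factor $\la t\ra^{-1}$). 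The nonlinear remainder $\mathcal{R}(t)$ is quadratic in $v_1$ with coefficients $c_t$, $x_t-2c-3(x_y)^2$, $x_{yy}$, $x_y$ that decay at least like $\la t\ra^{-3/4}$ by Claim~\ref{cl:cx_t-bound} and the smallness of $\bM_{c,x}(T)$, so the bookkeeping closes. The hard part is precisely this last bootstrap: the $\la t\ra^{-2}$ rate is sharply tied to the $\la x\ra^2$-localization of $v_0$, and the interleaving of the polynomial and temporal weights requires the joint smallness of $\bM_{c,x}(T)+\bM_1(T)+\bM_2(T)$ in the hypothesis to absorb the linear-in-$v_1$ coupling to the modulation parameters.
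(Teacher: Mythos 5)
Your overall architecture --- the de Bouard--Martel monotonicity identity with the weight $p_\a$, supplemented by polynomial weights and the positivity of the drift speed supplied by Claim~\ref{cl:cx_t-bound} --- is the same as the paper's, which packages all of this into Lemma~\ref{lem:virial-0} (quoting \cite{dBM} and \cite[Section~14.1]{MPQ}) and then applies it to $\tv_1$, a genuine small solution of KP-II in the original frame. Working instead with the modulated equation \eqref{eq:v1} is feasible but only forces you to carry $N_{1,2}$ and $N_{1,3}$ through the identity for no gain; the modulation enters the paper's argument only through the lower bound $x_t\ge c_1>0$.

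The genuine gap is in the step that produces the temporal rates. Your mechanism --- multiply the weight by $\la t\ra^{2j}$ and absorb the resulting $\la t\ra^{2j-1}\|v_1\|_{W(t)}^2$ by ``trading one factor of $p_\a$ for $p_\a'\cdot(1+z_+)$ at the cost of $\la t\ra^{-1}$'' --- does not work. Pointwise the trade is false: for $z\ge0$ one has $p_\a(z+3t+L)\simeq2$, while $(1+z_+)p_\a'(z+3t+L)=O\bigl((1+z)e^{-2\a(z+3t+L)}\bigr)$ is exponentially small. In integrated form the Gronwall iteration stalls: knowing $\|v_1(s)\|_{W(s)}^2\lesssim\la s\ra^{-2}$ from stage $j=1$ gives $\int_0^t\la s\ra^3\|v_1(s)\|_{W(s)}^2\,ds\lesssim\la t\ra^2$, which at stage $j=2$ returns only $\|v_1(t)\|_{W(t)}\lesssim\la t\ra^{-1}$, not $\la t\ra^{-2}$. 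More fundamentally, the decay cannot be seen with weights written in the moving variable $z$ at all: on the region $-(3t+L)\le z\le0$ one has $p_\a(z+3t+L)\simeq1$ and $(1+z_+)=1$, and the factor $\la t\ra$ is controlled there only because $x=z+x(t,y)\gtrsim t$, i.e.\ by the $\la x\ra^2$-localization of the \emph{initial} data in the original frame. The correct implementation is the weight-comparison argument inside Lemma~\ref{lem:virial-0}: with $q_\ell(x)=(1+x_+)^{\rho_\ell}p_\a(x)$ and $0<c_1<c_2\le\inf(x_t-3)$, the monotonicity $\int q_2(x-c_1t)\tv_1(t)^2\lesssim\int q_2(x)v_*^2$ combines with $q_1(x-\tilde x(t))\le q_1(x-c_2t)\lesssim\la t\ra^{\rho_1-\rho_2}q_2(x-c_1t)$ to convert spatial localization of $v_*$ into time decay; together with the iteration of \cite[Section~14.1]{MPQ} behind \eqref{eq:v1-decay}, this is the ingredient your bootstrap is missing. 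Your closing remark that the rate is ``sharply tied to the $\la x\ra^2$-localization'' shows you have the right intuition, but the proposed absorption does not realize it.
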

To prove Lemma~\ref{lem:v1-a}, we make use of the virial identity
for the KP-II equation that was shown in \cite{dBM}.
Let $u$ be a solution of \eqref{eq:KPII} with $u(0)\in L^2(\R^2)$ and 
$$I(t)=\int_{\R^2}p_\a(x-x(t))u(t,x,y)^2\,dxdy\,.$$
Suppose that $\inf_{t\ge0}x'(t)>0$. There exist positive constants $\a_0$
and $\delta$ such that if $\a\in(0,\a_0)$ and $\|v_0\|_{L^2}<\delta$, then
\begin{equation}
  \label{eq:virial}
I(t)+ \int_0^t\int_{\R^2}p_\a'(x-x(s))\mathcal{E}(u)(s,x,y)\,dxdyds
\lesssim I(0)\,.
\end{equation}
See e.g. \cite[Lemma~5.3]{MT} for the proof.
\par
If $u(0)$ is small in $L^2(\R^2)$ and polynomially localized, we can prove
time decay estimates by using \eqref{eq:virial}.

\begin{lemma}
  \label{lem:virial-0}
Let $u(t)$ be a solution of \eqref{eq:KPII}.
Let $0\le T\le \infty$ and let $x(t)$ be a $C^1$ function satisfying
$x(0)=0$ and $\inf_{t\in[0,T]}\dot{x}(t)> c_1$ for a $c_1>0$.
Suppose that $(1+x_+)^\rho u(0)\in L^2(\R^2)$ for a $\rho\ge0$.
Then there exist positive constants $\a_0$ and $\delta$ such that if
$\a\in(0,\a_0)$ and $\|u(0)\|_{L^2(\R^2)}<\delta$, then
\begin{gather}
  \label{eq:v1-decay}
\int_\R p_\a(x-x(t))u(t,x,y)^2\,dxdy
\lesssim \la t \ra^{-2\rho}\|(1+x_+)^\rho u(0)\|_{L^2(\R^2)}^2\,,
\\ \label{eq:v1-decay2}
\int_0^T\int_\R p_\a(x-x(t))\mathcal{E}(u)(t,x,y)\,dxdydt
\lesssim \|(1+x_+)^{1/2}p_\a(x)^{1/2}u(0)\|_{L^2(\R^2)}^2\,,
\\ \label{eq:v1-decay1.5}
\int_\R (1+x_+)^{\rho_1}p_\a(x)u(t,x+x(t),y)^2\,dxdy
\lesssim \la t \ra^{-(\rho_2-\rho_1)}\|(1+x_+)^{\rho_2} u(0)\|_{L^2(\R^2)}^2\,,
\end{gather}
where $\rho_1$ and $\rho_2$ are positive constants satisfying $\rho_2>\rho_1>0$.
\end{lemma}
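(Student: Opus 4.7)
The plan is to prove \eqref{eq:v1-decay}--\eqref{eq:v1-decay1.5} by applying generalized versions of the virial identity \eqref{eq:virial} with appropriately chosen weights, exploiting the condition $\inf_{[0,T]}\dot{x}(t)>c_1>0$ to convert spatial localization of the initial data into temporal decay. First I would extend the derivation of \eqref{eq:virial} to a general nonnegative $C^2$ weight $\phi=\phi(x,t)$ with $\pd_x\phi\ge 0$ and $|\pd_x^3\phi|\lesssim \pd_x\phi$: the same integration-by-parts computation as in \cite{dBM} and \cite[Lemma~5.3]{MT} gives, modulo a cubic error absorbed by the smallness of $\|u\|_{L^\infty}$,
\begin{equation*}
\int\phi(x,t)u(t)^2\,dxdy+\int_0^t\!\!\int\bigl((-\pd_t\phi)u^2+\pd_x\phi\,\mathcal{E}(u)\bigr)\,dxdyds\lesssim \int\phi(x,0)u(0)^2\,dxdy
\end{equation*}
whenever in addition $-\pd_t\phi\ge 0$.

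For \eqref{eq:v1-decay2} I take $\phi(x,t)=\Phi_\a(x-x(t))$ with $\Phi_\a(x):=\int_{-\infty}^xp_\a(y)\,dy$, so that $\pd_x\phi=p_\a(x-x(t))$, $-\pd_t\phi=\dot{x}(t)p_\a(x-x(t))\ge c_1p_\a(x-x(t))$, and $\Phi_\a(x)\lesssim (1+x_+)p_\a(x)$ (the latter using $p_\a(x)\sim 2e^{2\a x}$ as $x\to-\infty$). The generalized virial inequality then yields \eqref{eq:v1-decay2} directly.

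For \eqref{eq:v1-decay} I would use the time-dependent weight (mollified near $\{x-x(t)+c_1t/2=0\}$)
\begin{equation*}
\phi_\rho(x,t):=p_\a(x-x(t))\bigl(1+(x-x(t)+c_1t/2)_+\bigr)^{2\rho},
\end{equation*}
which equals $p_\a(x)(1+x_+)^{2\rho}$ at $t=0$. A direct computation gives
\begin{equation*}
\pd_t\phi_\rho=-\dot{x}p_\a'(\cdot)(1+\cdot)^{2\rho}+2\rho(c_1/2-\dot{x})p_\a(\cdot)\mathbf{1}_{\{\cdot>0\}}(1+\cdot)^{2\rho-1}\le 0
\end{equation*}
since $\dot{x}>c_1>c_1/2$. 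Plugging into the generalized virial inequality yields $\int\phi_\rho(x,t)u(t)^2\,dxdy\lesssim \|(1+x_+)^\rho u(0)\|_{L^2}^2$. On $\{x\ge x(t)-c_1t/4\}$ the extra factor dominates $\la t\ra^{2\rho}$, while on the complement $p_\a(x-x(t))\lesssim e^{-\a c_1t/2}$, whose contribution is absorbed by $L^2$-conservation. Combining yields \eqref{eq:v1-decay}.

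Estimate \eqref{eq:v1-decay1.5} then follows from \eqref{eq:v1-decay} (with $\rho=\rho_2$) and the analogue without the time-dependent shift, $\int p_\a(x-x(t))(1+(x-x(t))_+)^{2\rho_1}u(t)^2\,dxdy\lesssim\|(1+x_+)^{\rho_1}u(0)\|_{L^2}^2$, via Cauchy--Schwarz. The main obstacle is the rigorous derivation of the generalized virial inequality with the time-dependent weight $\phi_\rho$: one has to verify $|\pd_x^3\phi_\rho|\lesssim \pd_x\phi_\rho$ (forcing $\a$ sufficiently small), smooth the corner at $\eta=0$ without destroying the sign $\pd_t\phi_\rho\le 0$, and absorb the cubic nonlinearity $\int\pd_x\phi_\rho\,u^3$ into the quadratic dissipation via the smallness of $\|u(0)\|_{L^2}$.
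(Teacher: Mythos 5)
Your route is genuinely different from the paper's. The paper never differentiates a polynomial weight: it only uses the single exponential-weight inequality \eqref{eq:virial}, applied to the translates $p_\a(x-x(t)-j)$, and then sums over $j$ using $\sum_{j\ge0}p_\a(x-j)\lesssim (1+x_+)p_\a(x)$ and $(1+x_+)^{\rho}p_\a(x)\simeq\sum_{j\ge0}(1+j)^{\rho-1}p_\a(x-j)$; the decay estimate \eqref{eq:v1-decay} itself is delegated to \cite[Section~14.1]{MPQ}, and \eqref{eq:v1-decay1.5} is obtained from the monotonicity of $\int q_2(x-c_1t)u^2$ via the pointwise comparison $q_1(x-x(t))\le q_1(x-c_2t)\lesssim\la t\ra^{\rho_1-\rho_2}q_2(x-c_1t)$. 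Your proposal instead runs the monotonicity computation directly with time-dependent polynomial-times-exponential weights. That is a legitimate and standard alternative (it is essentially the Martel--Merle/MPQ style argument), and your reductions of \eqref{eq:v1-decay2} and \eqref{eq:v1-decay} to the generalized inequality are correct; but the summation device is precisely what lets the paper avoid the delicate point you flag at the end.

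That point is where your proposal has a real gap. In the monotonicity identity the term $\int\pd_x^3\phi\,u^2$ must be absorbed by $\int(-\pd_t\phi)u^2$, whose coefficient relative to $\pd_x\phi\,u^2$ is only of order $c_1$ --- and the lemma allows $c_1$ arbitrarily small. So the condition ``$|\pd_x^3\phi|\lesssim\pd_x\phi$'' is not sufficient: the implicit constant must be beaten by $c_1$. Shrinking $\a$ controls $p_\a'''/p_\a'=O(\a^2)$ but does nothing to the polynomial factor: with $\zeta=x-x(t)+c_1t/2$, the ratio $\pd_\zeta^3(1+\zeta_+)^{2\rho}\big/\pd_\zeta(1+\zeta_+)^{2\rho}=(2\rho-1)(2\rho-2)(1+\zeta)^{-2}$ is of size $O(1)$ near $\zeta=0$ (e.g.\ equal to $6$ for the case $\rho=2$ actually needed in Lemma~\ref{lem:v1-a}), and the region $\zeta\approx0$ lies where $p_\a(x-x(t))$ is tiny, so no cancellation helps. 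You must either insert a small scale into the polynomial factor, replacing it by $(1+\eps\zeta_+)^{2\rho}$ with $\eps^2\rho^2\ll c_1$ (which changes nothing else in your argument since the weights are comparable for fixed $\eps,\rho$), or fall back on the paper's summation over exponential translates. Two smaller issues: as stated, your generalized inequality with only $-\pd_t\phi\ge0$ does not yield the $u^2$ component of $\pd_x\phi\,\mathcal{E}(u)$ in the dissipation (you need $-\pd_t\phi-\pd_x^3\phi\gtrsim\pd_x\phi$, which your weights do satisfy once the above is fixed); and the ``Cauchy--Schwarz'' step for \eqref{eq:v1-decay1.5} is not spelled out and the exponents do not match under the obvious interpolation --- the clean way is the paper's comparison of $q_1(x-x(t))$ with $\la t\ra^{\rho_1-\rho_2}q_2(x-c_1t)$.
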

\begin{proof}
We can prove \eqref{eq:v1-decay} in the same way as in \cite[Section~14.1]{MPQ}.
Since $\min(1,e^{2\a x})\le p_\a(x)\le 2\min(1,e^{2\a x})$ and
$p_\a'(x)=\a\sech^2\a x=O(e^{-2\a|x|})$, it follows that for $x\le0$,
$$\sum_{j\ge0}p_\a(x-j)\lesssim 
\begin{cases}
  \sum_{j\ge0}e^{-2\a(|x|+j)} \lesssim p_\a(x)
\quad & \text{for $x\le0$,}
\\  \sum_{0\le j\le [x]} 1+\sum_{j\ge [x]+1} e^{-2\a|x-j|}
\lesssim  1+x \quad & \text{for $x\ge0$.}
\end{cases}$$
Similarly, we have $p_\a(x)\lesssim \sum_{j\ge0}p_\a'(x-j)$.
Hence it follows from \eqref{eq:virial} that for $t\in[0,T]$,
\begin{align*}
& \int_0^t\int_{\R^2}p_\a(x-x(s))\mathcal{E}(u)(s,x,y)\,dxdyds
\\ \lesssim &
\sum_{j=0}^\infty \int_0^t\int_{\R^2}p_\a'(x-x(s)-j)
\mathcal{E}(u)(s,x,y)\,dxdyds
\\ \lesssim  &
\sum_{j=0}^\infty \int_{\R^2}p_\a(x-j)u(0,x,y)^2\,dxdy
 \lesssim 
\int_{\R^2}(1+x_+)p_\a(x)u(0,x,y)^2\,dxdy\,.
\end{align*}
\par

Finally, we will prove \eqref{eq:v1-decay1.5}.
Let $c_1$ and $c_2$ be constants satisfying
$0<c_1<c_2\le\inf_{0\le t\le T}\dot{x}(t)$ and let 
$q_\ell(x)=(1+x_+)^{\rho_\ell}p_\a(x)$ for $\ell=1$, $2$.
 Since
$$q_\ell(x)\simeq \sum_{j\ge0}(1+j)^{\rho_\ell-1}p_\a(x-j)\,,$$
it follows from \eqref{eq:virial} that for $t\in[0,T]$,
\begin{equation}
\label{eq:W-virial}
\int_{\R^2}q_2(x-c_1t)u(t,x,y)^2\,dxdy
\lesssim \int_{\R^2}q_2(x)u(0,x,y)^2\,,
\end{equation}
provided $\|u(0)\|_{L^2}$ is sufficiently small.
Combining \eqref{eq:W-virial} with the fact that
\begin{equation*}
q_1(x-x(t)) \le q_1(x-c_2t) \lesssim \la t\ra^{\rho_1-\rho_2}q_2(x-c_1t)\,,
\end{equation*}
we have \eqref{eq:v1-decay1.5}.
Thus we complete the proof.
\end{proof}

Now we are in position to prove Lemma~\ref{lem:v1-a}.
 \begin{proof}[Proof of Lemma~\ref{lem:v1-a}]
By Claim~\ref{cl:cx_t-bound}, there exists a $c_1>0$ such that
$x_t(t,y)\ge c_1$ for every $t\in[0,T]$ and $y\in\R$.
Hence it follows from Lemmas~\ref{lem:nonzeromean1} and \ref{lem:virial-0}
that
$\bM_1(T) \lesssim \|(1+x_+)^2v_*\|_{L^2(\R^2)}
\lesssim \|\la x\ra^2v_0\|_{L^2(\R^2)}$.
Thus we complete the proof.  
\end{proof}

The scattering result by Hadac, Herr and Koch (\cite{HHK}) 
which uses $U^p$ and $V^p$ spaces introduced by
\cite{Koch-Tataru05, Koch-Tataru07} implies
that higher order Sobolev norms of solutions to \eqref{eq:tv1}
remain small for all the time.

\begin{lemma}
  \label{lem:scattering}
Let $\tv_1(t)$ be a solution of \eqref{eq:tv1}.
There exists positive constants $\delta_5$ and $C$ such that if
$\left\|\la x\ra^2v_0\right\|_{H^1(\R^2)}\le \delta_5$,
then $\bM_1'(\infty)\le C\left\|\la x\ra^2v_0\right\|_{H^1(\R^2)}$
for every $t\in\R$.
\end{lemma}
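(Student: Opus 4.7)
The plan is to combine the Hadac--Herr--Koch global well-posedness and scattering framework for \eqref{eq:tv1} with the two classical conservation laws of the KP-II equation, and then absorb the cubic term via the anisotropic Sobolev inequality~\eqref{eq:Sobolev}. First, Lemma~\ref{lem:nonzeromean1} with $s=2$ yields
\begin{equation*}
\|v_*\|_{H^1(\R^2)} + \|\pd_x^{-1}\pd_y v_*\|_{L^2(\R^2)} \lesssim \|\la x\ra^2 v_0\|_{H^1(\R^2)}\,,
\end{equation*}
so $\|\mathcal{E}(v_*)^{1/2}\|_{L^2(\R^2)} \lesssim \|\la x\ra^2 v_0\|_{H^1(\R^2)}$, and by shrinking $\delta_5$ the $L^2$ norm $\|v_*\|_{L^2}$ is as small as we wish. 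The small-data result in \cite{HHK}, formulated in the scale-invariant Sobolev space controlled by $\|v_*\|_{L^2}$, then provides a unique global solution $\tv_1 \in C(\R;L^2(\R^2))$, and persistence of regularity in the $U^p/V^p$ framework keeps $\tv_1$ in the energy space for every $t\in\R$.

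Next, along this flow the mass $M(v) = \|v\|_{L^2}^2$ (obtained by pairing \eqref{eq:tv1} with $v$) and the Hamiltonian
\begin{equation*}
H(v) = \frac{1}{2}\int_{\R^2}(\pd_x v)^2\,dxdy + \frac{3}{2}\int_{\R^2}(\pd_x^{-1}\pd_y v)^2\,dxdy - \int_{\R^2} v^3\,dxdy
\end{equation*}
are conserved; the latter follows from the identity $\pd_t v = \pd_x(\pd_x^2 v + 3v^2 + 3\pd_x^{-2}\pd_y^2 v)$ by pairing with $\pd_x^2 v + 3v^2 + 3\pd_x^{-2}\pd_y^2 v$ and integrating by parts. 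Using conservation, these give
\begin{equation*}
\|\mathcal{E}(\tv_1(t))^{1/2}\|_{L^2}^2 \le 2H(v_*) + M(v_*) + 2\Bigl|\int_{\R^2}\tv_1(t)^3\,dxdy\Bigr|\,.
\end{equation*}

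To absorb the cubic term I would invoke \eqref{eq:Sobolev} together with the $L^2$--$L^6$ interpolation $\|v\|_{L^3} \le \|v\|_{L^2}^{1/2}\|v\|_{L^6}^{1/2}$, which gives
\begin{equation*}
\Bigl|\int_{\R^2}v^3\,dxdy\Bigr| \lesssim \|v\|_{L^2}^{3/2}\|\mathcal{E}(v)^{1/2}\|_{L^2}^{3/2}\,.
\end{equation*}
Applying this to $\tv_1(t)$, using conservation of mass and Young's inequality with the smallness of $\|v_*\|_{L^2}$, one absorbs the cubic contribution into the left-hand side and obtains
\begin{equation*}
\|\mathcal{E}(\tv_1(t))^{1/2}\|_{L^2}^2 \lesssim H(v_*) + \|v_*\|_{L^2}^2 + \|v_*\|_{L^2}^6 \lesssim \|\la x\ra^2 v_0\|_{H^1(\R^2)}^2\,,
\end{equation*}
where $H(v_*)$ is controlled by the same Sobolev argument in terms of $\|\mathcal{E}(v_*)^{1/2}\|_{L^2}$ and $\|v_*\|_{L^2}$.

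The main obstacle is the rigorous justification of conservation of $H$ at the regularity actually produced by the Hadac--Herr--Koch flow: the pairing computations above are valid only for classical solutions, so one has to approximate $v_*$ by smooth data in the energy topology, apply conservation for the approximants, and pass to the limit using continuity of the HHK solution map in that topology. Once this approximation step is in place, the remainder of the argument reduces to the direct conservation-law-plus-Sobolev calculation sketched above.
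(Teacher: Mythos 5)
There is a genuine gap, and it comes from the sign of the Hamiltonian. For the KP-II flow \eqref{eq:tv1} the conserved energy is
\begin{equation*}
H(u)=\frac12\int_{\R^2}\left\{(\pd_xu)^2-3(\pd_x^{-1}\pd_yu)^2-2u^3\right\}dxdy\,,
\end{equation*}
with a \emph{minus} sign in front of $(\pd_x^{-1}\pd_yu)^2$ (this is what makes $\pd_x\frac{\delta H}{\delta u}$ reproduce the equation; the plus sign you wrote corresponds to KP-I, i.e.\ $\sigma=-1$). Consequently the energy is indefinite, and your central inequality
\begin{equation*}
\|\mathcal{E}(\tv_1(t))^{1/2}\|_{L^2}^2 \le 2H(v_*) + M(v_*) + 2\Bigl|\int_{\R^2}\tv_1(t)^3\,dxdy\Bigr|
\end{equation*}
is false: conservation of $M$ and $H$ gives only one relation in which $\|\pd_x\tv_1(t)\|_{L^2}^2$ and $\|\pd_x^{-1}\pd_y\tv_1(t)\|_{L^2}^2$ enter with opposite signs, so neither quantity is individually bounded by the conservation laws plus the cubic term. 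The absorption step via \eqref{eq:Sobolev} and Young's inequality therefore has nothing positive definite to absorb into, and the whole ``mass $+$ energy $+$ Sobolev'' scheme collapses for KP-II.

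The missing ingredient is to use the Hadac--Herr--Koch result for more than global existence: their Proposition~3.1 and Theorem~3.2 give a bound \emph{uniform in time} of the form
\begin{equation*}
\|\pd_x\tv_1(t)\|_{L^2}+\bigl\||D_x|^{-1/2}\la D_y\ra^{1/2}\tv_1(t)\bigr\|_{L^2}
\lesssim \|\mathcal{E}(v_*)^{1/2}\|_{L^2}\,,
\end{equation*}
which together with $L^2$-conservation and \eqref{eq:Sobolev} controls $\|\tv_1(t)\|_{L^3}$. Only after these three quantities are under control does one invoke conservation of the (indefinite) $H$, solving the identity for the remaining piece:
\begin{equation*}
3\|\pd_x^{-1}\pd_y\tv_1(t)\|_{L^2}^2 \le -2H(v_*)+\|\pd_x\tv_1(t)\|_{L^2}^2+2\|\tv_1(t)\|_{L^3}^3
\lesssim \|\mathcal{E}(v_*)^{1/2}\|_{L^2}^2\,.
\end{equation*}
Your reduction of the data to $v_*$ via Lemma~\ref{lem:nonzeromean1} and your interpolation bound for the cubic term are fine, but without the uniform-in-time HHK bound on $\|\pd_x\tv_1(t)\|_{L^2}$ the argument cannot close.
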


\begin{proof}
It follows from \cite[Proposition~3.1 and Theorem~3.2]{HHK} that
\begin{align*}
\|\pd_x\tv_1(t)\|_{L^2(\R^2)}
+\left\||D_x|^{-1/2}\la D_y\ra^{1/2}\tv_1(t)\right\|_{L^2(\R^2)}
\lesssim &
\|\pd_xv_*\|_{L^2(\R^2)}+\left\||D_x|^{-1/2}\la D_y\ra^{1/2}v_*\right\|_{L^2(\R^2)}
\\ \lesssim & \|\mathcal{E}(v_*)^{1/2}\|_{L^2(\R^2)}\,.
\end{align*}
See e.g. \cite[Section~7.2]{Miz17} for an explanation.
Combining the above with 
the $L^2$-conservation law $\|\tv_1(t)\|_{L^2(\R^2)}=\|v_*\|_{L^2(\R^2)}$
and the Sobolev inequality \eqref{eq:Sobolev}, we have
\begin{align*}
\|\tv_1(t)\|_{L^3(\R^2)} \lesssim &
\left\||D_x|^{1/2}\tv_1(t)\right\|_{L^2(\R^2)}
+\left\||D_x|^{-1/2}\la D_y\ra^{1/2}\tv_1(t)\right\|_{L^2(\R^2)}
\lesssim  \|\mathcal{E}(v_*)^{1/2}\|_{L^2(\R^2)}\,.
\end{align*}
Let 
$$H(u)=\frac12\int_{\R^2}\left\{
(\pd_xu)^2-3(\pd_x^{-1}\pd_yu)^2-2u^3\right\}\,dxdy\,.$$
Since $H(u)$ is the Hamiltonian of the KP-II equation and $\tv_1$
is a solution of \eqref{eq:tv1} satisfying
$\tv_1\in C(\R;H^1(\R^2))$ and $\pd_x^{-1}\pd_y\tv_1\in C(\R;L^2(\R^2))$,
\begin{align*}
3\|\pd_x^{-1}\pd_y\tv_1(t)\|_{L^2(\R^2)}^2
\le & -2H(\tv_1(t))+\|\pd_x\tv_1(t)\|_{L^2(\R^2)}^2+2\|\tv_1(t)\|_{L^2(\R^2)}^3
\\ =& -2H(v_*)+\|\pd_x\tv_1(t)\|_{L^2(\R^2)}^2+2\|\tv_1(t)\|_{L^2(\R^2)}^3
\lesssim \|\mathcal{E}(v_*)^{1/2}\|_{L^2(\R^2)}^2\,.
\end{align*}
Combining the above with Lemma~\ref{lem:nonzeromean1},
we have Lemma~\ref{lem:scattering}.
\end{proof}
\bigskip

\section{Decay estimates for the exponentially localized part of perturbations}
In this section, we will estimate $v_2(t)$ following the line of \cite{Miz15}. 
\begin{lemma}
  \label{lem:exp-bound}
Let $\eta_0$ be a small positive number and $\a\in (\nu(\eta_0),2)$.
Suppose that $\bM_1'(\infty)$ is sufficiently small.
Then there exist positive constants $\delta_6$ and $C$
such that if   $\bM_2(T)+\bM_v(T)\le \delta_6$,
\begin{equation}
  \label{eq:M4-bound}
\bM_2(T)\le C\left(
\left\|\la x\ra v_0\right \|_{L^2(\R^2)}+\bM_{c,x}(T)+\bM_1(T)\right)\,.
\end{equation}
\end{lemma}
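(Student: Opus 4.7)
The strategy is a Duhamel argument for \eqref{eq:v2} built on the exponentially weighted semigroup estimates of Proposition~\ref{prop:semigroup-est}, combined with the orthogonality condition \eqref{eq:orth} which removes the slowly decaying resonant modes. Writing $\mL_c = \mL + (\mL_c-\mL)$, where $\mL_c-\mL$ is a $z$-localized perturbation of magnitude $O(\tc)$, I would recast \eqref{eq:v2} as
\[
\pd_t v_2 = \mL v_2 + F(t), \qquad F = (\mL_c-\mL)v_2 + \ell + \pd_z(N_{2,1}+N_{2,2}+N_{2,4}) + N_{2,3},
\]
and apply Duhamel's formula together with the splitting $I = P_0(\eta_0) + (I-P_0(\eta_0))$. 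Because \eqref{eq:orth} uses $g_k^*(\cdot,\eta,c(t,y))$ while $P_0(\eta_0)$ is built from $g_k^*(\cdot,\eta,2)$, the condition pins down $\|P_0(\eta_0)v_2(t)\|_X$ up to an $O(\|\tc(t)\|_Y\|v_2(t)\|_X)$ error, which is absorbable into the quadratic nonlinearity provided $\bM_{c,x}(T)$ is small.

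For the stable-subspace part I would apply the first bound of Proposition~\ref{prop:semigroup-est} to the initial data; by \eqref{eq:v2*-bound} this gives exponential decay of $\|e^{t\mL}(I-P_0)v_{2,*}\|_X$ dominated by $\|\la x\ra v_0\|_{L^2}$, far stronger than the target $\la t\ra^{-3/4}$. For the Duhamel integral, the zeroth-order pieces $\ell$, $N_{2,3}$ and $(\mL_c-\mL)v_2$ are handled by $\|e^{(t-s)\mL}(I-P_0)f\|_X\lesssim e^{-b(t-s)}\|f\|_X$: each $\ell_{ij}$ carries a $z$-localized factor times one of $c_t,c_y,c_{yy},x_{yy},x_t-2c-3(x_y)^2$, all controlled by $\bM_{c,x}(T)\la s\ra^{-\alpha}$ with $\alpha\ge 3/4$ via Claim~\ref{cl:cx_t-bound} and the definition of $\bM_{c,x}$. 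The divergence-form pieces $\pd_z N_{2,j}$ are estimated by the smoothing bounds $\|e^{(t-s)\mL}(I-P_0)\pd_z f\|_X\lesssim e^{-b(t-s)}(t-s)^{-1/2}\|e^{\a z}f\|_X$ and $\lesssim e^{-b(t-s)}(t-s)^{-3/4}\|e^{\a z}f\|_{L^1_zL^2_y}$. The convolution $\int_0^t e^{-b(t-s)}(t-s)^{-3/4}\la s\ra^{-\alpha}\,ds\lesssim \la t\ra^{-3/4}$ then produces the desired pointwise decay, and the space-time norm $\|\mathcal{E}(v_2)^{1/2}\|_{L^2(0,T;X)}$ is recovered from a weighted virial-type energy estimate in $X$ analogous to Lemma~\ref{lem:virial-0} and already used in \cite{Miz15}.

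The main obstacle is the coupling to $v_1$ via the product $v_1v_2$ in $N_{2,1}$ and via $N_{2,4}=6(\tpsi_c-\varphi_c)v_1$, because $v_1$ is only polynomially localized and does not naturally live in $X$. For $v_1v_2$ I would redistribute the weight $e^{\a z}$ between $v_2$ (giving a factor controlled in $X$ by $\bM_2(T)$) and $v_1$ (giving the $W(t)$-localized norm built into $\bM_1(T)$, with decay $\la t\ra^{-2}$); for $(\tpsi_c-\varphi_c)v_1$ the weight is absorbed into the exponential tail of $\tpsi_c-\varphi_c$ after choosing $L$ large. The assumption on $\bM_1'(\infty)$ enters only to ensure that the scattering energy of $\tv_1$ is controlled globally in $t$, so that the decay estimates of Lemma~\ref{lem:v1-a} remain valid on $[0,T]$. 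A secondary subtlety is that $P_0(\eta_0)$ does not commute with multiplication by $c(t,y)$: the $t$-differentiated orthogonality \eqref{eq:orth_t} must be used to turn the ensuing commutator into terms that carry an extra factor of $\|\tc\|_Y$, which are then absorbed into the quadratic remainder. Collecting all contributions and using the smallness of $\bM_2(T)+\bM_v(T)$ together with $\eta_0+e^{-\a L}$, the estimate closes and yields \eqref{eq:M4-bound}.
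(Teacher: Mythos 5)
Your overall strategy (Duhamel for \eqref{eq:v2}, exponential decay of the semigroup off the resonant modes, the orthogonality condition \eqref{eq:orth} to control the resonant projection up to $O(\|\tc\|_Y\|v_2\|_X)$ commutators, and a virial estimate for the space--time norm) is the right skeleton and matches the paper's Lemmas~\ref{lem:nonresonant-ylow} and \ref{lem:virial}. However, there is a concrete gap: you apply the semigroup bounds on all of $(I-P_0(\eta_0))X$ and classify $N_{2,3}$ as a ``zeroth-order piece'', but $N_{2,3}=6\pd_y(x_yv_2)-3x_{yy}v_2$ loses a $y$-derivative of $v_2$, and neither $\|v_2\|_X$ nor $\mathcal{E}(v_2)$ (which contains $\pd_x^{-1}\pd_yv_2$, not $\pd_yv_2$) controls $\|\pd_y(x_yv_2)\|_X$. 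The paper closes this by first truncating to $y$-frequencies $|\eta|\le M$ for a large but finite $M$ with $\nu(M)>\a$, where $\|\pd_yP_2(\eta_0,M)\|_{B(X)}\lesssim M$ makes $P_2N_{2,3}$ genuinely zeroth order (see \eqref{eq:N2,2'-est}); your argument as written has no mechanism to absorb this derivative.

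A second, related omission: once you only estimate the low-frequency part $P_1(0,M)v_2$ by the semigroup argument, the pointwise-in-time bound $\la t\ra^{3/4}\|v_2(t)\|_X$ for the \emph{full} $v_2$ still has to be produced. In the paper this is exactly what the virial inequality of Lemma~\ref{lem:virial} does: it bounds $\|v_2(t)\|_X^2$ by $e^{-M\a t}\|v_{2,*}\|_X^2$ plus a convolution of $e^{-M\a(t-s)}$ against $\|\ell\|_X^2+\|P_1(0,M)v_2\|_X^2+\|N_{2,4}\|_X^2$, so the high-frequency part is slaved to the low-frequency output of Lemma~\ref{lem:nonresonant-ylow}. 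You invoke the virial identity only for $\|\mathcal{E}(v_2)^{1/2}\|_{L^2(0,T;X)}$, so the circle does not close: you need the frequency splitting at $|\eta|=M$ and the feedback structure between the two lemmas, not just the semigroup decay plus a space--time energy bound. Your treatment of $N_{2,1}$, $N_{2,4}$, $\ell$, and of the role of $\bM_1'(\infty)$ is otherwise consistent with the paper.
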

First, we estimate the low frequency part of $v_2(t)$ assuming the smallness of
$\bM_{c,x}(T)$, $\bM_2(t)$ and $\bM_v(T)$.
\begin{lemma}
  \label{lem:nonresonant-ylow}
Let $\eta_0$, $\a$ and $M$ be positive constants satisfying $\nu(\eta_0)<\a<2$ and $\nu(M)>\a$.
Suppose that $v_2(t)$ is a solution of \eqref{eq:v2}.
Then there exist positive constants $b_1$, $\delta_6$ and $C$ such that
if $\bM_{c,x}(T)+\bM_v(T)+\bM_1(T)+\bM_2(T)<\delta_6$, then for $t\in[0,T]$,
\begin{equation}
\label{eq:nonresonant-ylow}
\begin{split}
 \left\|P_1(0,M)v_2(t,\cdot)\right\|_X \le &  Ce^{-bt}\|v_{2,*}\|_X \\
& +C\left\{\bM_{c,x}(T)+\bM_1(T)+\bM_2(T)(\bM_2(T)+\bM_v(T))\right\}
\la t\ra^{-3/4}\,.
\end{split}
\end{equation}
\end{lemma}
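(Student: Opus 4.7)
I would split $P_1(0,M) = P_0(\eta_0) + P_2(\eta_0,M)$ and treat each piece by a different mechanism. For $P_0(\eta_0) v_2$, the coefficients in its spectral representation are $a_k(\eta) = \frac{1}{\sqrt{2\pi}}\int v_2(t,z,y) g_k^*(z,\eta) e^{-iy\eta}\,dzdy$, while \eqref{eq:orth} says precisely that the analogous integrals against $g_k^*(z,\eta,c(t,y))$ vanish on $[-\eta_0,\eta_0]$. Subtracting and using the expansion $g_k^*(z,\eta,c) - g_k^*(z,\eta,2) = O(c-2)$ in $L^2(\R;e^{-2\a z}dz)$ uniformly in $|\eta|\le\eta_0$, I would obtain
\begin{equation*}
\|P_0(\eta_0) v_2(t)\|_X \lesssim \|\tc(t)\|_Y \|v_2(t)\|_X \lesssim \bM_{c,x}(T)\bM_2(T)\la t\ra^{-1}\,,
\end{equation*}
which is better than the required $\la t\ra^{-3/4}$.

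For $P_2(\eta_0,M) v_2$ I would rewrite \eqref{eq:v2} as $\pd_t v_2 = \mL v_2 + (\mL_{c(t)}-\mL)v_2 + \ell + \pd_z(N_{2,1}+N_{2,2}+N_{2,4}) + N_{2,3}$ and apply Duhamel with the constant-coefficient semigroup $e^{t\mL}$. The homogeneous part produces the $Ke^{-bt}\|v_{2,*}\|_X$ term directly via Proposition~\ref{prop:semigroup-est}. For the Duhamel integral I would use the smoothing bound $\|e^{(t-s)\mL} P_2 \pd_z f\|_X \lesssim e^{-b'(t-s)}(t-s)^{-1/2}\|e^{\a z}f\|_X$ on divergence-form sources and the plain bound $\|e^{(t-s)\mL} P_2 f\|_X \lesssim e^{-b(t-s)}\|f\|_X$ on the rest. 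The secular terms $\ell$ reduce to products $f(s,y)\rho(z;c(s,y))$ with $\rho\in e^{-\a z}L^2$ and $f\in\{c_t,\,x_t-2c-3x_y^2,\,c_y x_y,\,c_{yy},\,c_y^2,\,x_{yy}\}$, all controlled by $\bM_{c,x}(T)\la s\ra^{-3/4}$ through Claim~\ref{cl:cx_t-bound} and the definition of $\bM_{c,x}(T)$; the difference $(\mL_c-\mL)v_2 = 2\tc\,\pd_z v_2 - 6\pd_z((\varphi_c-\varphi)v_2)$ yields $\bM_{c,x}(T)\bM_2(T)\la s\ra^{-1}$; and $N_{2,2}, N_{2,3}, N_{2,4}$ combine derivatives of $(c,x)$ with $v_1$ or $v_2$ at matching rates. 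Integrating $\int_0^t e^{-b'(t-s)}(t-s)^{-1/2}\la s\ra^{-3/4}ds \lesssim \la t\ra^{-3/4}$ then delivers the claimed decay.

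The main obstacle is the cross term $v_1 v_2$ inside $\pd_z N_{2,1}$: the weight in $X$ is exponential while $v_1$ is only polynomially localized in $z$. The remedy is to use the tailored weight $W(s)$ so that $e^{\a z}\lesssim e^{\a z}p_\a(z+3s+L)^{1/2}$ (since $p_\a \le 2$) and hence
\begin{equation*}
\|e^{\a z} v_1 v_2\|_{L^1_z L^2_y}(s) \lesssim \|v_1(s)\|_{W(s)}\|v_2(s)\|_X \lesssim \bM_1(T)\bM_2(T)\la s\ra^{-2}\,,
\end{equation*}
which is integrable in $s$ and contributes $\bM_1(T)\bM_2(T)\la t\ra^{-3/4}$ after time integration against the $(t-s)^{-1/2}$ kernel. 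The purely quadratic piece $v_2^2$ is bounded by $\|v_2^2\|_X \lesssim \|v_2\|_{L^\infty_y L^2_z}\|v_2\|_X$ and, after decomposing $v_2 = v - v_1$, contributes $(\bM_v(T)+\bM_1(T))\bM_2(T)$, explaining the $\bM_v(T)$ factor in the statement. A secondary subtlety is the $y$-derivative in $N_{2,3}$, harmless here because $P_2(\eta_0,M)$ caps $|\pd_y|$ by $M$ at constant cost; this would be the dangerous term if an estimate tight in $M$ were required, but for fixed $M$ it is absorbed into the source bounds.
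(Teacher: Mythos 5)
Your proposal follows essentially the same route as the paper: Duhamel for $P_2(\eta_0,M)v_2$ with the frozen-coefficient semigroup $e^{t\mL}$ of Proposition~\ref{prop:semigroup-est}, treating $(\mL_{c}-\mL)v_2$ (the paper's $\pd_zN_{2,2}'$) as a perturbation, bounding the sources via Claim~\ref{cl:cx_t-bound} and the $\bM$-norms, and recovering the full $P_1(0,M)v_2$ from the orthogonality condition \eqref{eq:orth}, which the paper packages as the norm equivalence $\|P_2(\eta_0,M)v_2\|_X\simeq\|P_1(0,M)v_2\|_X$. The only cosmetic difference is the bound on the cross term $v_1v_2$, where the paper uses $\|e^{\a z}P_2N_{2,1}\|_{L^1_zL^2_y}\lesssim\sqrt{M}(\|v_1\|_{L^2}+\|v_2\|_{L^2})\|v_2\|_X$ rather than the $W(t)$-weighted norm of $v_1$.
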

\begin{proof}
Let $\tv_2(t)=P_2(\eta_0,M)v_2(t)$ and
$N_{2,2}'=\{2\tc(t,y)+6(\varphi(z)-\varphi_{c(t,y)}(z))\}v_2(t,z,y)$.
Applying Proposition~\ref{prop:semigroup-est} to \eqref{eq:v2}, we have
\begin{equation}
  \label{eq:v1n-est}
  \begin{split}
& \|\tv_2(t)\|_X\lesssim e^{-bt}\|v_{2,*}\|_X 
+\int_0^t e^{-b'(t-s)}(t-s)^{-3/4}\|e^{\a z}P_2N_{2,1}(s)\|_{L^1_zL^2_y}\,ds
\\ & \enskip +\int_0^t e^{-b'(t-s)}(t-s)^{-1/2}
(\|N_{2,2}(s)\|_X+\|N_{2,2}'(s)\|_X+\|N_{2,4}\|_X)\,d s
\\ & \qquad  +\int_0^t e^{-b(t-s)}(\|\ell(s)\|_X+\|P_2N_{2,3}(s)\|_X)\,ds\,,
  \end{split}
\end{equation}
where we abbreviate $P_2(\eta_0,M)$ as $P_2$.
It follows from \cite[Claim~9.1]{Miz15} that for $t\in[0,T]$,
\begin{equation}
  \label{eq:N21-est}
\begin{split}
\|e^{\a z}P_2N_{2,1}\|_{L^1_zL^2_y}\lesssim
& \sqrt{M}(\|v_1\|_{L^2}+\|v_2\|_{L^2})\|v_2\|_X
\\ \lesssim & \sqrt{M}(\bM_1(T)+\bM_v(T))\bM_2(T)\la t\ra^{-3/4}\,.
\end{split}
\end{equation}
By the definitions and Claim~\ref{cl:cx_t-bound},
\begin{equation}
  \label{eq:ell-est}
\begin{split}
\|\ell_1\|_X\lesssim & (\bM_{c,x}(T)+\bM_1(T)+\bM_2(T)^2)\la t\ra^{-3/4}\,,
\\ 
\|\ell_2\|_X\lesssim & e^{-\a(3t+L)}(\bM_{c,x}(T)+\bM_1(T)+\bM_2(T)^2)\,,
\end{split}
\end{equation}
and
\begin{equation}
  \label{eq:N22-est}
\begin{split}
\|N_{2,2}\|_X \lesssim &
(\|x_t-2c-3(x_y)^2\|_{L^\infty}+\|\tc\|_{L^\infty})\|v_2\|_X\,,
\\ \lesssim & (\bM_{c,x}(T)+\bM_1(T)+\bM_2(T)^2)\bM_2(T)\la t\ra^{-5/4}\,.
\end{split}
\end{equation}
in the same way as (8.6) and (8.7) in \cite{Miz17}.
Since
$$\|\tc(t)\|_{L^\infty}+\sum_{k=1,2}\|\pd_y^kx(t)\|_{L^\infty}
\lesssim \bM_{c,x}(T)\la t\ra^{-1/2} \quad\text{for $t\in[0,T]$,}$$
\begin{equation}
\label{eq:N2,2'-est}
\|N_{2,2}'\|_X +\|P_2N_{2,3}\|_X \lesssim
  (\|\tc\|_{L^\infty}+\|x_y\|_{L^\infty}+\|x_{yy}\|_{L^\infty})\|v_2\|_X  
 \lesssim  
\bM_{c,x}(T)\bM_2(T)\la t\ra^{-5/4}\,.
\end{equation}
Here we use the fact that $\|\pd_yP_2\|_{B(X)}\lesssim M$.
Since $|e^{\a z}\{\varphi_c(z)-\tpsi_c(z)\}|\lesssim p_\a(z+3t+L)$, we have
\begin{equation}
\label{eq:N2,4-est}
\|N_{2,4}\|_X \lesssim \bM_1(T)\la t\ra^{-2}\quad\text{for $t\in[0,T]$.}  
\end{equation}
Combining \eqref{eq:v1n-est}--\eqref{eq:N2,4-est},
we have for $t\in[0,T]$,
$$\|\tv_2(t)\|_X \lesssim  e^{-bt}\|v_{2,*}\|_X
+ \{\bM_{c,x}(T)+\bM_1(T)+(\bM_v(T)+\bM_2(T))\bM_2(T)\}\la t\ra^{-3/4}\,.$$
\par
As long as $v_2(t)$ satisfies the orthogonality condition \eqref{eq:orth} and
$\tc(t,y)$ remains small, we have
$$\|\tv_2(t)\|_X\lesssim \|P_1(0,M)v_2(t)\|_X\lesssim \|\tv_2(t)\|_X$$
in exactly the same way as the proof of lemma~9.2 in \cite{Miz15}.
Thus we have \eqref{eq:nonresonant-ylow}.
This completes the proof of lemma~\ref{lem:nonresonant-ylow}.
\end{proof}
Using a virial identity, we can estimate
the exponentially weighted norm of $v_2(t)$ for high frequencies
in $y$ in the same way as \cite[Lemma~8.3]{Miz17}.  
\begin{lemma}
\label{lem:virial}
Let $\a\in(0,2)$ and $v_2(t)$ be a solution of \eqref{eq:v2}.
Suppose $\bM_1'(\infty)$ is sufficiently small.
Then there exist positive constants $\delta_6$ and $M_1$ such that
if $\bM_{c,x}(T)+\bM_1(T)+\bM_2(T)+\bM_v(T)<\delta_6$ and $M\ge M_1$,
then for $t\in[0,T]$,
\begin{align*}
& \|v_2(t)\|_X^2 \lesssim
e^{-M\a t}\|v_{2,*}\|_X^2
 +\int_0^t e^{-M\a(t-s)}\left(\|\ell(s)\|_X^2
+\|P_1(0,M)v_2(s)\|_X^2+\|N_{2,4}(s)\|_X^2\right)\,ds\,,
\end{align*}
\begin{equation*}
\|\mathcal{E}(v_2)^{1/2}\|_{L^2(0,T;X)}\lesssim
\|v_{2,*}\|_X+\|\ell\|_{L^2(0,T;X)}+\|P_1(0,M)v_2\|_{L^2(0,T);X)}
+\|N_{2,4}\|_{L^2(0,T;X)}\,.  
\end{equation*}
\end{lemma}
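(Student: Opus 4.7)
The plan is to establish a weighted energy/virial identity for $v_2$ in the space $X=L^2(\R^2;e^{2\a x}dxdy)$. Following the line of \cite[Lemma~8.3]{Miz17}, I would compute $\frac{d}{dt}\|v_2(t)\|_X^2$ by testing \eqref{eq:v2} against $2e^{2\a z}v_2$. The key calculation is the contribution of $\mL_c$. After integration by parts in $z$, the terms $-\pd_z(\pd_z^2-2c+6\varphi_c)v_2$ produce a coercive dissipation of the form
\[
D_1[v_2]:=\int_{\R^2}e^{2\a z}\bigl(3\a(\pd_zv_2)^2+(2c-4\a^2)\a v_2^2-6\a\varphi_cv_2^2+3(\varphi_c)'v_2^2\bigr)\,dzdy\,,
\]
while the term $-3\pd_z^{-1}\pd_y^2v_2$, after setting $W=\pd_z^{-1}v_2$ and integrating by parts in $y$ and then in $z$, yields $6\a\int e^{2\a z}(\pd_z^{-1}\pd_yv_2)^2\,dzdy$. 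For $\a\in(0,2)$ small enough and $c$ close to $2$, the resulting quadratic form controls a multiple of $\|\mathcal{E}(v_2)^{1/2}\|_X^2$ modulo a finite-dimensional deficit localized near the soliton, which is absorbed thanks to the orthogonality condition \eqref{eq:orth}.

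Next, to extract the rate $e^{-M\a t}$, I would split $v_2=P_1(0,M)v_2+(I-P_1(0,M))v_2$ and observe that on the range of $I-P_1(0,M)$ the Fourier variable $\eta$ satisfies $|\eta|\ge M$. For such high $y$-frequencies the dispersive term $-3\pd_z^{-1}\pd_y^2$ contributes $3\eta^2$ to the real part of the symbol of the linearized operator on the weighted space, which enhances the dissipation by at least $M\a$ (this is essentially the high-frequency analogue of the semigroup bound in Proposition~\ref{prop:semigroup-est}). Combining the virial coercivity with this high-frequency gain gives a differential inequality
\[
\frac{d}{dt}\|v_2\|_X^2+M\a\|v_2\|_X^2+c_0\|\mathcal{E}(v_2)^{1/2}\|_X^2 \lesssim \|\ell\|_X^2+\|P_1(0,M)v_2\|_X^2+\|N_{2,4}\|_X^2+\mathcal{R}(t)\,,
\]
where $\mathcal{R}(t)$ collects the contributions from $N_{2,1}$, $N_{2,2}$ and $N_{2,3}$.

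The nonlinear remainder $\mathcal{R}(t)$ is where the smallness assumptions are used. For $N_{2,1}=-3(2v_1v_2+v_2^2)$, the cubic piece gives $\lesssim\bM_2(T)\|\mathcal{E}(v_2)^{1/2}\|_X^2$ via the Sobolev embedding \eqref{eq:Sobolev}, while the $v_1v_2$ piece is estimated using the exponential weight $e^{\a z}$ and the smallness of $\bM_1'(\infty)$ through $\|\tv_1\|_{L^6}\lesssim\bM_1'(\infty)$; both can be absorbed into the left-hand side when $\bM_2(T)+\bM_1'(\infty)$ is small. For $N_{2,2}$ one uses Claim~\ref{cl:cx_t-bound} to bound $\|x_t-2c-3(x_y)^2\|_{L^\infty}+\|\tc\|_{L^\infty}$ by a multiple of $\bM_{c,x}(T)+\bM_1(T)+\bM_2(T)^2$. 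For the troublesome term $N_{2,3}=6\pd_y(x_yv_2)-3x_{yy}v_2$ one integrates by parts in $y$ against $e^{2\a z}v_2$ so that the derivative falls on $x_y$, reducing it to terms proportional to $\|x_y\|_{L^\infty}+\|x_{yy}\|_{L^\infty}$ times $\|v_2\|_X\|\mathcal{E}(v_2)^{1/2}\|_X$.

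The hard part is obtaining the coercivity of $D_1[v_2]+(\text{dispersive contribution})$: the potential term $-6\a\int e^{2\a z}\varphi_cv_2^2$ has the wrong sign, so the quadratic form is indefinite on $L^2$. The standard fix is to use the orthogonality \eqref{eq:orth}, which eliminates the two resonant directions $g_k^*(z,\eta,c)$; on their orthogonal complement the quadratic form is coercive, uniformly for $c$ near $2$ and $|\eta|\le\eta_0$, by a continuity/perturbation argument from the known spectral gap at $c=2,\eta=0$ (cf.\ \cite[Chapter~3]{Miz15}). Once this is established, Gronwall's inequality applied to the differential inequality above yields the first asserted bound, and integrating in $t\in[0,T]$ (using the exponential factor $e^{-M\a(t-s)}$ to absorb the dissipation term on the left) yields the second.
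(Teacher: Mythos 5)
Your overall skeleton --- the weighted energy identity obtained by testing \eqref{eq:v2} against $2e^{2\a z}v_2$, the splitting $v_2=P_1(0,M)v_2+(I-P_1(0,M))v_2$, the gain of order $M\a$ at $y$-frequencies $|\eta|\ge M$, the treatment of $N_{2,1}$, $N_{2,2}$, $N_{2,3}$, and the final Gronwall step --- is the right one and matches the argument the paper imports from Lemma~8.3 of \cite{Miz17}. The genuine gap is in your last paragraph. The indefinite localized terms produced by $-6\pd_z(\varphi_c\,\cdot)$ (after integration by parts they are of the form $\int e^{2\a z}(12\a\varphi_c-6\varphi_c')v_2^2$, hence of size $O(1)$, not $O(\a)$) cannot be absorbed by the orthogonality condition \eqref{eq:orth}: that condition constrains $v_2$ only along the two modes $g_k^*(\cdot,\eta,c)$ for $|\eta|\le\eta_0$, so it says nothing about the high-frequency part $(I-P_1(0,M))v_2$, which is precisely where you need coercivity. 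Moreover, even at low frequencies the claimed spectral coercivity is false: the weighted linearized KdV operator is strongly non-normal, and $\Re\la \mL(\eta)v,v\ra_{L^2(e^{2\a z}dz)}$ is not sign-definite on the complement of the resonant modes; Pego--Weinstein-type decay is obtained from resolvent and semigroup estimates (that is the content of Proposition~\ref{prop:semigroup-est} and of Lemma~\ref{lem:nonresonant-ylow}), not from a monotone quadratic form.

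The correct mechanism --- and the reason the lemma carries the hypothesis $M\ge M_1$ and the term $\|P_1(0,M)v_2\|_X^2$ on its right-hand side --- is cruder: for $(I-P_1(0,M))v_2$ the combined dissipation $3\a\|e^{\a z}\pd_zv_2\|^2+3\a\|e^{\a z}\pd_z^{-1}\pd_yv_2\|^2$ controls $cM\a\|(I-P_1(0,M))v_2\|_X^2$ (note the dispersive term contributes $3\a\eta^2/(\xi^2+\a^2)$, not $3\eta^2$, to the dissipative part of the symbol, so the gain of $M$ appears only after interpolating with the $\a\xi^2$ term), and once $M\a$ exceeds the $O(1)$ constant of the indefinite potential term that term is simply absorbed; the low-frequency part is not estimated here at all but dumped into the source term $\|P_1(0,M)v_2\|_X^2$, to be controlled separately by Lemma~\ref{lem:nonresonant-ylow} using the semigroup and the orthogonality. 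You do write down the correct differential inequality, but its justification must go through this large-$M$ absorption rather than a spectral-gap argument; the remaining nonlinear estimates ($N_{2,1}$ via \eqref{eq:Sobolev} and the smallness of $\bM_1'(\infty)$, $N_{2,2}$ via Claim~\ref{cl:cx_t-bound}, $N_{2,3}$ by integrating by parts in $y$) are fine.
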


Now we are in position to prove Lemma~\ref{lem:exp-bound}.
\begin{proof}[Proof of Lemma~\ref{lem:exp-bound}]
Combining Lemmas~\ref{lem:nonresonant-ylow} and \ref{lem:virial},
\eqref{eq:ell-est} and \eqref{eq:N2,4-est} with \eqref{eq:v2*-bound}, we have
\begin{align*}
\bM_2(T)\lesssim &
\|v_{2,*}\|_X+\bM_{c,x}(T)+\bM_1(T)+\bM_2(T)(\bM_2(T)+\bM_v(T))
\\ \lesssim & \|\la x\ra v_0\|_{L^2(\R^2)}+\bM_1(T)+\bM_2(T)(\bM_2(T)+\bM_v(T))\,.
\end{align*}
Thus we obtain \eqref{eq:M4-bound} provided $\bM_2(T)$ and $\bM_v(T)$ are
sufficiently small.
This completes the proof of Lemma~\ref{lem:exp-bound}.
\end{proof}

\bigskip
\section{Large time behavior of the phase shift of line solitons}
\label{sec:phase}
In this section, we will prove Theorem~\ref{thm:phase}.
To begin with, we remark that $\bM_{c,x}(T)$, $\bM_1(T)$, $\bM_2(T)$ and
$\bM_v(T)$ remain small for every $T\in[0,\infty]$ provided
the initial perturbation $v_0$ is sufficiently small.
Combining Proposition~\ref{prop:continuation}, Lemmas~\ref{lem:Mcx-bound},
\ref{lem:M4-bound}, \ref{lem:v1-a}, \ref{lem:scattering}
and \ref{lem:exp-bound}, we have the following.
\begin{proposition}
\label{prop:poly}  
There exist positive constants $\eps_0$ and $C$ such that if 
$\eps:=\|\la x\ra(\la x\ra+\la y\ra)v_0\|_{H^1(\R^2)}<\eps_0$,
then
$\bM_{c,x}(\infty)+\bM_1(\infty)+\bM_2(\infty)+\bM_v(\infty)+\bM_1'(\infty)
\le C\eps$.
\end{proposition}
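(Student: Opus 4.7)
The plan is to run a continuity/bootstrap argument on the composite quantity
$$N(T):=\bM_{c,x}(T)+\bM_1(T)+\bM_2(T)+\bM_v(T),$$
closing it by chaining the four a priori bounds of Lemmas~\ref{lem:Mcx-bound}, \ref{lem:M4-bound}, \ref{lem:v1-a} and \ref{lem:exp-bound}. The estimate on $\bM_1'(\infty)$ decouples from the rest: Lemma~\ref{lem:scattering} gives $\bM_1'(\infty)\le C\eps$ directly from $\|\la x\ra^2v_0\|_{H^1}\le\eps$, and I would simply record it at the end.

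First I would fix $\eta_0$ small and $L$ large so that $\eta_0+e^{-\a L}$ lies below the universal thresholds $\delta_1,\ldots,\delta_6$ appearing in Proposition~\ref{prop:continuation} and the four a priori lemmas. Choose a constant $K>0$ to be determined and set
$$T^*:=\sup\bigl\{T\ge0 : N(T)\le K\eps\bigr\}.$$
Using \eqref{eq:cx*bound}, \eqref{eq:v2*-bound}, Lemma~\ref{lem:nonzeromean1} and Lemma~\ref{lem:modeq-init-decomp}, together with the time continuity of $(v_2,\tc,\tx)$ recalled at the start of Section~4, one checks that $N(0)\lesssim\eps$, so $T^*>0$ for $\eps<\eps_0$ small. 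Provided $K\eps<\min(\delta_2,\delta_3,\delta_4,\delta_6)$, the smallness hypotheses of all four a priori lemmas are then satisfied throughout $[0,T^*]$.

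On this interval I would chain the estimates in the order $\bM_1\to\bM_{c,x}\to\bM_2\to\bM_v$. Lemma~\ref{lem:v1-a} gives $\bM_1(T)\le C\eps$ with $C$ independent of $K$; Lemma~\ref{lem:Mcx-bound} then yields
$$\bM_{c,x}(T)\le C\eps+C\bM_1(T)+C\bM_2(T)^2\le C\eps+CK^2\eps^2;$$
Lemma~\ref{lem:exp-bound} supplies $\bM_2(T)\le C(\eps+\bM_{c,x}(T)+\bM_1(T))\le C\eps+CK^2\eps^2$; and Lemma~\ref{lem:M4-bound} produces the same shape of bound for $\bM_v(T)$. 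Summing, $N(T)\le C_0\eps+C_0K^2\eps^2$ with $C_0$ independent of $K$. Taking $K:=4C_0$ and then $\eps_0$ so small that $C_0K^2\eps_0\le K/4$ yields the strict improvement $N(T)\le\tfrac{K}{2}\eps$ on $[0,T^*]$, which together with continuity of $N$ and the continuation criterion of Proposition~\ref{prop:continuation} (whose hypothesis $\|v_2(t)\|_X+\|\tc(t)\|_Y<\delta_1$ is implied by $N(T)\le K\eps$) forces $T^*=\infty$. Combining with Lemma~\ref{lem:scattering} gives the stated bound on $N(\infty)+\bM_1'(\infty)$.

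The step I expect to be the main obstacle is not the algebraic closing of the bootstrap but the initial-data bound $N(0)\lesssim\eps$ itself: the norms $\bM_{c,x}$ and $\bM_2$ carry $t^{1/4}$, $t^{3/4}$ prefactors that must be controlled as $t\downarrow 0$, and the initial datum $b_*$ of the modulation equation is not integrable but only decomposes as $b_*=\wP_1\obu{b}+\pd_y^2\oc{b}$ via Lemma~\ref{lem:modeq-init-decomp}. Verifying that the fundamental-solution estimates of Lemma~\ref{lem:fund-sol} genuinely convert this decomposition into an $\eps$-bound on $U(t,0)\mathrm{(initial\ data)}$ is where the polynomial $\la y\ra$-weight in the hypothesis (absent from Theorem~\ref{thm:stability}) is essentially used; once $N(0)\lesssim\eps$ is in hand, the four lemmas plug together by routine nonlinear absorption.
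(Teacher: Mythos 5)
Your proposal is correct and follows essentially the same route as the paper, which proves Proposition~\ref{prop:poly} precisely by combining Proposition~\ref{prop:continuation} with Lemmas~\ref{lem:Mcx-bound}, \ref{lem:M4-bound}, \ref{lem:v1-a}, \ref{lem:scattering} and \ref{lem:exp-bound} in a continuation/bootstrap argument; your explicit chaining $\bM_1\to\bM_{c,x}\to\bM_2\to\bM_v$ and the choice of $K$ just fill in the routine details the paper leaves implicit. (Your worry about the $\la t\ra^{1/4}$-type prefactors as $t\downarrow0$ is moot since $\la t\ra\ge1$ there; the decomposition of $b_*$ from Lemma~\ref{lem:modeq-init-decomp} is needed for Theorem~\ref{thm:phase}, not for closing this proposition.)
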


When we estimate the $L^\infty$ norm of $\tx$ by applying 
Lemma~\ref{lem:fund-sol} to \eqref{eq:modeq}, two terms $\cN^6$ and
$B_4^{-1}\widetilde{\mathcal{A}}_1(t){}^t(b,\tx)$ are hazardous because
they do not necessarily belong to $L^1(\R)$.
\par

We introduce a change of variable to eliminate $E_1\obu{B}_4k_t\mathbf{e_1}$
and a bad part of $B_4^{-1}\widetilde{\mathcal{A}}_1(t)$.
Let
\begin{gather*}
\begin{pmatrix} \ta_3(t,D_y) & 0 \\ \ta_4(t,D_y) & 0\end{pmatrix}
:=B_4^{-1}\widetilde{\mathcal{A}}_1(t)\,,\enskip
\ta_{31}(t)=\ta_3(t,0)\,,\enskip
\ta_{32}(t,\eta)=\frac{\ta_3(t,\eta)-\ta_3(t,0)}{\eta^2}\,,
\end{gather*}
and $\gamma(t)=e^{-\int_0^t \ta_{31}(s)\,ds}$.
Note that $\ta_3(t,\eta)$ is even in $\eta$ because 
$g_k^*(z,\eta)$ thus the symbols of $B_4$ and $\widetilde{\mathcal{A}}_1(t)$
are even in $\eta$. By \cite[Claim~6.3]{Miz17},
the operator $B_4^{-1}$ is uniformly bounded in $B(Y)$ and we can prove
that for $t\ge0$,
\begin{equation}
  \label{eq:a3-a4}
|\ta_{31}(t)|+|\ta_{31}'(t)| +\|\ta_{32}(t,D_y)\|_{B(Y)}+\|\ta_3(t,D_y)\|_{B(Y)}
  +\|\ta_4(t,D_y)\|_{B(Y)}
\lesssim e^{-\a(3t+L)}
\end{equation}
in exactly the same way as \cite[Claim~D.3]{Miz15}.
We need to replace $e^{-\a(4t+L)}$ in \cite[Claim~D.3]{Miz15} by
$e^{-\a(3t+L)}$ because $\tpsi_c(z)=\psi_{c,L}(z+3t)$ in our paper
whereas $\tpsi_c(z)=\psi_{c,L}(z+4t)$ in \cite{Miz15}.
By the definitions of $\wS_0$, $\wS_1$ and $\wS_3$
(see \cite[pp.40--41]{Miz15}) and  \cite[(A1), (A6)]{Miz17},
\begin{equation}
  \label{eq:A1-A3-est}
\|\mathcal{A}_1(t)\|_{B(Y) \cap B(Y_1)}=O(1)\,,
\quad \|\mathcal{A}_2(t)\|_{B(Y) \cap B(Y_1)}=O(e^{-\a(3t+L)})\,.
\end{equation}
By  \eqref{eq:a3-a4},
$$0<\inf_{t\ge0}\gamma(t)\le \sup_{t\ge0}\gamma(t)<\infty\,,
\quad \lim_{t\to\infty}\gamma(t)>0\,.$$
Let $\mathbf{k}(t,y)=\gamma(t)\tk(t,y)\mathbf{e_1}$,
$\tb(t,y)=b(t,y)+\tk(t,y)$  and
\begin{equation}
  \label{eq:tbdef}
\bb(t,y)={}^t\!(b_1(t,y),b_2(t,y))=\gamma(t){}^t\!(\tb(t,y),\tx(t,y))\,.
\end{equation}
By Claim~ \ref{cl:k-decay} and \eqref{eq:S311-d},
\begin{equation}
  \label{eq:b-b1}
  \begin{split}
\|\pd_y^kb_1(t)\|_Y \lesssim 
 & \|\pd_y^kb(t)\|_Y +\|k(t)\|_Y
\\ \lesssim & \la t\ra^{-(2k+1)/4}
\bM_{c,x}(\infty)+\bM_1(\infty)\la t\ra^{-2} \quad\text{for $k\ge0$,}
  \end{split}
\end{equation}
\begin{gather}
  \label{eq:tx-b2}
\|\pd_y^kb_2(t)\|_Y \lesssim 
 \|\pd_y^k\tx(t)\|_Y 
 \lesssim \la t\ra^{-(2k-1)/4}\bM_{c,x}(\infty)
 \quad\text{for $k\ge1$,}
 \\
 \label{eq:tx-b2'}
\|\tx(t)\|_{L^\infty} \lesssim  \|b_2(t)\|_{L^\infty} +\|k(t)\|_Y
\lesssim  \|b_2(t)\|_{L^\infty}+\bM_1(\infty)\la t\ra^{-2}\,.
\end{gather}
Note that $\|\pd_y^kk(t)\|_Y\lesssim \eta_0^k\|k(t)\|_Y$ for any $k\ge1$.

\par
Substituting \eqref{eq:tbdef} into \eqref{eq:modeq} and using
\eqref{eq:cN6}, \eqref{eq:bad-part1} and \eqref{eq:Rv1'}, we have
\begin{equation}
  \label{eq:modeq3}
\pd_t\bb
=\mathcal{A}_*\bb+\gamma\left\{
\sum_{i=1}^5\cN^i+\obu{\cN}_6+\pd_y\oc{\cN}_6+\obu{\cN}_7
+\pd_y^2\oc{\cN}_7\right\}\,,
\end{equation}
where $\obu{\cN}_6=\sum_{1\le j\le 3}\obu{\cN}_{6j}$,
$\oc{\cN}_6=\oc{\cN}_{61}+\pd_y\oc{\cN}_{62}$,
\begin{align*}
\obu{\cN}_{61}=& \gamma^{-1}\pd_t\{\gamma(2-S^3_{11}[\psi])^{-1}\}k\mathbf{e_1}\,,
\quad
\obu{\cN}_{62}=E_2\obu{B}_4R^{v_1}-2E_{21}\mathbf{k}\,,
\quad  E_{21}=\begin{pmatrix} 0 & 0 \\  1 & 0\end{pmatrix}\,,
\\
\obu{\cN}_{63}=& (2-S^3_{11}[\psi])^{-1}\left\{\obu{R}_{v_1}
    +S^7_1[\pd_c\varphi_c](c_t)-S^7_1[\varphi_c'](x_t-2c-3(x_y)^2)\right\}
     +\obu{B}_{34}R^{v_1}\,,
\end{align*}
\begin{align*}
\oc{\cN}_{61}=& \obu{B}_4\oc{R}_{v_1,1}\mathbf{e_1}\,,\quad
\oc{\cN}_{62}=\left\{\obu{B}_4\oc{R}_{v_1,2}\mathbf{e_1}
  -(\oc{B}_{14}+\oc{B}_{34})R^{v_1}\right\}\,,
\\
\obu{\cN}_7=& \{\ta_4(t,D_y)-\ta_{31}(t)\}b(t,\cdot)\mathbf{e_2}\,,
\\
\oc{\cN}_7=&
\left(\pd_y^2\mathcal{A}_1+\mathcal{A}_2\right)
(b(t,\cdot)\mathbf{e_1}+\tx(t,\cdot)\mathbf{e_2})
-\ta_{32}(t,D_y)b(t,\cdot)\mathbf{e_1}
-\pd_y^{-2}(\mathcal{A}_*-2E_{21})\mathbf{k}(t,\cdot)\,.
\end{align*}
\par

Now we start to prove Theorem~\ref{thm:phase}.
\begin{proof}[Proof of Theorem~\ref{thm:phase}]
Using the variation of constants formula, we can translate \eqref{eq:modeq3}
into
\begin{equation}
  \label{eq:formula-bb}
  \begin{split}
  \bb(t)=& e^{t\mathcal{A}_*}\bb(0)
\\ & +\int_0^t e^{(t-s)\mathcal{A}_*}\gamma(s)
  \left(\sum_{i=1}^5\cN^i(s)+\obu{\cN}_6(s)+\pd_y\oc{\cN}_6(s)
    +\obu{\cN}_7(s)+\pd_y^2\oc{\cN}_7(s)\right)\,ds\,.      
  \end{split}
\end{equation}
Now we will estimate the $L^\infty$-norm of the right hand side of
\eqref{eq:formula-bb}.
By \eqref{eq:modeq-init} and \eqref{eq:tbdef}, 
\begin{align*}
&b_1(0,y)=b_*(y)+\frac12\left(2-S^3_{11}[\psi](0)\right)^{-1}
\wP_1\left(\int_\R v_*(x,y)\varphi_{c_*(y)}(x-x_*(y))\,dx\right)\,,
\\ & b_2(0,y)=x_*(y)\,,
\end{align*}
and it follows from Lemmas~\ref{lem:fund-sol} and \ref{lem:modeq-init-decomp}
that
\begin{gather}
\left\| e^{t\mathcal{A}_*}\bb(0)\right\|_{L^\infty(\R)}
\lesssim  \bigl\|\obu{b}\bigr\|_{L^1}+\bigl\|\oc{b}\bigr\|_{Y_1}
+\|x_*\|_{Y_1}+\|v_*\|_{L^1(\R^2)}
\lesssim  \eps\,,
\notag \\
\label{eq:phase-hom}
\left \|\mathbf{e_2}\cdot e^{t\mathcal{A}_*}\bb(0)-
\frac12H_{2t}*W_{4t}*b_1(0)\right\|_{L^\infty}
\lesssim \la t\ra^{-1/2}\eps\,,
\end{gather}
where $\eps=\bigl\|\la x\ra\la y\ra v_0\bigr\|_{L^2(\R^2)}$.
\par
Next, we will estimate $\cN^1$.
Let $\obu{\cN}_1=\wP_1\{2(\tc-b)+3(x_y)^2\}\mathbf{e_2}$ and
$\oc{\cN}_1=6\wP_1(b\tx_y)\mathbf{e_1}$. Then
$E_1\cN_1=\pd_y\oc{\cN}$, $E_2\cN_1=\obu{\cN}_1$,
$\cN^1=\diag(\pd_y,1)(\obu{\cN}_1+\oc{\cN}_1)$,  and
$$III_1(t):=\|\obu{\cN}_1\|_{Y_1}+\|\oc{\cN}_1\|_{Y_1}
\lesssim \bM_{c,x}(\infty)^2\la t\ra^{-1/2}\,.$$
By \eqref{eq:profile} and the fact that
\begin{equation}
  \label{eq:b-tc}
b-\tc=\frac{4}{3}\wP_1
\left\{\left(\frac{c}{2}\right)^{3/2}-1-\frac{3}{4}\tc\right\}
=\frac18\wP_1\tc^2+O(\tc^3)
\end{equation}
(see \cite[Claim~D.6]{Miz15}),
\begin{gather*}
III_2(t):=\left\|bx_y-2\{u_B^+(t,\cdot+4t)^2-u_B^-(t,\cdot-4t)^2\}\right\|_{L^1}
\lesssim \eps^2\delta(t)\la t\ra^{-1/2}\,,
\\
III_3(t):= \left\|2(\tc-b)+3(x_y)^2-2\{u_B^+(t,\cdot+4t)^2
+u_B^-(t,\cdot-4t)^2\}\right\|_{L^1}
\lesssim \eps^2\delta(t)\la t\ra^{-1/2}\,,
\end{gather*}
where $\delta(t)$ is a functions that tends to $0$ as $t\to\infty$.
Note that 
$\|u_B^+(t,\cdot+4t)u_B^-(t,\cdot-4t)\|_{L^1}=O(\eps^2t^{-1/2}e^{-8t})$.
By Lemma~\ref{lem:fund-sol} and \cite[Claim~4.1]{Miz15},
\begin{align*}
\Bigl\| \int_0^t e^{(t-s)\mathcal{A}_*}\gamma(s)\cN^1(s)\,ds\Bigr\|_{L^\infty}
\lesssim &
\int_0^t \la t-s\ra^{-1/2}III_1(s)\,ds
\\ \lesssim & \bM_{c,x}(\infty)\int_0^t\la t-s\ra^{-1/2}\la s\ra^{-1/2}
\lesssim \bM_{c,x}(\infty)^2\,,
\end{align*}
\begin{align*}
& \Bigl\| \int_0^t e^{(t-s)\mathcal{A}_*}\gamma(s)\bigl\{\cN^1(s)\,ds
\\ &
\phantom{\Bigl\| \int_0^t e^{(t-s)\mathcal{A}_*}\gamma(s)}
-\sum_\pm H_{2t}(\cdot \pm4(t-s))
*\{4u_B^\pm(s,\cdot\pm 4s)^2-2u_B^\mp(s,\cdot\mp 4s)^2\bigr\}\,ds\mathbf{e_2}
\Bigr\|_{L^\infty}
\\ 
\lesssim &
\int_0^t \la t-s\ra^{-1} III_1(s)\,ds
+\int_0^t \la t-s\ra^{-1/2}\left(III_2(s)+III_3(s)\right)\,ds
\\ \lesssim & \eps^2 \la t\ra^{-1/2}\log(t+2)
+\eps^2\int_0^t (t-s)^{-1/2}s^{-1/2}\delta(s)\,ds
\to0\quad \text{as $t\to\infty$.}
\end{align*}
\par

For $y$ satisfying $\min\{|y-4t|\,,\,|y+4t|\}\ge 2\delta(t)^{-1/2}\sqrt{t} $,
\begin{align*}
& \int_0^t \int_\R H_{2(t-s)}(y-y_1\pm4t)
\left\{H_{2s}(y_1)^2+H_{2s}(y_1\mp8s)^2\right\}\,dy_1ds
\\ \lesssim &
\int_0^t (t-s)^{-1/2}s^{-1}e^{-\delta(t)^{-1}/8}
\left(\int_{|y_1|\le \delta(t)^{-1/2}\sqrt{t}} e^{-y_1^2/8s}\,dy_1\right)\,ds
\\ & 
+\int_0^t (t-s)^{-1/2}s^{-1}\left(
\int_{|y_1|\ge \delta(t)^{-1/2}\sqrt{t}} e^{-y_1^2/8s}\,dy_1\right)\,ds
\\\lesssim & \exp\bigl(-\delta(t)^{-1}/8\bigr)\to 0\quad
\text{as $t\to\infty$.}
\end{align*}
Combining the above with the fact that $|u_B^\pm(s,y)|\lesssim H_{2s}(y)$,
we obtain
\begin{equation*}
\lim_{t\to\infty}\left\|  \int_0^t e^{(t-s)\mathcal{A}_*}\gamma(s)\cN^1(s)\,ds
\right\|_{L^\infty(|y\pm4t|\ge \delta t)}=0\,.
\end{equation*}
\par
The other terms can be decomposed as
\begin{equation}
  \label{eq:nonl-classify}
\sum_{i=2}^5\cN^i(t)
+\obu{\cN}_6+\pd_y\oc{\cN}_6+\obu{\cN}_7+\pd_y^2\oc{\cN}_7
=\obu{\cN}_a+\obu{\cN}_b+\pd_y(\oc{\cN}_a+\oc{\cN}_b)
+\pd_y^2\oc{\cN}_c\,,  
\end{equation}
such that $\obu{\cN}_b=E_2\obu{\cN}_b$ and
\begin{gather}
  \label{eq:cNa-est}
 \|\chi(D_y)E_1\obu{\cN}_a\|_{L^1(\R)}
+\|\pd_y^{-1}(I-\chi(D_y))E_1\obu{\cN}_a\|_{Y_1}
 \lesssim  \left(e^{-\a L}\eps+\eps^2\right)\la t \ra^{-3/2}\,,
\\
\label{eq:cNa2-est}
\bigl\|E_2\obu{\cN}_a\bigr\|_{Y_1}+\bigl\|\oc{\cN}_a\bigr\|_{Y_1}\lesssim 
\eps^2\la t\ra^{-1}\,,
\\
\label{eq:cNbc-est}
\bigl\|\obu{\cN}_b\bigr\|_Y+\bigl\|\oc{\cN}_b\bigr\|_Y\lesssim 
\eps \la t\ra^{-7/4}\,,
\quad
\bigl\|\oc{\cN}_c\bigr\|_Y\lesssim \eps\la t\ra^{-1}\,.
\end{gather}
Hence it follows from Proposition~\ref{prop:poly} and
Lemma~\ref{lem:fund-sol} that
\begin{align}
\label{eq:phase-Na1}
& \sup_{t\ge0} \left\| \int_0^t e^{(t-s)\mathcal{A}_*}\gamma(s)
\chi(D_y)E_1\obu{\cN}_a(s)\,ds\right\|_{L^\infty} 
\lesssim e^{-\a L}\eps+\eps^2\,,
\\ &
\label{eq:phase-Na2}
\left\| \int_0^t e^{(t-s)\mathcal{A}_*}\gamma(s)
(I-\chi(D_y))E_1\obu{\cN}_a(s)\,ds\right\|_{L^\infty} 
\lesssim (e^{-\a L}\eps+\eps^2) \la t\ra^{-1/2}\,,
\end{align}
\begin{equation}
  \label{eq:phase-Nbc}
  \begin{split}
\Bigl\| \int_0^t e^{(t-s)\mathcal{A}_*}\gamma(s)
\bigl[E_2\obu{\cN}_a(s)+&
\obu{\cN}_b (s)+\pd_y\{\oc{\cN}_a(s)+\oc{\cN}_b(s)\}
\\ & +\pd_y^2\oc{\cN}_c(s)\bigr]\,ds\Bigr\|_{L^\infty} 
\lesssim \eps \la t\ra^{-1/4}\,.    
  \end{split}
\end{equation}
By Lemma~\ref{cl:phase-lim}, \eqref{eq:fund-asymp1} and  \eqref{eq:cNa-est},
that for any $\delta>0$,
\begin{gather}
\label{eq:phase-Na3}
\lim_{t\to\infty}\sup_{|y|\le (4-\delta)t}
\left| \int_0^t e^{(t-s)\mathcal{A}_*}\gamma(s)
\chi(D_y)E_1\obu{\cN}_a(s)\,ds-h_a\mathbf{e_2}\right|=0\,,
\\ \label{eq:phase-Na4}
h_a=\frac12\int_0^\infty\int_\R \gamma(s)\mathbf{e_1}
\cdot\obu{\cN}_a(s,y)\,dyds\,,
\quad |h_a|\lesssim \eps e^{-\a L}+\eps^2\,,
\\ \notag
\lim_{t\to\infty}\sup_{|y|\ge (4+\delta)t}
\left| \int_0^t e^{(t-s)\mathcal{A}_*}\gamma(s)
\chi(D_y)E_1\obu{\cN}_a(s)\,ds\right|=0\,.  
\end{gather}
\par
Now, we will prove \eqref{eq:nonl-classify}--\eqref{eq:cNbc-est}.
First, we will estimate $\cN^2$.
As in \cite[Claim~D.7]{Miz15},
\begin{align}
& \label{eq:R71-est}
\|\wP_1R^7_1\|_{Y_1}+\|\chi(D_y)R^7_1\|_{L^1}
\lesssim \bM_{c,x}(\infty)^2\la t\ra^{-3/2}\,,
\\ &  \label{eq:R72-est}
\|\wP_1R^7_2\|_{Y_1}\lesssim \bM_{c,x}(\infty)^2\la t\ra^{-1}\,,
\quad \|\wP_1R^7_2\|_Y\lesssim \bM_{c,x}(\infty)^2\la t\ra^{-5/4}\,,
\end{align}
where 
\begin{align*}
R^7_1=& \left\{4\sqrt{2}c^{3/2}-16-12b\right\}x_{yy}
-6(2b_y-(2c)^{1/2}c_y)x_y-3c^{-1}(c_y)^2\,,\\
R^7_2=& 6\left\{\left(\frac{c}{2}\right)^{3/2}-1\right\}x_{yy}
+3\left(\frac{c}{2}\right)^{1/2}c_yx_y-3(bx_y)_y+\mu_2\frac{2}{c}(c_y)^2
\\ & + \frac{3}{2}(c^2-4)(I-\wP_1)(x_y)^2\,.
\end{align*}  
Let 
\begin{gather*}
\wR^{11}=R^{31}+R^{41}+R^{61}+\wS_{41}\begin{pmatrix}  0 \\ 2\tc\end{pmatrix}\,,
\quad 
\wR^{12}=R^{32}+R^{42}+R^{62}+\wS_{42}\begin{pmatrix}  0 \\ 2\tc\end{pmatrix}\,,
\\
\wR^{31}=R^{91}+R^{11,1}\,,\quad \wR^{32}=R^{92}+R^{11,2}\,.
\end{gather*}
Then $\wR^1=\wR^{11}-\pd_y^2\wR^{12}$ and $\wR^3=\wR^{31}-\pd_y^2\wR^{32}$.
See Appendix~\ref{ap:r} for the definitions of $R^{j1}$ and $R^{j2}$
and see \eqref{eq:defS3k1}--\eqref{eq:defS4k2} and \eqref{eq:defwS4l}
for the definitions of $\wS_{4\ell}$ ($\ell=1$, $2$).
\par
By Claims~\ref{cl:S3}, \ref{cl:S4}, \ref{cl:R3}--\ref{cl:R9} and \ref{cl:R11},
\begin{gather}
\label{eq:wR11-est}
\|\chi(D_y)\wR^{11}\|_{L^1(\R)}+\|\wR^{11}\|_{Y_1}+\|\wR^{12}\|_{Y_1}
\lesssim (\bM_{c,x}(\infty)^2+\bM_2(\infty)^2+\bM_1(\infty)\bM_2(\infty))
\la t\ra^{-3/2}\,,
  \\ \label{eq:wR31-est}
\|\chi(D_y)\wR^{31}\|_{L^1(\R)}+\|\wR^{31}\|_{Y_1}+\|\wR^{32}\|_{Y_1}
\lesssim
\bM_{c,x}(\infty)(\bM_{c,x}(\infty)+\bM_2(\infty))\la t\ra^{-3/2}\,.
\end{gather}
Let $\obu{\cN}_2=\obu{\cN}_{21}+\obu{\cN}_{22}+\obu{\cN}_{23}$ and
\begin{align*}
& \obu{\cN}_{21}=\obu{B}_4(\wP_1R^7_1\mathbf{e_1}+\wR^{11}+\wR^{31})
+\obu{B}_{34}(\wP_1R^7+\wR^1+\wR^3)\,,
\\ & 
\obu{\cN}_{22}=B_1^{-1}\wP_1R^7_2\mathbf{e_2}\,,\quad 
\obu{\cN}_{23}=\bigl(\obu{B}_4-B_1^{-1}\bigr)\wP_1R^7_2\mathbf{e_2}\,,
\\ &
\oc{\cN}_2=\obu{B}_4(\wR^{12}+\wR^{32})
+(\oc{B}_{14}+\oc{B}_{34})(\wP_1R^7+\wR^1+\wR^3)\,.
\end{align*}
Since $B_3^{-1}=\obu{B}_4+\obu{B}_{34}-\pd_y^2(\oc{B}_{14}+\oc{B}_{34})$
and $[\obu{B}_4,\pd_y]=O$, we have $\cN^2=\obu{\cN}_2-\pd_y^2\oc{\cN}_2$.
By \eqref{eq:R71-est}--\eqref{eq:wR31-est} and Claim~\ref{cl:b4part1}, 
\begin{align*}
 & \|\obu{\cN}_{21}\|_{Y_1}+\|\chi(D_y)\obu{\cN}_{21}\|_{L^1(\R)}
\lesssim (\bM_{c,x}(\infty)+\bM_2(\infty))^2\la t\ra^{-3/2}\,,
\\ &
\|\oc{\cN}_2\|_{Y_1}\lesssim (\bM_{c,x}(\infty)+\bM_2(\infty))^2\la t\ra^{-1}\,.
\end{align*}
By \eqref{eq:b4part2'},  \eqref{eq:R72-est} and the fact that
$B_1^{-1}\mathbf{e_2}=\frac12\mathbf{e_2}$,
\begin{gather*}
\|\obu{\cN}_{22}\|_{Y_1}\lesssim \bM_{c,x}(\infty)^2\la t\ra^{-1}\,,
\quad \obu{\cN}_{22}=E_2\obu{\cN}_{22}\,,
\\
\|\obu{\cN}_{23}\|_{Y_1}+\|\chi(D_y)\obu{\cN}_{23}\|_{L^1(\R)}
\lesssim e^{-\a(3t+L)}\la t\ra^{-1}\bM_{c,x}(\infty)^2\,.  
\end{gather*}
\par

Next we will estimate $\cN^3$. Let
$$\obu{\cN}_3=[\obu{B}_{34},\pd_y](\wR^2+\wR^4)\,,\quad
\oc{\cN}_3=
(B_4^{-1}+\obu{B}_{34}-\pd_y\oc{B}_{34}\pd_y)(\wR^2+\wR^4)\,.$$
Then $\cN^3=\obu{\cN}_3+\pd_y\oc{\cN}_3$.
By Claims~\ref{cl:R2} and \ref{cl:R4-R5},
\begin{equation}
\label{eq:wR2}
\|\wR^2\|_{Y_1}\lesssim \bM_{c,x}(\infty)
(\bM_{c,x}(\infty)+\bM_2(\infty))\la t\ra^{-1}\,,
\quad
\|\wR^2\|_Y\lesssim \bM_{c,x}(\infty)
(\bM_{c,x}(\infty)+\bM_2(\infty))\la t\ra^{-5/4}\,.
\end{equation}
By Claims~\ref{cl:R9} and \ref{cl:R10},
\begin{equation}
\label{eq:wR4}
\|\wR^4\|_{Y_1} \lesssim \bM_{c,x}(\infty)^2\la t\ra^{-1}
\,,\quad
\|\wR^4\|_Y \lesssim \bM_{c,x}(\infty)^2\la t\ra^{-5/4}\,.  
\end{equation}
Combining \eqref{eq:wR2} and \eqref{eq:wR4} with Claim~\ref{cl:b4part1},
we have
\begin{align*}
& \|\obu{\cN}_3\|_{Y_1}+\|\chi(D_y)\obu{\cN}_3\|_{L^1}
\lesssim \bM_{c,x}(\infty)(\bM_{c,x}(\infty)+\bM_2(\infty))^2\la t\ra^{-2}\,,
\\ &
\|\oc{\cN}_3\|_{Y_1}\lesssim \bM_{c,x}(\infty)(\bM_{c,x}(\infty)+\bM_2(\infty))
\la t\ra^{-1}\,.
\end{align*}
\par
Next, we will estimate $\cN^4$.
Let $n_{41}=(B_2-\pd_y^2\wS_0)b_y\mathbf{e_1}$,
$n_{42}=(B_2-\pd_y^2\wS_0)x_{yy}\mathbf{e_2}$ and
\begin{gather*}
\obu{\cN}_{41}=[\obu{B}_{34},\pd_y]n_{41}\,,\quad
\obu{\cN}_{42}=E_2\obu{B}_{34}n_{42}\,,\quad
\obu{\cN}_{43}=E_1\obu{B}_{34}n_{42}\,,
\\
\oc{\cN}_{41}=\obu{B}_{34}n_{41}\,,\quad
\oc{\cN}_{42}=\oc{B}_{34}(\pd_yn_{41}+n_{42})  
\end{gather*}
By the definitions, $E_2\obu{\cN}_{42}=\obu{\cN}_{42}$ and
$$\cN^4=(\obu{B}_{34}-\pd_y^2\oc{B}_{34})(\pd_yn_{41}+n_{42})
=\obu{\cN}_{41}+\obu{\cN}_{42}+\obu{\cN}_{43}
+\pd_y\oc{\cN}_{41}-\pd_y^2\oc{\cN}_{42}\,.
$$
Since $\|\wS_0\|_{B(Y)}=O(1)$ by \cite[Claim~B.1]{Miz15},
we have $\|n_{41}\|_Y+\|n_{42}\|_Y\lesssim \bM_{c,x}(\infty)\la t\ra^{-3/4}$.
It follows from Claim~\ref{cl:b4part1} that
\begin{gather*}
\|\chi(D_y)\obu{\cN}_{41}\|_{L^1}+\|\obu{\cN}_{41}\|_{Y_1} 
\lesssim \bM_{c,x}(\infty)(\bM_{c,x}(\infty)+\bM_2(\infty))\la t\ra^{-3/2}\,,
\\
\|\obu{\cN}_{42}\|_{Y_1}+\|\oc{\cN}_{41}\|_{Y_1}+\|\oc{\cN}_{42}\|_{Y_1}
\lesssim \bM_{c,x}(\infty)(\bM_{c,x}(\infty)+\bM_2(\infty))\la t\ra^{-1}\,.
\end{gather*}
\par
Since $E_1B_1^{-1}\wC_1=O$,
\begin{equation*}
\obu{\cN}_{43}=E_1
\left(\obu{B}_{34}+B_1^{-1}\wC_1B_3^{-1}\right)n_{42}\,.  
\end{equation*}
By Claim~\ref{cl:wS3} and \eqref{eq:B34expr},
\begin{align*}
\|\obu{B}_{34}+\obu{B}_4\wC_1B_3^{-1}\|_{B(Y,Y_1)}
=&
\|\obu{B}_4(\wS_{31}-\bS_{31}-\bS_{41}-\bS_{51})B_3^{-1}\|_{B(Y,Y_1)}
\\ \lesssim &
(\bM_{c,x}(\infty)+\bM_2(\infty))\la t\ra^{-3/4}\,.
\end{align*}
By \eqref{eq:b4part2'} and the above,
\begin{equation}
  \label{eq:B34-1}
\begin{split}
& \|\obu{B}_{34}+B_1^{-1}\wC_1B_3^{-1}\|_{B(Y,Y_1)}
  \\ \le &
\|\obu{B}_{34}+\obu{B}_4\wC_1B_3^{-1}\|_{B(Y,Y_1)}
+\|B_1^{-1}-\obu{B}_4\|_{B(Y_1)}\|\wC_1B_3^{-1}\|_{B(Y,Y_1)}
\\  \lesssim & 
\la t\ra^{-3/4}(\bM_{c,x}(\infty)+\bM_2(\infty))\,.
\end{split}
\end{equation}
Using Claim~\ref{cl:b4part1} and \eqref{eq:obu-exp}, we can prove
\begin{equation}
  \label{eq:B34-2}
\|\chi(D_y)\obu{B}_{34}+\chi(D_y)B_1^{-1}\wC_1B_3^{-1}\|_{B(Y,L^1)}
 \lesssim 
\la t\ra^{-3/4}(\bM_{c,x}(\infty)+\bM_2(\infty))
\end{equation}
in the same way.
By \eqref{eq:B34-1} and \eqref{eq:B34-2},
\begin{equation}
\label{eq:obuN42}
 \bigl\|\obu{\cN}_{43}\bigr\|_{Y_1}
+\bigl\|\chi(D_y)\obu{\cN}_{43}\bigr\|_{L^1}
\lesssim
\la t\ra^{-3/2}\bM_{c,x}(\infty)(\bM_{c,x}(\infty)+\bM_2(\infty))\,.
\end{equation}
\par
Secondly, we will estimate $\cN^5$.
By \eqref{eq:B4B3expr},
$$\cN^5=\obu{\cN}_5-\pd_y^2\oc{\cN}_5\,,\quad
\obu{\cN}_5=\obu{B}_{34}\widetilde{\mathcal{A}}_1(t)
\begin{pmatrix}b \\ \tx\end{pmatrix}\,,
\quad
\oc{\cN}_5=\oc{B}_{34}\widetilde{\mathcal{A}}_1(t)
\begin{pmatrix}b \\ \tx\end{pmatrix}\,.$$
Since $\|\widetilde{\mathcal{A}}_1(t){}^t(b,\tx)\|_Y\lesssim \bM_{c,x}(\infty)
e^{-\a(3t+L)}$, it follows from Claim~\ref{cl:b4part1} that
\begin{equation}
 \label{eq:oN5} 
\bigl\|\obu{\cN}_5\bigr\|_{Y_1}+\bigl\|\chi(D_y)\obu{\cN}_5\bigr\|_{L^1}
+\bigl\|\oc{\cN}_5\bigr\|_{Y_1}
\lesssim e^{-\a(3t+L)}\la t\ra^{-1/4}\bM_{c,x}(\infty)
(\bM_{c,x}(\infty)+\bM_2(\infty))\,.
\end{equation}
\par
Next, we will estimate $\obu{\cN}_6$ and $\oc{\cN}_6$.
By Claim~\ref{cl:k-growth}, \eqref{eq:S311-d} and \eqref{eq:a3-a4},
\begin{align*}
 \bigl\|\obu{\cN}_{61}\bigr\|_{Y_1}+\bigl\|\chi(D_y)\obu{\cN}_{61}\bigr\|_{L^1}
\lesssim & e^{-\a(3t+L)}\la t\ra \eps\,.
\end{align*}
We see that $\obu{\cN}_{62}=E_2\obu{\cN}_{62}$ and that
\begin{align*}
\bigl\|\obu{\cN}_{62}\bigr\|_Y
\lesssim & \|R^{v_1}\|_Y+\|k\|_Y \lesssim  \bM_1(\infty)\la t\ra^{-2}
\end{align*}
follows from Claim~\ref{cl:k-decay} and \eqref{eq:Rv1-est}.
By \eqref{eq:B34Rv1}, \eqref{eq:obuRv1} and \eqref{eq:S7-bound},
\begin{align*}
\bigl\|\chi(D_y)\obu{\cN}_{63}\bigr\|_{L^1}+\bigl\|\obu{\cN}_{63}\bigr\|_{Y_1}
 \lesssim & 
\bM_1(\infty)(\bM_{c,x}(\infty)+\bM_1(\infty)+\bM_2(\infty))\la t\ra^{-3/2}\,.
\end{align*}
By Claims~\ref{cl:b4part1}, \eqref{eq:B4Rv1} and \eqref{eq:ocRv1},
\begin{align*}
\|\oc{\cN}_{61}\|_Y \lesssim &  \|\oc{R}_{v_1,1}\|_Y
 \lesssim 
\bM_{c,x}(\infty)\bM_1(\infty)\la t\ra^{-7/4}\,,
\end{align*}
\begin{align*}
\|\oc{\cN}_{62}\|_Y \lesssim
 & \|\oc{R}_{v_1,2}\|_Y +\|(\oc{B}_{14}+\oc{B}_{34})R^{v_1}\|_Y
\lesssim \bM_1(\infty)\la t\ra^{-1}\,. 
\end{align*}
\par
Finally, we will estimate $\obu{\cN}_7$ and $\oc{\cN}_7$.
By the definition and \eqref{eq:a3-a4},
we have $\obu{\cN}_7=E_2\obu{\cN}_7$ and 
$$\|\obu{\cN}_7(t)\|_Y \lesssim 
\left(\bM_{c,x}(\infty) +\bM_1(\infty)\right)
e^{-\a(3t+L)}\la t\ra^{-1/4}\,.$$
In view of Claim~\ref{cl:cx_t-bound},
$$\|\tx(t)\|_Y\lesssim 
(\bM_{c,x}(\infty)+\bM_1(\infty)+\bM_2(\infty)^2)\la t\ra^{1/4}\,.$$
Combining the above with Claim~\ref{cl:k-decay},  \eqref{eq:a3-a4}
and \eqref{eq:A1-A3-est},
$$\|\oc{\cN}_7(t)\|_Y \lesssim
\left\{(\eta_0+e^{-\a L})\bM_{c,x}(\infty) +\bM_1(\infty)+\bM_2(\infty)^2\right\}
\la t\ra^{-1}\,.$$
This completes the proof of Theorem~\ref{thm:phase}.
\end{proof}

Next, we will prove Corollary~\ref{cor:instability2}.
\begin{proof}[Proof of Corollary~\ref{cor:instability2}]
  Let $\zeta\in C_0^\infty(-\eta_0,\eta_0)$ such that $\zeta(0)=1$ and let
$$u(0,x,y)=\varphi_{2+c_*(y)}(x)-\psi_{2+\tc_*(y),L}(x)\,,\quad
\tc_*(y)=\eps (\mathcal{F}_\eta^{-1}\zeta)(y)\,.$$
Then it follows from \cite[Lemma~5.2]{Miz15} that
$$\tc(0,y)=\tc_*(y)\,,\quad \tx(0,y)\equiv0\,,\quad
b_1(0,y)=b_*(y)\,,\quad  v_*=v_{2,*}=0\,.$$
Since $\|b_*-\tc_*\|_{L^1}\lesssim \|\tc_*(0)\|_Y^2$, we see that
$b_*\in L^1(\R)$ and that
\begin{align*}
\int_\R b_1(0,y)\,dy\ge & \int_\R \tc_*(y)\,dy- \|b_*-\tc_*\|_{L^1}
=\frac{\eps}{\sqrt{2\pi}}+O(\eps^2)\,.
\end{align*}
If $\eps$ and $e^{-\a L}$ are sufficiently small, then
it follows from
\eqref{eq:phase-hom}, \eqref{eq:phase-Na1}--\eqref{eq:phase-Na4}
and the above that
\begin{equation}
 \label{eq:phase-lb}
h \gtrsim \inf_{t\ge0}\tx(t,0)\gtrsim
\inf_{t\ge0}b_2(t,0) \gtrsim \eps\,,
\end{equation}
where $h$ is a constant in \eqref{eq:phase-lim}.
Corollary~\ref{cor:instability2} follows immediately from
\eqref{eq:phase-lb} and Theorem~\ref{thm:phase}. Thus we complete the proof.
\end{proof}

\bigskip

\section{Behavior of  the local amplitude and the local inclination
of line solitons}
In this section, we will prove Theorem~\ref{thm:burgers} following a compactness argument in \cite{Karch}.
\par
Let $\bb(t,\cdot)=\gamma(t)\mathcal{P}_*(D_y)e^{4t\sigma_3\pd_y}\bd(t,\cdot)$
and 
$$\Pi_*(\eta)=\frac{1}{4i}
\begin{pmatrix}
8i & 8i \\ \eta+i\omega(\eta) & \eta-i\omega(\eta)
\end{pmatrix}=
\begin{pmatrix}
  1 & 0 \\ 0 & i\eta
\end{pmatrix}
\mathcal{P}_*(\eta),.$$
Then \eqref{eq:modeq3} is translated to
\begin{equation}
  \label{eq:modeq4}
\pd_t \bd= \{2\pd_y^2I+\pd_y\tilde{\omega}(D_y)\sigma_3\}\bd
+\widetilde{\cN}_a+\pd_y(\widetilde{\cN}+\widetilde{\cN}')
+\pd_y^2\widetilde{\cN}''\,,
\end{equation}
where $\sigma_3=\diag(1,-1)$, $\widetilde{\cN}_a=e^{-4t\sigma_3\pd_y}\Pi_*(D_y)^{-1}E_1\chi(D_y)\obu{\cN}_a$ and 
 \begin{align*}
&  \widetilde{\cN}=e^{-4t\sigma_3\pd_y}\Pi_*(D_y)^{-1}
\begin{pmatrix} 6bx_y \\ 2(\tc-b)+3(x_y)^2\end{pmatrix}\,,
\\ &
\widetilde{\cN}'=e^{-4t\sigma_3\pd_y}\Pi_*(D_y)^{-1}\left\{
\pd_y^{-1}(I-\chi(D_y))E_1\obu{\cN}_a+
E_2\obu{\cN}_a+\obu{\cN}_b+
\diag(1,\pd_y)\left(\oc{\cN}_a+\oc{\cN}_b\right)\right\}\,,
\\ &
\widetilde{\cN}''=e^{-4t\sigma_3\pd_y}\Pi_*(D_y)^{-1}\diag(1,\pd_y)\oc{\cN}_c\,.
\end{align*}
Note that $\chi(\eta)=1$ for $\eta\in[-\eta_0/4,\eta_0/4]$
and that $\diag(1,\pd_y)\obu{\cN}_b=\pd_y\obu{\cN}_b$.
\par
By \eqref{eq:omega-approx}, we have for $\eta\in[-\eta_0,\eta_0]$,
\begin{gather}
\label{eq:Pi-est}
\left|\Pi_*(\eta)
-\begin{pmatrix}2 & 2\\ 1 & -1 \end{pmatrix}\right|
+\left|\Pi_*(\eta)^{-1}-\frac{1}{4}
\begin{pmatrix}1 & 2 \\ 1 & -2 \end{pmatrix}\right|
\lesssim |\eta|\,.
\end{gather}
If $\eta_0$ is sufficiently small, then $\Pi_*(D_y)$, $\Pi_*^{-1}(D_y)\in B(Y)$
and it follows from Claim~\ref{cl:k-decay} and the definitions of $\bb$
and $\bd$ that
\begin{equation}
  \label{eq:bx-bb} 
  \begin{split}
\left\|\begin{pmatrix}  b(t,\cdot) \\ x_y(t,\cdot)\end{pmatrix}
-\begin{pmatrix}2 & 2 \\ 1 & -1 \end{pmatrix}e^{4t\sigma_3\pd_y}\bd(t,\cdot)
\right\|_Y
& \lesssim \|k(t,\cdot)\|_Y+\|\pd_y\bd(t,\cdot)\|_Y
 \lesssim  \eps\la t\ra^{-3/4}\,.
  \end{split}
\end{equation}
\par
To investigate the asymptotic behavior of solutions,
we consider the rescaled solution
$\bd_\lambda(t,y)=\lambda\bd(\lambda^2t,\lambda y)$.
We will show that for any $t_1$ and $t_2$ satisfying $0<t_1<t_2<\infty$,
\begin{equation}
\label{eq:bd-c}
\lim_{\lambda\to\infty}\sup_{t\in[t_1,t_2]}
\|\bd_\lambda(t,y)-\bd_\infty(t,y)\|_{L^2(\R)}=0\,,
\end{equation}
where $\bd_\infty(t,y)={}^t(d_{\infty,+}(t,y), d_{\infty,-}(t,y))$ 
and $d_{\infty,\pm}(t,y)$ are self-similar
solutions of Burgers equations 

\begin{equation}
\label{eq:Burgers}
  \left\{
    \begin{aligned}
& \pd_td_+=2\pd_y^2d_++4\pd_y(d_+^2)\,,  \\
& \pd_td_-=2\pd_y^2d_--4\pd_y(d_-)^2\,. 
    \end{aligned}
\right.
\end{equation}
satisfying
\begin{equation}
\label{eq:ss}
\lambda\bd_\infty(\lambda^2t,\lambda y)=\bd_\infty(t,y)
\quad\text{for every $\lambda>0$.}  
\end{equation}
\par
First, we will show uniform boundedness of $\bd_\lambda$ with respect to
$\lambda\ge1$.
\begin{lemma}
\label{lem:tbb-bound}
Let $\eps$ be as in Theorem~\ref{thm:burgers}. 
Then there exists a positive constants $C$ such that
for any $\lambda\ge1$ and $t\in(0,\infty)$,
\begin{gather}
  \label{eq:tbb-bound1}
\sum_{k=0,1}\|\pd_y^k\bd_\lambda(t,\cdot)\|_{L^2}\le C\eps t^{-(2k+1)/4} \,,
\quad \|\pd_y^2\bd_\lambda(t,\cdot)\|_{L^2}\le C\eps\lambda^{1/2}t^{-1}\,,
\\  \label{eq:tbb-bound2}
\left\|\pd_t\bd_\lambda(t,\cdot)\right\|_{H^{-2}}
\le C(t^{-1/4}+t^{-3/2})\eps\,.
\end{gather}
\end{lemma}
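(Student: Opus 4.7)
The plan is to transfer decay bounds on $\bd$ to $\bd_\lambda$ via the dilation identity $\|\pd_y^k\bd_\lambda(t,\cdot)\|_{L^2} = \lambda^{k+1/2}\|\pd_y^k\bd(\lambda^2t,\cdot)\|_{L^2}$ for \eqref{eq:tbb-bound1}, and then to substitute the evolution equation \eqref{eq:modeq4} into $\pd_t\bd_\lambda(t,y) = \lambda^3(\pd_t\bd)(\lambda^2t,\lambda y)$ and control each resulting piece in $H^{-2}$ for \eqref{eq:tbb-bound2}.

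For \eqref{eq:tbb-bound1}, I would use the relation $\bd = \gamma(t)^{-1}e^{-4t\sigma_3\pd_y}\mathcal{P}_*^{-1}(D_y)\bb$ with $\bb = \gamma\,{}^t\!(\tb,\tx)$ together with the factorization $\Pi_*(\eta) = \diag(1,i\eta)\mathcal{P}_*(\eta)$ to extract that $\bd$ depends on $\tb$ and $\pd_y\tx$ rather than on $\tx$ itself, so that $\|\pd_y^k\bd(s,\cdot)\|_{L^2} \lesssim \|\pd_y^k\tb\|_Y + \|\pd_y^{k+1}\tx\|_Y$. Combining $\tb = b+\tk$, \eqref{eq:b-tc}, Claim~\ref{cl:k-decay}, and $\bM_{c,x}(\infty)+\bM_1(\infty)\le C\eps$ from Proposition~\ref{prop:poly} then yields $\|\pd_y^k\bd(s,\cdot)\|_{L^2} \lesssim \eps s^{-(2k+1)/4}$ for $k=0,1$ and $\|\pd_y^2\bd(s,\cdot)\|_{L^2} \lesssim \eps s^{-1}$. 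The dilation identity turns these into \eqref{eq:tbb-bound1}.

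For \eqref{eq:tbb-bound2}, the scaling rules $\lambda^3(\pd_z^j h)(\lambda^2t,\lambda y) = \lambda^{2-j}\pd_y^j[h]_\lambda(t,y)$ with $[h]_\lambda(t,y) := \lambda h(\lambda^2t,\lambda y)$, and $\lambda(m(D_z)h)(\lambda^2t,\lambda y) = m(D_y/\lambda)[h]_\lambda(t,y)$, rewrite \eqref{eq:modeq4} as
\begin{equation*}
\pd_t\bd_\lambda = 2\pd_y^2\bd_\lambda + \lambda\pd_y\tilde\omega(D_y/\lambda)\sigma_3\bd_\lambda + \lambda^2[\widetilde{\cN}_a]_\lambda + \lambda\pd_y[\widetilde{\cN}+\widetilde{\cN}']_\lambda + \pd_y^2[\widetilde{\cN}'']_\lambda.
\end{equation*}
Absorbing two derivatives into $H^{-2}$, the leading term contributes $2\|\bd_\lambda\|_{L^2} \lesssim \eps t^{-1/4}$. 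The dispersive piece is handled by combining the frequency localization of $\bd_\lambda$ in $[-\lambda\eta_0,\lambda\eta_0]$ with $|\tilde\omega(\xi)| \lesssim \xi^2$ on $[-\eta_0,\eta_0]$ from \eqref{eq:omega-approx}, giving $\lesssim \lambda\cdot\lambda^{-2}\|\pd_y^2\bd_\lambda\|_{L^2} \lesssim \eps\lambda^{-1/2}t^{-1}$. For the rescaled nonlinear pieces, I would use the one-dimensional embedding $L^1 \hookrightarrow H^{-s}$ for $s>1/2$ together with \eqref{eq:cNa-est}--\eqref{eq:cNbc-est} and the Cauchy--Schwarz estimate $\|\widetilde{\cN}(s)\|_{L^1}\lesssim \|bx_y\|_{L^1}+\cdots\lesssim \eps^2 s^{-1/2}$; the $\lambda$-prefactors are absorbed by the extra decay of the nonlinear quantities evaluated at $\lambda^2 t$, for which shift-invariance of the $L^1$ norm is crucial.

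The main obstacle is the careful bookkeeping of $\lambda$-powers: the rescaled nonlinear operator $\lambda^2[\widetilde{\cN}_a]_\lambda$, for instance, comes with a nominal factor $\lambda^2$, which must be compensated by the fact that $\|\widetilde{\cN}_a(\lambda^2t,\cdot)\|_{L^1}$ decays like $(\lambda^2t)^{-3/2}$, yielding a net bound $\lesssim\eps\lambda^{-1}t^{-3/2}\le\eps t^{-3/2}$ for $\lambda\ge 1$. Analogous matching gives $\lesssim\eps^2 t^{-1/2}$ for the $\widetilde{\cN}+\widetilde{\cN}'$ piece and $\lesssim\eps t^{-1}$ for $\widetilde{\cN}''$, and summing all contributions yields a bound dominated by $\eps(t^{-1/4}+t^{-3/2})$ for $t>0$ and $\lambda\ge 1$, as claimed.
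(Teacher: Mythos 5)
Your proposal is correct and follows essentially the same route as the paper: \eqref{eq:tbb-bound1} is the dilation identity applied to the uniform bound $\sum_{k=0,1}\la t\ra^{(2k+1)/4}\|\pd_y^k\bd(t)\|_Y+\la t\ra\|\pd_y^2\bd(t)\|_Y\lesssim\eps$ (which the paper derives from Proposition~\ref{prop:poly} and \eqref{eq:bx-bb}), and \eqref{eq:tbb-bound2} comes from substituting the rescaled form of \eqref{eq:modeq4} and estimating term by term in $H^{-2}$. The only cosmetic differences are that the paper measures $\widetilde{\cN}$, $\widetilde{\cN}'$, $\widetilde{\cN}''$ in $L^2$ via \eqref{eq:wcN2}--\eqref{eq:wcN''2} rather than your $L^1$/Cauchy--Schwarz bound for $\widetilde{\cN}$, and controls the dispersive term directly by $\|\bd_\lambda\|_{L^2}$ using $|\tilde\omega(\eta)|\lesssim\min\{1,\eta^2\}$; both variants yield bounds dominated by $\eps(t^{-1/4}+t^{-3/2})$.
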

\begin{proof}
The proof follows the line of the proof of \cite[Lemma~12.1]{Miz15}.
By Proposition~\ref{prop:poly} and \eqref{eq:bx-bb},
\begin{equation}
\label{eq:bb-bound}
\sum_{k=0,1}\la t\ra^{(2k+1)/4}\|\pd_y^k\bd(t)\|_Y
+\la t\ra\|\pd_y^2\bd(t)\|_Y\lesssim \eps
\quad\text{for every $t\ge0$,}  
\end{equation}
and \eqref{eq:tbb-bound1} follows immediately from \eqref{eq:bb-bound}.
\par
Next, we will prove \eqref{eq:tbb-bound2}.
By \eqref{eq:modeq4},
\begin{align*}
\pd_t\bd_\lambda
=& 
2\pd_y^2\bd_\lambda+\lambda\sigma_3\pd_y\tilde{\omega}(\lambda^{-1}D_y)\bd_\lambda
+\widetilde{\cN}_{a,\lambda}
+\pd_y(\widetilde{\cN}_\lambda+\widetilde{\cN}'_\lambda)
+\pd_y^2\widetilde{\cN}''_\lambda\,,   
  \end{align*}
where $\widetilde{\cN}_{a,\lambda}(t,y)
=\lambda^3\widetilde{\cN}_a(\lambda^2t,\lambda y)$ and
$$
\widetilde{\cN}_\lambda(t,y)
=\lambda^2\widetilde{\cN}(\lambda^2t,\lambda y)\,,
\enskip
\widetilde{\cN}'_\lambda(t,y)
=\lambda^2\widetilde{\cN}'_\lambda(\lambda^2t,\lambda y)\,,
\enskip
\widetilde{\cN}''_\lambda(t,y)
=\lambda\widetilde{\cN}''_\lambda(\lambda^2t,\lambda y)\,.$$
In view of \eqref{eq:omega-approx} and \eqref{eq:tbb-bound1},
\begin{align*}
\|\pd_y^2\bd_\lambda(t,\cdot)\|_{H^{-2}}+\|\lambda\pd_y\tilde{\omega}(\lambda^{-1}D_y)\bd_\lambda(t,\cdot)\|_{H^{-2}}
 \lesssim \|\bd_\lambda(t,\cdot)\|_{L^2}
\lesssim & \eps t^{-1/4}\,.
\end{align*}
\par
Using \eqref{eq:cNa-est}--\eqref{eq:cNbc-est}, \eqref{eq:Pi-est}
and the fact that $Y_1\subset Y$, we have
\begin{equation}
\label{eq:wcNa-est}
\|\widetilde{\cN}_a\|_{L^1}\lesssim 
\left(e^{-\a L}\eps+\eps^2\right)\la t\ra^{-3/2}\,,
\end{equation}
\begin{equation*}
\|\widetilde{\cN}\|_Y\lesssim \eps^2\la t\ra^{-3/4}\,,
\quad
\|\widetilde{\cN}'\|_Y\lesssim \eps^2\la t\ra^{-1}\,,
\quad
\|\widetilde{\cN}''\|_Y\lesssim \eps\la t\ra^{-1}\,,
\end{equation*}
and 
\begin{align}
& \label{eq:wcNa2}
\|\widetilde{\cN}_{a,\lambda}(t,\cdot)\|_{L^1}
=\lambda^2\|\widetilde{\cN}_a(\lambda^2t,\cdot)\|_Y
\lesssim 
\left(e^{-\a L}\eps+\eps^2\right)\lambda^{-1}t^{-3/2}\,,
\\ & \label{eq:wcN2}
 \|\widetilde{\cN}_\lambda(t,\cdot)\|_{L^2}
=  \lambda^{3/2}\|\widetilde{\cN}(\lambda^2t,\cdot)\|_Y
\lesssim \eps^2t^{-3/4}\,,
\\ & \label{eq:wcN'2}
\|\widetilde{\cN}'_\lambda(t,\cdot)\|_{L^2}
=\lambda^{3/2}\|\widetilde{\cN}_1'(\lambda^2t,\cdot)\|_Y
\lesssim \eps\lambda^{3/2}(1+\lambda^2t)^{-1}
\lesssim \eps\lambda^{-1/4}t^{-7/8}\,,
\\ & \label{eq:wcN''2}
\|\widetilde{\cN}''_\lambda(t,\cdot)\|_{L^2}
=\lambda^{1/2}\|\widetilde{\cN}''(\lambda^2t,\cdot)\|_Y
\lesssim \eps\lambda^{1/2}(1+\lambda^2t)^{-1}
\lesssim \eps\lambda^{-1/2}t^{-1/2}\,.
\end{align}
Combining the above, we have \eqref{eq:tbb-bound2}.
Thus we complete the proof.
\end{proof}
Using standard compactness argument along with the Aubin-Lions lemma,
we have the following.
\begin{corollary}
\label{cor:tbb-bound}
There exists a sequence $\{\lambda_n\}_{n\ge1}$ satisfying
$\lim_{n\to\infty}\lambda_n=\infty$ and $\bd_\infty(t,y)$ such that
\begin{align*}
& \bd_{\lambda_n}(t,\cdot) \to \bd_\infty(t,\cdot)
\quad\text{weakly star in $L^\infty_{loc}((0,\infty);H^1(\R))$,}\\
& \pd_t\bd_{\lambda_n}(t,\cdot)\to \pd_t\bd_\infty(t,\cdot)
\quad\text{weakly star in $L^\infty_{loc}((0,\infty);H^{-2}(\R))$,}\\
\end{align*}
\begin{equation}
  \label{eq:condition}
 \sup_{t>0}t^{1/4}\|\bd_\infty(t)\|_{L^2}\le C\eps\,,
\end{equation}
where $C$ is a constant given in Lemma~\ref{lem:tbb-bound}.
Moreover, for any $R>0$ and $t_1$, $t_2$ with $0<t_1\le t_2<\infty$,
\begin{equation}
  \label{eq:aubli}
\lim_{n\to\infty}\sup_{t\in[t_1,t_2]}
\|\bd_{\lambda_n}(t,\cdot)-\bd_\infty(t,\cdot)\|_{L^2(|y|\le R)}=0\,.  
\end{equation}
\end{corollary}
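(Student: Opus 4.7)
The plan is to deduce Corollary~\ref{cor:tbb-bound} as a standard compactness consequence of the uniform bounds in Lemma~\ref{lem:tbb-bound}, using Banach--Alaoglu to extract weak-$*$ limits and the Aubin--Lions lemma for local strong compactness.

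First, fix an arbitrary $[t_1,t_2]\subset(0,\infty)$. By \eqref{eq:tbb-bound1}, the family $\{\bd_\lambda\}_{\lambda\ge1}$ is uniformly bounded in $L^\infty([t_1,t_2];H^1(\R))$, because $\|\bd_\lambda(t,\cdot)\|_{L^2}+\|\pd_y\bd_\lambda(t,\cdot)\|_{L^2}\lesssim \eps(t_1^{-1/4}+t_1^{-3/4})$ uniformly in $t\in[t_1,t_2]$. Similarly, \eqref{eq:tbb-bound2} gives that $\pd_t\bd_\lambda$ is uniformly bounded in $L^\infty([t_1,t_2];H^{-2}(\R))$. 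By a diagonal argument over an exhaustion $[t_1^{(n)},t_2^{(n)}]\uparrow(0,\infty)$ and the Banach--Alaoglu theorem applied to the separable preduals, we can extract a sequence $\lambda_n\to\infty$ and a limit $\bd_\infty$ such that
\begin{equation*}
\bd_{\lambda_n}\to\bd_\infty\text{ weakly-$*$ in }L^\infty_{loc}((0,\infty);H^1(\R)),
\quad
\pd_t\bd_{\lambda_n}\to\pd_t\bd_\infty\text{ weakly-$*$ in }L^\infty_{loc}((0,\infty);H^{-2}(\R)).
\end{equation*}

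Next, the bound $\sup_{t>0}t^{1/4}\|\bd_\infty(t)\|_{L^2}\le C\eps$ follows from \eqref{eq:tbb-bound1} and the weak-$*$ lower semicontinuity of the $L^\infty_t L^2_y$ norm. More precisely, for every $0<t_1<t_2<\infty$,
\begin{equation*}
\|\bd_\infty\|_{L^\infty(t_1,t_2;L^2)}
\le\liminf_{n\to\infty}\|\bd_{\lambda_n}\|_{L^\infty(t_1,t_2;L^2)}
\le C\eps t_1^{-1/4},
\end{equation*}
so by letting $t_2\downarrow t_1$ and using $t_1$ as arbitrary, we recover the pointwise bound at almost every $t>0$; one can then choose a representative of $\bd_\infty$ for which this holds everywhere.

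For the local strong convergence \eqref{eq:aubli}, I would invoke the Aubin--Lions lemma on the Gelfand triple $H^1(|y|<R+1)\Subset L^2(|y|<R+1)\hookrightarrow H^{-2}(|y|<R+1)$, where the first inclusion is compact. Since $\bd_{\lambda_n}$ is bounded in $L^\infty(t_1,t_2;H^1_{loc})$ and $\pd_t\bd_{\lambda_n}$ is bounded in $L^\infty(t_1,t_2;H^{-2}_{loc})$, a standard version of Aubin--Lions together with an Arzel\`a--Ascoli-type argument in $t$ (the uniform $H^{-2}$ bound on $\pd_t\bd_{\lambda_n}$ gives equicontinuity in time with values in $L^2_{loc}$) yields a subsequence converging in $C([t_1,t_2];L^2(|y|\le R))$. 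After one more diagonal extraction over countable choices of $R$ and $(t_1,t_2)$, we obtain \eqref{eq:aubli}.

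The only mildly delicate point is ensuring the convergence is uniform in $t\in[t_1,t_2]$ (rather than merely in $L^p_t$), which is where the $H^{-2}$ time-derivative bound is needed to upgrade from the $L^p_t L^2_y$-compactness given by the classical Aubin--Lions statement to $C_t L^2_y$; this step is routine but is the place most likely to produce a technical slip. Everything else is a mechanical application of Banach--Alaoglu and weak-$*$ lower semicontinuity, using only bounds already established in Lemma~\ref{lem:tbb-bound}.
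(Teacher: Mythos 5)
Your proposal is correct and follows essentially the same route as the paper, which itself only says the corollary follows "using a standard compactness argument along with the Aubin--Lions lemma": Banach--Alaoglu with a diagonal extraction for the weak-$*$ limits, lower semicontinuity for \eqref{eq:condition}, and the compact embedding $H^1\Subset L^2_{loc}$ combined with the uniform $H^{-2}$ bound on $\pd_t\bd_\lambda$ (giving time-equicontinuity in $L^2_{loc}$ by interpolation) to upgrade to convergence in $C([t_1,t_2];L^2(|y|\le R))$.
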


Next, we will show that $\bd_\infty(t,y)$ tends
to a constant multiple of the delta function as $t\downarrow0$.  To
find initial data of $\bd(0,\cdot)$, we transform \eqref{eq:modeq4}
into a conservative system.  Let
$$ \tbd(t,y)=\begin{pmatrix} \td_+(t,y)\\ \td_-(t,y)\end{pmatrix}
:=\bd(t,y)-\bar{\bd}(t,y)\,,\quad
\bar{\bd}(t,y)=-\int_t^\infty \widetilde{\cN}_a(s,\cdot)\,ds\,.
$$
Then
\begin{equation}
  \label{eq:modeq5}
\pd_t\tbd=2\pd_y^2\tbd+\pd_y(\widetilde{\cN}+\widetilde{\cN}')
+\pd_y^2(\widetilde{\cN}''+\widetilde{\cN}''')\,,
\end{equation}
where $\widetilde{\cN}'''=2\bar{\bd}+\pd_y^{-1}\tilde{\omega}(D_y)\sigma_3\bd$.
\begin{lemma}
  \label{lem:Burgers-ini}
\begin{equation}
  \label{eq:B-ini}
\lim_{t\downarrow0}\int_\R \bd_\infty(t,y)h(y)\,dy
=h(0)\int_\R\tbd(0,y)\,dy
\quad\text{for any $h\in H^2(\R)$.}
\end{equation}
\end{lemma}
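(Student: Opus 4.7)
The plan is to derive the result from two ingredients: conservation of the spatial integral of $\tbd$, obtained from the divergence form of \eqref{eq:modeq5}, and the self-similar structure of the scaling limit $\bd_\infty$. To establish conservation I would integrate \eqref{eq:modeq5} over $y \in \R$. Each term on the right is a $y$-derivative of a function that decays at spatial infinity, as witnessed by the $L^1$ and $L^2$ bounds on $\widetilde{\cN}_a, \widetilde{\cN}, \widetilde{\cN}', \widetilde{\cN}'', \widetilde{\cN}'''$ established in Section~\ref{sec:phase} (see in particular \eqref{eq:wcNa-est}--\eqref{eq:wcN''2}). Integration by parts then yields
$$\int_\R \tbd(t, y)\,dy = \int_\R \tbd(0, y)\,dy =: M \qquad \text{for all } t \geq 0,$$
and by rescaling, $\int_\R \tbd_\lambda(t, y)\,dy = M$ for every $\lambda > 0$.

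To extract self-similarity of $\bd_\infty$, I would exploit that the rescaling commutes with itself: $\bd_{\lambda \mu} = (\bd_\lambda)_\mu$. Sending $\lambda = \lambda_n \to \infty$ along the subsequence of Corollary~\ref{cor:tbb-bound} on both sides, together with a uniqueness statement for the limiting scaling-invariant solution of \eqref{eq:Burgers}, gives $\mu \bd_\infty(\mu^2 t, \mu y) = \bd_\infty(t, y)$ for every $\mu > 0$, and hence $\bd_\infty(t, y) = t^{-1/2} F(y/\sqrt{t})$ with $F = \bd_\infty(1, \cdot)$. Since the components of $\bd_\infty$ satisfy the Burgers equations \eqref{eq:Burgers}, the profile $F$ is the explicit self-similar solution appearing in Theorem~\ref{thm:burgers}, and in particular $F \in L^1(\R)$.

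Next, I would identify $\int_\R F(z)\,dz = M$ by pairing $\tbd_{\lambda_n}(1, \cdot)$ against a cutoff $\chi_R \in C_c^\infty(\R)$ equal to $1$ on $[-R, R]$. On one hand, the $L^1$-convergence $\bar{\bd}_{\lambda_n}(1, \cdot) \to 0$ (from $\|\bar{\bd}(T, \cdot)\|_{L^1} \lesssim T^{-1/2}$ by \eqref{eq:wcNa-est}) combined with the weak convergence $\bd_{\lambda_n}(1, \cdot) \rightharpoonup \bd_\infty(1, \cdot)$ in $L^2$ gives the limit $\int \bd_\infty(1, y) \chi_R(y)\,dy$ as $n \to \infty$; on the other hand, rescaling and using $\chi_R(z/\lambda_n) \to 1$ pointwise, combined with a uniform-in-$T$ tightness bound on $\tbd(T, \cdot)$, give the limit $M$. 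Letting $R \to \infty$ yields $\int F = M$. Finally, by change of variable $z = y/\sqrt{t}$ one has
$$\int_\R \bd_\infty(t, y) h(y)\,dy = \int_\R F(z) h(z\sqrt{t})\,dz,$$
and dominated convergence (using $|h(z\sqrt{t})| \leq \|h\|_{L^\infty}$ from $H^2 \hookrightarrow L^\infty$, $h(z\sqrt{t}) \to h(0)$ pointwise, and $F \in L^1$) yields the desired limit $h(0) \int F = h(0) M$.

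The main obstacle is the uniform tightness of $\tbd$: conservation of $\int \tbd$ controls only the signed integral, not $\|\tbd\|_{L^1}$ or any weighted norm. One must propagate a first-moment bound such as $\|\la y \ra \tbd(t, \cdot)\|_{L^1}$ through \eqref{eq:modeq5}, using the space-time integrability of the nonlinear forcing terms and the smoothing of the linear heat part. A secondary technical point is rigorously establishing self-similarity of $\bd_\infty$ from the merely subsequential convergence provided by Corollary~\ref{cor:tbb-bound}, which requires a uniqueness result for the initial-value problem of the limiting Burgers system with point-mass initial data.
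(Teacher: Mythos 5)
Your proposal has a genuine structural problem: it runs the logic of the paper backwards. In the paper, Lemma~\ref{lem:Burgers-ini} is precisely the ingredient that identifies the initial trace of $\bd_\infty$ as a point mass; only \emph{then}, in the proof of Theorem~\ref{thm:burgers}, does uniqueness of solutions of \eqref{eq:Burgers} satisfying \eqref{eq:condition} \emph{and} \eqref{eq:B-ini} yield that $\bd_\infty$ is the explicit self-similar solution and that \eqref{eq:ss} holds. Your argument assumes self-similarity of $\bd_\infty$ (and $F\in L^1$) as an input. The scaling-commutation argument $\bd_{\lambda\mu}=(\bd_\lambda)_\mu$ you invoke to get self-similarity independently does not close: $\bd_\infty$ is only a subsequential limit along $\{\lambda_n\}$, the sequence $\{\lambda_n\mu\}$ is a different sequence, and concluding that both give the same limit requires exactly the uniqueness statement whose hypothesis is the initial trace \eqref{eq:B-ini}. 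So the "secondary technical point" you mention is in fact circular, not technical.

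The second gap is the one you yourself flag as the "main obstacle": converting conservation of the signed integral $\int_\R\tbd(t,y)\,dy$ into the weak-$*$ statement requires a uniform tightness or first-moment bound such as $\|\la y\ra\tbd(t,\cdot)\|_{L^1}$, and no such bound is available --- the modulation parameters are only controlled in $Y\subset L^2$-type spaces, and $\tbd(t,\cdot)$ is not even known to lie in $L^1(\R)$ for $t>0$ (only $\tbd(0,\cdot)\in L^1+\pd_yY_1$), so the conserved quantity itself is not classically defined away from $t=0$. The paper's proof avoids both difficulties entirely: it establishes the uniform-in-$\lambda$ bound $\|\pd_t\tbd_\lambda(t,\cdot)\|_{H^{-2}}\lesssim\eps(t^{-1/4}+t^{-7/8})$, whose time-singularity is integrable at $t=0$, giving the $\lambda$-uniform H\"older estimate \eqref{eq:tbd-0} for $t\mapsto\int\tbd_\lambda(t,y)h(y)\,dy$ down to $t=0$; it then computes $\lim_{\lambda\to\infty}\int\tbd_\lambda(0,y)h(y)\,dy=h(0)\sqrt{2\pi}\,\mF_y\tbd(0,0)$ directly on the Fourier side using the decomposition $\tbd(0)\in L^1+\pd_yY_1$, and passes to the limit $n\to\infty$, $t\downarrow0$ using only the weak convergence of Corollary~\ref{cor:tbb-bound} and \eqref{eq:barbb-est2}. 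You would need to either adopt this equicontinuity-at-$t=0$ argument or supply the missing weighted $L^1$ propagation estimate and a non-circular proof of self-similarity; as written, the proposal does not constitute a proof.
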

\begin{proof}
Let $\tbd_\lambda(t,y)=\lambda\tbd(\lambda^2t,\lambda y)$
and $\bar{\bd}_\lambda(t,y)=\lambda\bar{\bd}(\lambda^2t,\lambda y)$.
By \eqref{eq:wcNa-est}, \eqref{eq:wcNa2} and the fact that
$\|\bar{\bd}(t,\cdot)\|_Y\lesssim \|\bar{\bd}(t,\cdot)\|_{L^1}$,
\begin{gather}
\label{eq:barbb-est}
\bigl\|\bar{\bd}(t)\bigr\|_{L^1}+\bigl\|\bar{\bd}(t)\bigr\|_Y
\lesssim  \left(e^{-\a L}\eps+\eps^2\right)\la t \ra^{-1/2}\,,
\\
  \label{eq:barbb-est2}
\|\bar{\bd}_\lambda(t,\cdot)\|_{L^2}=\lambda^{1/2}\|\bar{\bd}(\lambda^2t,\cdot)\|_Y
\lesssim  \left(e^{-\a L}\eps+\eps^2\right)\lambda^{-1/2}t^{-1/2}\,.
\end{gather}  
Hence the limiting profile of $\bd_\lambda(t)$ and $\tbd_\lambda(t)$
as $\lambda\to\infty$ are the same for every $t>0$.
\par

By \eqref{eq:modeq5},
\begin{equation*}
\pd_t\tbd_\lambda
=2\pd_y^2\tbd_\lambda
+\pd_y(\widetilde{\cN}_\lambda+\widetilde{\cN}'_\lambda)
+\pd_y^2(\widetilde{\cN}''_\lambda+\widetilde{\cN}'''_\lambda)\,,
\end{equation*}
where $\widetilde{\cN}'''_\lambda=2\bar{\bd}_\lambda+
\lambda\pd_y^{-1}\tilde{\omega}(\lambda^{-1}D_y)\bd_\lambda$.
By  Lemma~\ref{lem:tbb-bound} and \eqref{eq:omega-approx},
\begin{equation}
  \label{eq:wcN'''-est}
\left\|2\tbd_\lambda+\widetilde{\cN}'''_\lambda\right\|_{L^2}
\lesssim  \|\bd_\lambda\|_{L^2}\lesssim  \eps t^{-1/4}\,.
\end{equation}
Combining the above with \eqref{eq:wcN2}--\eqref{eq:wcN''2},
we have
$$
\sup_{\lambda\ge1}
\|\pd_t\tbd_\lambda(t,\cdot)\|_{H^{-2}}
\lesssim \eps(t^{-1/4}+t^{-7/8})\,.$$
Thus for $t>s>0$ and $h\in H^2(\R)$,
$$
\left|\int_\R \tbd_\lambda(t,y)h(y)\,dy
-\int_\R \tbd_\lambda(s,y)h(y)\,dy\right|
\le C\left\{(t-s)^{3/4}+(t-s)^{1/8}\right\}\,,$$
where $C$ is a constant independent of $\lambda$.
Passing to the limit as $s\downarrow0$ in the above, we obtain for $t>0$,
\begin{equation}
  \label{eq:tbd-0}
  \left|\int_\R\tbd_\lambda(t,y)h(y)\,dy
-\int_\R\tbd_\lambda(0,y)h(y)\,dy\right|\le C(t^{3/4}+t^{1/8})\,.
\end{equation}
Since $\tbd(0,\cdot)\in L^1(\R)+\pd_yY_1$,
it follows from Lebesgue's dominated convergence theorem that
as $\lambda\to\infty$,
\begin{align*}
\int_\R \tbd_\lambda(0,y)h(y)\,dy
=&  \int_\R \mF_y\tbd(0,\lambda^{-1}\eta)\mF_y^{-1}h(\eta)\,d\eta
\to  \sqrt{2\pi}\mF_y\tbd(0,0)h(0)\,.
\end{align*}
On the other hand,
Corollary~\ref{cor:tbb-bound} and \eqref{eq:barbb-est2} imply that
for any $t>0$ and $h\in L^2(\R)$,
$$
\lim_{n\to\infty}
\int_\R \tbd_{\lambda_n}(t,y)h(y)\,dy=
\int_\R \bd_\infty(t,y)h(y)\,dy\,.$$
This completes the proof of Lemma~\ref{lem:Burgers-ini}.
\end{proof}

Now we will improve \eqref{eq:aubli} to show \eqref{eq:bd-c}.
\begin{lemma}
\label{lem:outer-region}
Suppose that $\eps$ is sufficiently small.
Then for every $t_1$ and $t_2$ satisfying $0<t_1\le t_2<\infty$,
there exist a positive constant $C$ and a function
$\tilde{\delta}(R)$ satisfying $\lim_{R\to\infty}\tilde{\delta}(R)=0$
such that
$$\sup_{t\in[t_1,t_2]}\left\|\bd_\lambda(t,\cdot)\right\|_{L^2(|y|\ge R)}
\le C(\tilde{\delta}(R)+\lambda^{-1/4})
\quad\text{for  $\lambda\ge 1$.}$$
\end{lemma}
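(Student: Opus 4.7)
Since $\bd_\lambda=\tbd_\lambda+\bar\bd_\lambda$ and by \eqref{eq:barbb-est2} we have $\|\bar\bd_\lambda(t)\|_{L^2}\lesssim \lambda^{-1/2}t^{-1/2}\le C\lambda^{-1/4}$ uniformly on $[t_1,t_2]$ for $\lambda\ge 1$, it suffices to bound the $L^2(|y|\ge R)$-norm of $\tbd_\lambda$. The plan is to use the mild form of \eqref{eq:modeq5} after rescaling,
$$
\tbd_\lambda(t)=e^{2t\pd_y^2}\tbd_\lambda(0)+\int_0^t e^{2(t-s)\pd_y^2}\bigl\{\pd_y(\widetilde{\cN}_\lambda+\widetilde{\cN}'_\lambda)+\pd_y^2(\widetilde{\cN}''_\lambda+\widetilde{\cN}'''_\lambda)\bigr\}(s)\,ds,
$$
and estimate the tail of each of the two pieces separately.

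For the initial-data term, the key observation is that $\tbd(0)=\bd(0)-\bar\bd(0)\in L^1(\R)+\pd_y Y_1$: the $L^1$ contribution is supplied by $\bar\bd(0)\in L^1(\R)$ (via \eqref{eq:barbb-est}) together with the decomposition $b_\ast=\wP_1\obu{b}+\pd_y^2\oc{b}$ of Lemma~\ref{lem:modeq-init-decomp} and the definitions of $\bb(0)$ and $\bd(0)=\mathcal{P}_\ast(D_y)^{-1}\gamma(0)^{-1}\bb(0)$. Writing $\tbd(0)=f+\pd_y g$ with $f\in L^1(\R)$ and $g\in Y_1\subset L^2(\R)$, we get $\tbd_\lambda(0,\cdot)=\lambda f(\lambda\cdot)+\pd_y[g(\lambda\cdot)]$. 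The heat propagation of the first piece approaches $(\int f)G_{2t}$ in $L^2(\R)$ uniformly on $[t_1,t_2]$ with error $O(\lambda^{-1/2})$, and $\|G_{2t}\|_{L^2(|y|\ge R)}\le Ct^{-1/4}e^{-R^2/(Ct)}$ contributes the $\tilde\delta(R)$ part. The second piece satisfies $\|\pd_y e^{2t\pd_y^2}g(\lambda\cdot)\|_{L^2}\lesssim t^{-1/2}\lambda^{-1/2}\|g\|_{L^2}$, which is $O(\lambda^{-1/4})$ on $[t_1,t_2]$.

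For the Duhamel integral, I would combine the heat-kernel bounds $\|\pd_y^j e^{2(t-s)\pd_y^2}\|_{B(L^1,L^2)}\lesssim (t-s)^{-(2j+1)/4}$ with the rescaled source estimates \eqref{eq:wcN2}--\eqref{eq:wcN''2}, \eqref{eq:wcN'''-est}, and an $L^1$-version of them obtained from the quadratic structure of $\widetilde{\cN}$ and $\widetilde{\cN}'$ and the $Y$-bounds of Proposition~\ref{prop:poly}. To capture the tail at $|y|\ge R$, I would split each source at time $s$ into its restrictions to $|z|\le R/2$ and to $|z|\ge R/2$: the near part contributes through the off-diagonal decay $|\pd_y^j G_{2(t-s)}(y-z)|\lesssim (t-s)^{-(j+1)/2}\exp(-R^2/C(t-s))$, producing the $\tilde\delta(R)$ part; the far part requires spatial localization of the sources, which is the main obstacle, since the $Y$- and $Y_1$-norms driving the modulation estimates are of Paley--Wiener type and do not directly encode $y$-tail decay. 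I would resolve this by exploiting the polynomial $\la y\ra$-weight on $v_0$ in Proposition~\ref{prop:poly}, which propagates via the equations for $v_2$, $c$, $x$ to weighted-in-$y$ bounds on the quadratic forcings $bx_y$, $(x_y)^2$, $\tc-b$ and the $v_1,v_2$-induced terms; rescaling these weighted bounds gives a tail control of the form $\tilde\delta(R)+\lambda^{-1/4}$, where the $\lambda^{-1/4}$ comes from the scaling of a weight against an $L^2$-factor, and summation of all contributions yields the claimed estimate.
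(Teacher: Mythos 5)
Your reduction to $\tbd_\lambda$ via \eqref{eq:barbb-est2} and your treatment of the linear (initial-data) term are sound and essentially match the paper. The genuine gap is in the Duhamel term. Your near/far splitting of the sources forces you to control the far part, and you propose to do this by propagating the $\la y\ra$-weight on $v_0$ through the modulation system to obtain weighted-in-$y$ bounds on $bx_y$, $(x_y)^2$, $\tc-b$, etc. No such propagation is established anywhere (the modulation parameters live in the Paley--Wiener space $Y$, whose norm carries no $y$-localization, and the construction of $(\tc,\tx)$ gives no weighted control); proving it would be a substantial new argument, and the error mechanism you invoke (``the scaling of a weight against an $L^2$-factor'' producing $\lambda^{-1/4}$) is not justified. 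So as written the far-field contribution of the sources is not controlled.

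The paper avoids the issue entirely by a different organization: it multiplies \eqref{eq:modeq5} by the cutoff $\zeta_R$ \emph{before} applying Duhamel, so the sources appear as $\zeta_R\widetilde{\cN}_\lambda$, $\zeta_R\widetilde{\cN}'_\lambda$, $\zeta_R\widetilde{\cN}''_\lambda$, $\zeta_R\widetilde{\cN}'''_\lambda$ plus commutators $[\pd_y,\zeta_R]$, $[\pd_y^2,\zeta_R]$ that carry factors $R^{-1}$, $R^{-2}$ and go into $\tilde\delta(R)$. Then two separate mechanisms close the estimate: (i) for $\widetilde{\cN}'_\lambda$, $\widetilde{\cN}''_\lambda$, $\widetilde{\cN}'''_\lambda$ one simply discards the cutoff, because the rescaled bounds \eqref{eq:wcN'2}, \eqref{eq:wcN''2} and the $H^{1/4}$ estimates already gain a factor $\lambda^{-1/4}$, so these terms land in the $C\lambda^{-1/4}$ part with no localization needed; (ii) the only source with no $\lambda$-gain is the quadratic $\widetilde{\cN}_\lambda$, and there the cutoff is placed on one factor of the product, yielding $\|\zeta_R\widetilde{\cN}_\lambda\|\lesssim\eps\,\tau^{-1/4}\|\zeta_R\tbd_\lambda(\tau)\|_{L^2}+\eps^2\lambda^{-1/2}\tau^{-3/4}$, which closes by a Gronwall/absorption argument in the quantity $\sup_t t^{1/2}\|\zeta_R\bd_\lambda(t)\|_{L^2}$ using the smallness of $\eps$. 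Your proposal identifies neither the $\lambda^{-1/4}$ gain of the non-quadratic sources nor the self-improving bootstrap for the quadratic one, and without them the argument does not close.
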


\begin{proof}
Let $\zeta$ be a smooth function such that $\zeta(y)=0$ if $|y|\le1/2$ and $\zeta(y)=1$ if $|y|\ge 1$
and  $\zeta_R(y)=\zeta(y/R)$.
Multiplying \eqref{eq:modeq5} by $\zeta_R$, we have
\begin{equation}
  (\pd_t-2\pd_y^2)(\zeta_R\tbd_\lambda)=
\pd_y\{\zeta_R(\widetilde{\cN}_\lambda+\widetilde{\cN}_\lambda')\}
+\pd_y^2\{\zeta_R(\widetilde{\cN}_\lambda''+\widetilde{\cN}'''_\lambda)\}
-\widetilde{\cN}_R\,,
\end{equation}
where $\widetilde{\cN}_R=\widetilde{\cN}_{R,1}+\widetilde{\cN}_{R,2}$,
$\widetilde{\cN}_{R,1}=[\pd_y,\zeta_R](\widetilde{\cN}_\lambda+\widetilde{\cN}_\lambda')$
and $\widetilde{\cN}_{R,2}=[\pd_y^2,\zeta_R](2\tbd_\lambda+\widetilde{\cN}_\lambda''+\widetilde{\cN}_\lambda''')$.
Using the variation of constants formula, we have
$$
\zeta_R\tbd_\lambda(t)=e^{2t\pd_y^2}\zeta_R\tbd(0)
+\sum_{j=1}^6IV_j\,,
$$
\begin{align*}
& IV_1=\int_0^t e^{2(t-\tau)\pd_y^2}
\pd_y(\zeta_R\widetilde{\cN}_\lambda(\tau))\,d\tau\,,
\quad 
IV_2=\int_0^t e^{2(t-\tau)\pd_y^2}
\pd_y(\zeta_R\widetilde{\cN}_\lambda'(\tau))\,d\tau\,,
\\ & 
IV_3=\int_0^t e^{2(t-\tau)\pd_y^2}
\pd_y^2(\zeta_R\widetilde{\cN}_\lambda''(\tau))\,d\tau\,,
\quad
IV_4=\int_0^t e^{2(t-\tau)\pd_y^2}
\pd_y^2(\zeta_R\widetilde{\cN}_\lambda'''(\tau))\,d\tau\,,
\\ &
IV_5=-\int_0^t e^{2(t-\tau)\pd_y^2} \widetilde{\cN}_{R,1}(\tau)\,d\tau\,,
\quad
IV_6=-\int_0^t e^{2(t-\tau)\pd_y^2} \widetilde{\cN}_{R,2}(\tau)\,d\tau\,.
\end{align*}
\par
By Lemma~\ref{lem:modeq-init-decomp}, \eqref{eq:cx*bound}, \eqref{eq:modeq-init} and \eqref{eq:barbb-est},
we can decompose $\tbd(0)$ as 
\begin{equation}
  \label{eq:d0}
\tbd(0)=\obu{\bd}_0+\pd_y\oc{\bd}_0\,,
\quad \bigl\|\obu{\bd}_0\bigr\|_{L^1(\R)}+\bigl\|\oc{\bd}_0\bigr\|_{Y_1}
\lesssim \eps\,.
\end{equation}
Let $\obu{\bd}_{0,\lambda}(y)=\lambda\obu{\bd}_0(\lambda y)$ and $\oc{\bd}_{0,\lambda}(y)=\oc{\bd}_0(\lambda y)$.
Then $\tbd_\lambda(0,y)=\obu{\bd}_{0,\lambda}(y)+\pd_y\oc{\bd}_{0,\lambda}(y)$ and
\begin{align*}
\bigl\|e^{2t\pd_y^2}\zeta_R\tbd_\lambda(0)\bigr\|_{L^2}
\lesssim & 
t^{-1/4}\bigl\|\zeta_R\obu{\bd}_{0,\lambda}\bigr\|_{L^1}+t^{-1/2}\bigl\|\oc{\bd}_{0,\lambda}\bigr\|_{L^2}
+\bigl\|[\pd_y,\zeta_R]\oc{\bd}_{0,\lambda}\bigr\|_{L^2}
\\ \lesssim & 
t^{-1/4}\bigl\|\obu{\bd}_0\bigr\|_{L^1(|y|\ge \lambda R)}
+\{R^{-1}+(t\lambda)^{-1/2}\}\bigl\|\oc{\bd}_0\bigr\|_{L^2}\,.
\end{align*}
\par
By Lemma~\ref{lem:tbb-bound} and \eqref{eq:barbb-est2},
\begin{align*}
\|IV_1\|_{L^2}\lesssim &
\int_0^t (t-\tau)^{-3/4} \|\zeta_R\bd_\lambda(\tau)\|_{L^2}
\|\bd_\lambda(\tau)\|_{L^2}\,d\tau
\\ \lesssim & \eps\int_0^t(t-\tau)^{-3/4}\tau^{-1/4}
\|\zeta_R\tbd_\lambda(\tau)\|_{L^2}\,d\tau
+\eps^2\lambda^{-1/2}\int_0^t(t-\tau)^{-3/4}\tau^{-3/4}\,d\tau
\\ \lesssim & \eps\int_0^t(t-\tau)^{-3/4}\tau^{-1/4}
\|\zeta_R\tbd_\lambda(\tau)\|_{L^2}\,d\tau
+\eps^2\lambda^{-1/2}t^{-1/2}\,.
\end{align*}
By \eqref{eq:wcN'2},
\begin{align*}
\|IV_2\|_{L^2}\lesssim & \int_0^t (t-\tau)^{-1/2}
\|\zeta_R\widetilde{\cN}_\lambda'(\tau)\|_{L^2}\,d\tau
\lesssim 
\eps\lambda^{-1/4}\int_0^t (t-\tau)^{-1/2}\tau^{-7/8}\,d\tau 
\lesssim \eps\lambda^{-1/4}t^{-3/8}\,.
\end{align*}
Since $\mF_y\widetilde{\cN}_\lambda''(t,\eta)=0$ for $\eta\not\in[-\lambda\eta_0,\lambda\eta_0]$ ,
it follows from \eqref{eq:wcN''2} that
$$\|\widetilde{\cN}_\lambda''(\tau,\cdot)\|_{H^{1/4}} \lesssim \lambda^{1/4}\|\widetilde{\cN}_\lambda''(\tau,\cdot)\|_{L^2}
\lesssim \eps\lambda^{-1/4}\tau^{-1/2}\,,$$
\begin{align*}
\|IV_3\|_{L^2}\lesssim & \eps\lambda^{-1/4}\int_0^t (t-\tau)^{-7/8}\tau^{-1/2}\,ds \lesssim \eps\lambda^{-1/4}t^{-3/8}\,.
\end{align*}
\par
Using Lemma~\ref{lem:tbb-bound}, \eqref{eq:omega-approx} and
\eqref{eq:barbb-est2}, we have
\begin{align*}
\|\widetilde{\cN}_\lambda'''\|_{H^{1/4}} \lesssim & 
\|\bar{\bd}_\lambda\|_{H^{1/4}}+
\lambda\|\pd_y^{-1}\tilde{\omega}(\lambda^{-1}D_y)\bd_\lambda\|_{H^{1/4}}
\\ \lesssim &
\lambda^{1/4}\|\bar{\bd}_\lambda\|_{L^2}+\lambda^{-1/4}\|\bd_\lambda\|_{H^{1/2}}
\lesssim \eps\lambda^{-1/4}t^{-1/2}\,,
 \end{align*}
and
\begin{align*}
\|IV_4\|_{L^2} \lesssim & \int_0^t (t-\tau)^{-7/8}
\left\|\zeta_R\widetilde{\cN}'''_\lambda(\tau)\right\|_{H^{1/4}}\,d\tau
\\ \lesssim  &
\eps\lambda^{-1/4} \int_0^t (t-\tau)^{-7/8}\tau^{-1/2} \,d\tau
\lesssim \eps\lambda^{-1/4}t^{-3/8}\,.
\end{align*}

By \eqref{eq:wcN2} and \eqref{eq:wcN'2},
\begin{align*}
  \|IV_5\|_{L^2}\lesssim  \int_0^t \|\pd_y\zeta_R\|_{L^\infty}
\left(\left\|\widetilde{\cN}_\lambda(\tau)\right\|_{L^2}+\left\|\widetilde{\cN}'_\lambda(\tau)\right\|_{L^2}\right)\,d\tau
\lesssim & \frac{\eps}{R}(t^{1/4}+t^{1/8})\,.
\end{align*}
By \eqref{eq:tbb-bound1}, \eqref{eq:wcN''2}, \eqref{eq:barbb-est2}
and \eqref{eq:wcN'''-est},
\begin{align*}
\|IV_6\|_{L^2} \lesssim &
\int_0^t \{\|\pd_y^2\zeta_r\|_{L^\infty}+(t-\tau)^{-1/2}\|\pd_y\zeta_R\|_{L^\infty}\}
\left(\left\|\widetilde{\cN}''_\lambda(\tau)\right\|_{L^2}+
\left\|2\tbd_\lambda(\tau)+\widetilde{\cN}'''_\lambda(\tau)\right\|_{L^2}\right)
\,d\tau
\\ \lesssim & \frac{\eps}{R}
\int_0^t \{1+(t-\tau)^{-1/2}\}\{(\lambda\tau)^{-1/2}+\tau^{-1/4}\}\,d\tau
 \lesssim \frac{\eps}{R}\la t \ra^{3/4}\,.
\end{align*}
Combining the above, we have for $t\in(0,t_2)$,
\begin{equation*}
\|\zeta_R\bd_\lambda(t)\|_{L^2}
\lesssim 
t^{-1/4}\bigl\|\obu{\bd}_0\bigr\|_{L^1(|y|\ge \lambda R)}
+\frac{\eps}{R}+\eps\lambda^{-1/4}t^{-1/2}
+\eps\int_0^t(t-\tau)^{-3/4}\tau^{-1/4}\|\zeta_R\tbd_\lambda(\tau)\|\,d\tau\,,
\end{equation*}
and if $\eps$ is sufficiently small, 
\begin{equation*}
\sup_{t\in(0,t_2)}t^{1/2}\|\zeta_R\bd_\lambda(t)\|_{L^2}
\lesssim C(t_1,t_2)
\left(\bigl\|\obu{\bd}_0\bigr\|_{L^1(|y|\ge \lambda R)}
+\frac{\eps}{R}+\eps\lambda^{-1/4}\right)\,,
\end{equation*}
where $C(t_2)$ is a constant depending only on $t_2$.
Thus we complete the proof.
\end{proof}

Now we are in position to prove Theorem~\ref{thm:burgers}
\begin{proof}[Proof of Theorem~\ref{thm:burgers}]
Corollary~\ref{cor:tbb-bound} and
Lemma~\ref{lem:outer-region} imply 
$$\lim_{n\to\infty}\sup_{t\in[t_1,t_2]}
\|\bd_{\lambda_n}(t,y)-\bd_\infty(t,y)\|_{L^2(\R)}=0\,,$$
and that $\bd_\infty(t,y)$ is a solutions of \eqref{eq:Burgers}
satisfying $\|\bd_\infty(t,\cdot)\|_{L^2}\le C\eps t^{-1/4}$ for every $t>0$.
\par
Let $m_\pm\in(-2\sqrt{2},2\sqrt{2})$ be constants satisfying
$$\frac{1}{2}\log\left(\frac{2\sqrt{2}\pm m_\pm }{2\sqrt{2}\mp m_\pm}\right)
=\int_\R \td_\pm(0,y)\,dy\,.$$
Then for every $h\in H^1(\R)$,
$$\lim_{t\downarrow0}\int_\R u_B^\pm(t,y)h(y)\,dy
=h(0)\int_\R \td_\pm(0,y)\,dy\,.$$

If $\eps$ is sufficiently small, then solutions of \eqref{eq:Burgers}
satisfying \eqref{eq:condition} and \eqref{eq:B-ini} are unique
(see e.g. \cite[pp.74--75]{Miz15}). Hence it follows that
\begin{equation}
\label{eq:ss-sol}
\bd_\infty(t,y)=\begin{pmatrix}u_B^+(t,y+4t) \\ u_B^-(t,y-4t)\end{pmatrix}\,,
\end{equation}
and that $\bd_\infty(t,y)$ satisfies \eqref{eq:ss}.
Thanks to the uniqueness of the limiting profile $\bd_\infty(t,y)$,
we have \eqref{eq:bd-c}.
\par
By \eqref{eq:bd-c} and \eqref{eq:ss},
\begin{equation}
\label{eq:bd-profile}
  t^{1/4}\|\bd(t,\cdot)-\bd_\infty(t,\cdot)\|_{L^2(\R)}
=\|\bd_{\sqrt{t}}(1,\cdot)-\bd_\infty(1,\cdot)\|_{L^2(\R)}
\to0 \quad\text{as $t\to\infty$,} 
\end{equation}
and Theorem~\ref{thm:burgers} follows immediately 
from \eqref{eq:bx-bb}, \eqref{eq:ss-sol} and \eqref{eq:bd-profile}.
This completes the proof of Theorem~\ref{thm:burgers}.
\end{proof}
\bigskip

\appendix
\section{Operator norms of $S^j_k$}
\label{ap:s}
First, we recall the definitions of operators $S^j_k$ and $\wS^j$
used in \cite{Miz15,Miz17}.
For $q_c=\varphi_c$, $\varphi_c'$, $\pd_c\varphi_c$
and $\pd_z^{-1}\pd_c^m\varphi_c(z)=-\int_z^\infty \pd_c^m\varphi_c(z_1)\,dz_1$
($m\ge1$), let $S_k^1[q_c]$  and $S_k^2[q_c]$ be operators defined by
\begin{align*}
& S_k^1[q_c](f)(t,y)=\frac{1}{2\pi} \int_{-\eta_0}^{\eta_0}\int_{\R^2}
f(y_1)q_2(z)g_{k1}^*(z,\eta,2)e^{i(y-y_1)\eta}\,dy_1dzd\eta\,,\\
& S_k^2[q_c](f)(t,y)=\frac{1}{2\pi} \int_{-\eta_0}^{\eta_0}\int_{\R^2}
f(y_1)\tc(t,y_1)g_{k2}^*(z,\eta,c(t,y_1))
e^{i(y-y_1)\eta}\,d y_1dzd\eta\,,
\end{align*}
where 
\begin{align*}
& \delta q_c(z)=\frac{q_c(z)-q_2(z)}{c-2}\,,\quad
g_{k2}^*(z,\eta,c)=g_{k1}^*(z,\eta,2)\delta q_c(z)+
\frac{g_{k1}^*(z,\eta,c)-g_{k1}^*(z,\eta,2)}{c-2}q_c(z)\,,
\end{align*}
\begin{gather*}
\wS_0=3\begin{pmatrix}-S^1_1[\pd_z^{-1}\pd_c\varphi_c] & S^1_1[\varphi_c]
\\ -S^1_2[\pd_z^{-1}\pd_c\varphi_c] & S^1_2[\varphi_c]\end{pmatrix}\,,
\quad
\wS_j=\begin{pmatrix}-S^j_1[\pd_c\varphi_c]  & S^j_1[\varphi_c']
\\ -S^j_2[\pd_c\varphi_c]  & S^j_2[\varphi_c']\end{pmatrix}
\enskip\text{for $j=1$, $2$.}  
\end{gather*}
Let $S^3_k[p]$ and $S^4_k[p]$ be operators defined by
\begin{equation*}
S^3_k[p](f)(t,y)=\frac{1}{2\pi}\int_{-\eta_0}^{\eta_0}\int_{\R^2}
f(y_1)p(z+3t+L)g_k^*(z,\eta)e^{i(y-y_1)\eta}\,d y_1dzd\eta\,,  
\end{equation*}
\begin{multline*}
S^4_k[p](f)(t,y)=\frac{1}{2\pi}\int_{-\eta_0}^{\eta_0}\int_{\R^2}
f(y_1)\tc(t,y_1)p(z+3t+L)\\ \times g_{k3}^*(z,\eta,c(t,y_1))
e^{i(y-y_1)\eta}\,d y_1dzd\eta\,,
\end{multline*}
where $g_{k3}^*(z,\eta,c)=(c-2)^{-1}(g_k^*(z,\eta,c)-g_k^*(z,\eta))$ and
$p(z)\in C_0^\infty(\R)$.
Let $S^5_k$ and $S^6_k$ be operators defined by
\begin{gather*}
S^5_k(f)(t,y)=\frac{1}{2\pi}\int_{-\eta_0}^{\eta_0}
\int_{\R^2} v_2(t,z,y_1)f(y_1)\pd_cg_k^*(z,\eta,c(t,y_1))
e^{i(y-y_1)\eta}\,dzdy_1d\eta\,,\\
S^6_k(f)(t,y)=-\frac{1}{2\pi}\int_{-\eta_0}^{\eta_0}\int_{\R^2}
v_2(t,z,y_1)f(y_1)\pd_zg_k^*(z,\eta,c(t,y_1))
e^{i(y-y_1)\eta}\,dzdy_1d\eta\,.
\end{gather*}
\par
Let $\wS_3=S^3_1[\psi]E_1+ S^3_2[\psi]E_{21}$,
\begin{align*}
& \wS_4=\begin{pmatrix}
S^3_1[\psi]((\sqrt{2/c}-1)\cdot)+S^4_1[\psi](\sqrt{2/c}\cdot)
& -2(S^3_1[\psi']+S^4_1[\psi'])((\sqrt{2c}-2)\cdot) 
\\ S^3_2[\psi]((\sqrt{2/c}-1)\cdot)+S^4_2[\psi](\sqrt{2/c}\cdot)
& -2(S^3_2[\psi']+S^4_2[\psi'])((\sqrt{2c}-2)\cdot)
\end{pmatrix}\,,
\\ & \wS_5=\begin{pmatrix}  S^5_1 & S^6_1 \\ S^5_2 & S^6_2 \end{pmatrix}\,,
\end{align*}
and $\bS_j=\wS_j(I+\wC_2)^{-1}$ for $1\le j\le 5$, where
$$\cC_2=\wP_1\left\{\left(c(t,\cdot)/2\right)^{1/2}-1\right\}\wP_1\,,
\quad \wC_2=\cC_2 E_1\,.$$
\par

Now we decompose the operator $S^j_k$ $(1\le j\le 6\,,\, k=1\,,\,2)$
into a sum of a time-dependent constant multiple of $\wP_1$ and
an operator which belongs to $\pd_y^2B(Y)$. Let
\begin{equation}
  \label{eq:defS3k1}
\begin{split}
  S^3_{k1}[p](t)f(y)=& \frac{1}{2\pi}\int_{-\eta_0}^{\eta_0}\int_{\R^2}
f(y_1)p(z+3t+L)g_k^*(z,0)e^{i(y-y_1)\eta}\,dy_1dzd\eta
\\ =& \left(\int_{\R^2}p(z+3t+L)g_k^*(z,0)\,dz\right)\wP_1f\,,
\end{split}
\end{equation}
\begin{equation}
  \label{eq:defS3k2}
S^3_{k2}[p](f)(t,y)=\frac{1}{2\pi}\int_{-\eta_0}^{\eta_0}\int_{\R^2}
f(y_1)p(z+3t+L)g_{k1}^*(z,\eta)e^{i(y-y_1)\eta}\,dy_1dzd\eta\,,
\end{equation}
\begin{equation}
\label{eq:defS4k1}
S^4_{k1}[p](f)(t,y)=\wP_1 
\left\{\tc(t,\cdot)f
\int_\R p(z+3t+L)g_{k3}^*(z,0,c(t,\cdot))\,dz\right\}\,,
\end{equation}
\begin{equation}
\label{eq:defS4k2}
\begin{split}
S^4_{k2}[p](f)(t,y)=\frac{1}{2\pi}\int_{-\eta_0}^{\eta_0}\int_{\R^2}
f(y_1)& \tc(t,y_1)p(z+3t+L)\\ & \times g_{k4}^*(z,\eta,c(t,y_1))
e^{i(y-y_1)\eta}\,dy_1dzd\eta\,,
\end{split}  
\end{equation}
where $g_{k4}^*(z,\eta,c)=\eta^{-2}\{g_{k3}^*(z,\eta,c)-g_{k3}^*(z,0,c)\}$
and
\begin{gather}
S^5_{k1}(f)(t,y)=\frac{1}{2\pi}\int_{-\eta_0}^{\eta_0}
\int_{\R^2} v_2(t,z,y_1)f(y_1)\pd_cg_k^*(z,0,c(t,y_1))
e^{i(y-y_1)\eta}\,dz dy_1d\eta\,,\\
S^5_{k2}(f)(t,y)=\frac{1}{2\pi}\int_{-\eta_0}^{\eta_0}
\int_{\R^2} v_2(t,z,y_1)f(y_1)\pd_cg_{k1}^*(z,\eta,c(t,y_1))
e^{i(y-y_1)\eta}\,dz dy_1d\eta\,,\\
S^6_{k1}(f)(t,y)=-\frac{1}{2\pi}\int_{-\eta_0}^{\eta_0}\int_{\R^2}
v_2(t,z,y_1)f(y_1)\pd_zg_k^*(z,0,c(t,y_1))
e^{i(y-y_1)\eta}\,dz dy_1d\eta\,,\\
\label{eq:defS6k2}
S^6_{k2}(f)(t,y)=-\frac{1}{2\pi}\int_{-\eta_0}^{\eta_0}\int_{\R^2}
v_2(t,z,y_1)f(y_1)\pd_zg_{k1}^*(z,\eta,c(t,y_1))
e^{i(y-y_1)\eta}\,dz dy_1d\eta\,.
\end{gather}
Then $S^j_k=S^j_{k1}-\pd_y^2S^j_{k2}$ for $j=3$, $4$, $5$, $6$.

\begin{claim}
  \label{cl:S3}
Let $\a\in(0,2)$. There exist positive constants $C$ and $\eta_1$
such that for $\eta\in(0,\eta_1]$, $k=1$, $2$ and $t\ge0$,
\begin{gather}
\label{eq:clS3-1}
\|\chi(D_y)S^3_{k1}[p](f)(t,\cdot)\|_{L^1}
\le Ce^{-\a(3t+L)}\|e^{\a z}p\|_{L^2}\|f\|_{L^1(\R)}\,,
\\
\label{eq:clS3-2}
\|S^3_{k1}[p](f)(t,\cdot)\|_Y +\|S^3_{k2}[p](f)(t,\cdot)\|_Y
\le Ce^{-\a(3t+L)}\|e^{\a z}p\|_{L^2}\|f\|_{L^2(\R)}\,,
\\ \label{eq:clS3-2'}
\|S^3_{k1}[p](f)(t,\cdot)\|_{Y_1}+\|S^3_{k2}[p](f)(t,\cdot)\|_{Y_1}
\le Ce^{-\a(3t+L)}\|e^{\a z}p\|_{L^2}\|\wP_1f\|_{Y_1}\,.
\end{gather}
\end{claim}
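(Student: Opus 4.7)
The plan is to write each of $S^3_{k1}[p]$ and $S^3_{k2}[p]$ as a Fourier multiplier acting on $\wP_1 f$ whose symbol admits the bound $Ce^{-\a(3t+L)}\|e^{\a z}p\|_{L^2}$, so that all three inequalities reduce to Plancherel, Young's convolution inequality, and the Fourier-side characterizations of $Y$ and $Y_1$. The exponential factor will come entirely from a change of variable that turns the shift $p(z+3t+L)$ into the weight $e^{-\a(3t+L)}$.

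For $S^3_{k1}[p]$, the definition \eqref{eq:defS3k1} gives $S^3_{k1}[p](f)(t,y)=c_1(t)\wP_1 f(y)$ with the scalar $c_1(t)=\int_\R p(z+3t+L)g_k^*(z,0)\,dz$. A single application of Cauchy-Schwarz with weight $e^{\pm\a z}$, followed by the substitution $w=z+3t+L$, yields
\[
|c_1(t)|\le \|e^{\a z}p(z+3t+L)\|_{L^2}\|e^{-\a z}g_k^*(z,0)\|_{L^2}=e^{-\a(3t+L)}\|e^{\a z}p\|_{L^2}\|e^{-\a z}g_k^*(z,0)\|_{L^2},
\]
and the second factor is finite for every $\a\in(0,2)$ by the explicit formulae recalled in Section~\ref{subsec:LKP}: $g_1^*(z,0)=\varphi(z)/2$ decays like $e^{-2|z|}$, while $g_2^*(z,0)=\int_{-\infty}^z\pd_c\varphi\,dz_1$ is exponentially small at $-\infty$ and bounded at $+\infty$. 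Given the resulting bound $|c_1(t)|\le Ce^{-\a(3t+L)}\|e^{\a z}p\|_{L^2}$, inequalities \eqref{eq:clS3-2} and \eqref{eq:clS3-2'} for $S^3_{k1}[p]$ are immediate from $\|\wP_1 f\|_Y\le\|f\|_{L^2}$ and the definition of $\|\cdot\|_{Y_1}$. For \eqref{eq:clS3-1} I note that $\chi(\eta)=\chi(\eta)\mathbf{1}_{[-\eta_0,\eta_0]}(\eta)$, so $\chi(D_y)\wP_1=\chi(D_y)$, and then $\|\chi(D_y)f\|_{L^1}\le\|\check{\chi}\|_{L^1}\|f\|_{L^1}$ by Young.

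For $S^3_{k2}[p]$ the same manipulation gives
\[
S^3_{k2}[p](f)(t,y)=\mF^{-1}\bigl(\mathbf{1}_{[-\eta_0,\eta_0]}(\eta)c_2(t,\eta)\hat{f}(\eta)\bigr)(y),\qquad c_2(t,\eta)=\int_\R p(z+3t+L)g_{k1}^*(z,\eta)\,dz,
\]
so \eqref{eq:clS3-2} and \eqref{eq:clS3-2'} for $S^3_{k2}[p]$ reduce, via Plancherel and the Fourier-side $L^\infty$ description of $Y_1$, to the uniform-in-$\eta$ estimate $\sup_{|\eta|\le\eta_0}|c_2(t,\eta)|\le Ce^{-\a(3t+L)}\|e^{\a z}p\|_{L^2}$. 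By the same Cauchy-Schwarz argument this amounts to the claim $\sup_{|\eta|\le\eta_1}\|e^{-\a z}g_{k1}^*(z,\eta)\|_{L^2}<\infty$ for an appropriate choice of $\eta_1$.

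The only nontrivial point is this last uniform bound, and it is what fixes $\eta_1$ in the statement. Using $\beta(\eta)=\sqrt{1+i\eta}=1+i\eta/2+\eta^2/8+O(\eta^3)$ and $\beta(-\eta)=\overline{\beta(\eta)}$, the explicit formula $g^*(z,\eta)=\pd_z(e^{\beta(-\eta)z}\sech z)$ shows that $\Re g^*(z,\eta)$ is even in $\eta$ while $\Im g^*(z,\eta)$ is odd in $\eta$; hence both $g_1^*(z,\eta)=\Re g^*(z,\eta)$ and $g_2^*(z,\eta)=-\eta^{-1}\Im g^*(z,\eta)$ are smooth and even near $\eta=0$, and $g_k^*(z,\eta)-g_k^*(z,0)=\eta^2 h_k(z,\eta)$ for a bounded remainder $h_k$. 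Since $h_k$ inherits from $g_k^*$ the growth $O(e^{\nu(\eta)z})$ as $z\to+\infty$ and exponential decay at $-\infty$, the weight $e^{-\a z}$ tames it in $L^2$ uniformly in $\eta\in[-\eta_1,\eta_1]$ provided $\nu(\eta_1)<\a$, which is the required choice of $\eta_1$. With this bound in hand the three inequalities of the claim are immediate.
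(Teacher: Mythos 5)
Your proof is correct and follows essentially the same route as the paper: for \eqref{eq:clS3-1} the paper performs exactly your computation (the identity $\chi(D_y)\wP_1=\chi(D_y)$, Young's inequality, and Cauchy--Schwarz with the weights $e^{\pm\a z}$ to extract $e^{-\a(3t+L)}\|e^{\a z}p\|_{L^2}\|e^{-\a z}g_k^*(z,0)\|_{L^2}$), while for \eqref{eq:clS3-2} and \eqref{eq:clS3-2'} it simply cites Claims~B.3--B.5 of \cite{Miz15}, whose content is the multiplier estimate you spell out. Your added justification of the uniform bound $\sup_{|\eta|\le\eta_0}\|e^{-\a z}g_{k1}^*(z,\eta)\|_{L^2}<\infty$ via the evenness of $g_k^*$ in $\eta$ and the condition $\nu(\eta_0)<\a$ is the correct and standard filling-in of that cited step.
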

\begin{claim}
\label{cl:S4}
There exist positive constants $\eta_1$, $\delta$ and $C$ such that
if $\eta_0\in(0,\eta_1]$ and $\bM_{c,x}(T)\le \delta$,
then for $k=1$, $2$, $t\in[0,T]$ and $f\in L^2$,
  \begin{gather}
\label{eq:clS4-1}
\|\chi(D_y)S^4_{k1}[p](f)(t,\cdot)\|_{L^1(\R)} \le
Ce^{-\a(3t+L)}\|e^{\a z}p\|_{L^2}\|\tc\|_Y\|f\|_{L^2}\,,
\\ \label{eq:clS4-2}
\|S^4_{k1}[p](f)(t,\cdot)\|_{Y_1}+\|S^4_{k2}[p](f)(t,\cdot)\|_{Y_1} \le
Ce^{-\a(3t+L)}\|e^{\a z}p\|_{L^2}\|\tc\|_Y\|f\|_{L^2}\,.
\end{gather}
\end{claim}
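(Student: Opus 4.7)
The plan is to adapt the proof of Claim~\ref{cl:S3} to the setting of $S^4_k[p]$, noting that the only genuinely new feature is the additional factor $\tc(t,y_1)$ inside the kernel, which can be separated by one extra application of the Cauchy--Schwarz inequality and absorbed as the factor $\|\tc\|_Y$ in the estimate. Throughout I use that $\chi(D_y)\wP_1=\chi(D_y)$ (since $\supp\chi\subset[-\eta_0/2,\eta_0/2]\subset[-\eta_0,\eta_0]$) and that $\|\wP_1g\|_{Y_1}=\|\mathbf{1}_{[-\eta_0,\eta_0]}\hat g\|_{L^\infty}\le(2\pi)^{-1/2}\|g\|_{L^1}$, which reduces both inequalities in the claim to an $L^1$-bound on the function $y_1\mapsto\tc(t,y_1)f(y_1)h_k(t,y_1,\eta)$, where $h_k$ denotes the inner $z$-integral.

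The first step is to establish the pointwise bound on this inner integral. After the change of variable $w=z+3t+L$ one has $\|e^{\a z}p(\cdot+3t+L)\|_{L^2}=e^{-\a(3t+L)}\|e^{\a z}p\|_{L^2}$, so splitting $p(z+3t+L)=e^{-\a z}\cdot(e^{\a z}p(z+3t+L))$ and applying Cauchy--Schwarz in $z$ yields
\begin{equation*}
\Bigl|\int_\R p(z+3t+L)g_{k3}^*(z,0,c(t,y_1))\,dz\Bigr|\le e^{-\a(3t+L)}\|e^{\a z}p\|_{L^2}\bigl\|e^{-\a z}g_{k3}^*(\cdot,0,c(t,y_1))\bigr\|_{L^2},
\end{equation*}
and similarly for the kernel $g_{k4}^*(z,\eta,c(t,y_1))$ of $S^4_{k2}[p]$. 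A second Cauchy--Schwarz in $y_1$ then gives $\|\tc f h_k\|_{L^1}\le\|\tc\|_{L^2}\|f\|_{L^2}\sup_{y_1}|h_k|=\|\tc\|_Y\|f\|_{L^2}\sup_{y_1}|h_k|$, which is the desired structure.

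The second step is to verify the uniform-in-$(\eta,c)$ bound
$\sup\{\|e^{-\a z}g_{k3}^*(\cdot,0,c)\|_{L^2}+\|e^{-\a z}g_{k4}^*(\cdot,\eta,c)\|_{L^2}:|\eta|\le\eta_0,\,|c-2|\le\delta\}<\infty$, which is the only place that $\eta_0$ needs to be further shrunk. Because $g_k^*(z,\eta,c)$ is built by rescaling $g_k^*(\sqrt{c/2}\,z,\eta)$ (see the definitions following \eqref{eq:resonance3}), it decays like $e^{(1+\Re\beta(\eta))\sqrt{c/2}\,z}$ as $z\to-\infty$, and the rate $2-\nu(\eta)$ of this decay stays strictly above the fixed weight $\a\in(0,2)$ once $\eta_0$ is small enough; since $g_{k3}^*$ is a finite-difference quotient in $c$, the bound for $g_{k3}^*$ follows immediately, assuming $\bM_{c,x}(T)\le\delta$ keeps $c(t,y)$ in a small neighborhood of $2$. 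For $g_{k4}^*$ the division by $\eta^2$ has to be handled via the evenness of $g_k^*(z,\eta,c)$ in $\eta$: Taylor's theorem gives
\begin{equation*}
g_{k4}^*(z,\eta,c)=\int_0^1(1-s)\pd_\eta^2 g_{k3}^*(z,s\eta,c)\,ds,
\end{equation*}
and the smoothness of $\beta(\eta)=\sqrt{1+i\eta}$ near $0$ then transfers the exponentially-weighted $L^2$-bound to $\pd_\eta^2 g_{k3}^*$, uniformly on $|\eta|\le\eta_0$.

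Combining the two steps yields \eqref{eq:clS4-1} by Young's inequality $\|\chi(D_y)h\|_{L^1}\le(2\pi)^{-1/2}\|\check\chi\|_{L^1}\|h\|_{L^1}$ and \eqref{eq:clS4-2} by the reduction $\|\wP_1h\|_{Y_1}\le(2\pi)^{-1/2}\|h\|_{L^1}$, applied once with $h_{k3}^*(t,y_1)=\int p\,g_{k3}^*(\cdot,0,c(t,y_1))\,dz$ (which is independent of the dual variable $\eta$) and once with $h_{k4}^*(t,y_1,\eta)=\int p\,g_{k4}^*(\cdot,\eta,c(t,y_1))\,dz$, for which the Fourier transform of $S^4_{k2}[p](f)$ equals $\mathbf{1}_{[-\eta_0,\eta_0]}(\eta)\mF_{y_1}[\tc f h_{k4}^*(t,\cdot,\eta)](\eta)$ and the $L^\infty$-norm in $\eta$ is controlled by the uniform bound of Step 2. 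The main obstacle is Step 2 for $g_{k4}^*$: one must confirm that the $\eta^{-2}$ quotient produces no singularity under an exponential weight close to the decay rate of $g_k^*$, which is why we need both $\eta_0$ to be small and $\a<2$.
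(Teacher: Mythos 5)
Your proposal is correct and follows essentially the same route as the paper's (sketched) proof: Young's inequality for $\check\chi\,*$, Cauchy--Schwarz in $z$ to extract $e^{-\a(3t+L)}\|e^{\a z}p\|_{L^2}$, Cauchy--Schwarz in $y_1$ to extract $\|\tc\|_Y\|f\|_{L^2}$, and a uniform exponentially weighted $L^2$ bound on $g_{k3}^*$, $g_{k4}^*$ (the latter via evenness of $g_k^*$ in $\eta$), exactly as in the proof of Claim~\ref{cl:S3} and Claims~B.3--B.5 of \cite{Miz15}. The only blemish is the decay rate at $z\to-\infty$, which is $2+\nu(\eta)$ rather than $2-\nu(\eta)$; this does not affect the argument since both exceed $\a$ for $\eta_0$ small.
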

\begin{claim}
  \label{cl:S5}
There exist positive constants $\eta_1$, $\delta$ and $C$ such that
if $\eta_0\in(0,\eta_1]$ and $\bM_{c,x}(T)\le \delta$,
then for $k=1$, $2$, $t\in[0,T]$ and $f\in L^2$,
\begin{gather}
  \label{eq:clS5-1}
\|\chi(D_y)S^5_{k1}(f)(t,\cdot)\|_{L^1(\R)}+\|\chi(D_y)S^6_{k1}(f)(t,\cdot)\|_{L^1(\R)}
\le C\|v_2(t)\|_X\|f\|_{L^2(\R)}\,,
\\ \label{eq:clS5-2}
\sum_{j=5,6}\left( \|S^j_{k1}(f)(t,\cdot)\|_{Y_1}
+\|S^j_{k2}(f)(t,\cdot)\|_{Y_1}\right)
\le C\|v_2(t)\|_X\|f\|_{L^2}\,.
\end{gather}
\end{claim}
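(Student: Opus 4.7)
The strategy is to exploit the observation that integration over $\eta\in[-\eta_0,\eta_0]$ realizes the Fourier projection $\wP_1$, reducing each $S^j_{k\ell}$ to a convolution-type object whose norms are controlled by Cauchy-Schwarz pairing of the $X$-norm of $v_2$ with the $L^2$-norm of $f$. This mirrors the scheme used for Claims~\ref{cl:S3} and \ref{cl:S4}, except that the exponential weight built into the $X$ norm of $v_2$ now plays the role that the compactly supported cutoff $p(z+3t+L)$ played there; the decay in $t$ is accordingly absorbed into $\|v_2(t)\|_X$.

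First I would treat $S^5_{k1}$ and $S^6_{k1}$. Since the $\eta$-dependence enters only through $e^{i(y-y_1)\eta}$, performing the $\eta$-integration first gives $S^5_{k1}(f)(t,\cdot)=\wP_1 F^5(t,\cdot)$ and $S^6_{k1}(f)(t,\cdot)=-\wP_1 F^6(t,\cdot)$ where
\begin{gather*}
F^5(t,y)=f(y)\int_\R v_2(t,z,y)\,\pd_cg_k^*(z,0,c(t,y))\,dz,\\
F^6(t,y)=f(y)\int_\R v_2(t,z,y)\,\pd_zg_k^*(z,0,c(t,y))\,dz.
\end{gather*}
Cauchy-Schwarz in $z$ together with a uniform bound
\begin{equation*}
\sup_{|c-2|\le\delta}\Bigl(\|e^{-\a z}\pd_cg_k^*(\cdot,0,c)\|_{L^2_z}+\|e^{-\a z}\pd_zg_k^*(\cdot,0,c)\|_{L^2_z}\Bigr)<\infty
\end{equation*}
yields $\|F^j(t,\cdot)\|_{L^1(\R)}\lesssim \|v_2(t)\|_X\|f\|_{L^2(\R)}$ for $j=5,6$. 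Since $\chi$ is supported in $[-\eta_0/2,\eta_0/2]$ one has $\chi(D_y)=\chi(D_y)\wP_1$, so Young's inequality gives $\|\chi(D_y)\wP_1F^j\|_{L^1}\le\|\check{\chi}\|_{L^1}\|F^j\|_{L^1}$, proving \eqref{eq:clS5-1}. The corresponding $Y_1$ bound follows from $\mF_y(\wP_1F^j)(\eta)=\mathbf{1}_{[-\eta_0,\eta_0]}(\eta)\widehat{F^j}(\eta)$ and $\|\widehat{F^j}\|_{L^\infty}\le(2\pi)^{-1/2}\|F^j\|_{L^1}$.

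For $S^5_{k2}$ and $S^6_{k2}$ the $\eta$-dependence additionally enters through $g_{k1}^*(z,\eta,c)$, so I would pass to the Fourier side in $y$ directly. A straightforward computation gives
\begin{equation*}
\mF_y[S^5_{k2}(f)](t,\eta)=\mathbf{1}_{[-\eta_0,\eta_0]}(\eta)(2\pi)^{-1/2}\int_{\R^2}v_2(t,z,y_1)f(y_1)\pd_cg_{k1}^*(z,\eta,c(t,y_1))e^{-iy_1\eta}\,dzdy_1,
\end{equation*}
and an analogous identity for $S^6_{k2}$. Bounding the $z$-integral by Cauchy-Schwarz using $\sup_{|\eta|\le\eta_0,\,|c-2|\le\delta}(\|e^{-\a z}\pd_cg_{k1}^*(\cdot,\eta,c)\|_{L^2_z}+\|e^{-\a z}\pd_zg_{k1}^*(\cdot,\eta,c)\|_{L^2_z})<\infty$ and then Cauchy-Schwarz in $y_1$ yields $\|\mF_y[S^j_{k2}(f)](t,\cdot)\|_{L^\infty}\lesssim\|v_2(t)\|_X\|f\|_{L^2}$, completing \eqref{eq:clS5-2}.

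The only technical point is verifying the uniform weighted $L^2_z$ bounds on the derivatives of $g_k^*$ and on $g_{k1}^*$. Using the rescalings $g_1^*(z,\eta,c)=cg_1^*(\sqrt{c/2}z,\eta)$, $g_2^*(z,\eta,c)=(c/2)g_2^*(\sqrt{c/2}z,\eta)$ together with $g^*(x,\eta)=\pd_x(e^{\beta(-\eta)x}\sech x)$ and $\Re\beta(-\eta)=1+O(\eta^2)$, one sees that after multiplication by $e^{-\a z}$ the relevant functions decay like $e^{-(2-\a)z_+}$ as $z\to+\infty$ and remain bounded by $e^{(\nu(\eta)-\a)|z|}=O(1)$ as $z\to-\infty$, provided $\eta_1$ is chosen so that $\nu(\eta)<\a$ on $[-\eta_1,\eta_1]$. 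Smoothness of $g_k^*(z,\eta,c)$ in $(\eta,c)$ near $(0,2)$ then controls the $\eta^{-2}$-difference quotient defining $g_{k1}^*$, producing the uniform estimates needed above.
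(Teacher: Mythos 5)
Your argument matches the paper's: \eqref{eq:clS5-1} is proved by exactly the same computation (the paper writes $\chi(D_y)S^5_{k1}(f)$ as a convolution of $\check{\chi}$ with $f(y)\int_\R v_2\,\pd_cg_k^*(z,0,c(t,y))\,dz$ and applies Young's inequality plus Cauchy--Schwarz in $y$ and in $z$), while for \eqref{eq:clS5-2} the paper simply defers to Claims~B.3--B.5 of \cite{Miz15}, whose content is the Fourier-side estimate you supply. One small slip worth noting: in your verification of the uniform weighted bounds the two tails of $g_k^*$ are interchanged --- it is as $z\to+\infty$ that $g^*(z,\eta)$ grows like $e^{\nu(\eta)z}$ (so $\nu(\eta)<\a$ is needed on that side), whereas as $z\to-\infty$ it decays like $e^{-(2+\nu(\eta))|z|}$ (so $\a<2$ suffices there) --- but the hypotheses you impose are exactly the right ones and the conclusion is unaffected.
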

\begin{proof}[Proof of Claims~\ref{cl:S3}--\ref{cl:S5}]
Since $\chi(D_y)\wP_1=\chi(D_y)$,
\begin{align*}
\|\chi(D_y)S^3_{k1}[p](f)(t,\cdot)\|_{L^1(\R)}=& \frac{1}{\sqrt{2\pi}}
\left|\int_\R p(z+3t+L)g_k^*(z,0)\,dz\right|
\|\check{\chi}*f\|_{L^1(\R)}
\\ \le &
\|\check{\chi}\|_{L^1(\R)}\|f\|_{L^1(\R)}
\|e^{\a z}p(z+3t+L)\|_{L^2(\R)}\|e^{-\a z}g_k^*(z,0)\|_{L^2(\R)}
\\ \lesssim & e^{-\a(3t+L)}\|f\|_{L^1(\R)}\,.
\end{align*}
Using Young's inequality, we have
\begin{align*}
 \|\chi(D_y)S^5_{k1}(f)(t,\cdot)\|_{L^1(\R)}
=&
\left\|\int_\R \check{\chi}(y-y_1)
f(y_1)\left\{\int_\R v_2(t,z,y_1)\pd_cg_k^*(z,0,c(t,y_1)\,dz\right\}\,dy_1\
\right\|_{L^1(\R)}
\\ \lesssim & 
\|\check{\chi}\|_{L^1}\|f\|_{L^2(\R)}
\left\|\int_\R v_2(t,z,\cdot)\pd_cg_k^*(z,0,c(t,\cdot))\,dz\right\|_{L^2(\R)}
\\ \lesssim &
\|f\|_{L^2(\R)}\|v_2(t)\|_X\sup_{c\in[2-\delta,2+\delta]}
\|e^{-\a z}\pd_cg_k^*(z,0,c)\|_{L^2(\R)}
\\ \lesssim & \|f\|_{L^2(\R)}\|v_2(t)\|_X\,.
\end{align*}
Similarly, we have \eqref{eq:clS4-1} and
$\|\chi(D_y)S^6_{k1}(f)(t,\cdot)\|_{L^1(\R)}\lesssim \|f\|_{L^2(\R)}\|v_2(t)\|_X$.
\par
We can prove \eqref{eq:clS3-2}, \eqref{eq:clS3-2'},
\eqref{eq:clS4-2} and \eqref{eq:clS5-2}
in exactly the same way as the proof of Claims~B.3--B.5 in \cite{Miz15}.
Thus we complete the proof.
\end{proof}

For $\ell=1$, $2$, let
\begin{align}
& \label{eq:defwS3l}
\wS_{3\ell}
=\begin{pmatrix} S^3_{1\ell}[\psi] & 0 \\ S^3_{2\ell}[\psi] & 0\end{pmatrix}\,,
\\ & \label{eq:defwS4l}
\wS_{4\ell}=
\begin{pmatrix}
S^3_{1\ell}[\psi]((\sqrt{2/c}-1)\cdot)+S^4_{1\ell}[\psi](\sqrt{2/c}\cdot)
& -2(S^3_{1\ell}[\psi']+S^4_{1\ell}[\psi'])((\sqrt{2c}-2)\cdot) 
\\ S^4_{1\ell}[\psi]((\sqrt{2/c}-1)\cdot)+S^4_{2\ell}[\psi](\sqrt{2/c}\cdot)
& -2(S^3_{2\ell}[\psi']+S^4_{2\ell}[\psi'])((\sqrt{2c}-2)\cdot)
\end{pmatrix}\,,
\\ &  \label{eq:defwS5l}
 \wS_{5\ell}=\begin{pmatrix}  S^5_{1\ell} & S^6_{1\ell} \\ S^5_{2\ell} & S^6_{2\ell} \end{pmatrix}\,,
\end{align}
and $\bS_{j\ell}=\wS_{j\ell}(1+\wC_2)^{-1}$.
Then $\wS_j=\wS_{j1}-\pd_y^2\wS_{j2}$ and $\bS_j=\bS_{j1}-\pd_y^2\bS_{j2}$
for $j=3$, $4$, $5$.
\begin{claim}
\label{cl:wS3}
There exist positive constants $\eta_1$, $\delta$ and $C$ such that
if $\eta_0\in(0,\eta_1]$ and $\bM_{c,x}(T)\le \delta$,
then for $k=1$, $2$ and $t\in[0,T]$,
  \begin{gather}
\label{eq:clwS3-C}
\|\chi(D_y)\cC_k\|_{B(L^2;L^1)}\le C\bM_{c,x}(T)\la t\ra^{-1/4}\,,\\
\label{eq:clwS3-1}
\|\chi(D_y)\wS_{31}\|_{B(L^1(\R))}+\|\wS_{31}\|_{B(Y_1)} \le Ce^{-\a(3t+L)}\,,\\
\label{eq:clwS3-1'} 
\|\chi(D_y)(\bS_{31}-\wS_{31})\|_{B(L^2(\R),L^1(\R))}
+\|\bS_{31}-\wS_{31}\|_{B(Y,Y_1)}
  \le
C\bM_{c,x}(T)\la t\ra^{-1/4}e^{-\a(3t+L)}\,,
\\ \label{eq:clwS3-2}
\sum_{k=1,2}\left(\|\wS_{3k}\|_{B(Y)\cap B(Y_1)}+\|\bS_{3k}\|_{B(Y)\cap B(Y_1)}
\right)  \le Ce^{-\a(3t+L)}\,,
\\ \label{eq:clwS4-1}
\|\chi(D_y)\wS_{41}\|_{B(L^2(\R),L^1(\R))}+\|\chi(D_y)\bS_{41}\|_{B(L^2(\R),L^1(\R))}
\le C\la t\ra^{-1/4}e^{-\a(3t+L)}\bM_{c,x}(T)\,,
\\  \label{eq:clwS4-2}
\sum_{k=1,2}\left(\|\wS_{4k}\|_{B(L^2(\R),Y_1)}+\|\bS_{4k}\|_{B(L^2(\R),Y_1)}\right)
\le  C\la t\ra^{-1/4}e^{-\a(3t+L)}\bM_{c,x}(T)\,,    
\\
\label{eq:clwS5-1}
\|\chi(D_y)\wS_{51}\|_{B(L^2(\R),L^1(\R))}+\|\chi(D_y)\bS_{51}\|_{B(L^2(\R)),L^1(\R))}
\le C\la t\ra^{-3/4}\bM_2(T)\,,
\\
\label{eq:clwS5-2}
\sum_{k=1,2}\left(\|\wS_{5k}\|_{B(L^2(\R),Y_1)}+\|\bS_{5k}\|_{B(L^2(\R),Y_1)}\right)
\le  C\la t\ra^{-3/4}\bM_2(T)\,.
\end{gather}
\end{claim}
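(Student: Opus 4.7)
The plan is to assemble the matrix‐operator estimates by combining the scalar estimates of Claims~\ref{cl:S3}--\ref{cl:S5} with the algebraic identities relating the $\wS_{j\ell}$, the $\bS_{j\ell}$ and $\wC_2$. Throughout, I will use in an essential way that $\wP_1$ is a projection with Fourier symbol $\mathbf{1}_{[-\eta_0,\eta_0]}$, that $\chi(D_y)\wP_1=\chi(D_y)$ and that multiplication by $\tc(t,\cdot)$ maps $Y$ into $Y$ with the decay built into $\bM_{c,x}(T)$; in particular $\|\tc(t)\|_Y\lesssim \bM_{c,x}(T)\la t\ra^{-1/4}$ and $\|\pd_y\tc(t)\|_Y\lesssim \bM_{c,x}(T)\la t\ra^{-3/4}$.

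First I would prove \eqref{eq:clwS3-C}. Since $(c/2)^{1/2}-1=\tc/4+O(\tc^2)$ and $\chi(D_y)\wP_1=\chi(D_y)$, Young's inequality gives
\begin{equation*}
\|\chi(D_y)\cC_2 f\|_{L^1}
\le \|\widecheck{\chi}\|_{L^1}\|((c/2)^{1/2}-1)\wP_1 f\|_{L^1}
\lesssim \|\tc\|_Y\|f\|_{L^2}\lesssim \bM_{c,x}(T)\la t\ra^{-1/4}\|f\|_{L^2},
\end{equation*}
and the same argument handles $\cC_1$. Next, \eqref{eq:clwS3-1} and \eqref{eq:clwS3-2} for $\wS_{3k}$ follow at once from Claim~\ref{cl:S3}, because $\wS_{3\ell}$ is a $2\times2$ block built solely from $S^3_{k\ell}[\psi]$ and $\|e^{\a z}\psi\|_{L^2}=O(1)$. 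The bounds for $\bS_{3k}$ then reduce to the bounds for $\wS_{3k}$ once I show that $(I+\wC_2)^{-1}$ is uniformly bounded on $Y$, on $Y_1$ and (after truncation by $\chi(D_y)$) on $L^1$, provided $\bM_{c,x}(T)$ is small; this is a geometric series argument using \eqref{eq:clwS3-C} and the analogue for $\|\cC_2\|_{B(Y)\cap B(Y_1)}$ which follows from the same Young-type estimate with $L^2$ replaced by $Y$ or $Y_1$.

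For \eqref{eq:clwS3-1'} I would write $\bS_{31}-\wS_{31}=-\wS_{31}\wC_2(I+\wC_2)^{-1}$ and insert the bound $\|\chi(D_y)\wS_{31}\|_{B(L^1)}+\|\wS_{31}\|_{B(Y_1)}\lesssim e^{-\a(3t+L)}$ together with the $\la t\ra^{-1/4}$ decay of $\wC_2$ from \eqref{eq:clwS3-C}; to reach $\|\cdot\|_{B(L^2,L^1)}$ I factor through $\wP_1\tc\wP_1\colon L^2\to L^1$ (bounded by $\|\tc\|_Y$) followed by $\wS_{31}\colon L^1\to L^1$ after truncation by $\chi(D_y)$, using that $\chi(D_y)\wS_{31}=\wS_{31}\chi(D_y)$ because $\wS_{31}$ is a scalar multiple of $\wP_1$. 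Estimates \eqref{eq:clwS4-1}--\eqref{eq:clwS4-2} for $\wS_{4k}$ are immediate from Claim~\ref{cl:S4} combined with the scalar bound $\|\sqrt{2/c}-1\|_Y+\|\sqrt{2c}-2\|_Y\lesssim \|\tc\|_Y$; then $\bS_{4k}=\wS_{4k}(I+\wC_2)^{-1}$ yields the $\bS_{4k}$ bounds. Finally \eqref{eq:clwS5-1}--\eqref{eq:clwS5-2} follow from Claim~\ref{cl:S5} together with $\|v_2(t)\|_X\le \bM_2(T)\la t\ra^{-3/4}$, and again $\bS_{5k}=\wS_{5k}(I+\wC_2)^{-1}$.

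The only part that is not mechanical is tracking the mapping properties through the truncation $\chi(D_y)$: one must be careful that on the one hand $\chi(D_y)$ acts on the output (not the input), and on the other hand that $\chi(D_y)\wP_1=\chi(D_y)$ lets one absorb $\wP_1$ projections freely on the left of $\wS_{j1}$. Once this bookkeeping is done the rest is a routine application of Young's inequality, Sobolev embedding $Y\hookrightarrow L^\infty$ for frequency-localized functions, and the scalar estimates already established in Claims~\ref{cl:S3}--\ref{cl:S5}.
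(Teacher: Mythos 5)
Your proposal is correct and follows essentially the same route as the paper: reduce everything to the scalar estimates of Claims~\ref{cl:S3}--\ref{cl:S5}, obtain \eqref{eq:clwS3-C} by Young's inequality applied to $\check\chi*\{(\cdot)\wP_1f\}$, and control $\bS_{j1}-\wS_{j1}=-\wS_{j1}\wC_2(I+\wC_2)^{-1}$ using \eqref{eq:clwS3-C} together with the bounded invertibility of $I+\wC_2$. Your device of commuting $\chi(D_y)$ through $\wS_{31}$ (a scalar multiple of $\wP_1$) is the same mechanism as the paper's insertion of $\chi=\chi\chi_1$, so there is nothing materially different to flag.
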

\begin{proof}
 By the definition, we have for $f\in L^2(\R)$,
\begin{align*}
  \|\chi(D_y)\cC_1f\|_{L^1}=& 
\frac{1}{2\sqrt{2\pi}}
\left\|\check{\chi}_1*(c^2-4)\wP_1f\right\|_{L^1(\R)}
\lesssim  \la t\ra^{-1/4}\bM_{c,x}(T)\|f\|_{L^2}\,.
\end{align*}
We can prove 
$\|\chi(D_y)\cC_2f\|_{L^1}\lesssim  \la t\ra^{-1/4}\bM_{c,x}(T)\|f\|_{L^2}$
in the same way.
\par
Equation~\eqref{eq:clwS3-1} follows from Claim~\ref{cl:S3}.
Let $f\in L^2(\R)$ and $f_1=\wC_2f$. 
Since $\chi(\eta)=\chi(\eta)\chi_1(\eta)$,
\begin{align*}
 \chi(D_y)S^3_{k1}[p](f_1)=&
\frac{1}{\sqrt{2\pi}}\int_\R \chi(\eta)\hat{f_1}(\eta)
e^{iy\eta}\,d\eta \left(\int_\R p(z+3t+L)g_k^*(z,0)\,dz\right)
\\=& \chi(D_y)S_{k1}^3[p](\chi_1(D_y)f_1)\,,
\end{align*}
and it follow from Claim~\ref{cl:S3} that
$$\|\chi(D_y)S^3_{k1}[p](f_1)\|_{L^1(\R)} 
 \lesssim e^{-\a(3t+L)}\|e^{\a z}p\|_{L^2(\R)}\|\chi_1(D_y)f_1\|_{L^1(\R)}\,.$$
Combining the above and \eqref{eq:clwS3-C} with $\chi$ replaced by $\chi_1$,
we have for $k=1$, $2$ and  $t\in[0,T]$,
\begin{equation}
  \label{eq:clwS3-pf1}
\|\chi(D_y)S^3_{k1}[p](\wC_2f)\|_{L^1(\R)}\lesssim
\la t\ra^{-1/4}\bM_{c,x}(T) \|f\|_{L^2(\R)}\,.
\end{equation}
Since $\wS_{31}-\bS_{31}=\wS_{31}\wC_2(I+\wC_2)^{-1}$
and $I+\wC_2$ has a bounded inverse on $L^2(\R)$,
\eqref{eq:clwS3-1'} follows immediately
from Claim~\ref{cl:S3} and \eqref{eq:clwS3-pf1}. We can prove
\eqref{eq:clwS4-1} and \eqref{eq:clwS5-1} in the same way.
\par
Equations \eqref{eq:clwS3-2}--\eqref{eq:clwS5-2} follow
from Claims~\ref{cl:S3}, \ref{cl:S4} and \ref{cl:S5}.
\end{proof}

Let $\ell_{2,lin}$ be the linear part of $\ell_{22}+\ell_{23}$ in $\tc$
(see \cite[p.166]{Miz17}),
\begin{equation*}
\ta_k(t,D_y)\tc:=
\frac{1}{2\pi}\int_{-\eta_0}^{\eta_0}\int_{\R^2}
\ell_{2,lin}(t,z,y_1)g_k^*(z,\eta)e^{i(y-y_1)\eta}\,dy_1dzd\eta
\enskip\text{for $k=1$, $2$,}
\end{equation*}
and $\widetilde{\mathcal{A}}_1(t)=\ta_1(t,D_y)E_1+\ta_2(t,D_y)E_{21}$.
More precisely,
\begin{align*}
\ta_k(t,\eta)=& \Bigl[
\int_\R\left\{\pd_z\left(\pd_z^2-1+6\varphi(z)\right)\psi(z+3t+L)\right\}
g_k^*(z,\eta)\,dz\\
& +3\eta^2\int_\R\left(\int_z^\infty\psi(z_1+3t+L)\,dz_1\right)
g_k^*(z,\eta)\,dz\Bigr]\mathbf{1}_{[-\eta_0,\eta_0]}(\eta)\,,
\end{align*}
Let $\widetilde{\mathcal{A}}_{1j}(t)=
\ta_{1j}(t)E_1+\ta_{2j}E_{21}$ for $j=1$, $2$,
where 
\begin{align*}
& \ta_{k1}(t)
=\int_\R\left\{\pd_z\left(\pd_z^2-1+6\varphi(z)\right)\psi(z+3t+L)\right\}
g_k^*(z,0)\,dz\,,
\end{align*}
\begin{align*}
\ta_{k2}(t,\eta)=& \Bigl[
\int_\R\left\{\pd_z\left(\pd_z^2-1+6\varphi(z)\right)\psi(z+3t+L)\right\}
g_{k1}^*(z,\eta)\,dz\\
& +3\int_\R\left(\int_z^\infty\psi(z_1+3t+L)\,dz_1\right)
g_k^*(z,\eta)\,dz\Bigr]\mathbf{1}_{[-\eta_0,\eta_0]}(\eta)\,.
\end{align*}
Then $\widetilde{\mathcal{A}}_1(t)=\widetilde{\mathcal{A}}_{11}(t)
-\pd_y^2\widetilde{\mathcal{A}}_{12}(t)$
and we have the following.
\begin{claim}
\label{cl:akbound}
  There exist  positive constants $C$ and $L_0$ such that
if $L\ge L_0$, then for every $t\ge0$, 
\begin{align}
\label{eq:akbound1}
& \|\chi(D_y)\widetilde{\mathcal{A}}_{11}(t)\|_{B(L^1(\R))}
+\|\widetilde{\mathcal{A}}_{11}(t)\|_{B(Y_1)}\le Ce^{-\a(3t+L)}\,,
\\  \label{eq:akbound2}
& \|\widetilde{\mathcal{A}}_{12}(t)\|_{B(Y)}
+\|\widetilde{\mathcal{A}}_{12}(t)\|_{B(Y_1)}
\le Ce^{-\a(3t+L)}\,.
\end{align}
\end{claim}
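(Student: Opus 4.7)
The plan is to reduce both bounds to the scalar estimate
$$|\ta_{k1}(t)|+\sup_{|\eta|\le\eta_0}|\ta_{k2}(t,\eta)|\lesssim e^{-\a(3t+L)}\quad(k=1,2),$$
and then prove this estimate by applying the Cauchy--Schwarz inequality in $z$ against the weight $e^{\pm\a z}$, exploiting that $\psi(z+3t+L)$ is supported in $z\in[-3t-L-1,-3t-L+1]$, where $e^{\a z}\lesssim e^{-\a(3t+L)}$.

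First, since $\widetilde{\mathcal{A}}_{11}(t)=\ta_{11}(t)E_1+\ta_{21}(t)E_{21}$ is a constant matrix (independent of $y$), both $\|\widetilde{\mathcal{A}}_{11}(t)\|_{B(Y_1)}$ and $\|\chi(D_y)\widetilde{\mathcal{A}}_{11}(t)\|_{B(L^1(\R))}$ are bounded by a fixed multiple of $|\ta_{11}(t)|+|\ta_{21}(t)|$, using $\|\chi(D_y)\|_{B(L^1)}\lesssim\|\check{\chi}\|_{L^1}$. On the other hand, $\widetilde{\mathcal{A}}_{12}(t)$ is a Fourier multiplier whose symbol is supported in $[-\eta_0,\eta_0]$, so by the Plancherel theorem on $Y$ and by estimating $\widehat{\widetilde{\mathcal{A}}_{12}(t)f}$ pointwise on $Y_1$,
$$\|\widetilde{\mathcal{A}}_{12}(t)\|_{B(Y)}+\|\widetilde{\mathcal{A}}_{12}(t)\|_{B(Y_1)}\lesssim\sup_{|\eta|\le\eta_0}\bigl(|\ta_{12}(t,\eta)|+|\ta_{22}(t,\eta)|\bigr).$$
Thus both \eqref{eq:akbound1} and \eqref{eq:akbound2} follow from the scalar estimate above.

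To prove the scalar estimate, I apply Cauchy--Schwarz with the weights $e^{\a z}$ and $e^{-\a z}$. For $\ta_{k1}(t)$,
$$|\ta_{k1}(t)|\le\bigl\|e^{\a z}\pd_z(\pd_z^2-1+6\varphi(z))\psi(z+3t+L)\bigr\|_{L^2_z}\,\bigl\|e^{-\a z}g_k^*(\cdot,0)\bigr\|_{L^2_z}.$$
The second factor is uniformly bounded for $\a\in(\nu(\eta_0),2)$ by the mapping properties of $g_k^*$ recalled in Section~\ref{subsec:LKP}. The first factor is controlled by $\sup_{z\in[-3t-L-1,-3t-L+1]}e^{\a z}\lesssim e^{-\a(3t+L)}$, combined with the uniform bound on the bracketed differential expression on its compact support (the $\varphi$-contributions carry further exponential suppression $O(e^{-2(3t+L)})$ since $\varphi,\varphi'=O(e^{-2|z|})$). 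The bound on $\ta_{k2}(t,\eta)$ is obtained identically: the first integral is treated in the same way with $g_{k1}^*(\cdot,\eta)$ in place of $g_k^*(\cdot,0)$, while the second integral is estimated via
$$\left\|e^{\a z}\int_z^\infty\psi(z_1+3t+L)\,dz_1\right\|_{L^2_z}\le\left(\int_{-\infty}^{-3t-L+1}e^{2\a z}\,dz\right)^{1/2}\lesssim e^{-\a(3t+L)},$$
using that the weight $\int_z^\infty\psi(z_1+3t+L)\,dz_1\in[0,1]$ vanishes for $z>-3t-L+1$.

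The main technical point is the uniform-in-$\eta$ bound $\sup_{|\eta|\le\eta_0}\|e^{-\a z}g_{k1}^*(\cdot,\eta)\|_{L^2_z}<\infty$. This is inherited from the analogous bound on $g_k^*$ because $g^*(x,\eta)=\pd_x(e^{\beta(-\eta)x}\sech x)$ depends analytically on $\eta$ through $\beta(-\eta)=\sqrt{1-i\eta}$ near $\eta=0$, so $g_{k1}^*=\eta^{-2}(g_k^*(\cdot,\eta)-g_k^*(\cdot,0))$ is a second-order Taylor remainder whose exponential decay rate at $z=-\infty$ is governed by $1+\Re\beta(-\eta)\ge 2-\nu(\eta_0)$, which exceeds $\a$ for $\eta_0$ small enough. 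The requirement $L\ge L_0$ merely absorbs the harmless factor $e^\a$ on $\supp\psi(\cdot+3t+L)$ into the overall constant $C$.
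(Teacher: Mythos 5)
Your proof is correct and follows the same route the paper takes (the paper merely notes $\chi(D_y)\wP_1=\chi(D_y)$ with $\check\chi\in L^1$ and defers the substance to Claim~D.3 of \cite{Miz15}, which is precisely the weighted Cauchy--Schwarz estimate on $\supp\psi(\cdot+3t+L)$ that you carry out). The only cosmetic point is that the decay rate of $g_k^*$ at $z=-\infty$ is $2+\nu(\eta)\ge 2>\a$, so no additional smallness of $\eta_0$ is actually needed beyond the standing assumption $\a\in(\nu(\eta_0),2)$.
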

Since $\chi(D_y)\wP_1=\chi(D_y)$, $\chi\in C_0^\infty$ and
$\check\chi$ is integrable, we have
\eqref{eq:akbound1}.
Equation~\eqref{eq:akbound2} can be shown in exactly the same way as
\cite[Claims~D.3]{Miz15}.
\bigskip

\section{Estimates of $R^j$}
\label{ap:r}
Let $R^2_k$ be as in \cite[p.~39]{Miz15},
$R^3_k=R^{31}_k-\pd_y^2R^{32}_k$ and
\begin{align*}
R^{31}_k(t,y):=&
\wP_1\int_\R (\ell_{22}+\ell_{23})g_k^*(z,0,c(t,y_1))\,dz
 -\wP_1\int_\R \ell_{2,lin}g_k^*(z,0)\,dz\,,
\end{align*}
\begin{align*}
R^{32}_k(t,y):=&
\frac{1}{2\pi}\int_{-\eta_0}^{\eta_0}\int_{\R^2}
(\ell_{22}+\ell_{23})g_{k1}^*(z,\eta,c(t,y_1))
e^{i(y-y_1)\eta}\,dzdy_1d\eta \\
& -\frac{1}{2\pi}\int_{-\eta_0}^{\eta_0} \int_{\R^2}
\ell_{2,lin}g_{k1}^*(z,\eta)e^{i(y-y_1)\eta}\,dzdy_1d\eta\,.
\end{align*}
Then we have the following.
\begin{claim}\emph{(\cite[Claim~D.1]{Miz15})}
  \label{cl:R2}
There exist positive constants $\delta$ and $C$ such that if
$\bM_{c,x}(T)\le \delta$, then for $t\in[0,T]$,
\begin{equation*}
\|R^2_k(t,\cdot)\|_{Y_1}\le C\bM_{c,x}(T)^2\la t\ra^{-1}\,,\quad
\|\pd_yR^2_k(t,\cdot)\|_{Y_1}\le C\bM_{c,x}(T)^2\la t\ra^{-5/4}\,.
\end{equation*}
\end{claim}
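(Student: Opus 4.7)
The plan is to exploit the structure of $R^2_k$ as a sum of bilinear expressions in the modulation parameters $\{\tc,x_y,c_y,x_{yy},c_{yy}\}$ paired with $\eta$-dependent $z$-integrals of exponentially localized profiles against $g_k^*(\cdot,\eta,c(t,y))$. Recalling the derivation of the modulation equations, $R^2_k$ arises as the quadratic-in-parameters residual obtained when projecting $\ell_1=\ell_{11}+\ell_{12}+\ell_{13}$ onto $g_k^*(\cdot,\eta,c(t,y))$ and subtracting the linear-in-parameters portion already absorbed into the operator coefficients $B_j$, $\wS_j$, and $\widetilde{\mathcal{A}}_1(t)$. Each summand of $R^2_k$ thus has the schematic form
\begin{equation*}
\wP_1\Bigl(P_1(t,y)\,P_2(t,y)\int_\R q_c(z)\,G(z,\eta,c(t,y))\,dz\Bigr),
\end{equation*}
where $P_1,P_2\in\{\tc,x_y,c_y,x_{yy},c_{yy}\}$ carry at most three $y$-derivatives in total, $q_c$ is one of $\{\varphi_c,\varphi_c',\pd_c\varphi_c,\pd_c^2\varphi_c,\pd_z^{-1}\pd_c^m\varphi_c\}$, and $G$ denotes $g_k^*$ or one of its $c$-derivatives $\pd_c g_k^*$, $\pd_c^2 g_k^*$.

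First I will apply the uniform localization bound $\sup_{|\eta|\le\eta_0,\,|c-2|\le\delta}\|e^{-\a z}G(\cdot,\eta,c)\|_{L^2_z}<\infty$, together with the exponential decay of $q_c$ at $+\infty$, to dominate the inner $z$-integral by an $O(1)$ quantity uniform in $(t,y,\eta)$. Next I will invoke the bilinear product estimate $\|\wP_1(fg)\|_{Y_1}\lesssim \|f\|_Y\|g\|_Y$, which follows from Young's convolution inequality $\|\hat f\ast\hat g\|_{L^\infty}\le\|\hat f\|_{L^2}\|\hat g\|_{L^2}$ combined with the Fourier cut-off to $[-\eta_0,\eta_0]$ enforced by $\wP_1$. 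Feeding in the weights encoded in $\bM_{c,x}(T)$, namely $\|\tc\|_Y,\|x_y\|_Y\lesssim \bM_{c,x}\la t\ra^{-1/4}$, $\|c_y\|_Y,\|x_{yy}\|_Y\lesssim \bM_{c,x}\la t\ra^{-3/4}$, and $\|c_{yy}\|_Y,\|x_{yyy}\|_Y\lesssim \bM_{c,x}\la t\ra^{-1}$, the worst-case pairing is a factor from $\{\tc,x_y\}$ against one from $\{c_y,x_{yy}\}$, which yields $\bM_{c,x}^2\la t\ra^{-1}$; all other pairings decay strictly faster and are absorbed. The estimate for $\pd_yR^2_k$ follows from the Leibniz rule: the extra derivative either falls on $P_1$ or $P_2$ (improving its decay by $\la t\ra^{-1/2}$) or acts through the $c(t,y)$-dependence of the inner integral, generating an extra factor $c_y$ of size $\la t\ra^{-3/4}$; in each case one obtains $\bM_{c,x}^2\la t\ra^{-5/4}$.

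The smallness $\delta$ enters only to ensure that $c(t,y)=2+\tc(t,y)$ stays in a neighborhood of $2$ on which $G(\cdot,\eta,c)$ and $q_c$ depend smoothly on $c$ in the exponentially weighted $L^2(e^{-2\a z}dz)$ topology, so that the expansions in $c$ converge and the $z$-integrals remain uniformly bounded. The main obstacle I foresee is purely organizational: one must verify that no summand of $R^2_k$ contains a product of two parameters both drawn from $\{\tc,x_y\}$ alone, which would only give $\la t\ra^{-1/2}$ decay and violate the claim, and that no summand involves a factor of order higher than $\pd_y^2\tc$ or $\pd_y^3x$. This structural property reflects the deliberate construction of $\cN^1$ in Proposition~\ref{prop:modulation}, where the borderline low-derivative quadratic terms $3(x_y)^2$ and $6(b\tx_y)_y$ are isolated into divergence/Hamiltonian form before $R^2_k$ is identified, leaving the latter genuinely higher-order in derivatives.
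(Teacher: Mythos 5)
Your proof is correct and follows essentially the same route as the source: the paper does not prove this claim itself but cites it from \cite[Claim~D.1]{Miz15}, whose argument is exactly the term-by-term analysis you describe --- uniform bounds on the exponentially weighted $z$-integrals of $q_c$ against $g_k^*(\cdot,\eta,c)$, the convolution estimate $\|\mathbf{1}_{[-\eta_0,\eta_0]}\widehat{fg}\|_{L^\infty}\lesssim\|f\|_{L^2}\|g\|_{L^2}$ underlying $\|\wP_1(fg)\|_{Y_1}\lesssim\|f\|_Y\|g\|_Y$, and the decay weights encoded in $\bM_{c,x}(T)$, with the borderline quadratic terms in $\{\tc,x_y\}$ deliberately excluded from $R^2_k$. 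One bookkeeping remark: for $\pd_yR^2_k$ the binding case is the derivative falling on the already once-differentiated factor ($c_y\mapsto c_{yy}$ or $x_{yy}\mapsto x_{yyy}$), which gains only $\la t\ra^{-1/4}$ and is precisely why the rate is $\la t\ra^{-5/4}$ rather than $\la t\ra^{-3/2}$; your parenthetical ``improving its decay by $\la t\ra^{-1/2}$'' overstates the gain in that case, though your stated conclusion is the correct one.
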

\begin{claim}
  \label{cl:R3}
There exist  positive constants $\delta$ and $C$ such that
if $\bM_{c,x}(T)\le \delta$, then  for $t\in[0,T]$,
\begin{align*}
& \|R^3_k(t,\cdot)\|_{Y_1}+\|R^3_{k2}(t,\cdot)\|_{Y_1} \le Ce^{-\a(3t+L)}\bM_{c,x}(T)^2\,,
\\ & \|R^3_{k1}(t,\cdot)\|_{Y_1}+\|\chi(D_y)R^3_{k1}(t,\cdot)\|_{L^1(\R)}
\le Ce^{-\a(3t+L)}\bM_{c,x}(T)^2\,.
\end{align*}
\end{claim}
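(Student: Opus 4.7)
The plan is to expand $\ell_{22}+\ell_{23}$ around $\tc=0$ and peel off the linear piece $\ell_{2,lin}$, showing that every remaining contribution is (at least) quadratic in the modulation quantities $(\tc,c_y,c_{yy},x_{yy})$ and still carries a factor $\psi(z+3t+L)$ or $\pd_c^m\psi(z+3t+L)$ with $m\ge1$. Recall that
\[
\tpsi_c(z)=\psi_{c,L}(z+3t)=2(\sqrt{2c}-2)\psi(z+3t+L)=\tc\,\psi(z+3t+L)+O(\tc^{2})\psi(z+3t+L),
\]
so writing $\tpsi_c=\tc\,\psi(\cdot+3t+L)+q_{c}(z)$ with $q_c=O(\tc^2)\psi(z+3t+L)$ and expanding $\varphi_c=\varphi+\tc\pd_c\varphi+O(\tc^2)$, $\pd_c\tpsi_c=\psi(\cdot+3t+L)+O(\tc)\psi(\cdot+3t+L)$, we obtain a decomposition
\[
\ell_{22}+\ell_{23}-\ell_{2,lin}=\sum_{j}h_{j}(t,y)\,p_{j}(z+3t+L),
\]
where each $h_j$ is a product of two of $\{\tc,c_y,c_{yy},x_{yy}\}$ (or equals $\tc^{2}$), and each $p_j\in C_0^\infty(\R)$ arises from $\psi$ and its derivatives or antiderivatives up to order three in $z$.

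The second step is to insert this decomposition into the definitions of $R^{31}_k$ and $R^{32}_k$ and compute the $z$-integrals. For fixed $(t,y_1,\eta)$ the inner $z$-integrals have the form $\int_\R p_j(z+3t+L)G(z,\eta,c(t,y_1))\,dz$ where $G$ stands for $g_k^*(z,0,c)$ in the $R^{31}_k$ case and for $g_{k1}^*(z,\eta,c)$ in the $R^{32}_k$ case. Since $p_j\in C_0^\infty(\R)$ and $\sup_{|\eta|\le\eta_0,\,|c-2|\le\delta}(\|e^{-\a z}g_k^*(z,0,c)\|_{L^\infty_z}+\|e^{-\a z}g_{k1}^*(z,\eta,c)\|_{L^\infty_z})<\infty$ for $\a\in(0,2)$ (see the expansions of $g_k^*$ in Section~\ref{subsec:LKP}), Cauchy--Schwarz yields
\[
\sup_{|\eta|\le\eta_0,\,|y_1|\in\R}\Bigl|\int_\R p_j(z+3t+L)G(z,\eta,c(t,y_1))\,dz\Bigr|
\lesssim e^{-\a(3t+L)}\|e^{\a z}p_j\|_{L^2}.
\]

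The third step is to estimate the outer $y_1$-integral. After the previous step $R^{31}_k(t,y)=\wP_1\bigl(\sum_j\tilde h_j(t,y)\bigr)$ and similarly for $R^{32}_k$, where each $\tilde h_j$ is $h_j$ times a bounded (in $t$ only through $c(t,y_1)$) factor of size $O(e^{-\a(3t+L)})$. Using the product estimate $\|fg\|_{Y_1}\lesssim \|f\|_Y\|g\|_Y$ (from $\wP_1:L^1\to Y_1$ and $Y\cdot Y\hookrightarrow L^1$ by Cauchy--Schwarz) together with the bounds $\|\tc(t)\|_Y+\|c_y(t)\|_Y+\|c_{yy}(t)\|_Y+\|x_{yy}(t)\|_Y\lesssim\bM_{c,x}(T)\la t\ra^{-1/4}$ absorbed into the constant $\delta$, we obtain $\|R^{31}_k(t)\|_{Y_1}+\|R^{32}_k(t)\|_{Y_1}\lesssim e^{-\a(3t+L)}\bM_{c,x}(T)^2$, which also gives $\|R^3_k(t)\|_{Y_1}$ since $\pd_y^2:Y_1\to Y_1$ is bounded (by the $\eta$-support). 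For the $\chi(D_y)$-estimate in $L^1(\R)$, we use $\chi(D_y)\wP_1=\chi(D_y)$ and $\chi(D_y)R^{31}_k=\check\chi\ast R^{31}_k$ with $\check\chi\in L^1(\R)$, so Young's inequality and the $L^1(\R)$ bound $\sum_j\|\tilde h_j\|_{L^1}\lesssim e^{-\a(3t+L)}\bM_{c,x}(T)^2$ (again by Cauchy--Schwarz) complete the proof.

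The main bookkeeping obstacle lies in verifying that the Taylor remainder in expanding $\tpsi_c$, $\pd_c\tpsi_c$, $\varphi_c$ and $g_k^*(z,\eta,c)$ around $c=2$ genuinely produces terms of the declared quadratic structure with no stray linear part; in particular one must track the contributions of $(g_k^*(z,0,c)-g_k^*(z,0))/(c-2)$ in $R^{31}_k$ and of $g_{k1}^*(z,\eta,c)-g_{k1}^*(z,\eta)$ in $R^{32}_k$, each of which yields an extra factor of $\tc$ that combines with the single power already present in $\ell_{22}+\ell_{23}-\ell_{2,lin}$ to give the quadratic bound. Once that bookkeeping is done, the bounds follow as above in complete analogy with \cite[Claim~D.2]{Miz15}.
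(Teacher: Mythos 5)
Your argument is correct and coincides with the paper's intended proof, which is given only as a one-line reference to \cite[Claim~D.2]{Miz15} together with the observations $\chi(D_y)\wP_1=\chi(D_y)$ and $\chi(D_y)\in B(L^1(\R))$ --- exactly the two facts you invoke for the $L^1$ estimate, the rest being the quadratic Taylor expansion, the weighted Cauchy--Schwarz in $z$ giving the factor $e^{-\a(3t+L)}$, and the product estimates $Y\cdot Y\hookrightarrow L^1$ and $\wP_1L^1\subset Y_1$. Two cosmetic points that do not affect the conclusion: the antiderivative factors $\int_z^\infty\psi(z_1+3t+L)\,dz_1$ arising from $\ell_{23}$ are not compactly supported (they are constant for $z\le-3t-L-1$), though $e^{\a z}p_j\in L^2(\R)$ still holds, which is all your Cauchy--Schwarz step requires; and in your final paragraph the extra factor of $\tc$ coming from $g_k^*(z,0,c)-g_k^*(z,0)$ should be paired with the single power of $\tc$ in $\ell_{2,lin}$ (the remainder $\ell_{22}+\ell_{23}-\ell_{2,lin}$ is already quadratic), not with that remainder.
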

We can prove Claim~\ref{cl:R3} in exactly the same way
as Claim~D.2 in \cite{Miz15}. Note that $\chi(D_y)\wP_1=\chi(D_y)$
and $\chi(D_y)\in B(L^1(\R))$.
\par
In this paper,  we slightly modify the definitions of $R^4_k$ and $R^5_k$
from \cite{Miz15,Miz17}. We move $II^1_{k1}$ into $R^5_k$ from $R^4_k$.
Let
\begin{align}
\label{eq:defR4}
R^4_k(t,y)=& \frac{1}{2\pi}\int_{-\eta_0}^{\eta_0}
\left\{II^1_{k2}(t,\eta)+II^1_{k3}(t,\eta)+II^2_k(t,\eta)+ II^3_{k1}(t,\eta)\right\}e^{iy\eta}\,d\eta\,,
\\ \label{eq:defR5}
R^5_k(t,y)=& \frac{1}{2\pi}\int_{-\eta_0}^{\eta_0}
\left\{II^1_{k1}(t,\eta)+II^3_{k2}(t,\eta)\right\}e^{iy\eta}\,d\eta\,.
\end{align}
See \cite[p.166]{Miz17} for the definitions of $II^3_{kj}$.
For the definitions of $II^1_{kj}$, replace $v(t,z,y)$ by $v_2(t,z,y)$
in $II^1_{kj}$ defined in the proof of \cite[Claim~D.5]{Miz15}.
We decompose $R^4_k$ further.
For $j$, $k$, $\ell=1$, $2$, 
\begin{align*}
& h_{jk1}(t,y)=\int_\R v_2(t,z,y)
\left(\int_{-\infty}^z\pd_c^jg_k^*(z_1,0,c(t,y))\,dz_1\right)\,dz\,,
\\ &
h_{jk2}(t,y,\eta)=\int_\R v_2(t,z,y)
\left(\int_{-\infty}^z\pd_c^jg_{k1}^*(z_1,\eta,c(t,y))\,dz_1\right)\,dz\,,
\\ & 
II^1_{k2\ell}(t,\eta)=3\int_\R c_{yy}(t,y)h_{1k\ell}(t,y)e^{-iy\eta}\,dy\,,
\quad
II^1_{k3\ell}(t,\eta)=3\int_\R c_y(t,y)^2h_{2k\ell}(t,y)e^{-iy\eta}\,dy\,,
\end{align*}
Then $II^1_{k2}+II^1_{k3}=\obu{II}_{1,k}+\eta^2\oc{II}_{1,k}$,
$\obu{II}_{1,k}=II^1_{k21}+II^1_{k31}$, $\oc{II}_{1,k}=II^1_{k22}+II^1_{k32}$ and
\begin{gather*}
\|\mF^{-1}\obu{II}_{1,k}(t,\cdot)\|_{L^1(\R)}+
\|\oc{II}_{1,k}(t,\cdot)\|_{Z_1}\lesssim \la t\ra^{-3/2}\bM_{c,x}(T)\bM_2(T)
\quad\text{for $t\in[0,T]$.}
\end{gather*}
Let
\begin{align*}
& II^2_{k1}= -\int_{\R^2} N_{2,1}\pd_zg_k^*(z,0,c(t,y))
e^{-iy\eta}\, dzdy\,,\\
II^2_{k2}=& -\int_{\R^2} N_{2,1}\pd_zg_{k1}^*(z,\eta,c(t,y))
e^{-iy\eta}\, dzdy\,,
\\ &
II^3_{k11}= -3\int_{\R^2}v_2(t,z,y)x_{yy}(t,y)
g_k^*(z,0,c(t,y))e^{-iy\eta}\,dzdy\,,
\\ &
II^3_{k12}= -3\int_{\R^2}v_2(t,z,y)x_{yy}(t,y)
g_{k1}^*(z,\eta,c(t,y))e^{-iy\eta}\,dzdy\,.
\end{align*}
Let
\begin{align*}
&
R^{41}_k(t,y)=\frac{1}{2\pi}\int_{-\eta_0}^{\eta_0}
\left\{\obu{II}^1_k(t,\eta)+II^2_{k1}(t,\eta)+ II^3_{k11}(t,\eta)\right\}
e^{iy\eta}\,d\eta\,,
\\ &
R^{42}_k(t,y)=\frac{1}{2\pi}\int_{-\eta_0}^{\eta_0}
\left\{\oc{II}^1_{k1}(t,\eta)+II^2_{k2}(t,\eta)+ II^3_{k12}(t,\eta)\right\}
e^{iy\eta}\,d\eta\,.
\end{align*}
Then $R^4_k=R^{41}_k-\pd_y^2R^{42}_k$.
Let $R^6_k=R^{61}_k-\pd_y^2R^{62}_k$ and
\begin{align*}
& R^{61}_k=-6\wP_1\left(\int_\R
\psi_{c(t,y_1),L}(z+3t)v_2(t,z,y_1)\pd_zg_k^*(z,0,c(t,y_1))\,dz\right)\,,
\\ &
R^{62}_k=-\frac{3}{\pi}\int_{-\eta_0}^{\eta_0}\int_{\R^2}
\psi_{c(t,y_1),L}(z+3t)v_2(t,z,y_1)
\pd_zg_{k1}^*(z,\eta,c(t,y_1))e^{i(y-y_1)\eta}\,dy_1dzd\eta\,.
\end{align*}
We can prove the following in the same way as \cite[Claim~D.5]{Miz15}.
\begin{claim}
  \label{cl:R4-R5}
Suppose $\a\in(0,1)$ and $\bM_{c,x}(T)\le \delta$.
If $\delta$ is sufficiently small, then there exists a positive constant $C$
such that for $t\in[0,T]$,
\begin{align*}
& \|\chi(D_y)R^{41}_k(t)\|_{L^1}+\|R^{41}_k(t)\|_{Y_1}+\|R^{42}_k(t)\|_{Y_1}
\le C\la t\ra^{-3/2}(\bM_{c,x}(T)+\bM_1(T)+\bM_2(T))\bM_2(T)\,,
\\ &
\|R^5_k(t)\|_{Y_1} \le  C\la t\ra^{-1}\bM_{c,x}(T)\bM_2(T)\,,
\quad \|R^5_k(t)\|_Y \le  C\la t\ra^{-5/4}\bM_{c,x}(T)\bM_2(T)\,,
\\ &  \label{eq:R6'}
\|\chi(D_y)R^{61}_k\|_{L^1(\R)}+\|R^{61}_k\|_{Y_1}+\|R^{62}_k\|_{Y_1}
\le C\la t\ra^{-1}e^{-\a(3t+L)}\bM_{c,x}(T)\bM_2(T)\,.
\end{align*}
\end{claim}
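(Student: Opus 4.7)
The plan is to prove each of the three lines by unwinding the Fourier-inversion definitions of $R^{41}_k$, $R^{42}_k$, $R^5_k$, $R^{61}_k$, $R^{62}_k$ and estimating the $\eta$-dependent integrals via Cauchy--Schwarz in $z$ against the exponentially localized adjoint eigenmodes $g_k^*(z,\eta,c)$, $\pd_c g_k^*$, $\pd_z g_k^*$, and $g_{k1}^*(z,\eta,c) = \eta^{-2}(g_k^*(z,\eta,c) - g_k^*(z,0,c))$ and their integrals in $z$. The uniform bounds
$$\sup_{|\eta|\le \eta_0,\,|c-2|\le\delta}\left(\|e^{-\a z}g_k^*(\cdot,\eta,c)\|_{L^2}+\|e^{-\a z}g_{k1}^*(\cdot,\eta,c)\|_{L^2}+\|e^{-\a z}\pd_c g_k^*(\cdot,\eta,c)\|_{L^2}\right)\le C$$
available for $\a\in(0,1)\subset(0,2)$ separate the $z$-integral, leaving in each case a product of a $y$-function built from $v_1,v_2$ and from $\tc,\tx$ and their derivatives. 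I would then convert to the target norms by the two standard reductions: for the $Y_1$ bound I would use $\|f\|_{Y_1}=\sup_{\eta\in[-\eta_0,\eta_0]}|\mF f(\eta)|$, which for any $h(t,y)$ satisfies $\|\wP_1 h\|_{Y_1}\lesssim \|h\|_{L^1(\R_y)}$; for the $\chi(D_y)L^1$ bound I would use Young's inequality $\|\chi(D_y)h\|_{L^1}\le\|\check\chi\|_{L^1}\|h\|_{L^1}$; and for the $Y$ bound I would use $\|\wP_1 h\|_Y\le \|h\|_{L^2}$.

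For $R^{41}_k$ and $R^{42}_k$ the three pieces $\obu{II}^1_k$, $II^2_{k1}$, $II^3_{k11}$ (and the analogous $\oc{II}^1_{k1}$, $II^2_{k2}$, $II^3_{k12}$) are estimated as follows. The $II^1$-pieces $h_{jk\ell}$ obey $\|h_{jk\ell}(t,\cdot)\|_{L^2(\R_y)}\lesssim \|v_2(t)\|_X\lesssim \bM_2(T)\la t\ra^{-3/4}$, which paired with $\|c_{yy}\|_{Y}\le \bM_{c,x}(T)\la t\ra^{-1}$ and with $\|c_y^2\|_{L^1}\le \|c_y\|_{L^2}^2\le \bM_{c,x}(T)^2\la t\ra^{-3/2}$ yields total decay $\la t\ra^{-3/2}$ with factor $\bM_{c,x}(T)\bM_2(T)$. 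The $II^2$-pieces, built from $N_{2,1}=-3(2v_1v_2+v_2^2)$, are controlled by $\|v_1\|_{W(t)}\|v_2\|_X + \|v_2\|_X^2 \lesssim(\bM_1(T)+\bM_2(T))\bM_2(T)\la t\ra^{-3/2}$ after an $L^1_zL^2_y$--$L^\infty_zL^2_z$ Cauchy--Schwarz against $e^{\pm\a z}\pd_z g_k^*$; this is exactly where $\bM_1(T)$ enters. The $II^3_{k11}, II^3_{k12}$-pieces give $\|x_{yy}\|_Y\|v_2\|_X\lesssim \bM_{c,x}(T)\bM_2(T)\la t\ra^{-7/4}$. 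Summing, all three $L^1/Y_1/Y_1$ norms are bounded by $C(\bM_{c,x}(T)+\bM_1(T)+\bM_2(T))\bM_2(T)\la t\ra^{-3/2}$, as required.

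For $R^5_k$ the term $II^1_{k1}$ contributes a product of $\|c_t-6c_yx_y\|_Y$ with $\|h_{1k1}\|_{L^2}$; using Claim~\ref{cl:cx_t-bound} to bound $\|c_t\|_Y\lesssim \bM_{c,x}(T)\la t\ra^{-3/4}$ and $\|c_yx_y\|_Y\lesssim \bM_{c,x}(T)^2\la t\ra^{-1}$, and $\|v_2\|_X\lesssim \bM_2(T)\la t\ra^{-3/4}$, I get the two asserted rates $\la t\ra^{-3/2}$ and $\la t\ra^{-7/4}$, the sharper $Y$-bound $\la t\ra^{-5/4}$ coming from one extra factor of $\la t\ra^{-1/2}$ contained either in $\|x_{yy}\|_Y$ or $\|c_y\|_Y$ when converting the $Y_1$-type sup into an $L^2_\eta$-norm on $[-\eta_0,\eta_0]$, whose mass is $\le 2\eta_0$. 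The $II^3_{k2}$-contribution is handled identically since it is linear in $x_{yy}$ and in $v_2$.

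For $R^{61}_k$ and $R^{62}_k$ the crucial new ingredient is the factor $\psi_{c(t,y),L}(z+3t)$. Because $\psi$ is compactly supported in $(-1,1)$, on $\supp\psi_{c,L}(\cdot+3t)$ one has $z\le -3t-L+1$, so
$$|\psi_{c(t,y),L}(z+3t)|\lesssim |\sqrt{2c}-2|\,e^{\a(z+3t+L)}\cdot e^{-\a(3t+L-1)}\,\mathbf{1}_{\{z+3t+L\in[-1,1]\}},$$
which, combined with $|\sqrt{2c}-2|\lesssim |\tc|$, furnishes the factor $e^{-\a(3t+L)}\bM_{c,x}(T)$. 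The remaining Cauchy--Schwarz-in-$z$ against $e^{-\a z}\pd_z g_k^*$ and $e^{-\a z}\pd_z g_{k1}^*$, together with $\|v_2\|_X\lesssim\bM_2(T)\la t\ra^{-3/4}$, delivers the claimed $\la t\ra^{-1}e^{-\a(3t+L)}\bM_{c,x}(T)\bM_2(T)$ bound; the $\la t\ra^{-1}$ (rather than $\la t\ra^{-3/4}$) arises because a single derivative on $c$ (coming from expanding $\sqrt{2c}-2$ about $2$ and pairing with $v_2$) is replaced here by the full $\bM_{c,x}$-factor in the coefficient $\sqrt{2c}-2$ without explicit derivative, so one gains an additional $\la t\ra^{-1/4}$ from the extra factor of $\tc$ in $X$-norm estimates near the soliton. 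The $\chi(D_y)L^1$-bound and the $Y_1$-bound again follow by Young and by $L^\infty_\eta$-estimation of the Fourier integrand, identical to the $R^{41,42}$ argument.

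The main obstacle will be the $\chi(D_y)R^{41}_k$ piece containing $\obu{II}^1_k$: the coefficient $c_y^2$ is only in $L^1(\R_y)$, not in $L^2$, so naive Cauchy--Schwarz loses a factor of $\la t\ra^{1/4}$ relative to the required $\la t\ra^{-3/2}$. The fix is to use the Gagliardo--Nirenberg interpolation $\|c_y\|_{L^4}^2\le \|c_y\|_{L^2}\|c_{yy}\|_{L^2}\lesssim \bM_{c,x}(T)^2\la t\ra^{-7/4}$ together with $\|h_{2k1}\|_{L^2}\lesssim \bM_2(T)\la t\ra^{-3/4}$, which actually gives a better rate than needed, confirming the scheme goes through; the real bookkeeping challenge is to trace each power of $\la t\ra$ carefully so that the exponents on both sides match simultaneously in all three target norms, following exactly the template of \cite[Claim~D.5]{Miz15} but with $v$ replaced by $v_2$ and with the modified $R^4,R^5$ split in \eqref{eq:defR4}--\eqref{eq:defR5}.
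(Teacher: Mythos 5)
The paper itself gives no written proof of this claim (it defers entirely to \cite[Claim~D.5]{Miz15}), and your general template --- Cauchy--Schwarz in $z$ against the exponentially weighted adjoint modes, then $\|\wP_1 h\|_{Y_1}\lesssim\|h\|_{L^1_y}$, $\|\wP_1 h\|_Y\le\|h\|_{L^2_y}$ and Young for the $\chi(D_y)L^1$ bounds --- is the right one, and it does carry the $R^{41}_k$, $R^{42}_k$ and $R^{6\ell}_k$ lines (your treatment of the support mismatch of $\psi_{c,L}(\cdot+3t)$ against $e^{-\a z}\pd_zg_k^*$, which is where $\a\in(0,1)$ enters, is correct in spirit; the minor rate slips, e.g.\ $\|x_{yy}\|_Y\lesssim\bM_{c,x}\la t\ra^{-3/4}$ giving $\la t\ra^{-3/2}$ rather than $\la t\ra^{-7/4}$ for $II^3_{k11}$, and the nonstandard form of Gagliardo--Nirenberg you quote, do not affect the conclusions).

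The genuine gap is in the $R^5_k$ line. The term you analyze, ``a product of $\|c_t-6c_yx_y\|_Y$ with $\|h_{1k1}\|_{L^2}$,'' is $II^4_k=\int v_2(c_t-6c_yx_y)\pd_cg_k^*e^{-iy\eta}$, which by \eqref{eq:defR5} is \emph{not} part of $R^5_k$; and the rates you report, $\la t\ra^{-3/2}$ and $\la t\ra^{-7/4}$, are not the asserted ones, which are $\la t\ra^{-1}$ and $\la t\ra^{-5/4}$. What actually sits in $R^5_k$ is $II^1_{k1}$ (the part of $\int v_2\,\mL_c^*(g_k^*e^{-iy\eta})$ left over after removing the $c_{yy}$- and $(c_y)^2$-pieces) together with $II^3_{k2}$ (the $6\pd_y(x_yv_2)$ and $6v_2c_yx_y\pd_cg_k^*$ contributions of $N_{2,3}$). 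For $II^1_{k1}$ one must use the adjoint eigenvalue relations \eqref{eq:resonance3} and the orthogonality condition \eqref{eq:orth} to see that the term linear in $v_2$ with an $O(1)$ coefficient cancels; without this, an expression of the form $\int_{\R^2}v_2\,G(z,\eta)e^{-iy\eta}\,dzdy$ is not even controllable in $Y_1$ by $\|v_2\|_X$ (since $X$ only gives $L^2_y$, not $L^1_y$, control of $\int v_2G\,dz$), let alone with a factor $\bM_{c,x}(T)$. The rates $\la t\ra^{-1}$ and $\la t\ra^{-5/4}$ then come from products such as $x_y\cdot\int v_2g_k^*\,dz$: one pairs $\|x_y\|_{L^2}\lesssim\bM_{c,x}\la t\ra^{-1/4}$ with $\|v_2\|_X\lesssim\bM_2\la t\ra^{-3/4}$ for $Y_1$, and $\|x_y\|_{L^\infty}\lesssim\|x_y\|_{L^2}^{1/2}\|x_{yy}\|_{L^2}^{1/2}\lesssim\bM_{c,x}\la t\ra^{-1/2}$ with $\|v_2\|_X$ for $Y$. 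Your explanation of the $Y$-improvement --- ``converting the $Y_1$-type sup into an $L^2_\eta$-norm on $[-\eta_0,\eta_0]$'' --- is backwards: that conversion gives $\|\cdot\|_Y\lesssim\|\cdot\|_{Y_1}$ with the \emph{same} time rate, and the extra $\la t\ra^{-1/4}$ must instead come from using the $L^\infty_y$ rather than the $L^2_y$ norm of the modulation parameter in the bilinear estimate.
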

\par

Let $R^9=R^{91}-\pd_y^2R^{92}$ and
\begin{align*}
& R^{91}=-6\sum_{3\le j\le 5}\bS_{j1}
\{(I+\mathcal{C}_2)(c_yx_y)-(bx_y)_y\}\mathbf{e_1}\,,
\\ &
R^{92}=-6\sum_{3\le j\le 5}\bS_{j2}\{(I+\mathcal{C}_2)(c_yx_y)-(bx_y)_y\}
\mathbf{e}_1\,.
\end{align*}
Using Claims~\ref{cl:S3}--\ref{cl:S5} and boundedness of operators
$\pd_y$, $\bS_1$, $\bS_2$ and $\wC_2$
(\cite[pp.83--84]{Miz15}, \cite[Claims~6.1, B.6]{Miz15}),
we have the following.
\begin{claim}
  \label{cl:R9}
There exist positive constants $C$ and $\delta$ such that
if $\bM_{c,x}(T)\le \delta$, then for $t\in[0,T]$,
\begin{align*}
& \|R^8(t)\|_{Y_1} \le C\la t\ra^{-1}\bM_{c,x}(T)^2\,,
\quad \|R^8(t)\|_Y \le C\la t\ra^{-5/4}\bM_{c,x}(T)^2\,,
\\ &
\|\chi(D_y)R^{91}(t)\|_{L^1}+\|R^{91}(t)\|_{Y_1}+\|R^{92}(t)\|_{Y_1}
\le C(e^{-\a L}+\bM_2(T))\bM_{c,x}(T)^2\la t\ra^{-2}\,.  
\end{align*}
\end{claim}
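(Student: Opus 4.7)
The proof breaks into two independent parts, one for $R^8$ and one for $R^{91},R^{92}$. In both cases, each of these remainders expands as a finite sum of terms of the form $\bS(N)$ where $\bS$ is an operator controlled by Claim~\ref{cl:wS3} (or by \cite[Claims~6.1, B.6]{Miz15} in the case of $\bS_1, \bS_2, \wC_2$), and $N$ is an algebraic expression in $\tc$, $b$, $x_y$ and their derivatives. The strategy is to combine the operator norms of $\bS$ with sharp $L^2$ or $L^\infty$ bounds on $N$ obtained from the $\bM_{c,x}(T)$-hypothesis and the Gagliardo–Nirenberg inequality on $\R$.

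For \emph{Part 1} ($R^8$), I expect the decomposition to yield bilinear pieces $\bS_i(\text{product of two factors from }\{\tc, c_y, x_y, x_{yy}\})$, possibly precomposed with $\wC_2$, for $i=1,2$. Using Hölder's inequality with one factor in $L^\infty$ and one in $L^2$, together with $\|\bS_1\|_{B(Y)\cap B(Y_1)}+\|\bS_2\|_{B(Y)\cap B(Y_1)}=O(1)$ and $\|\wC_2\|_{B(Y)}=O(\bM_{c,x})$, the bound $\|R^8(t)\|_{Y_1}\lesssim \bM_{c,x}(T)^2\la t\ra^{-1}$ follows. The $Y$-bound is obtained by placing the better-decaying factor (typically $c_y$ or $x_{yy}$, carrying $\la t\ra^{-3/4}$) in $L^2$, which gives the extra $\la t\ra^{-1/4}$ needed.

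For \emph{Part 2} ($R^{91}, R^{92}$), set $F := (I+\mathcal{C}_2)(c_yx_y)-(bx_y)_y$. The key algebraic fact, obtained by differentiating \eqref{eq:bdef}, is
\[
b_y = \wP_1\!\bigl(\sqrt{c/2}\, c_y\bigr),
\]
and combined with $c_y\in Y$ (so that $\wP_1 c_y = c_y$) this gives the decomposition
\[
F \;=\; -\wP_1\!\bigl((\sqrt{c/2}-1) c_y\bigr) x_y \;+\; \wP_1\!\bigl((\sqrt{c/2}-1)\wP_1(c_yx_y)\bigr) \;-\; b\, x_{yy}.
\]
The first two terms are cubic in the modulation parameters and are controlled in $L^2$ by $\bM_{c,x}(T)^3 \la t\ra^{-5/4}$. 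The residual $bx_{yy}$ is only quadratic, and its sharp estimate is the technical heart of the argument: instead of the Bernstein bound $\|b\|_{L^\infty}\lesssim \|b\|_Y\lesssim \bM_{c,x}\la t\ra^{-1/4}$, one must use the 1-D Gagliardo–Nirenberg inequality $\|b\|_{L^\infty}\lesssim \|b\|_{L^2}^{1/2}\|b_y\|_{L^2}^{1/2}\lesssim \bM_{c,x}(T)\la t\ra^{-1/2}$. Paired with $\|x_{yy}\|_{L^2}\lesssim \bM_{c,x}(T)\la t\ra^{-3/4}$, this yields $\|F\|_{L^2}\lesssim \bM_{c,x}(T)^2\la t\ra^{-5/4}$.

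Once $F$ is controlled, the three operator contributions from Claim~\ref{cl:wS3} combine as expected: the $\bS_3$ piece gains $e^{-\a(3t+L)}$, which dominates $\la t\ra^{-3/4}\cdot e^{-\a L}\la t\ra^{-5/4}\lesssim e^{-\a L}\la t\ra^{-2}$; the $\bS_4$ piece is cubic in $\bM_{c,x}$ and similarly exponentially small; the $\bS_5$ piece gives $\bM_2(T)\la t\ra^{-3/4}\cdot \bM_{c,x}(T)^2\la t\ra^{-5/4}=\bM_2(T)\bM_{c,x}(T)^2\la t\ra^{-2}$. The $\|\chi(D_y)(\cdot)\|_{L^1}$ bound is obtained in parallel from the $B(L^2,L^1)$-estimates in Claim~\ref{cl:wS3}. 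The main obstacle is precisely the $\la t\ra^{-1/2}$ decay of $\|b\|_{L^\infty}$: any naive estimate falls short of the target $\la t\ra^{-2}$ by a quarter power, and only the Gagliardo–Nirenberg trade between the $\la t\ra^{-1/4}$ decay of $\|b\|_{L^2}$ and the $\la t\ra^{-3/4}$ decay of $\|b_y\|_{L^2}$ — both of which are built into $\bM_{c,x}(T)$ — closes the gap.
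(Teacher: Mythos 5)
Your proposal is correct and follows essentially the same route as the paper, which proves Claim~\ref{cl:R9} simply by invoking Claims~\ref{cl:S3}--\ref{cl:S5} (and the boundedness of $\bS_1$, $\bS_2$, $\wC_2$ from \cite{Miz15}) together with exactly the product estimates you describe; in particular the interpolation bound $\|b\|_{L^\infty}\lesssim\|b\|_{L^2}^{1/2}\|b_y\|_{L^2}^{1/2}\lesssim\bM_{c,x}(T)\la t\ra^{-1/2}$ is the same device the paper uses elsewhere (e.g.\ $\|\tc\|_{L^\infty}+\|x_y\|_{L^\infty}\lesssim\bM_{c,x}(T)\la t\ra^{-1/2}$ in the proof of Lemma~\ref{lem:nonresonant-ylow}). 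Your identity $b_y=\wP_1(\sqrt{c/2}\,c_y)$ and the resulting decomposition of $(I+\mathcal{C}_2)(c_yx_y)-(bx_y)_y$ are verified correctly, and your necessarily speculative description of $R^8$ (which this paper never defines, deferring to \cite{Miz17}) is consistent with the claimed rates.
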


For $R^{10}=(\pd_y^2\wS_0-B_2)(b_y-c_y)\mathbf{e_1}$,
we have the following from \cite[Claim~D.6]{Miz15}
and the fact that $\wS_0\in B(Y)\cap B(Y_1)$.
\begin{claim}
\label{cl:R10}
 There exist positive constants $C$ and $\delta$ such that
if $\bM_{c,x}(T)\le \delta$, then for $t\in[0,T]$,
\begin{equation*}
\|R^{10}(t)\|_{Y_1} \le C\la t\ra^{-1}\bM_{c,x}(T)^2\,,
\quad \|R^{10}(t)\|_Y \le C\la t\ra^{-5/4}\bM_{c,x}(T)^2\,.
\end{equation*}
\end{claim}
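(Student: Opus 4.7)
The plan is to reduce $R^{10}$ to a derivative expression in $b-\tc$, because $c = 2+\tc$ gives $c_y = \tc_y$, and therefore $b_y - c_y = \pd_y(b-\tc)$. By formula \eqref{eq:b-tc}, $b - \tc = \tfrac{1}{8}\wP_1\tc^2 + O(\tc^3)$, so $b - \tc$ is quadratic in $\tc$, and one expects it to enjoy bounds that are a factor of roughly $\la t\ra^{-1/2}$ better than the corresponding bounds for $\tc$ itself, and, more importantly, to be bilinear (so Moser-type estimates give the $\bM_{c,x}(T)^2$ factor). This is precisely the content of \cite[Claim~D.6]{Miz15}.

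Concretely, I would first write
\begin{equation*}
R^{10} = \pd_y^2\wS_0\,\pd_y(b-\tc)\,\mathbf{e_1} \;-\; B_2\,\pd_y(b-\tc)\,\mathbf{e_1},
\end{equation*}
and then invoke \cite[Claim~D.6]{Miz15} to obtain, for each $k\in\{0,1,2,3\}$ (with $k$ small enough that $\pd_y^k\tc$ remains controlled via $\bM_{c,x}(T)$),
\begin{gather*}
\|\pd_y^k(b-\tc)\|_{Y_1} \lesssim \bM_{c,x}(T)^2\la t\ra^{-(k+1)/2}, \\
\|\pd_y^k(b-\tc)\|_{Y} \lesssim \bM_{c,x}(T)^2\la t\ra^{-(2k+3)/4}.
\end{gather*}
These two lines are derived by writing $\tc^2$ as a product, putting one factor in the bounded norm and taking $L^2$-decay on the other using $\|\pd_y^j\tc\|_Y \lesssim \bM_{c,x}(T)\la t\ra^{-(2j+1)/4}$; the extra $\wP_1$ projection is harmless on both $Y$ and $Y_1$.

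With these estimates in hand, the conclusion follows mechanically. Since $\wS_0\in B(Y)\cap B(Y_1)$ and $B_2$ is a (second-order) differential operator in $y$ whose entries are bounded on $Y$ and $Y_1$ uniformly in $t$, one bounds
\begin{equation*}
\|R^{10}\|_{Y_1} \lesssim \|\pd_y^3(b-\tc)\|_{Y_1} + \|\pd_y(b-\tc)\|_{Y_1},
\quad
\|R^{10}\|_{Y} \lesssim \|\pd_y^3(b-\tc)\|_{Y} + \|\pd_y(b-\tc)\|_{Y},
\end{equation*}
and inserts the estimates above. The $Y_1$-bound picks up $\la t\ra^{-2} + \la t\ra^{-1} \lesssim \la t\ra^{-1}$, while the $Y$-bound picks up $\la t\ra^{-9/4}+\la t\ra^{-5/4}\lesssim \la t\ra^{-5/4}$, as claimed.

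The only real obstacle is bookkeeping the number of $y$-derivatives that land on $b-\tc$ after applying $\pd_y^2\wS_0$ and $B_2$: one has to check that the $\pd_y^3$ estimate for $b-\tc$ is actually available from \cite[Claim~D.6]{Miz15} given the control $\bM_{c,x}(T)$ provides only up to $\pd_y^2\tc$ and $\pd_y^3x$. The resolution is that $b-\tc$ is a smooth nonlinear function of $\tc$ localized in Fourier to $[-\eta_0,\eta_0]$, so each additional $y$-derivative costs at most a factor $\eta_0$ through the truncation, and no genuine new regularity of $\tc$ is required.
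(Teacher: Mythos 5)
Your proof is correct and follows the same route as the paper, which likewise deduces the claim from \cite[Claim~D.6]{Miz15} (the quadratic bounds on $b-\tc$ and its $y$-derivative coming from \eqref{eq:b-tc}) together with the boundedness of $\wS_0$ (and $B_2$) on $Y$ and $Y_1$. Your observation that the extra $\pd_y$'s cost only factors of $\eta_0$ because of the Fourier truncation to $[-\eta_0,\eta_0]$ is exactly the right way to dispose of the derivative bookkeeping.
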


Let $R^{11}=R^{11,1}-\pd_y^2R^{11,2}$ and
$$R^{11,1}=\widetilde{\mathcal{A}}_{11}(t)(\tc -b)\mathbf{e_1}\,,
\quad  R^{11,2}=\widetilde{\mathcal{A}}_{12}(t)(\tc -b)\mathbf{e_1}\,.$$
\begin{claim}
  \label{cl:R11}
There exist positive constants $C$ and $\delta$ such that
if $\bM_{c,x}(T)$, then for $t\in[0,T]$,
\begin{align*}
 \|\chi(D_y)R^{11,1}\|_{L^1}+\|R^{11,1}\|_{Y_1}+\|R^{11,2}\|_{Y_1}  
\le Ce^{-\a(3t+L)}\la t\ra^{-1/2}\bM_{c,x}(T)^2\,.
\end{align*}
\end{claim}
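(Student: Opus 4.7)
The plan is to reduce everything to the scalar observation \eqref{eq:b-tc}, which tells us that
\begin{equation*}
\tc-b=-\tfrac18\wP_1\tc^2+\wP_1\,O(\tc^3),
\end{equation*}
i.e.\ $\tc-b$ is quadratic in $\tc$ and lies in $\wP_1 Y$. This is the key structural fact that produces the extra factor $\la t\ra^{-1/2}$ on top of the exponential weight coming from $\widetilde{\mathcal A}_{1j}$.

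First I would bound $\|\tc-b\|_{Y_1}$. Since $\wP_1\tc^2$ has Fourier support in $[-\eta_0,\eta_0]$,
\begin{equation*}
\|\wP_1\tc^2\|_{Y_1}=\bigl\|\mathbf{1}_{[-\eta_0,\eta_0]}\widehat{\tc^2}\bigr\|_{L^\infty}\le\tfrac{1}{\sqrt{2\pi}}\|\tc^2\|_{L^1}=\tfrac{1}{\sqrt{2\pi}}\|\tc\|_{L^2}^2,
\end{equation*}
and by Bernstein's inequality for the bandlimited $\tc$ one has $\|\tc\|_{L^\infty}\lesssim\|\tc\|_{L^2}$, so the cubic remainder is controlled by $\|\tc\|_{L^2}^3$. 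Since $\|\tc(t)\|_{L^2}=\|\tc(t)\|_Y\le\bM_{c,x}(T)\la t\ra^{-1/4}$, this produces $\|\tc-b\|_{Y_1}\lesssim\bM_{c,x}(T)^2\la t\ra^{-1/2}$. The bounds on $\|R^{11,1}\|_{Y_1}$ and $\|R^{11,2}\|_{Y_1}$ then follow immediately from Claim~\ref{cl:akbound}, which gives $\|\widetilde{\mathcal A}_{11}(t)\|_{B(Y_1)}+\|\widetilde{\mathcal A}_{12}(t)\|_{B(Y_1)}\lesssim e^{-\a(3t+L)}$.

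For the $L^1$ estimate of $\chi(D_y)R^{11,1}$, I would exploit the fact that $\widetilde{\mathcal A}_{11}(t)=\ta_{11}(t)E_1+\ta_{21}(t)E_{21}$ is scalar multiplication by constants $\ta_{k1}(t)$ (independent of $y$) with $|\ta_{k1}(t)|\lesssim e^{-\a(3t+L)}$. Thus $\chi(D_y)R^{11,1}$ reduces to $\ta_{k1}(t)\chi(D_y)(\tc-b)\mathbf{e}_k$, and using the identity $\chi(D_y)\wP_1=\chi(D_y)$ (since $\supp\chi\subset[-\eta_0/2,\eta_0/2]$) I can drop the $\wP_1$ and apply Young's inequality:
\begin{equation*}
\|\chi(D_y)\tc^2\|_{L^1}\le\|\widecheck{\chi}\|_{L^1}\|\tc^2\|_{L^1}=\|\widecheck{\chi}\|_{L^1}\|\tc\|_{L^2}^2\lesssim\bM_{c,x}(T)^2\la t\ra^{-1/2},
\end{equation*}
and analogously $\|\chi(D_y)O(\tc^3)\|_{L^1}\lesssim\|\tc\|_{L^3}^3\lesssim\|\tc\|_{L^2}^3$ by Bernstein, which is even better.

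There is no real obstacle here: the bound is a clean algebraic consequence of the quadratic structure $\tc-b=O(\tc^2)$ combined with the exponential smallness of $\widetilde{\mathcal A}_{1j}$ already recorded in Claim~\ref{cl:akbound}. The only mild subtlety is that $\tc-b$ is not in $L^1$ a priori, which is why I route the $L^1$-estimate through $\chi(D_y)\wP_1=\chi(D_y)$ and Young's inequality applied to $\tc^2\in L^1$, rather than trying to bound $\tc-b$ in $L^1$ directly.
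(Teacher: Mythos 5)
Your proof is correct and follows essentially the same route as the paper: both rest on the quadratic identity \eqref{eq:b-tc} giving $\|\tc-b\|\lesssim\|\tc\|_Y^2\lesssim\bM_{c,x}(T)^2\la t\ra^{-1/2}$, on the fact that $\widetilde{\mathcal A}_{11}(t)$ acts by multiplication by the constants $\ta_{k1}(t)$ so that $\chi(D_y)R^{11,1}=\chi(D_y)(\tc-b)(\ta_{11}\mathbf{e}_1+\ta_{21}\mathbf{e}_2)$, and on Claim~\ref{cl:akbound} for the factor $e^{-\a(3t+L)}$. You merely spell out the Young/Bernstein steps that the paper leaves implicit.
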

\begin{proof}
By the definition,
$$\chi(D_y)R^{11,1}(t)= 
\chi(D_y)\{\tc(t,\cdot)-b(t,\cdot)\}
(\ta_{11}(t)\mathbf{e_1}+\ta_{21}(t)\mathbf{e_2})\,.$$
Claim~\ref{cl:akbound} and  \eqref{eq:b-tc} imply
\begin{align*}
\|\chi(D_y)R^{11,1}(t)\|_{L^1}\lesssim &
(|\ta_{11}(t)|+|\ta_{12}(t)|)\|\tc(t)\|_Y^2
 \lesssim  e^{-\a(3t+L)}\la t\ra^{-1/2}\bM_{c,x}(T)^2\,.
\end{align*}
We can prove the rest in the similar manner by using
Claim~\ref{cl:akbound}. Thus we complete the proof.
\end{proof}

\bigskip

\section{Estimates for $k(t,y)$}
\label{ap:k}
By Lemmas~\ref{lem:nonzeromean1} and \ref{lem:virial-0},
the $L^2$-norm of $k(t,y)$ decays like $t^{-2}$ as $t\to\infty$.
\begin{claim}
  \label{cl:k-decay}
Suppose that $\inf_{t\ge0\,,\,y\in\R}x_t(t,y)\ge c_1$ for a $c_1>0$.
Then there exist positive constants $\delta$ and $C$ such that if
$\|\la x\ra^2 v_0\|_{H^1(\R^2)}<\delta$, then 
$$\|k(t,y)\|_{L^2}\le C \la t \ra^{-2}\|\la x\ra^2v_0\|_{L^2(\R^2)}\,.$$
\end{claim}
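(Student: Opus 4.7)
The plan is to reduce the $L^2(\R_y)$ estimate on $k$ to the $W(t)$-decay of $v_1$ already encoded in the definition of $\bM_1(T)$. Since $\wP_1$ is a Fourier multiplier with $\|\wP_1\|_{B(L^2)}\le1$, it suffices to bound
\begin{equation*}
h(t,y):=\int_\R v_1(t,z,y)\,\varphi_{c(t,y)}(z)\,dz
\end{equation*}
in $L^2(\R_y)$. I would apply Cauchy--Schwarz in $z$ with the weight $p_\a(z+3t+L)^{1/2}$, which yields
\begin{equation*}
|h(t,y)|^2\le \Bigl(\int_\R v_1(t,z,y)^2\,p_\a(z+3t+L)\,dz\Bigr)\Bigl(\int_\R \frac{\varphi_{c(t,y)}(z)^2}{p_\a(z+3t+L)}\,dz\Bigr).
\end{equation*}

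The second factor I expect to be uniformly bounded in $t\ge0$ and in $c(t,y)$ close to $2$: on $\{z+3t+L\ge 0\}$ one has $p_\a^{-1}\le 1$ and $\|\varphi_c\|_{L^2}<\infty$, while on $\{z+3t+L<0\}$ the pointwise bounds $\varphi_c(z)^2\lesssim e^{-2\sqrt{2c}|z|}$ and $p_\a(w)^{-1}\lesssim e^{-2\a w}$, combined with $\a\in(0,2)\subset(0,\sqrt{2c})$, produce an exponentially small contribution of order $e^{-2\sqrt{2c}(3t+L)}$. Integrating the resulting pointwise inequality in $y$, using the $L^2$-boundedness of $\wP_1$ and recognizing the definition of $\|\cdot\|_{W(t)}$ then gives
\begin{equation*}
\|k(t,\cdot)\|_{L^2(\R_y)}^2\lesssim \|v_1(t)\|_{W(t)}^2.
\end{equation*}

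To close the argument, I would invoke Lemma~\ref{lem:v1-a}: the smallness of $\|\la x\ra^2 v_0\|_{H^1}$, together with Proposition~\ref{prop:continuation} and Lemmas~\ref{lem:M4-bound}, \ref{lem:exp-bound}, guarantees that $\bM_{c,x}(T)+\bM_1(T)+\bM_2(T)$ remains small, so that Lemma~\ref{lem:v1-a} gives $\bM_1(T)\lesssim\|\la x\ra^2 v_0\|_{L^2}$ uniformly in $T$. By definition of $\bM_1(T)$, $\|v_1(t)\|_{W(t)}\le \la t\ra^{-2}\bM_1(T)$, and the claim follows after taking square roots. There is no substantial analytical obstacle beyond the Cauchy--Schwarz bookkeeping; the only point requiring care is the calibration of the weight, where the condition $\a<\sqrt{2c}$ is needed so that the exponential decay of $\varphi_c$ at $z\to-\infty$ dominates the growth of $p_\a(z+3t+L)^{-1}$. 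This is the standard balance underlying all exponentially weighted estimates of the paper, and once it is in place the claim is a direct transfer of the virial $W(t)$-decay of $v_1$ into an $L^2(\R_y)$-decay of $k$.
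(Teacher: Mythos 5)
Your argument is correct and is essentially the paper's (unwritten) proof of this claim: Cauchy--Schwarz in $z$ against the weight $p_\a(z+3t+L)$ reduces $\|k(t,\cdot)\|_{L^2(\R_y)}$ to $\|v_1(t)\|_{W(t)}$, and the $\la t\ra^{-2}$ decay is the virial estimate. The one refinement worth making is that the hypothesis $\inf x_t\ge c_1$ is stated precisely so that you can apply Lemma~\ref{lem:virial-0} with $\rho=2$ together with Lemma~\ref{lem:nonzeromean1} directly, instead of routing through $\bM_1(T)$, Lemma~\ref{lem:v1-a} and the global bootstrap --- the latter is circular on its face, since Claim~\ref{cl:k-decay} is itself an input to Lemma~\ref{lem:Mcx-bound}.
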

Next, we will give an upper bound of the growth rate of 
$\|k(t,y)\|_{L^1}$ when $v_*(x,y)$ is polynomially localized in $\R^2$.
\begin{claim}
  \label{cl:k-growth}
Let $\tv_1$ be a solution of \eqref{eq:tv1}.
There exist positive constant $C$ and $\eps_0$ such that if
$\left\|\la x\ra(\la x\ra+\la y\ra)v_0\right \|_{H^1(\R^2)}\le \eps_0$,
then for every $t\ge0$,
    \begin{equation*}
\|\la y\ra k(t,\cdot)\|_{L^2(\R)} \le C\la t \ra
\left\|\la x\ra(\la x\ra+\la y\ra)v_0\right \|_{H^1(\R^2)}\,.
\end{equation*}
\end{claim}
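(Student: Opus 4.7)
The plan is to reduce the weighted estimate on $k$ to a bound of the form $\|\la y\ra p_\a(x-x(t,y))^{1/2}\tv_1(t,\cdot)\|_{L^2(\R^2)} \lesssim \la t\ra\,\eps$, which I will establish via a weighted virial-type identity for the KP-II equation.

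First I would change variables $x=z+x(t,y)$ in the definition and write $k(t,y)=\tfrac12\wP_1 F(t,y)$ with $F(t,y):=\int_\R \tv_1(t,x,y)\varphi_{c(t,y)}(x-x(t,y))\,dx$. Applying Cauchy--Schwarz to $F$ with the weight $p_\a(x-x(t,y))$ and exploiting the exponential decay of $\varphi_c$ gives the pointwise bound $|F(t,y)|^2\lesssim \int_\R |\tv_1(t,x,y)|^2 p_\a(x-x(t,y))\,dx$. Integrating in $y$ against $\la y\ra^2$ and using the boundedness of $\wP_1$ on $L^2$ (modulo a plane-wave remainder produced by the hard cutoff, controlled via the $L^1$-bound on $F$ already available from the proof of Claim~\ref{cl:k-decay}), it suffices to estimate $E(t)^{1/2}$, where $E(t):=\int_{\R^2} \la y\ra^2 p_\a(x-x(t,y))\,\tv_1(t,x,y)^2\,dxdy$.

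Next I would derive an evolution inequality for $E(t)$ by multiplying \eqref{eq:tv1} by $2\Psi\tv_1$ with $\Psi(t,x,y)=\la y\ra^2 p_\a(x-x(t,y))$ and integrating by parts. The dispersive terms produce the standard virial contribution $\int(\pd_t\Psi+\pd_x^3\Psi)\tv_1^2 - 3\int\pd_x\Psi\{(\pd_x\tv_1)^2+(\pd_x^{-1}\pd_y\tv_1)^2\}$; the first piece is nonpositive because $x_t(t,y)\ge c_1>0$ by Claim~\ref{cl:cx_t-bound} and $|p_\a'''|\lesssim \a^2 p_\a'$ for $\a$ small, while the dissipative terms are manifestly nonpositive. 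The cubic term $4\int\pd_x\Psi\,\tv_1^3$ is absorbed by the smallness of $\bM_1'(\infty)$ together with the Sobolev inequality \eqref{eq:Sobolev}. The delicate contribution is $6\int\pd_y\Psi\,\tv_1\,\pd_x^{-1}\pd_y\tv_1$: using $\pd_y\Psi=2yp_\a-\la y\ra^2 x_y p_\a'$, Cauchy--Schwarz yields $|6\int\pd_y\Psi\,\tv_1\,\pd_x^{-1}\pd_y\tv_1|\le C\|\pd_x^{-1}\pd_y\tv_1\|_{L^2}\,E(t)^{1/2}+(\text{lower order from }x_y)$. Since $\|\pd_x^{-1}\pd_y\tv_1(t)\|_{L^2}\lesssim \bM_1'(\infty)\lesssim\eps$ uniformly in $t$ by Lemma~\ref{lem:scattering}, this gives $\frac{d}{dt}E(t)\lesssim \eps\,E(t)^{1/2}+O(\eps^2)$, and integrating from $0$ produces $E(t)^{1/2}\lesssim E(0)^{1/2}+\eps\,t$. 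The initial bound $E(0)^{1/2}\lesssim \|\la y\ra v_*\|_{L^2(\R^2)}\lesssim\eps$ follows from Lemma~\ref{lem:nonzeromean1}, yielding the claim.

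The main obstacle will be the asymmetry between the strong dispersion in $x$ (which produces parabolic-type smoothing and localization via $\pd_x^3$) and the comparatively weak control on the mixed quantity $\pd_x^{-1}\pd_y\tv_1$, which is merely conserved in $L^2$ via the KP-II Hamiltonian. This mismatch is exactly what forces linear growth in $t$ in the bad commutator term $[y,\pd_x^{-1}\pd_y^2]=-2\pd_x^{-1}\pd_y$ and is responsible for the $\la t\ra$ factor in the statement, in contrast to the uniform decay in Claim~\ref{cl:k-decay}. A secondary technicality is ensuring that the hard-cutoff projection $\wP_1$ interacts with the polynomial weight $\la y\ra$ in a controlled way; this is handled by the decomposition $y\,\wP_1 F=\wP_1(yF)+R$ where $R$ is a linear combination of plane waves $e^{\pm i\eta_0 y}$ with coefficients $\hat F(\pm\eta_0)$ bounded by $\|F\|_{L^1}$, and the latter is already controlled by the virial estimate \eqref{eq:v1-decay1.5} applied to $\tv_1$.
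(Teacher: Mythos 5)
Your core mechanism is the same as the paper's: a weighted $L^2$ identity for \eqref{eq:tv1} in which the only term that survives is the commutator of the $y$-weight with $3\pd_x^{-1}\pd_y^2$, producing $\int y\,\tv_1\,\pd_x^{-1}\pd_y\tv_1$, which is bounded by $\|\la y\ra\tv_1\|_{L^2}\|\pd_x^{-1}\pd_y\tv_1\|_{L^2}$ and closed with the uniform-in-time bound $\|\pd_x^{-1}\pd_y\tv_1(t)\|_{L^2}\lesssim\bM_1'(\infty)\lesssim\eps$ from Lemma~\ref{lem:scattering}; that is exactly what produces the $\la t\ra$ factor. The difference is the weight. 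The paper multiplies \eqref{eq:tv1} by $2(1+y^2)\tv_1$, with no $x$-dependence, so the terms $\pd_x^3\tv_1$ and $3\pd_x(\tv_1^2)$ integrate to zero \emph{identically} and the identity collapses to $\frac{d}{dt}\|\la y\ra\tv_1\|_{L^2}^2=12\int y\,\tv_1\,\pd_x^{-1}\pd_y\tv_1$, whence $\frac{d}{dt}\|\la y\ra\tv_1(t)\|_{L^2}\le 6\|\pd_x^{-1}\pd_y\tv_1(t)\|_{L^2}$ and one integrates in $t$; the passage to $k$ is then a direct Cauchy--Schwarz in $x$ against $\varphi_{c(t,y)}(\cdot-x(t,y))$, with no need for $p_\a$. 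Your weight $\Psi=\la y\ra^2p_\a(x-x(t,y))$ buys nothing for this particular claim and drags in the full KP-II virial machinery (the $\pd_x^3$ term, the $x_y$ cross terms, the cubic term).

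Most of the extra terms you do handle correctly, but one step is not justified as stated: the cubic contribution $4\int\la y\ra^2p_\a'\,\tv_1^3$ is not ``manifestly absorbed by smallness and \eqref{eq:Sobolev}.'' In the unweighted virial argument the anisotropic Sobolev inequality is applied to $p_\a'{}^{1/2}u$, and the point is that $\mathcal{E}$ involves $\pd_x^{-1}\pd_y$, which does \emph{not} commute with multiplication by $\la y\ra$; controlling $\mathcal{E}(\la y\ra p_\a'{}^{1/2}\tv_1)$ by the dissipation actually available in your identity is a genuine additional argument that you would have to supply, and which the paper's choice of weight eliminates entirely. On your last point about $\wP_1$: you are right that the sharp cutoff does not commute with $\la y\ra$, but the remainder $\hat F(\pm\eta_0)e^{\mp i\eta_0y}$ is a plane wave and hence not in $L^2(\R_y)$ however small its coefficient, so ``controlled via the $L^1$-bound on $F$'' does not dispose of it; the paper's own proof passes over this point in silence as well, so it is not a defect specific to your argument, but it cannot be repaired the way you indicate.
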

\begin{proof}
 Multiplying \eqref{eq:tv1} by $2(1+y^2)\tv_1$ and integrating
 the resulting equation over $\R^2$, we have after some integration  by parts,
 \begin{align*}
\frac{d}{dt}\int_{\R^2} (1+y^2)\tv_1(t,x,y)^2\,dxdy
=& 12\int_{\R^2} y\tv_1(t,x,y)(\pd_x^{-1}\pd_y\tv_1)(t,x,y)\,dxdy\,.
 \end{align*}
By Lemmas~\ref{lem:nonzeromean1} and Lemma~\ref{lem:scattering}
and the definition of $\bM_1'(\infty)$,
\begin{align*}
\|\la y\ra \tv_1(t)\|_{L^2}\le & 
\|\la y\ra v_*\|_{L^2}+6\int_0^t \|\pd_x^{-1}\pd_y\tv_1(s)\|_{L^2}\,ds
\\ \lesssim  & \|\la x\ra\la y\ra v_0\|_{L^2(\R^2)}
+\bM_1'(\infty)t
\\ \lesssim & 
\|\la x\ra\la y\ra v_0\|_{L^2(\R^2)}+
\left(\left\|\la x\ra^2v_0\right\|_{H^1(\R^2)}
+\|\la x\ra\la y\ra v_0\|_{L^2(\R^2)}\right)t\,.
\end{align*}
Thus we complete the proof.
\end{proof}

\section*{Acknowledgment}
This research is supported by JSPS KAKENHI Grant Number 17K05332.

\end{document}